\algrenewcommand\algorithmicrequire{\textbf{Input:}}
\algrenewcommand\algorithmicensure{\textbf{Initialize:}}
\newcommand{\Exp}{\mathbb{E}}
\newcommand{\E}[1]{{\mathbb{E}\left[#1\right] }}    % expectation
\newcommand{\R}{\mathbb{R}}
\newcommand\encircle[1]{%
  \tikz[baseline=(X.base)] 
    \node (X) [draw, shape=circle, inner sep=0] {\strut #1};}
\newcommand{\bA}{\mathbf{A}}
\newcommand{\bB}{\mathbf{B}}
\newcommand{\bI}{\mathbf{I}}
\newcommand{\bL}{\mathbf{L}}
\newcommand{\bP}{\mathbf{P}}
\newcommand{\bS}{\mathbf{S}}
\newcommand{\bW}{\mathbf{W}}
\newcommand{\bZ}{\mathbf{Z}}
\newcommand{\bU}{\mathbf{U}}
\newcommand{\eqdef}{:=}
\newcommand{\cD}{{\cal D}}
\newcommand{\cE}{{\cal E}}
\newcommand{\cF}{{\cal F}}
\newcommand{\cG}{{\cal G}}
\newcommand{\cL}{{\cal L}}
\newcommand{\cO}{{\cal O}}
\newcommand{\cV}{{\cal V}}
\newcommand{\mA}{{\bf A}}
\newcommand{\mB}{{\bf B}}
\newcommand{\mH}{{\bf H}}
\newcommand{\mI}{{\bf I}}
\newcommand{\mS}{{\bf S}}
\newcommand{\mU}{{\bf U}}
\newcommand{\mW}{{\bf W}}
\newcommand{\mZ}{{\bf Z}}
\theoremstyle{plain}
\newtheorem{thm}{Theorem}[]
\newtheorem{lem}[thm]{Lemma}
\newtheorem{rem}{Remark}[]
\theoremstyle{remark}
\begin{document}
\title{Momentum and Stochastic Momentum for Stochastic  Gradient, Newton, Proximal Point and Subspace Descent Methods}
\author{Nicolas Loizou \thanks{ School of Mathematics, The University of Edinburgh. ---  E-mail: n.loizou@sms.ed.ac.uk}
 \and Peter Richt\'{a}rik\thanks{CEMSE, King Abdullah University of Science and Technology (KAUST), Thuwal, Saudi Arabia ---  School of Mathematics, The University of Edinburgh, United Kingdom --- Moscow Institute of Physics and Technology (MIPT), Dolgoprudny, Moscow, Russia.--- E-mail: peter.richtarik@ed.ac.uk}}
\date{December 22, 2017\footnote{A short version of this paper (5 pages) was posted on arXiv on 30 Oct 2017 \cite{loizou2017linearly}. The paper was accepted for presentation at the 2017 NIPS Optimization for Machine Learning workshop in a peer reviewed process. The accepted papers are listed on the website of the workshop, but are not published in any proceedings volume.}}
\maketitle
\providecommand{\keywords}[1]{\textbf{Keywords} #1} 
\providecommand{\ams}[1]{\textbf{Mathematical Subject Classifications } #1}
\begin{abstract} 
In this paper we study several classes of stochastic optimization algorithms enriched with  {\em heavy ball momentum}. Among the methods studied are: stochastic gradient descent, stochastic Newton, stochastic proximal point  and stochastic dual subspace ascent. This is the first time momentum variants of several of these methods are studied. We choose to perform our analysis in a setting in which all of the above methods are equivalent. We prove global nonassymptotic linear convergence rates for all methods and various measures of success, including primal function values, primal iterates (in L2 sense), and dual function values. We also show that the primal iterates converge at an accelerated linear rate in the L1 sense. This is the first time a linear rate is shown for the stochastic heavy ball method (i.e., stochastic gradient descent method with momentum). Under somewhat weaker conditions,  we establish a sublinear convergence rate for Cesaro averages of primal iterates.  Moreover, we propose a novel concept, which we call {\em stochastic momentum}, aimed at decreasing the cost of performing the momentum step. We prove linear convergence of several stochastic methods with stochastic momentum, and show that in some sparse data regimes and for sufficiently small momentum parameters, these methods enjoy better overall complexity than methods with deterministic momentum. Finally, we perform extensive numerical testing on artificial and real datasets, including data coming from average consensus problems. 
\end{abstract}

\noindent \keywords{ stochastic methods $\cdot$ heavy ball momentum $\cdot$ linear systems $\cdot$ randomized coordinate descent $\cdot$ randomized Kaczmarz $\cdot$  stochastic gradient descent $\cdot$ stochastic Newton $\cdot$ quadratic optimization $\cdot$ convex optimization}

\noindent \ams{68Q25 $\cdot$ 68W20 $\cdot$ 68W40 $\cdot$ 65Y20 $\cdot$ 90C15 $\cdot$ 90C20 $\cdot$ 90C25 $\cdot$ 15A06 $\cdot$ 15B52 $\cdot$ 65F10 }

\section{Introduction} \label{sec:intro}

Two of the most popular algorithmic ideas  for solving optimization problems involving big volumes of data are {\em stochastic approximation}  and {\em momentum}. By stochastic approximation we refer to the practice pioneered by Robins and Monro \cite{robbins1951stochastic} of replacement of costly-to-compute quantities (e.g.,  gradient of the objective function) by cheaply-to-compute {\em stochastic} approximations thereof (e.g., unbiased estimate of the gradient).  By momentum we refer to the {\em heavy ball} technique originally developed by Polyak \cite{polyak1964some} to accelerate the convergence rate of gradient-type methods.  

While much is known about the effects of {\em stochastic approximation} and {\em momentum} in isolation, surprisingly little is known about the {\em combined effect} of these two popular algorithmic techniques. For instance, to the best of our knowledge, there is no context in which a method combining stochastic approximation with momentum  is known to have a linear convergence rate. One of the  contributions of this work is to show that there are important problem classes for which a linear rate can indeed be established for a range of stepsize and momentum parameters.

\subsection{Setting}

In this paper we study three closely related problems: 
\begin{enumerate}
\item[(i)] stochastic optimization, 
\item[(ii)] best approximation, and 
\item[(iii)] (bounded) concave quadratic maximization.
\end{enumerate} 
These problems and the relationships between them are described in detail in Section~\ref{sec:prelim}. Here we only briefly outline some of the key relationships. By stochastic optimization we refer to the problem of the form
\begin{equation}\label{eq:stoch_reform_intro} \min_{x\in \R^n} f(x)\eqdef \E{f_{\mS}(x)},\end{equation}
where the expectation is over random matrices $\mS$ drawn from an arbitrary  distribution $\cD$, and $f_{\mS}$ is a stochastic convex quadratic function of a  least-squares type, depending on $\mS$, and in addition on a matrix $\mA\in \R^{m\times n}$, vector $b\in \R^m$,  and an $n\times n$ positive definite matrix $\mB$ (see Section~\ref{sec:prelim} for full details). The problem is constructed in such a way that the set of minimizers of $f$ is identical to the set of solutions of a given (consistent) linear system \begin{equation} \label{eq:lin-sys-intro}\mA x =b,\end{equation}
where $\mA\in \R^{m\times n}$ and $b \in \R^m$. In this sense, \eqref{eq:stoch_reform_intro} can  be seen as the reformulation of the linear system \eqref{eq:lin-sys-intro} into a stochastic optimization problem. Such reformulations  provide an explicit connection between the fields of  linear algebra and stochastic optimization, which may inspire future research by enabling the transfer of knowledge, techniques, and algorithms from one field to another. For instance, the randomized Kaczmarz method of Strohmer and Vershynin \cite{RK} for solving \eqref{eq:lin-sys-intro} is equivalent to the stochastic gradient descent method applied to \eqref{eq:stoch_reform_intro}, with $\cD$ corresponding to a discrete distribution over unit coordinate vectors in $\R^m$. However, the flexibility of being able to choose $\cD$ arbitrarily allows for numerous generalizations of the randomized Kaczmarz method \cite{ASDA}. Likewise, provably faster variants of the randomized Kaczmarz method (for instance, by utilizing importance sampling) can be designed using the connection. 

\subsection{Three stochastic methods}
Problem  \eqref{eq:stoch_reform_intro}  has several peculiar characteristics which are of key importance to this paper. For instance, the Hessian of $f_{\mS}$ is a (random) projection matrix, which can be used to show that $f_{\mS}(x) = \tfrac{1}{2}\|\nabla f_{\mS}(x)\|_\mB^2$ . Moreover, it follows that the Hessian of $f$ has all eigenvalues bounded by 1, and so on. These characteristics can be used to show that several otherwise {\em distinct}  stochastic algorithms for solving the stochastic optimization problem \eqref{eq:stoch_reform_intro} are {\em identical} \cite{ASDA}. In particular, the following optimization methods for solving \eqref{eq:stoch_reform_intro} are identical\footnote{In addition, these three methods are identical to a stochastic fixed point method (with relaxation) for solving the fixed point problem $x = \E{\Pi_{\cL_\mS}(x)}$, where $\cL_{\mS}$ is the set of solutions of $\mS^\top \mA x = \mS^\top b$, which is a  {\em sketched} version of the  linear system \eqref{eq:lin-sys-intro}, and can be seen as a stochastic approximation  of the set $\cL\eqdef \{x\;:\; \mA x = b\}$}: 
\begin{itemize}
\item {\em Method 1:} stochastic gradient descent (SGD),
\item {\em Method 2:} stochastic Newton method (SN), and
\item {\em Method 3:} stochastic proximal point method (SPP);
\end{itemize}
all with a fixed stepsize $\omega>0$.  The methods will be described in detail in Section~\ref{sec:prelim}; see also Table~\ref{tbl:all_methods} for a quick summary.

The equivalence of these methods is useful for the purposes of this paper as it allows us to study their variants  {\em with momentum} by studying a single algorithm only. We are not aware of any successful attempts to analyze momentum variants of SN and SPP and  as we said before, there are no linearly convergent variants of SGD with momentum in {\em any setting}.

\subsection{Best approximation, duality and  stochastic dual subspace ascent} It was shown in \cite{gower2015stochastic} in the $\omega=1$ case and in \cite{ASDA} in the general $\omega>0$ case  that SGD, SN and SPP converge to a very particular minimizer of $f$: the projection of the starting point $x_0$ onto the solution set of the linear system \eqref{eq:lin-sys-intro}. This naturally leads  to the best approximation problem, which is the problem of projecting\footnote{In the rest of the paper we consider projection with respect to an arbitrary Euclidean norm.} a given vector onto the solution space of the linear system \eqref{eq:lin-sys-intro}:
\begin{equation}\label{eq:best_approx_intro}\min_{x\in \R^n} \tfrac{1}{2}\|x-x_0\|_{\mB}^2 \quad \text{subject to}\quad \mA x = b .  \end{equation}

The dual of the best approximation problem is an unconstrained concave quadratic maximization problem \cite{gower2015stochastic}.  Consistency of $\mA x = b$ implies that the dual is bounded.  It follows from the results of \cite{gower2015stochastic} that for $\omega=1$, the random iterates of SGD, SN and SPP arise as affine images of  the random iterates produced by an algorithm for solving the dual of the best approximation  problem \eqref{eq:best_approx_intro}, known as
\begin{itemize}
\item  {\em Method 4:} stochastic dual subspace ascent (SDSA).
\end{itemize}
In this paper we show that this equivalence extends beyond the $\omega=1$ case, specifically for $0<\omega<2$, and further study {\em SDSA with momentum}. We then show that SGD, SN and SPP with momentum arise as affine images of SDSA with momentum. SDSA proceeds by taking steps in a random subspace spanned by the columns of $\mS$ randomly drawn in each iteration from $\cD$. In this subspace, the method moves to the point which maximizes the dual objective, $D(y)$. Since $\cD$ is an arbitrary distribution of random matrices, SDSA moves in arbitrary random subspaces, and as such, can be seen as a vast generalization of randomized coordinate descent methods  and their minibatch variants \cite{fercoq2015accelerated, qu2016coordinate}.

\subsection{Structure of the paper}

The remainder of this work is organized as follows. In Section~\ref{sec:contributions} we summarize our contributions in the context of existing literature. In Section~\ref{sec:prelim} we provide a detailed account of the stochastic optimization problem, the best approximation and its dual. Here we also describe the SGD, SN and SPP methods. In  Section~\ref{sec:primal} we describe and analyze  primal  methods  with momentum (mSGD, mSN and mSPP), and in Section~\ref{sec:dual} we describe and analyze the dual method with momentum (mSDSA). In Section~\ref{sec:sm} we describe and analyze primal methods with stochastic momentum (smSGD, smSN and smSPP). Numerical experiments are presented in Section~\ref{sec:experiments}.  Proofs of all key results can be found in the appendix.

 \subsection{Notation}

The following notational conventions are used in this paper. Boldface upper-case letters denote matrices; $\bI$ is the identity matrix. By $\cL$ we denote the solution set of the linear system $\bA x=b$. By $\cL_{\bS}$, where $\mS$ is a random matrix, we denote the solution set of the {\em sketched} linear system $\bS^\top \bA x= \bS^\top b$.  Throughout the paper, $\mB$ is an $n\times n$ positive definite matrix giving rise to an inner product and norm on $\R^n$. Unless stated otherwise, throughout the paper, $x_*$ is the projection of $x_0$ onto $\cL$ in the $\mB$-norm: $x_*=\Pi_{\cL}^\bB(x_0)$. We write $[n]\eqdef \{1,2, \dots ,n\}$.

\section{Momentum Methods and Our Contributions} \label{sec:contributions}

In this section we give a brief review of the relevant literature, and provide a  summary of our contributions.

\subsection{Heavy ball method}

The baseline first-order method for minimizing a differentiable function $f$ is the gradient descent (GD) method, \[x_{k+1} = x_k - \omega_k \nabla f(x_k),\] where $\omega_k>0$ is a stepsize. For convex functions with $L$-Lipschitz gradient (function class $\cF^{1,1}_{0,L}$),  GD converges at at the rate of $\cO(L/\epsilon)$. When, in addition, $f$ is $\mu$-strongly convex (function class $\cF^{1,1}_{\mu,L}$), the rate is linear: $\cO((L /\mu) \log(1/\epsilon))$ \cite{nesterov2013introductory}. To improve the convergence behavior of the method, Polyak proposed to modify GD by the   introduction of a (heavy ball) momentum term\footnote{Arguably a much more popular, certainly theoretically much better understood alternative to Polyak's momentum is the momentum introduced by Nesterov \cite{nesterov1983method, nesterov2013introductory}, leading to the famous {\em accelerated gradient descent (AGD)} method. This method converges nonassymptotically and globally; with optimal sublinear rate $\cO(\sqrt{L/\epsilon})$  \cite{nemirovskii1983problem} when  applied to minimizing a smooth convex objective function (class $\cF^{1,1}_{0,L}$), and with the optimal linear  rate $\cO(\sqrt{L/\mu} \log(1/\epsilon))$ when minimizing smooth strongly convex functions (class $\cF^{1,1}_{\mu,L}$). Both Nesterov's and Polyak's update rules are known in the literature as ``momentum'' methods. In this paper, however, we focus exclusively on Polyak's heavy ball momentum.}, $\beta(x_k-x_{k-1})$. This leads to the gradient descent method with  momentum (mGD), popularly  known as the {\em heavy ball method:} \[x_{k+1} = x_k - \omega_k \nabla f(x_k) + \beta (x_k-x_{k-1}).\]  More specifically, Polyak proved that with the correct choice of the stepsize parameters $\omega_k$ and momentum parameter $\beta$, a \textit{local} accelerated linear convergence rate of  $\cO(\sqrt{L/\mu}\log(1/\epsilon))$  can be achieved in the case of twice continuously differentiable, $\mu$-strongly convex objective functions with $L$-Lipschitz gradient (function class $\mathcal{F}_{\mu, L}^{2,1}$).  See the first line of Table~\ref{ComparisonWIthHeavy}.

Recently, Ghadimi et al.~\cite{ghadimi2015global} performed a \textit{global} convergence analysis for the heavy ball method. In particular, the authors showed that for a certain combination of the stepsize and momentum parameter, the method  converges sublinearly to the optimum when the objective function is convex  and has Lipschitz gradient ($f\in \mathcal{F}_{0, L}^{1,1}$), and  linearly when the function is also strongly convex ($f\in \mathcal{F}_{\mu, L}^{1,1}$). A particular, selection of the parameters $\omega$ and $\beta$ that gives the desired accelerated linear rate was not provided.
 
To the best of our knowledge, despite considerable amount of work on the on heavy ball method, there is still no  global convergence analysis which would guarantee an accelerated linear rate for $f\in  \mathcal{F}_{\mu, L}^{1,1}$. However, in the special case of a strongly convex quadratic, an elegant proof was recently proposed in \cite{lessard2016analysis}. Using the notion of integral quadratic constraints from robust control theory, the authors proved that by choosing $\omega_k = \omega=4 / (\sqrt{L}+\sqrt{\mu})^2$ and $\beta=(\sqrt{L/\mu}-1)^2/(\sqrt{L/\mu}+1)^2$, the heavy ball method enjoys a global \textit{asymptotic} accelerated convergence rate of $\cO(\sqrt{L/\mu} \log(1/\epsilon))$. The aforementioned results are summarized in the first part of Table~\ref{ComparisonWIthHeavy}.

Extensions of the heavy ball method have been recently proposed in the proximal setting \cite{ochs2015ipiasco}, non-convex setting \cite{ochs2014ipiano, zavriev1993heavy} and for distributed optimization \cite{ghadimi2013multi}.

\subsection{Stochastic heavy ball method}

In contrast to the recent advances in our theoretical understanding of the (classical) heavy ball method, there has been less progress in understanding the convergence behavior of  {\em stochastic} variants of the heavy ball method. The key method in this category is stochastic gradient descent with momentum (mSGD; aka: stochastic heavy ball method):
\[x_{k+1} = x_k -\omega_k g (x_k)  + \beta (x_k-x_{k-1}),\]
where $g_k$ is an unbiased estimator of the true gradient $\nabla f(x_k)$. While mSGD is used extensively in practice, especially in deep learning \cite{sutskever2013importance, szegedy2015going, krizhevsky2012imagenet, wilson2017marginal}, its convergence behavior is not very well understood.

In fact, we are aware of only two  papers, both recent, which set out to study the complexity of mSGD: the work of Yang et al.\ \cite{yang2016unified}, and the work of Gadat et al.\ \cite{gadat2016stochastic}. In the former paper, a unified convergence analysis for stochastic gradient methods with momentum  (heavy ball and Nesterov's momentum) was proposed; and an analysis for  both convex and non convex functions was performed. For a general Lipschitz continuous convex objective function with bounded variance, a rate of $O(1/\sqrt{\epsilon})$ was proved. For this, the authors employed a decreasing stepsize strategy: $\omega_k=\omega_0/\sqrt{k+1}$, where $\omega_0$ is a positive constant.  In \cite{gadat2016stochastic}, the authors first describe several almost sure convergence results in the case of general non-convex coercive functions, and then provided a complexity analysis for the case of quadratic strongly convex function. However, the established rate is slow. More precisely, for strongly convex quadratic  and coercive functions, mSGD  with diminishing stepsizes $\omega_k=\omega_0 / k^\beta$ was shown to convergence as $\cO(1/k^\beta)$ when the momentum parameter is $\beta <1$, and with the rate $O(1/\log k)$ when $\beta=1$. The convergence rates established in  both of these papers  are sublinear. In particular, no insight is provided into whether the inclusion of the momentum term provides what is was aimed to provide: acceleration.

The above results are summarized in the second part of Table~\ref{ComparisonWIthHeavy}. From this perspective, our contribution lies in providing an in-depth analysis of mSGD (and, additionally, of SGD with stochastic momentum).  Our contributions are discussed next.
 
\begin{table}
\begin{center}
\scalebox{0.8}{
\begin{tabular}{ |c | c| c| c | c| }
 \hline
 Method & Paper & Rate & Assumptions on $f$ & Convergence\\
 \hline \hline
\multirow{4}{*}{\begin{tabular}{c}Heavy Ball\\ (mGD) \end{tabular}} & Polyak, 1964 \cite{polyak1964some} & accelerated linear & $\mathcal{F}_{\mu, L}^{2,1}$ & local   \\ 
& Ghadimi et al, 2014 \cite{ghadimi2015global} & sublinear & $ \mathcal{F}_{ L}^{1,1}$ & global \\
  & Ghadimi et al, 2014 \cite{ghadimi2015global} & linear  & $ \mathcal{F}_{\mu, L}^{1,1}$ & global \\
 & Lessard et al, 2016 \cite{lessard2016analysis} &  accelerated linear & $ \mathcal{F}_{\mu, L}^{1,1}$ + quadratic 
& global, asymptotic \\
 \hline
\multirow{3}{*}{\begin{tabular}{c}Stochastic \\ Heavy Ball  \\ (mSGD)\end{tabular}}  & Yang et al. 2016 \cite{yang2016unified} & sublinear & $ \mathcal{F}_{0, L}^{1,1}$ + bounded variance & global, non-asymptotic \\
& Gadat et al, 2016 \cite{gadat2016stochastic} & sublinear & $ \mathcal{F}_{\mu, L}^{1,1}$ + other assumptions & global, non-asymptotic \\
& \textbf{THIS PAPER} & {\bf see Table~\ref{OurResults}}  & $ \mathcal{F}_{0, L}^{1,1}$ + quadratic & global, non-asymptotic \\
 \hline
\end{tabular}}
\end{center}
\caption{Known complexity results for gradient descent with momentum (mGD, aka: heavy ball method),  and stochastic gradient descent with momentum (mSGD, aka: stochastic heavy ball method). We give the first linear and accelerated rates for mSGD. For full details on iteration complexity results we obtain, refer to Table~\ref{OurResults}.}
\label{ComparisonWIthHeavy}
\end{table}
 
\subsection{Connection to incremental gradient methods}

Assuming $\cD$ is discrete distribution (i.e., we sample from $M$ matrices, $\mS^1,\dots,\mS^{M}$, where $\mS^i$ is chosen with probability $p_i>0$), we can write the stochastic optimization problem \eqref{eq:stoch_reform_intro} in the {\em finite-sum} form
\begin{equation}\label{eq:finite-sum}\min_{x\in \R^n} f(x) = \sum_{i=1}^{M} p_i f_{\mS^i}(x).\end{equation}
Choosing $x_0=x_1$, mSGD with fixed stepsize $\omega_k=\omega$ applied to \eqref{eq:finite-sum} can be written in the form
\begin{equation}
x_{k+1} =  x_k-\omega \sum_{t=1}^k\beta^{k-t} \nabla f_{\bS_t} (x_t)+\beta^k(x_1-x_0)
= x_k-\omega \sum_{t=1}^k\beta^{k-t} \nabla f_{\bS_t} (x_t),
\label{ExpansionOfUpdate}
\end{equation} 
where $\mS_t = \mS^i$ with probability $p_i$. Problem~\eqref{eq:finite-sum} can be also solved using incremental  average/aggregate gradient methods, such as the IAG method of Blatt et al.\ \cite{blatt2007convergent}. These methods have a similar form to \eqref{ExpansionOfUpdate}, with the main difference being in the way the past gradients are aggregated. While \eqref{ExpansionOfUpdate} uses a geometric weighting of the gradients, the incremental average gradient methods use a uniform/arithmetic weighting. The  stochastic average gradient (SAG) method of Schmidt et al.\ \cite{schmidt2017minimizing} can be also written in a similar form. Note that mSGD  uses a geometric weighting of previous gradients, while the the incremental and stochastic average gradient methods use an arithmetic weighting. Incremental and incremental average gradient methods are widely studied algorithms for minimizing objective functions which can expressed as a sum of finite convex functions. For a review of key works on incremental methods and a detailed presentation of the connections with stochastic gradient descent, we refer the interested reader to the excellent survey of Bertsekas~ \cite{bertsekas2011incremental}; see also the work of Tseng~ \cite{tseng1998incremental}. 

In \cite{gurbuzbalaban2017convergence}, an incremental average gradient method with momentum was proposed for minimizing strongly convex functions. It was proved that the method converges to the optimum with linear rate. The rate is always worse than that of the no-momentum variant. However, it was  shown experimentally that in practice the method is faster, especially in problems with high condition number.  In our setting, the objective function has a very specifc structure \eqref{eq:stoch_reform_intro}. It is not a finite sum problem as the distribution $\cD$ could be continous; and we also do not assume strong convexity. Thus, the convergence analysis of \cite{gurbuzbalaban2017convergence} can not be directly applied to our problem.

\subsection{Summary of contributions}

We now summarize the contributions of this paper.

{\bf New momentum methods. } We study several  classes of stochastic optimization algorithms (SGD, SN, SPP and SDSA) {\em with momentum}, which we call mSGD, mSN, mSPP and mSDSA, respectively (see the first and second columns of Table~\ref{tbl:all_methods}). We do this in a simplified  setting with quadratic objectives where all of these algorithms are  equivalent. These methods can be seen as solving three related optimization problems: the stochastic optimization problem \eqref{eq:stoch_reform_intro}, the best approximation problem \eqref{eq:best_approx_intro} and its dual.  To the best of our knowledge, momentum variants of SN, SPP and SDSA were not analyzed before. 

{\footnotesize
\begin{table}[t!]
\begin{center}
\scalebox{0.8}{
\begin{tabular}{|c|c|c|}
\hline
&& \\
 \begin{tabular}{ccc}no momentum \\ ($\beta=0$) \end{tabular} & \begin{tabular}{cc} momentum \\ ($\beta\geq 0$) \end{tabular}  & \begin{tabular}{cc} stochastic momentum \\ ($\beta\geq 0$) \end{tabular} \\
&& \\
\hline
\hline
&& \\
 \begin{tabular}{c} SGD \cite[$\omega=1$]{gower2015randomized},  \cite[$\omega>0$]{ASDA}  \\ \\
  $ x_{k+1}= x_k - \omega \nabla f_{\mS_k}(x_k)$  
   \end{tabular}  & \begin{tabular}{ccc} {\bf mSGD} [Sec~\ref{sec:primal}]\\ \\
   % \footnote{mSGD is also known as the  stochastic heavy ball method.}  \\
$ + \beta (x_k-x_{k-1})$ \end{tabular} & 
 \begin{tabular}{c}
 {\bf smSGD} [Sec~\ref{sec:sm}] \\ \\
 $ + n \beta e_{i_k}^\top (x_k-x_{k-1}) e_{i_k}$
 \end{tabular} 
  \\
&& \\
\hline
&& \\
  \begin{tabular}{c} 
 SN \cite{ASDA} \\
 \\
 $  x_{k+1}= x_k - \omega (\nabla^2 f_{\mS_k}(x_k))^{\dagger_\mB} \nabla f_{\mS_k}(x_k)$
  \end{tabular}
 & \begin{tabular}{c}{\bf mSN} [Sec~\ref{sec:primal}] \\\\
 $ + \beta (x_k-x_{k-1})$
\end{tabular} 
 & 
 \begin{tabular}{c}
 {\bf smSN} [Sec~\ref{sec:sm}]\\ \\
 $ + n \beta e_{i_k}^\top (x_k-x_{k-1}) e_{i_k}$
 \end{tabular} 
 \\
&& \\
\hline
&& \\
  \begin{tabular}{c}  
 SPP \cite{ASDA} \\ \\
$ x_{k+1}=  \arg\min_x \left\{ f_{\mS_k}(x) + \frac{1-\omega}{2\omega}\|x-x_k\|_{\mB}^2\right\}
$
  \end{tabular}
&   \begin{tabular}{c}{\bf mSPP} [Sec~\ref{sec:primal}] \\ \\
 $ + \beta (x_k-x_{k-1})$
\end{tabular} 
&  \begin{tabular}{c}
 {\bf smSPP} [Sec~\ref{sec:sm}]\\ \\
 $ + n \beta e_{i_k}^\top (x_k-x_{k-1}) e_{i_k}$
 \end{tabular} 
\\
&& \\
\hline
&& \\
  \begin{tabular}{ccc}SDSA \cite[$\omega=1$]{gower2015stochastic} \\ \\ 
  $y_{k+1} = y_k + \mS_k \lambda_k$ \end{tabular}  
  &  \begin{tabular}{c}{\bf mSDSA} [Sec~\ref{sec:dual}] \\ \\
 $ + \beta (y_k-y_{k-1})$
\end{tabular} 
  &   \\
&& \\
\hline
\end{tabular}
}
\end{center}

\caption{All methods analyzed in this paper. The methods highlighted in bold (with momentum and stochastic momentum) are new. SGD = Stochastic Gradient Descent,  SN = Stochastic Newton, SPP = Stochastic Proximal Point, SDSA = Stochastic Dual Subspace Ascent. At iteration $k$, matrix $\mS_k$ is drawn in an i.i.d.\ fashion from distribution $\cD$, and a stochastic step is performed.}
\label{tbl:all_methods}
\end{table}
}

{\bf Linear rate.} We prove several (global and non-asymptotic) linear convergence results for our primal momentum methods mSGD/mSN/mSPP. First, we establish a linear rate for the decay of $\E{\|x_k-x_*\|_\mB^2}$ to zero (i.e., $L2$ convergence), for a range of stepsizes $\omega> 0$ and momentum parameters $\beta\geq 0$. We show that the same rate holds for the decay of the expected function values $\E{f(x_k)-f(x_*)}$ of \eqref{eq:stoch_reform_intro} to zero. Further, the same rate holds for  mSDSA, in particular, this is for the convergence of the dual objective to the optimum. For a summary of these results, and pointers to the relevat theorems, refer to lines 1, 2 and 6 of Table~\ref{OurResults}. Unfortunately, the theoretical rate for all our momentum methods is optimized for $\beta = 0$, and  gets worse as the momentum parameter increases. However, no prior linear rate for any of these methods with momentum are known. We give the first linear convergence rate for SGD with momentum (i.e., for the stochastic heavy ball method). 

\begin{table}[t!]
\begin{center}
\scalebox{0.8}{
\begin{tabular}{ |c|c|c|c|c|c| }
% \multicolumn{4}{|c|}{Complexity Results} \\
 \hline
 Algorithm & $\omega$& \begin{tabular}{c} momentum \\ $\beta$ \end{tabular} & \begin{tabular}{c} Quantity \\ converging to 0 \end{tabular} & \begin{tabular}{c}Rate\\(all: global, non-asymptotic)\end{tabular} & Theorem \\
 \hline
 \hline
 mSGD/mSN/mSPP & $(0,2)$ &  $ \geq 0$ &   $\Exp[\|x_k-x_*\|^2_{\bB}]$ & linear &\ref{L2}\\
 \hline
mSGD/mSN/mSPP & $(0,2)$ &  $\geq 0$ &   $\Exp[f(x_k) - f(x_*)]$  & linear &\ref{L2}\\
 \hline
mSGD/mSN/mSPP & $(0,2)$ & $\geq 0$ & $\E{f(\hat{x}_k)} - f(x_*)$  & sublinear: $O(1/k)$& \ref{cesaro} \\
 \hline 
mSGD/mSN/mSPP  & 1 & $\left(1-\sqrt{0.99 \lambda_{\min}^+}\right)^2 $ & $\|\Exp[x_k-x_*]\|^2_{\bB}$ &  accelerated linear & \ref{theoremheavyball}\\
 \hline
  mSGD/mSN/mSPP  & $\frac{1}{\lambda_{\max}}$ & $ \displaystyle  \left(1-\sqrt{0.99 \tfrac{\lambda_{\min}^+}{\lambda_{\max}}}\right)^2 $ & $\|\Exp[x_k-x_*]\|^2_{\bB}$   &  \begin{tabular}{c}accelerated linear\\ (better than for $\omega=1$) \end{tabular} & \ref{theoremheavyball}\\
  \hline
  mSDSA & $(0,2)$ & $\geq 0$ & $\E{D(y_*) - D(y_0)}$ & linear & \ref{thm:dual-conv}  \\
  \hline
 smSGD/smSN/smSPP & $(0,2)$ & $\geq 0$ &   $\Exp[\|x_k-x_*\|^2_{\bB}]$ & linear &  \ref{thm:DSHB-L2}\\
 \hline
  smSGD/smSN/smSPP  & $(0,2)$ & $\geq 0$ &   $\Exp[f(x_k)-f(x_*)]$  & linear & \ref{thm:DSHB-L2}\\
 \hline
\end{tabular}}
\end{center}
\caption{Summary of the iteration complexity results obtained in this paper. Parameters of the methods: $\omega$ (stepsize) and $\beta$ (momentum term). In all cases, $x_*=\Pi_{\cL}^\bB(x_0)$ is the solution of the best approximation problem.  Theorem~\ref{cesaro} refers to Cesaro averages:  $\hat{x}_k = \frac{1}{k}\sum_{t=0}^{k-1}x_t$. Theorem~\ref{thm:dual-conv} refers to suboptimality in dual function values ($D$ is the dual function).}
\label{OurResults}
\end{table}

{\bf Accelerated linear rate.} We then study the decay of the larger quantity $\|\E{x_k}-x_*\|_\mB^2$  to zero (i.e., L1 convergence).  In this case, we establish  an {\em accelerated} linear rate, which depends on the square root of the condition number (of the Hessian of $f$). This is a quadratic speedup when compared to  the no-momentum methods as these depend on the condition number. See lines 4 and 5 of Table~\ref{OurResults}. To the best of our knowledge, this is the first time an  accelerated rate is obtained for the stochastic heavy ball method (mSGD).  Note that there are no global non-asymptotic accelerated linear rates proved even in the non-stochastic setting (i.e., for the heavy ball method). Moreover, we are not aware of any accelerated linear convergence results for  the stochastic proximal point method.

{\bf Sublinear rate for Cesaro averages.} We show that the Cesaro averages, $\hat{x}_k = \frac{1}{k}\sum_{t=0}^{k-1}x_t$, of all primal momentum methods enjoy a sublinear $O(1/k)$ rate (see line 3 of Table~\ref{OurResults}). This holds under weaker assumptions than those which lead to the linear convergence rate. 

{\bf Primal-dual correspondence.} We show that SGD, SN and SPP with momentum arise as affine images of SDSA with momentum (see Theorem~\ref{thm:dual_corresp}). This extends the result of \cite{gower2015stochastic} where this was shown for the no-momentum methods ($\beta=0$) and in the special case of the unit stepsize ($\omega=1$).

{\bf Stochastic momentum.} We propose a new momentum strategy, which we call {\em stochastic momentum}.   Stochastic momentum is a stochastic (coordinate-wise) approximation of the deterministic momentum, and hence is much less costly, which in some situations leads to computational savings in each iteration. On the other hand, the additional noise introduced this way increases the number of iterations needed for convergence. We analyze the SGD, SN and SPP methods with stochastic momentum, and prove linear convergence rates. We prove that in some settings  the overall complexity of SGD with stochastic momentum is better than the overall complexity of SGD with momentum. For instance, this is the case if we consider the randomized Kaczmarz (RK) method as a special case of SGD, and if $\mA$ is sparse.

{\bf Space for generalizations.} We hope that the present work can serve as a starting point for the development of SN, SPP and SDSA methods with momentum for more general classes (beyond special quadratics) of convex and perhaps also nonconvex optimization problems. In such more general settings, however, the symmetry which implies equivalence of these algorithms will break, and hence a different analysis will be needed for each method.

\subsection{No need for variance reduction}

 SGD  is arguably  one of the most popular algorithms in machine learning. Unfortunately, SGD suffers from slow convergence, which is due to the fact that the variance of the stochastic gradient as an estimator of the gradient does not naturally diminish. For this reason, SGD is typically used with a decreasing stepsize rule, which ensures that the variance converges to zero. However, this has an adverse effect on the convergence rate. For instance, SGD has a sublinear rate even if the function to be minimized is strongly convex.  To overcome this problem, a new class of so-called  {\em variance-reduced} methods was developed over the last 2-5 years, including SAG \cite{schmidt2017minimizing}, SDCA \cite{SDCA, richtarik2014iteration}, SVRG/S2GD  \cite{johnson2013accelerating, S2GD}, minibatch SVRG/S2GD \cite{mS2GD}, and SAGA \cite{defazio2014saga, defazio2016simple}.
 
Since we assume that the linear system \eqref{eq:lin-sys-intro} is feasible, it follows that the stochastic gradient vanishes at the optimal point (i.e., $\nabla f_{\bS}(x_*)=0$ for any $\mS$). This suggests that additional variance reduction techniques are not necessary since the variance of the stochastic gradient drops to zero as we approach the optimal point $x_*$. In particular, in our context, SGD with fixed stepsize enjoys linear rate without any variance reduction strategy \cite{needell2014stochastic, gower2015randomized, ASDA}. Hence, in this paper we can bypass the development of variance reduction techniques, which allows us to focus on the momentum term.

\section{Technical Preliminaries} \label{sec:prelim}

A general framework for studying consistent linear systems via carefully designed {\em stochastic reformulations} was recently proposed by Richt\'{a}rik and Tak\'{a}\v{c} \cite{ASDA}. In particular, given the consistent linear system \eqref{eq:lin-sys-intro}, they provide four reformulations in the form of a stochastic optimization problem,  stochastic linear system,  stochastic fixed point problem and a stochastic intersection problem.  These reformulations are equivalent in the sense that their solutions sets are identical. That is, the set of minimizers of  the stochastic optimization problem is equal to the set of solutions of the stochastic linear system and so on. Under a certain assumption,  for which the term {\em exactness} was coined in \cite{ASDA}, the solution sets  of these reformulations are equal to the solution set of the linear system.

\subsection{Stochastic optimization}

 Stochasticity enters the reformulations via a user defined distribution $\cD$ of matrices (all with $m$ rows). In addition, the reformulations  utilize a positive definite matrix $\mB\in \R^{n\times n}$ as a parameter,  used to define an inner product in $\R^n$ via $\langle x,z \rangle_\mB \eqdef \langle \mB x, z\rangle$ and the induced norm
$\|x\|_\mB\eqdef (x^\top \mB x)^{1/2}$.   In particular, the stochastic optimization reformulation \eqref{eq:stoch_reform_intro}, i.e., 
$\min_{x\in \R^n} f(x) \eqdef \E{f_\mS(x)},$
is defined by
setting  \begin{equation}\label{eq:f_s}f_{\mS}(x) \eqdef \frac{1}{2}\|\mA x - b\|_{\mH}^2 = \frac{1}{2}(\mA x - b)^\top \mH (\mA x - b),\end{equation} where $\mH$ is a random symmetric positive semidefinite matrix defined as $\mH \eqdef  \mS (\mS^\top \mA \mB^{-1} \mA^\top \mS)^\dagger \mS^\top.$ By $\dagger$ we denote the Moore-Penrose pseudoinverse.  

\paragraph{Hessian and its eigenvalues.} Note that the Hessian\footnote{While the Hessian is not self-adjoint with respect to the standard inner product, it is self-adjoint with respect to the inner product $\langle \mB x, y\rangle$ which we use as the canonical inner product in $\R^n$.} of $f = \E{f_\mS}$ is given by
$\nabla^2 f = \mB^{-1}\E{\mZ},$
where 
\begin{equation}\label{eq:Z} \mZ \eqdef \mA^\top \mH \mA.\end{equation}
 Note that $\nabla^2 f$ and \begin{equation}\label{eq:W-def}\mW \eqdef \mB^{-1/2} \E{\mZ}\mB^{-1/2}\end{equation} have the same spectrum. Matrix $\mB$ is symmetric and positive semidefinite (with respect to the standard inner product). Let 
\[\mW = \mU \Lambda \mU^\top = \sum_{i=1}^n \lambda_i u_i u_i^\top\]
be the eigenvalue decomposition of $\mW$, where $\mU = [u_1,\dots,u_n]$ is an orthonormal matrix of eigenvectors, and $\lambda_1\leq \lambda_2 \leq \cdots \leq \lambda_{n}$ are the corresponding eigenvalues. Let $\lambda_{\min}^+$ be the smallest nonzero eigenvalue, and $\lambda_{\max}  = \lambda_n$ be the largest eigenvalue. It was shown in \cite{ASDA} that $0\leq \lambda_i \leq 1$ for all $i\in [n]$.

\paragraph{Exactness.} Note that $f_\mS$ is a convex quadratic, and that $f_\mS(x) = 0$ whenever $x\in \cL\eqdef \{x\;:\; \mA x = b\}$. However, $f_\mS$ can be zero also for points $x$ outside of $\cL$. Clearly, $f(x)$ is nonnegative, and $f(x)=0$ for $x\in \cL$. However, without further assumptions, the set of minimizers of $f$ can be larger than $\cL$. The exactness assumption mentioned above ensures that this does not happen. For necessary and sufficient conditions for exactness, we refer the reader to \cite{ASDA}. Here it suffices to remark that  a sufficient condition for exactness is to require $\E{\mH}$ to be positive definite. This is easy to see by observing that
\[f(x) = \E{f_\mS(x)} =\tfrac{1}{2}\|\mA x - b\|^2_{\E{\mH}}. \]

\subsection{Three algorithms for solving the stochastic optimization problem}

The authors of \cite{ASDA} consider solving the stochastic optimization problem \eqref{eq:stoch_reform_intro}  via stochastic gradient descent (SGD)\footnote{The gradient is computed with respect to the inner product $\langle \mB x, y\rangle $.}
\begin{equation}\label{eq:SGD}x_{k+1} = x_k - \omega \nabla f_{\mS_k}(x_k),\end{equation}
where $\omega>0$ is a fixed stepsize and  $\mS_k$ is sampled afresh in each iteration from $\cD$. Note that the gradient of $f_\mS$ with respect to the $\mB$ inner product is equal to
\begin{equation}\label{eq:grad_f_S}\nabla f_\mS(x) \overset{\eqref{eq:f_s}}{=} \mB^{-1} \mA^\top \mH (\mA x - b)  = \mB^{-1} \mA^\top \mH \mA (x -x_*)  = \mB^{-1} \mZ (x -x_*),\end{equation}
where
$\mZ \eqdef \mA^\top \mH \mA$, and $x_*$ is any vector in $ \cL$.

They observe that, surprisingly,  SGD is in this setting equivalent to several other methods; in particular, to the {\em stochastic Newton method}\footnote{In this method we take the $\mB$-pseudoinverse of the Hessian of $f_{\mS_k}$ instead of the classical inverse, as the inverse does not exist. When $\mB=\mI$, the $\mB$ pseudoinverse specializes to the standard Moore-Penrose pseudoinverse.},
\begin{equation}\label{alg:SNM}x_{k+1} = x_k - \omega (\nabla^2 f_{\mS_k}(x_k))^{\dagger_\mB} \nabla f_{\mS_k}(x_k),\end{equation}
and to the {\em stochastic proximal point method}\footnote{In this case, the equivalence only works for $0<\omega\leq 1$.}
\begin{equation}\label{alg:SPPM}x_{k+1} = \arg\min_{x\in \R^n} \left\{ f_{\mS_k}(x) + \frac{1-\omega}{2\omega}\|x-x_k\|_{\mB}^2\right\}.\end{equation}

\subsection{Stochastic fixed point problem}

The stochastic fixed point problem considered in \cite{ASDA} as one of the four stochastic reformulations has the form
\begin{equation} \label{eq:SFP} x = \E{\Pi^\mB_{\cL_\mS}(x)},\end{equation}
where the expectation is taken with respect to $\mS\sim \cD$, and where $\Pi^\mB_{\cL_{\mS}}(x)$ is the projection of $x$, in the $\mB$ norm, onto  the sketched system $\cL_{\mS}= \{x \in \R^n\;:\; \mS^\top \mA x = \mS^\top b\}$. An explicit formula for the projection onto $\cL$ is given by
\begin{equation}
\Pi_{\cL}^\bB(x)\eqdef \arg\min_{x' \in \cL} \|x'-x\|_{\bB} =x-\bB^{-1}\bA^\top (\bA\bB^{-1}\bA ^ \top )^\dagger (\bA x-b);
\end{equation}
a formula for $\cL_\mS$ is obtained by replacing $\mA$ with $\mS^\top \mA$ everywhere.

The {\em stochastic fixed point method} (with relaxation parameter $\omega>0$) for solving 
\eqref{eq:SFP} is defined by \begin{equation}\label{alg:SPM}x_{k+1} = \omega \Pi^\mB_{\cL_{\mS_k}}(x_k) + (1-\omega) x_k.\end{equation}

\subsection{Best approximation problem, its dual and SDSA}

It was shown in \cite{ASDA} that the above methods converge linearly to $x_*=\Pi^{\mB}_{\cL}(x_0)$; the projection of the initial iterate onto the solution set of the linear system. Hence, besides solving problem \eqref{eq:stoch_reform_intro}, they solve the {\em best approximation problem}
\begin{equation}\label{eq:primal}\min_{x\in \R^n} P(x) \eqdef \tfrac{1}{2}\|x-x_0\|_\mB^2 \quad \text{subject to} \quad \mA x = b .\end{equation}
The Fenchel dual of \eqref{eq:primal} is
the (bounded) unconstrained concave quadratic maximization problem
\begin{equation}\label{eq:Dual}
\max_{y\in \R^m} D(y) \eqdef (b-\bA x_0)^\top y - \tfrac{1}{2}\|\bA^\top y\|^2_{\bB^{-1}}.
\end{equation}
Boundedness follows from consistency. It turns out that by varying $\mA, \mB$ and $b$ (but keeping consistency of the linear system), the dual problem in fact captures {\em all} bounded unconstrained concave quadratic maximization problems.

In the special case of unit stepsize, method \eqref{alg:SPM} was first proposed by Gower and Richt\'{a}rik \cite{gower2015randomized} under the name ``sketch-and-project method'', motivated by the iteration structure which proceeds in two steps: i) replace the set $\cL \eqdef \{x\in \R^n \;:\; \mA x = b\}$ by its {\em sketched} variant $\cL_{\mS_k}$, and then project the last iterate $x_k$ onto $\cL_{\mS_k}$. Analysis in \cite{gower2015randomized} was done under the assumption that $\mA$ be of full column rank. This assumption was lifted in \cite{gower2015stochastic}, and a {\em duality} theory for the method developed. In particular, for $\omega=1$, the iterates $\{x_k\}$ arise as  images of the iterates $\{y_k\}$ produced by  a specific {\em dual method} for solving \eqref{eq:Dual} under the mapping $\phi:\R^m \mapsto \R^n$ given by
\begin{equation}\label{eq:phi}\phi (y) \eqdef x_0 + \mB^{-1}\mA^\top y. \end{equation}
The dual method---{\em stochastic dual subspace ascent (SDSA)}---has the form
\begin{equation}\label{alg:SDA}y_{k+1} = y_k + \mS_k \lambda_k,\end{equation}
where $\mS_k$ is in each iteration sampled from $\cD$, and $\lambda_k$ is chosen greedily, maximizing the dual objective $D$: $\lambda_k \in \arg\max_\lambda D(y_k + \mS_k \lambda)$. Such a $\lambda$ might not be unique, however. SDSA is defined by picking the solution with the smallest (standard Euclidean) norm. This leads to the formula:
\[ \lambda_k =  \left(\mS_k^\top \bA \bB^{-1}\bA^\top \mS_k \right)^\dagger\bS_k^\top \left(b-\bA(x_0 + \bB^{-1}\bA^\top y_k) \right).\]

SDSA proceeds by moving in random subspaces spanned by the random columns of $\mS_k$. In the special case when $\omega=1$ and  $y_0=0$,  Gower and Richt\'{a}rik \cite{gower2015stochastic} established the following relationship between the iterates $\{x_k\}$ produced by the primal methods \eqref{eq:SGD}, \eqref{alg:SNM},  \eqref{alg:SPPM},  \eqref{alg:SPM} (which are equivalent), and the dual method \eqref{alg:SDA}:
\begin{equation}\label{eq:dual-corresp}x_{k} = \phi(y_k) \overset{\eqref{eq:phi}}{=}  x_0 + \mB^{-1} \mA^\top y_k.\end{equation}

\subsection{Other related work} Variants of the sketch-and-project methods have been recently proposed for solving several other problems. Xiang and Zhang \cite{double} show that the sketch-and-project framework is capable of expressing, as special cases, randomized  variants of  16 classical  algorithms for solving linear systems.  Gower and Richt\'{a}rik \cite{gower2016randomized, gower2016linearly} use similar ideas to develop of linearly convergent randomized iterative methods for computing/estimating the inverse and the pseudoinverse of a large matrix, respectively. A limited memory variant of the stochastic block BFGS method  for solving the empirical risk minimization problem arising in machine learning  was proposed by Gower et al.\ \cite{gower2016stochastic}. Tu et al.\ \cite{tu2017breaking}  utilize the sketch-and-project framework to show that breaking block locality can accelerate block Gauss-Seidel methods. In addition, they develop an accelerated variant of the method for a specific distribution $\cD$.  Loizou and Richt\'{a}rik~\cite{LoizouRichtarik} use the sketch-and-project method to solve the average consensus problem; and  Hanzely et al.\ \cite{hanzely2017privacy} design new variants of sketch and project methods for the average consensus problem with  privacy considerations (see Section~\ref{consensus} for more details regarding the average consensus problem).

\section{Primal Methods with Momentum} \label{sec:primal}

 Applied to problem \eqref{eq:stoch_reform_intro}, i.e., 
$\min_{x\in \R^n} f(x) = \E{f_\mS(x)},$
  the gradient descent method with momentum (also known as the  heavy ball method) of Polyak \cite{polyak1964some, polyak1987introduction} takes the form
\begin{equation}
\label{HB}
x_{k+1} = x_k - \omega \nabla f(x_k) + \beta(x_k - x_{k-1}),
\end{equation}
where $\omega>0$ is a stepsize and $\beta\geq 0$ is a momentum parameter. Instead of marrying the momentum term with gradient descent, we can marry it with SGD. This leads to SGD with momentum (mSGD), also known as the {\em stochastic heavy ball method}:
\begin{equation}\label{eq:SHB-intro} x_{k+1} = x_k - \omega \nabla f_{\mS_k}(x_k) + \beta(x_k-x_{k-1}).\end{equation}

Since SGD is equivalent to SN and SPP, this way we obtain momentum variants of the stochastic Newton  (mSN) and  stochastic proximal point (mSPP) methods.  The method is formally described below:

\begin{center}
\boxed{
\begin{minipage}[!h][][b]{0,9\textwidth}
\noindent \textbf{mSGD / mSN / mSPP}

\bigskip

{\bf Parameters:} 
Distribution $\mathcal{D}$ from which method samples matrices; positive definite matrix $\bB \in \R^{n\times n}$; stepsize/relaxation parameter $\omega \in \R$ the heavy ball/momentum parameter $\beta$.

\bigskip
{\bf Initialize:} Choose initial points $x_0,x_1\in \R^n$

\bigskip

For $k\geq 1$ do
\begin{enumerate}
\item Draw a fresh $\bS_k \sim \cD$
\item Set $$x_{k+1}=x_k -\omega \nabla f_{\bS_k}(x_k) + \beta(x_k - x_{k-1}) $$
\end{enumerate}
{\bf Output:} last iterate $x_k$
\end{minipage}
}
\end{center}

To the best of our knowledge, momentum variants of SN and SPP were not considered  in the literature before. Moreover, as far as we know, there are no momentum variants of even deterministic variants of \eqref{alg:SNM}, \eqref{alg:SPPM} and \eqref{alg:SPM}, such as incremental or batch Newton method,  incremental or batch proximal point method and  incremental or batch projection method; not even for a problem formulated differently.

In the rest of this section we state our convergence results for mSGD/mSN/mSPP. 

\subsection{$L2$ convergence and function values: linear rate}
In this section we study L2 convergence of mSGD/mSN/mSPP; that is, we study the convergence of the quantity $\Exp[\|x_k-x_*\|_{\mB}^2]$ to zero. We show that for a range  of stepsize parameters $\omega > 0$ and  momentum terms $\beta \geq 0$ the method enjoys global  linear convergence rate. To the best of our knowledge, these results are the first of their kind for the stochastic heavy ball method.
As a corollary of  L2 convergence, we  obtain convergence of the expected function values.

\begin{thm}
\label{L2}
Choose $x_0= x_1\in \R^n$.  Assume exactness. Let $\{x_k\}_{k=0}^\infty$ be the sequence of random iterates produced by mSGD/mSN/mSPP.  Assume $0< \omega < 2$ and $\beta \geq 0$ and that the expressions
\[a_1 \eqdef 1+3\beta+2\beta^2 - (\omega(2-\omega) +\omega\beta)\lambda_{\min}^+, \qquad \text{and}\qquad
a_2 \eqdef \beta +2\beta^2 + \omega \beta \lambda_{\max}\]
satisfy $a_1+a_2<1$. Let $x_* = \Pi_{\mathcal{L}}^{\bB}(x_0)$. Then 
\begin{equation}\label{eq:nfiug582}\Exp[\|x_{k}-x_*\|^2_{\bB}] \leq q^k (1+\delta)  \|x_{0}-x_*\|^2_{\bB}\end{equation}
and 
$$\Exp[f(x_k)] \leq q^k  \frac{\lambda_{\max}}{2} (1+\delta) \|x_{0}-x_*\|^2_{\bB},$$
where  $q=\frac{a_1+\sqrt{a_1^2+4a_2}}{2}$ and $\delta=q-a_1$. Moreover, $a_1+a_2 \leq q <1$.
\end{thm}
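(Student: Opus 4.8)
The plan is to work in the Euclidean variables $z_k\eqdef\bB^{1/2}(x_k-x_*)$, in which the mSGD/mSN/mSPP recursion becomes $z_{k+1}=(\bI-\omega\bW_{\bS_k})z_k+\beta(z_k-z_{k-1})$, where $\bW_{\bS}\eqdef\bB^{-1/2}\bA^\top\bH\bA\bB^{-1/2}$ is a symmetric projection matrix — this is precisely the observation recalled in Section~\ref{sec:prelim} that the Hessian of $f_{\bS}$ is a projection — and $\Exp[\bW_{\bS}]=\bW$. In these coordinates $\|x_k-x_*\|_\bB^2=\|z_k\|_2^2$, and since $f(x)=\tfrac12(x-x_*)^\top\Exp[\bZ](x-x_*)=\tfrac12 z^\top\bW z$ for $z=\bB^{1/2}(x-x_*)$, we get $f(x_k)\le\tfrac{\lambda_{\max}}{2}\|z_k\|_2^2$. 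So both conclusions follow once we establish geometric decay of $F_k\eqdef\Exp[\|z_k\|_2^2]$.

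Before the main estimate I would record an invariance: $z_k\in\mathrm{Range}(\bW)$ for every $k$. Indeed, $\mathrm{Range}(\bW_{\bS})\subseteq\mathrm{Range}(\bW)$ almost surely (the null space of an average of positive semidefinite matrices is the intersection of their null spaces), exactness is equivalent to $\mathrm{Range}(\bW)=\mathrm{Range}(\bB^{-1/2}\bA^\top)$, and $z_0=\bB^{-1/2}\bA^\top(\bA\bB^{-1}\bA^\top)^\dagger(\bA x_0-b)\in\mathrm{Range}(\bB^{-1/2}\bA^\top)$; combined with $z_1=z_0$ and the update formula, a one-line induction gives the claim. Consequently $z_k^\top\bW z_k\ge\lambda_{\min}^+\|z_k\|_2^2$ for all $k$, which is what lets $\lambda_{\min}^+$ enter the rate.

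The core of the argument is a one-step inequality. Conditioning on $(z_k,z_{k-1})$ and expanding
\[ \|z_{k+1}\|_2^2=\|(\bI-\omega\bW_{\bS_k})z_k\|_2^2+2\beta\langle(\bI-\omega\bW_{\bS_k})z_k,\,z_k-z_{k-1}\rangle+\beta^2\|z_k-z_{k-1}\|_2^2, \]
I would take the conditional expectation using idempotency ($\Exp[(\bI-\omega\bW_{\bS_k})^2\mid z_k]=\bI-\omega(2-\omega)\bW$), and then bound what remains: the $\bW$-free terms by Cauchy--Schwarz and Young ($-2\beta\langle z_k,z_{k-1}\rangle\le\beta\|z_k\|_2^2+\beta\|z_{k-1}\|_2^2$ and $\beta^2\|z_k-z_{k-1}\|_2^2\le 2\beta^2\|z_k\|_2^2+2\beta^2\|z_{k-1}\|_2^2$), and the cross term $2\beta\omega\,z_k^\top\bW z_{k-1}$ via $2z_k^\top\bW z_{k-1}\le z_k^\top\bW z_k+z_{k-1}^\top\bW z_{k-1}$ (valid since $\bW\succeq 0$) followed by $z_{k-1}^\top\bW z_{k-1}\le\lambda_{\max}\|z_{k-1}\|_2^2$. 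What is left is a negative multiple $-(\omega(2-\omega)+\omega\beta)\,z_k^\top\bW z_k$ of $z_k^\top\bW z_k$, whose coefficient is negative precisely because $0<\omega<2$, so by the invariance it is $\le-(\omega(2-\omega)+\omega\beta)\lambda_{\min}^+\|z_k\|_2^2$. Collecting the coefficients of $\|z_k\|_2^2$ and of $\|z_{k-1}\|_2^2$ reproduces exactly $a_1$ and $a_2$, hence $\Exp[\|z_{k+1}\|_2^2\mid z_k,z_{k-1}]\le a_1\|z_k\|_2^2+a_2\|z_{k-1}\|_2^2$ and, by the tower property, $F_{k+1}\le a_1F_k+a_2F_{k-1}$. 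I expect the main obstacle to be precisely this bookkeeping: choosing for each cross term the splitting (and the $\bW$-inequality) that lands on the stated $a_1,a_2$ rather than on some looser constants. It is not deep, but it is where the exact form of the rate is forced.

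Finally I would solve the scalar recursion $F_{k+1}\le a_1F_k+a_2F_{k-1}$ with $F_1=F_0$. With $g(t)\eqdef t^2-a_1t-a_2$, the larger root is $q=\tfrac12(a_1+\sqrt{a_1^2+4a_2})$; from $q^2=a_1q+a_2$ and $\delta\eqdef q-a_1$ one gets the identities $q\delta=a_2$ and $a_1+\delta=q$, so $G_k\eqdef F_{k+1}+\delta F_k$ satisfies $G_k\le(a_1+\delta)F_k+a_2F_{k-1}=q(F_k+\delta F_{k-1})=qG_{k-1}$; iterating and using $F_1=F_0$ (and $\delta F_k\ge 0$) gives the stated bound $F_k\le q^k(1+\delta)F_0$, and multiplying by $\lambda_{\max}/2$ yields the bound on $\Exp[f(x_k)]$. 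The remaining assertions are elementary properties of the convex parabola $g$: $g(0)=-a_2\le 0$ and $g(1)=1-a_1-a_2>0$ give $0\le q<1$; and $g(a_1+a_2)=a_2(a_1+a_2-1)\le 0$ gives $a_1+a_2\le q$ (hence also $\delta=q-a_1\ge a_2\ge 0$).
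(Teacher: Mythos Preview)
Your proof is correct and lands on exactly the same two-term recursion $F_{k+1}\le a_1F_k+a_2F_{k-1}$ and the same solution via $G_k=F_{k+1}+\delta F_k$ as the paper. The route is close but not identical in presentation, so a brief comparison is warranted.

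The paper works directly in the $\bB$-norm with function values: it keeps the term $2\omega\beta\langle\nabla f_{\bS_k}(x_k),x_{k-1}-x_k\rangle_\bB$ and, after taking expectation in $\bS_k$, bounds it via the convexity inequality $\langle\nabla f(x_k),x_{k-1}-x_k\rangle_\bB\le f(x_{k-1})-f(x_k)$; the passage to $\lambda_{\min}^+$ and $\lambda_{\max}$ then goes through the sandwich bounds $\tfrac{\lambda_{\min}^+}{2}\|x_k-x_*\|_\bB^2\le f(x_k)$ and $f(x_{k-1})\le\tfrac{\lambda_{\max}}{2}\|x_{k-1}-x_*\|_\bB^2$, the former requiring exactness together with the separately proved invariance $\Pi_\cL^\bB(x_k)=\Pi_\cL^\bB(x_0)$. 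Your version transforms to $z_k=\bB^{1/2}(x_k-x_*)$, replaces the convexity step by the equivalent $\bW$-seminorm Young inequality $2z_k^\top\bW z_{k-1}\le z_k^\top\bW z_k+z_{k-1}^\top\bW z_{k-1}$ (for quadratics this \emph{is} the convexity inequality), and makes the invariance $z_k\in\mathrm{Range}(\bW)$ explicit to justify $z_k^\top\bW z_k\ge\lambda_{\min}^+\|z_k\|_2^2$. The bookkeeping of the $\bW$-free terms is also packaged slightly differently (you use $-2\beta\langle z_k,z_{k-1}\rangle\le\beta\|z_k\|^2+\beta\|z_{k-1}\|^2$ directly, while the paper first applies a polarization identity and then bounds $\beta\|x_k-x_{k-1}\|_\bB^2$), but the resulting coefficients match. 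In short: same skeleton, with your argument trading the ``optimization'' vocabulary (convexity, function-value sandwich lemmas) for an equivalent and somewhat more self-contained spectral one.
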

\begin{proof} See Appendix~\ref{app:1}.
\end{proof}

In the above theorem we obtain a global linear rate. To the best of our knowledge, this is the first time that linear rate is established for a stochastic variant of the heavy ball method (mSGD) in any setting. All existing results are sublinear. These seem to be  the first momentum variants of SN and SPP methods.

If we choose $\omega \in (0,2)$, then the condition $a_1+a_2<1$ is satisfied for all   \begin{equation}\label{rangesSHB} 0\leq \beta< \tfrac{1}{8} \left( -4+\omega \lambda_{\min}^+-\omega \lambda_{\max} +\sqrt{(4-\omega \lambda_{\min}^++\omega \lambda_{\max})^2+16\omega (2-\omega) \lambda_{\min}^+ }\right).\end{equation}

If $\beta=0$, mSGD reduces to  SGD analyzed in \cite{ASDA}. In this special case, $q = 1-\omega(2-\omega)\lambda_{\min}^+$, which is the rate established in \cite{ASDA}. Hence, our result is more general.

Let $q(\beta)$ be the rate as a function of $\beta$. Note that since $\beta\geq 0$, we have
\begin{eqnarray} q(\beta) &\geq & a_1 + a_2 \notag \\
&=& 1 + 4\beta + 4\beta^2 + \omega\beta(\lambda_{\max}-\lambda_{\min}^+) - \omega(2-\omega)\lambda_{\min}^+ \notag \\
&\geq & 1-\omega(2-\omega)\lambda_{\min}^+ = q(0).\label{eq:qbeta}\end{eqnarray}
Clearly, the lower bound on $q$ is an increasing function of $\beta$. 
Also, for any $\beta$ the rate is always inferior to that of SGD ($\beta=0$). It is an open problem whether one can prove a strictly better rate for mSGD than for SGD.

Our next theorem states that $\Pi_\cL^\mB(x_k) = x_*$ for all iterations $k$ of mSGD. This invariance is important, as it allows the algorithm to converge to $x_*$.

\begin{thm}\label{prop:projections}
Let $x_0=x_1\in \R^n$ be the starting points of the mSGD method and let $\{x_k\}$ be the random iterates generated by mSGD. Then $\Pi_\cL^\bB(x_k)=\Pi_\cL^\bB(x_0)$ for all $k\geq 0$.
\end{thm}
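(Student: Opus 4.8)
The plan is to track a single invariant along the iteration of mSGD --- namely that every increment $x_{k+1}-x_k$ lies in the range of $\bB^{-1}\mA^\top$ --- and to combine it with the elementary geometric fact that translating a point by such a vector does not change its $\bB$-projection onto the affine solution set $\cL=\{x:\mA x=b\}$.

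First I would record the geometric lemma: for every $x\in\R^n$ and every $h\in\mathrm{Range}(\bB^{-1}\mA^\top)$ one has $\Pi_\cL^\bB(x+h)=\Pi_\cL^\bB(x)$. This is immediate from the explicit formula $\Pi_\cL^\bB(x)=x-\bB^{-1}\mA^\top(\mA\bB^{-1}\mA^\top)^\dagger(\mA x-b)$: writing $h=\bB^{-1}\mA^\top w$ and using the pseudoinverse identity $\mA\bB^{-1}\mA^\top(\mA\bB^{-1}\mA^\top)^\dagger\mA\bB^{-1}\mA^\top=\mA\bB^{-1}\mA^\top$, the extra terms coming from $h$ cancel exactly. (Equivalently, $\mathrm{Range}(\bB^{-1}\mA^\top)$ is precisely the $\bB$-orthogonal complement of $\mathrm{Null}(\mA)$, which is the direction space of the affine set $\cL$; hence adding $h$ moves $x$ only within the fibre over $\Pi_\cL^\bB(x)$.)

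Next I would prove, by induction on $k$, that $x_{k+1}-x_k\in\mathrm{Range}(\bB^{-1}\mA^\top)$ for all $k\geq 0$. The base case is $x_1-x_0=0$, which holds by the assumption $x_0=x_1$. For the inductive step, the mSGD update gives $x_{k+1}-x_k=-\omega\nabla f_{\mS_k}(x_k)+\beta(x_k-x_{k-1})$; by the gradient formula \eqref{eq:grad_f_S} the term $\nabla f_{\mS_k}(x_k)=\bB^{-1}\mA^\top\mH(\mA x_k-b)$ lies in $\mathrm{Range}(\bB^{-1}\mA^\top)$, and $x_k-x_{k-1}$ lies there by the induction hypothesis; since this subspace is closed under linear combinations, so is $x_{k+1}-x_k$. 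Applying the lemma then yields $\Pi_\cL^\bB(x_{k+1})=\Pi_\cL^\bB(x_k)$ for every $k$, and telescoping gives $\Pi_\cL^\bB(x_k)=\Pi_\cL^\bB(x_0)$ for all $k\geq 0$, as claimed.

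I do not expect a real obstacle here; the only point that needs slight care is the momentum term, and that is exactly what is handled by carrying the ``increment lies in $\mathrm{Range}(\bB^{-1}\mA^\top)$'' property through the induction rather than trying to reason about one step in isolation. (The initialization $x_0=x_1$ is used precisely to seed this induction.)
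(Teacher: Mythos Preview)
Your proposal is correct and follows essentially the same approach as the paper: both argue by induction that the iterates stay in $x_0+\mathrm{Range}(\bB^{-1}\mA^\top)$ (you phrase it equivalently in terms of increments), and both conclude via the fact that $\mathrm{Range}(\bB^{-1}\mA^\top)$ is the $\bB$-orthogonal complement of $\mathrm{Null}(\mA)$, the direction space of $\cL$. Your write-up is in fact a bit more explicit than the paper's, which merely asserts the orthogonal-complement fact and the induction without working them out.
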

\begin{proof} Note that in view of \eqref{eq:f_s}, $ \nabla f_{\mS}(x) = \mB^{-1}\mA^\top \mH (\mA x - b) \in {\rm Range}(\mB^{-1}\mA^\top)$. Since 
\[x_{k+1} = x_k -\omega \nabla f_{\bS_k}(x_k) + \beta(x_k - x_{k-1}),\]
and since $x_0=x_1$, it can shown by induction that $x_k \in x_0 + {\rm Range}(\mB^{-1}\mA^\top)$ for all $k$. However, ${\rm Range}(\mB^{-1}\mA^\top)$ is the orthogonal complement to ${\rm Null}(\mA)$ in the $\mB$-inner product. Since $\cL$ is parallel to ${\rm Null}(\mA)$, vectors $x_k$ must have the same $\mB$-projection onto $\cL$ for all $k$: $\Pi^\mB_\cL(x_0) = x_*$.
\end{proof}

\subsection{Cesaro average: sublinear rate without exactness assumption}

In this section we present the convergence analysis of the function values computed on the Cesaro average. Again our results are global in nature. To the best of our knowledge are the first results that show $O(1/k)$ convergence of the stochastic heavy ball method. Existing results apply in more general settings at the expense of slower rates. In particular, \cite{yang2016unified} and \cite{gadat2016stochastic} get $O(1/\sqrt{k})$ and $O(1/k^{\beta})$ convergence, respectively. When $\beta=1$, \cite{gadat2016stochastic} gets $O(1/\log(k))$ rate.

\begin{thm}
\label{cesaro}
Choose $x_0=x_1$ and let $\{x_k\}_{k=0}^\infty$ be the random iterates produced by mSGD/mSN/mSPP, where the momentum parameter $0\leq \beta <1$ and relaxation parameter (stepsize) $\omega > 0$ satisfy $\omega + 2\beta <2$. Let $x_*$ be any vector satisfying $f(x_*)=0$. If we let $\hat{x}_k=\frac{1}{k}\sum_{t=1}^{k}x_t$, then
$$\Exp[f(\hat{x}_k)] \leq \frac{(1-\beta)^2\|x_0-x_*\|_{\mB}^2 + 2\omega \beta f(x_0)}{2\omega(2-2\beta-\omega) k}.$$
\end{thm}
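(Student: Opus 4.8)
The plan is to track a suitable Lyapunov-type quantity along the iterates and then average. First I would introduce the shift $r_k \eqdef x_k - x_*$ and write the mSGD recursion in the form $r_{k+1} = r_k - \omega \nabla f_{\mathbf{S}_k}(x_k) + \beta(r_k - r_{k-1})$, using that $\nabla f_{\mathbf{S}_k}(x_*) = 0$. The key algebraic identity I expect to need is a ``telescoping with momentum'' trick: rewrite the update as $(r_{k+1} - r_k) - \beta(r_k - r_{k-1}) = -\omega \nabla f_{\mathbf{S}_k}(x_k)$, and define $z_k \eqdef r_k - \beta r_{k-1}$ (or, equivalently, $z_{k+1} = z_k - \omega\nabla f_{\mathbf{S}_k}(x_k)$ after dividing by $1-\beta$ in the right normalization). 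This converts the two-step recursion into a one-step recursion in $z_k$ at the cost of a harmless scaling factor $1/(1-\beta)$, which is where the hypothesis $\beta < 1$ enters.

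Next I would expand $\|z_{k+1} - (1-\beta)\,\text{(something)}\|_{\mathbf{B}}^2$ — more concretely, I expect to look at $\|r_{k+1} - \beta r_k\|_{\mathbf{B}}^2$ (an exact square coming from the one-step form), expand it, and take conditional expectation with respect to $\mathbf{S}_k$. Using the identity $f_{\mathbf{S}}(x) = \tfrac12\|\nabla f_{\mathbf{S}}(x)\|_{\mathbf{B}}^2$ (noted in the excerpt, a consequence of the Hessian being a projection), the cross term $\langle r_k - \beta r_{k-1}, \nabla f_{\mathbf{S}_k}(x_k)\rangle_{\mathbf{B}}$ should be controlled — the dominant piece $\langle r_k, \nabla f_{\mathbf{S}_k}(x_k)\rangle_{\mathbf{B}} = 2 f_{\mathbf{S}_k}(x_k)$ by the quadratic structure, and the $-\beta\langle r_{k-1},\nabla f_{\mathbf{S}_k}(x_k)\rangle_{\mathbf{B}}$ piece absorbed via Young's inequality against the $\omega^2\|\nabla f_{\mathbf{S}_k}(x_k)\|_{\mathbf{B}}^2 = 2\omega^2 f_{\mathbf{S}_k}(x_k)$ term. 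After taking full expectations, I expect an inequality of the shape
\[
\Exp\|r_{k+1} - \beta r_k\|_{\mathbf{B}}^2 \;\le\; \Exp\|r_k - \beta r_{k-1}\|_{\mathbf{B}}^2 \;-\; 2\omega(2-2\beta-\omega)\,\Exp f(x_k) \;+\; (\text{telescoping correction terms}),
\]
where the coefficient $2-2\beta-\omega > 0$ is exactly the stated condition $\omega + 2\beta < 2$, and the correction terms telescope or are nonnegative-definite enough to drop.

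Then I would sum this inequality from $t=1$ to $t=k$, telescope the left-hand side down to the initial data (using $x_0 = x_1$, so that $r_1 - \beta r_0 = (1-\beta) r_0$ and hence the starting value of the Lyapunov function is $(1-\beta)^2\|x_0-x_*\|_{\mathbf{B}}^2$, plus the $2\omega\beta f(x_0)$ term which arises from the $k=1$ boundary of the momentum correction), drop the nonnegative final Lyapunov value, divide by $2\omega(2-2\beta-\omega)k$, and finally invoke convexity of $f$ (Jensen: $f(\hat x_k) \le \tfrac1k\sum_{t=1}^k f(x_t)$) to pass from the average of function values to the function value at the average $\hat x_k = \tfrac1k\sum_{t=1}^k x_t$. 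Note that exactness is \emph{not} used: we only need $x_*$ with $f(x_*)=0$, which is why this theorem holds under weaker assumptions than Theorem~\ref{L2}.

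\textbf{Main obstacle.} The delicate point is getting the bookkeeping of the momentum cross-terms to telescope cleanly while keeping the coefficient on $\Exp f(x_k)$ equal to the sharp $2\omega(2-2\beta-\omega)$ rather than something lossier. Concretely, the term $\langle r_{k-1}, \nabla f_{\mathbf{S}_k}(x_k)\rangle_{\mathbf{B}}$ is not directly a function value and must either be rewritten as a difference of consecutive Lyapunov-type terms (so it telescopes) or bounded by $f_{\mathbf{S}_k}(x_k)$ with a carefully chosen Young constant so that, after combining with the $\beta$-weighted pieces and the $\omega^2$ gradient-norm piece, the residual coefficient is precisely $2-2\beta-\omega$. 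Choosing the right Lyapunov function (I suspect $\|r_k-\beta r_{k-1}\|_{\mathbf{B}}^2$ plus possibly a multiple of $\beta$ times a function-value term at the previous iterate) is what makes everything line up, and identifying it is the crux; once it is in hand, the rest is routine summation and Jensen.
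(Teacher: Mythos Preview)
Your approach is essentially the paper's. Your Lyapunov candidate $\|r_k-\beta r_{k-1}\|_{\mathbf{B}}^2$ is exactly the paper's $d_k \eqdef \|x_k+\tfrac{\beta}{1-\beta}(x_k-x_{k-1})-x_*\|_{\mathbf{B}}^2$ scaled by $(1-\beta)^2$, and your guess that the correct potential is this quantity \emph{plus} a $\beta$-weighted function value at the previous iterate is exactly what the paper uses: with $\Theta_k \eqdef \Exp\|r_k-\beta r_{k-1}\|_{\mathbf{B}}^2 + 2\omega\beta\,\Exp f(x_{k-1})$ one gets $\Theta_{k+1} + 2\omega(2-2\beta-\omega)\Exp f(x_k) \le \Theta_k$, and summing plus Jensen yields the claim with $\Theta_1 = (1-\beta)^2\|x_0-x_*\|_{\mathbf{B}}^2 + 2\omega\beta f(x_0)$.

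The one point to sharpen: the cross term $-\beta\langle r_{k-1},\nabla f_{\mathbf{S}_k}(x_k)\rangle_{\mathbf{B}}$ should \emph{not} be handled by Young's inequality (that would lose the constant). Instead, first take expectation over $\mathbf{S}_k$ and then use convexity of $f$ in the form $\langle \nabla f(x_k), x_{k-1}-x_k\rangle_{\mathbf{B}} \le f(x_{k-1})-f(x_k)$; combined with the identity $\langle x_k-x_*,\nabla f(x_k)\rangle_{\mathbf{B}} = 2f(x_k)$ this produces precisely the $+2\omega\beta f(x_{k-1})$ telescoping correction you anticipated and the sharp coefficient $2-2\beta-\omega$.
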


\begin{proof} See Appendix~\ref{app:acc212}.
\end{proof}

In the special case of $\beta=0$, the above theorem gives the rate
$$\Exp[f(\hat{x}_k)] \leq \frac{\|x_0-x_*\|^2_{\bB}}{2\omega (2 -\omega) k} .$$
This is the convergence rate for Cesaro averges of the ``basic method'' (i.e., SGD) established in \cite{ASDA}.

Our proof strategy  is similar to  \cite{ghadimi2015global} in which the first global convergence analysis of the (deterministic) heavy ball method was presented. There it was shown that when the objective function has a Lipschitz continuous gradient, the Cesaro averages of the iterates converge to the optimum at a rate of $O(1/k)$. To the best of our knowledge, there are no results in the literature that prove the same rate of convergence in the stochastic case for any class of objective functions.  

In \cite{yang2016unified} the authors analyzed  mSGD for general Lipshitz continuous convex objective functions (with bounded variance) and proved the {\em sublinear} rate $O(1/\sqrt{k})$. In \cite{gadat2016stochastic}, a complexity analysis is provided for the case of quadratic strongly convex smooth coercive functions. A  sublinear convergence  rate of $O(1/k^\beta)$, where $\beta \in (0,1)$, was proved. In contrast to our results, where we assume fixed stepsize $\omega$, both papers analyze mSGD with diminishing stepsizes.

\subsection{$L1$ convergence: accelerated linear rate}

In this section we show that by a proper combination of the relaxation (stepsize) parameter  $\omega$ and the momentum parameter $\beta$, mSGD/mSN/mSPP enjoy an {\em accelerated} linear convergence rate in mean.

\begin{thm}
\label{theoremheavyball}
Assume exactness. Let $\{x_k\}_{k=0}^{\infty}$ be the sequence of random iterates produced by mSGD / mSN / mSPP, started with $x_0, x_1 \in \R^n$ satisfying the relation $x_0-x_1 \in {\rm Range}(\bB^{-1} \bA^ \top)$, with relaxation parameter (stepsize)  $0<\omega \leq1/\lambda_{\max}$ and momentum parameter  $(1-\sqrt{\omega \lambda_{\min}^+})^2 < \beta <1$. Let $x_* = \Pi^\mB_{\cL}(x_0)$. Then there exists constant $C >0$ such that for all $k\geq0$ we have 
$$\|\Exp[x_{k} -x_*]\|_{\bB}^2  \leq \beta^k C.$$

\begin{itemize}
\item[(i)] If we choose $ \omega= 1$ and $\beta= \left(1- \sqrt{0.99 \lambda_{\min}^+}\right) ^2$ then
 $\|\Exp[x_{k} -x_*]\|_{\bB}^2  \leq \beta^k C$
and the iteration complexity becomes
$ \tilde{O}\left(\sqrt{1/ \lambda_{\min}^+}\right)$.
\item[(ii)] If we choose $ \omega= 1/\lambda_{\max}$ and $\beta= \left(1- \sqrt{ \frac{0.99\lambda_{\min}^+}{\lambda_{\max}}}\right)^2$ then
$\|\Exp[x_{k} -x_*]\|_{\bB}^2  \leq \beta^k C$
and the iteration complexity becomes
$\tilde{O}\left(\sqrt{\lambda_{\max}/ \lambda_{\min}^+}\right)$.

\end{itemize}
\end{thm}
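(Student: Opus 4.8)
The plan is to pass to expectations, which turns the stochastic recursion into a \emph{deterministic} linear recursion of order two, and then to diagonalize and analyze it mode by mode in the spectral basis of $\mW$.

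\emph{Step 1 (expectation recursion).} Set $z_k \eqdef \Exp[x_k] - x_*$. Since the matrices $\mS_k$ are i.i.d., $\mS_k$ is independent of $x_k$ and $x_{k-1}$, and by \eqref{eq:grad_f_S} we have $\nabla f_{\mS_k}(x_k) = \bB^{-1}\mZ_k(x_k - x_*)$ with $\mZ_k = \bA^\top \mH_k \bA$. Taking expectations in the mSGD/mSN/mSPP update gives the deterministic recursion
\[ z_{k+1} = \left((1+\beta)\bI - \omega \bB^{-1}\Exp[\mZ]\right) z_k - \beta z_{k-1}. \]
Multiplying by $\bB^{1/2}$ and writing $\hat z_k \eqdef \bB^{1/2} z_k$ turns this into $\hat z_{k+1} = \left((1+\beta)\bI - \omega \mW\right)\hat z_k - \beta \hat z_{k-1}$, where $\mW = \bB^{-1/2}\Exp[\mZ]\bB^{-1/2}$ as in \eqref{eq:W-def}; note $\|z_k\|_\bB^2 = \|\hat z_k\|^2$.

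\emph{Step 2 (diagonalization and elimination of the null modes).} Using $\mW = \sum_i \lambda_i u_i u_i^\top$, the coordinates $\tilde z_k^{(i)} \eqdef u_i^\top \hat z_k$ satisfy the decoupled scalar recursions $\tilde z_{k+1}^{(i)} = (1+\beta - \omega \lambda_i)\tilde z_k^{(i)} - \beta \tilde z_{k-1}^{(i)}$. When $\lambda_i = 0$ the characteristic roots are $1$ and $\beta$, so that mode does not decay to $0$; the hypotheses must rule it out. This is exactly the role of $x_* = \Pi^\bB_\cL(x_0)$ together with $x_0 - x_1 \in \mathrm{Range}(\bB^{-1}\bA^\top)$: from the projection formula $x_0 - x_* \in \mathrm{Range}(\bB^{-1}\bA^\top)$, hence also $z_1 = (x_1-x_0)+(x_0-x_*) \in \mathrm{Range}(\bB^{-1}\bA^\top)$, so $\hat z_0, \hat z_1 \in \mathrm{Range}(\bB^{-1/2}\bA^\top)$. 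Under exactness $\mathrm{Null}(\mW) = \bB^{1/2}\mathrm{Null}(\bA)$, whence $\mathrm{Range}(\mW) = \mathrm{Range}(\mW)^{\perp\perp} = (\bB^{1/2}\mathrm{Null}(\bA))^\perp = \mathrm{Range}(\bB^{-1/2}\bA^\top) = \mathrm{span}\{u_i : \lambda_i > 0\}$. Since the operator $(1+\beta)\bI - \omega\mW$ preserves $\mathrm{Range}(\mW)$, induction shows every $\hat z_k$ lies in $\mathrm{Range}(\mW)$, so only indices with $\lambda_i \ge \lambda_{\min}^+ > 0$ contribute.

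\emph{Step 3 (complex roots give the accelerated rate).} For $\lambda_i > 0$ the characteristic polynomial $t^2 - (1+\beta - \omega\lambda_i)t + \beta$ has discriminant $\Delta_i = (1+\beta-\omega\lambda_i)^2 - 4\beta$. From $0 < \omega \le 1/\lambda_{\max}$ we get $\omega\lambda_i \le \lambda_i/\lambda_{\max} \le 1 < (1+\sqrt\beta)^2$, and from $(1-\sqrt{\omega\lambda_{\min}^+})^2 < \beta < 1$ we get $(1-\sqrt\beta)^2 < \omega\lambda_{\min}^+ \le \omega\lambda_i$; together these give $|1+\beta-\omega\lambda_i| < 2\sqrt\beta$, i.e.\ $\Delta_i < 0$ \emph{strictly}. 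Hence the two roots are distinct complex conjugates of modulus $\sqrt\beta$, so $\tilde z_k^{(i)} = \beta^{k/2}\left(A_i \cos(k\theta_i) + B_i \sin(k\theta_i)\right)$ for constants depending only on $\hat z_0, \hat z_1$, whence $|\tilde z_k^{(i)}|^2 \le \beta^k (A_i^2 + B_i^2)$. Summing over $i$ yields $\|\Exp[x_k] - x_*\|_\bB^2 = \sum_i |\tilde z_k^{(i)}|^2 \le \beta^k C$ with $C \eqdef \sum_i (A_i^2+B_i^2)$.

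\emph{Step 4 (the two parameter choices) and the main obstacle.} For (i), $\omega = 1 \le 1/\lambda_{\max}$ since $\lambda_{\max}\le 1$, and because $0.99\lambda_{\min}^+ < \lambda_{\min}^+ \le 1$ one checks $0 < \beta = (1-\sqrt{0.99\lambda_{\min}^+})^2 < 1$ and $\beta > (1-\sqrt{\lambda_{\min}^+})^2$, so the hypotheses hold; for (ii) the same check works with $\omega\lambda_{\min}^+ = \lambda_{\min}^+/\lambda_{\max}$. Then $\beta^k C \le \epsilon$ once $k \ge \log(C/\epsilon)/\log(1/\beta)$, and $\log(1/\beta) = -2\log(1-\sqrt{0.99\lambda_{\min}^+}) \ge 2\sqrt{0.99\lambda_{\min}^+}$, giving iteration complexity $\tilde O(1/\sqrt{\lambda_{\min}^+})$ in case (i) and $\tilde O(\sqrt{\lambda_{\max}/\lambda_{\min}^+})$ in case (ii). I expect the delicate points to be: (a) showing the $\lambda_i=0$ modes are genuinely absent, which is precisely why the range condition on $x_0-x_1$ and exactness are needed; and (b) getting a clean geometric bound with no polynomial-in-$k$ prefactor, which forces the discriminant to be made \emph{strictly} negative and thereby pins down the admissible window $(1-\sqrt{\omega\lambda_{\min}^+})^2 < \beta < 1$.
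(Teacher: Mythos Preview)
Your proposal is correct and follows essentially the same route as the paper: pass to expectations to obtain a deterministic second-order linear recursion, change variables via $\bB^{1/2}$ and diagonalize in the eigenbasis of $\mW$, show the zero-eigenvalue modes are absent from the initial data, and for the positive modes show the discriminant is strictly negative so the roots have modulus $\sqrt{\beta}$. The only notable difference is in the handling of the $\lambda_i=0$ modes: the paper invokes Lemma~\ref{Forweakconvergence} (and Theorem~\ref{prop:projections}, which as stated assumes $x_0=x_1$) to conclude $s_0^i=s_1^i=0$, whereas you argue directly that $\hat z_0,\hat z_1\in\mathrm{Range}(\bB^{-1/2}\bA^\top)=\mathrm{Range}(\mW)$ under exactness; your formulation handles the general hypothesis $x_0-x_1\in\mathrm{Range}(\bB^{-1}\bA^\top)$ more transparently, but the underlying content is the same.
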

\begin{proof} See Appendix~\ref{app:acc}.
\end{proof}

Note that the convergence factor is precisely equal to the value of the momentum parameter $\beta$. Let $x$ be any random vector in $\R^n$ with finite mean $\Exp[x]$, and $x_*\in \R^n$ is any reference vector (for instance, any solution of $\mA x = b$). Then we have the identity (see, for instance \cite{gower2015randomized})
\begin{equation}
\label{weakstrong}
\E{\|x-x_*\|_{\bB}^2} = \|\E{x-x_*}\|_{\bB}^2 + \E{\|x-\Exp[x]\|^2_{\bB}}.
\end{equation}
This means that the quantity $\E{\|x-x_*\|_{\bB}^2}$ appearing in our L2 convergence result (Theorem~\ref{L2}) is larger than 
$\|\E{x-x_*}\|_{\bB}^2$ appearing in the L1 convergence result (Theorem~\ref{theoremheavyball}), and hence harder to push to zero. As a corollary, L2 convergence implies L1 convergence. However, note that in Theorem~\ref{theoremheavyball} we have established an {\em accelerated} rate.  A similar theorem, also obtaining an accelerated rate in the L1 sense, was established in \cite{ASDA} for an accelerated variant of SGD in the sense of Nesterov.

\section{Dual Methods with Momentum} \label{sec:dual}

In the previous sections we focused on methods for solving the stochastic optimization problem \eqref{eq:stoch_reform_intro} and the best approximation problem~\eqref{eq:best_approx_intro}. In this section we focus on the dual of the best approximation problem, and propose a momentum variant of SDSA, which we call mSDSA. 

\begin{center}
\boxed{
\begin{minipage}[!h][][b]{0,9\textwidth}
\noindent \textbf{Stochastic Dual Subspace Ascent with Momentum (mSDSA)}

\bigskip

{\bf Parameters:} 
Distribution $\mathcal{D}$ from which method samples matrices; positive definite matrix $\bB \in \R^{n\times n}$; stepsize/relaxation parameter $\omega \in \R$ the heavy ball/momentum parameter $\beta$. SDSA is obtained as a special case of mSDSA for $\beta=0$. 

\bigskip
{\bf Initialize:} Choose initial points $y_0 =  y_1 = 0 \in \R^m$

\bigskip

For $k\geq 1$ do
\begin{enumerate}
\item Draw a fresh $\bS_k \sim \cD$
\item Set $\lambda_k = \left(\mS_k^\top \bA \bB^{-1}\bA^\top \mS_k \right)^\dagger\bS_k^\top \left(b-\bA(x_0 + \bB^{-1}\bA^\top y_k) \right)$
\item Set $y_{k+1}=y_k+\omega \mS_k \lambda_k +\beta(y_k-y_{k-1})$
\end{enumerate}
{\bf Output:} last iterate $y_k$
\end{minipage}
}
\end{center}

\subsection{Correspondence between primal and dual methods}

In our first result we show that the random iterates of the mSGD/mSN/mSPP methods arise as an affine image of mSDSA under the mapping $\phi$ defined in \eqref{eq:phi}.

\begin{thm}[Correspondence Between Primal and Dual Methods]  \label{thm:dual_corresp} Let $x_0=x_1$ and let $\{x_k\}$ be the iterates of mSGD/mSN/mSPP. Let $y_0=y_1=0$, and let $\{y_k\}$ be the iterates of mSDSA.   Assume that the  methods use the same stepsize $\omega > 0$, momentum parameter $\beta\geq 0$, and the same sequence of random matrices $\mS_k$.  Then  
\[x_k = \phi(y_k) = x_0 + \mB^{-1} \mA^\top y_k\]
for all $k$. That is, the primal iterates arise as affine images of the dual iterates.
\end{thm}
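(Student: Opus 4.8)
The plan is to prove the identity $x_k = \phi(y_k) = x_0 + \mB^{-1}\mA^\top y_k$ by induction on $k$, exploiting the fact that the update rules of mSGD and mSDSA are structurally parallel once one passes through the map $\phi$. The base cases $k=0$ and $k=1$ hold because $x_0 = x_1$ and $y_0 = y_1 = 0$, so $\phi(y_0) = x_0 + \mB^{-1}\mA^\top \cdot 0 = x_0 = x_1 = \phi(y_1)$. For the inductive step, I assume $x_j = \phi(y_j)$ for all $j \le k$ and aim to show $x_{k+1} = \phi(y_{k+1})$.

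The key computation is to rewrite the primal gradient step in terms of dual quantities. Using the formula \eqref{eq:grad_f_S}, or more directly the expression $\nabla f_{\mS_k}(x) = \mB^{-1}\mA^\top \mH_k(\mA x - b)$ where $\mH_k = \mS_k(\mS_k^\top \mA\mB^{-1}\mA^\top \mS_k)^\dagger \mS_k^\top$, I substitute $x_k = x_0 + \mB^{-1}\mA^\top y_k$ so that $\mA x_k - b = \mA x_0 - b + \mA\mB^{-1}\mA^\top y_k = -(b - \mA(x_0 + \mB^{-1}\mA^\top y_k))$. Then $\omega\nabla f_{\mS_k}(x_k) = -\omega\mB^{-1}\mA^\top \mS_k (\mS_k^\top\mA\mB^{-1}\mA^\top\mS_k)^\dagger \mS_k^\top (b - \mA(x_0 + \mB^{-1}\mA^\top y_k)) = -\omega\mB^{-1}\mA^\top \mS_k\lambda_k$, recognizing exactly the $\lambda_k$ from step 2 of mSDSA. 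So the primal step becomes
\[
x_{k+1} = x_k + \omega\mB^{-1}\mA^\top \mS_k\lambda_k + \beta(x_k - x_{k-1}).
\]
Now I apply the inductive hypothesis to both $x_k$ and $x_{k-1}$: $x_k - x_{k-1} = \mB^{-1}\mA^\top(y_k - y_{k-1})$, and $x_k = x_0 + \mB^{-1}\mA^\top y_k$. Substituting, $x_{k+1} = x_0 + \mB^{-1}\mA^\top(y_k + \omega\mS_k\lambda_k + \beta(y_k - y_{k-1})) = x_0 + \mB^{-1}\mA^\top y_{k+1} = \phi(y_{k+1})$, using step 3 of mSDSA. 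This closes the induction.

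I do not anticipate a serious obstacle here; the proof is essentially an algebraic verification that the two recursions commute with $\phi$. The one point requiring minor care is the matching of the pseudoinverse terms: one must check that $\mA\mB^{-1}\mA^\top\mS_k$ appearing inside $\nabla f_{\mS_k}(x_k)$ after the substitution lines up precisely with the matrix $\mS_k^\top\mA\mB^{-1}\mA^\top\mS_k$ whose pseudoinverse defines $\lambda_k$ — this is immediate from the definition of $\mH_k$ and the factorization $\mZ = \mA^\top\mH\mA$. A secondary subtlety, if one wants full rigor, is that the identity $\mB^{-1}\mA^\top\mS_k(\mS_k^\top\mA\mB^{-1}\mA^\top\mS_k)^\dagger\mS_k^\top = \mB^{-1}\mA^\top\mS_k(\mS_k^\top\mA\mB^{-1}\mA^\top\mS_k)^\dagger\mS_k^\top$ holds trivially, so no pseudoinverse manipulation beyond plugging in is needed; the whole argument is a direct substitution plus the two-step induction. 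It is worth remarking that, unlike the $\omega=1$, $\beta=0$ case in \cite{gower2015stochastic}, no projection-theoretic interpretation is required — the correspondence is purely a consequence of the linear-algebraic form of the updates, which is why it extends verbatim to arbitrary $\omega$ and $\beta$.
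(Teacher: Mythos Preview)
Your proof is correct and follows essentially the same approach as the paper: both verify by induction that the two recursions are intertwined by $\phi$, with the key computation being $\nabla f_{\mS_k}(\phi(y_k)) = -\mB^{-1}\mA^\top \mS_k \lambda_k$. The only cosmetic difference is that the paper starts from the dual update and shows $\phi(y_{k+1})$ satisfies the primal recursion, whereas you start from the primal update and show $x_{k+1} = \phi(y_{k+1})$; these are the same argument read in opposite directions.
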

\begin{proof}
First note that
\begin{eqnarray*}\nabla f_{\mS_k}(\phi(y_k)) &\overset{\eqref{eq:grad_f_S}}{=}& \mB^{-1}\mA^\top  \mS_k ( \mS_k^\top \mA \mB^{-1} \mA^\top \mS_k)^\dagger \mS_k^\top (\mA \phi(y_k) - b) \; =\;  - \mB^{-1}\mA^\top  \mS_k \lambda_k.
\end{eqnarray*}
We now use this to show that
\begin{eqnarray*}
\phi(y_{k+1}) &\overset{\eqref{eq:phi}}{=}& x_0  + \bB^{-1}\bA^\top y_{k+1}\\
&=& x_0+\bB^{-1}\bA^\top\left[y_k+\omega \bS_k \lambda_k +\beta(y_k-y_{k-1})\right]\\
&=& \underbrace{x_0+\bB^{-1}\bA^\top y_k}_{\phi(y_k)}+\omega \underbrace{\bB^{-1}\bA^\top \bS_k \lambda_k}_{-\nabla f_{\bS_k}(\phi(y_k))} +\beta \bB^{-1}\bA^\top (y_k-y_{k-1})\\
&=& \phi(y_k)- \omega \nabla f_{\bS_k}(\phi(y_k)) +\beta  (\bB^{-1}\bA^\top y_k-\bB^{-1}\bA^\top y_{k-1})\\
&\overset{\eqref{eq:phi}}{=}& \phi(y_k) -\omega \nabla f_{\bS_k}(\phi(y_k)) +\beta  (\phi(y_k) - \phi(y_{k-1})).
\end{eqnarray*}
So, the sequence of vectors $\{\phi(y_k)\}$ mSDSA satisfies the same recursion of degree as the sequence $\{x_k\}$ defined by mSGD.  It remains to check that the first two elements of both recursions coincide. Indeed, since $y_0=y_1=0$ and $x_0=x_1$, we have $x_0 = \phi(0) =  \phi(y_0)$, and $x_1 = x_0= \phi(0) =  \phi(y_1)$.
\end{proof}

\subsection{Convergence}

We are now ready to state a linear convergence convergence result describing the behavior of mSDSA in terms of the dual function values $D(y_k)$.

\begin{thm}[Convergence of dual objective] 
\label{thm:dual-conv}
Choose $y_0= y_1\in \R^n$.  Assume exactness. Let $\{y_k\}_{k=0}^\infty$ be the sequence of random iterates produced by mSDSA.  Assume $0\leq \omega \leq 2$ and $\beta \geq 0$ and that the expressions
\[a_1 \eqdef 1+3\beta+2\beta^2 - (\omega(2-\omega) +\omega\beta)\lambda_{\min}^+, \qquad \text{and}\qquad
a_2 \eqdef \beta +2\beta^2 + \omega \beta \lambda_{\max}\]
satisfy $a_1+a_2<1$. Let $x_* = \Pi_{\mathcal{L}}^{\bB}(x_0)$ and let $y_*$ be any dual optimal solution. Then 
\begin{equation}
\label{eq:nfiug582}
\Exp[D(y_*)-D(y_k)] \leq q^k (1+\delta) \left[D(y_*)-D(y_0)\right]
\end{equation}
where  $q=\frac{a_1+\sqrt{a_1^2+4a_2}}{2}$ and $\delta=q-a_1$. Moreover, $a_1+a_2 \leq q <1$.
\end{thm}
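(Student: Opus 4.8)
The plan is to deduce this statement directly from the primal $L2$ result (Theorem~\ref{L2}) by passing through the primal--dual correspondence (Theorem~\ref{thm:dual_corresp}) and a standard duality identity that expresses dual suboptimality as a primal squared distance. Concretely, the route is: (1) show $D(y_*)-D(y)=\tfrac12\|\phi(y)-x_*\|_\mB^2$ for every $y$, where $\phi(y)=x_0+\mB^{-1}\mA^\top y$; (2) invoke $x_k=\phi(y_k)$ from Theorem~\ref{thm:dual_corresp}; (3) take expectations and quote the $L2$ rate.

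For step (1) I would expand $\|\phi(y)-x_*\|_\mB^2=\|x_0-x_*\|_\mB^2+2(x_0-x_*)^\top\mA^\top y+y^\top\mA\mB^{-1}\mA^\top y$ and use $\mA x_*=b$ (so $(x_0-x_*)^\top\mA^\top y=(\mA x_0-b)^\top y=-(b-\mA x_0)^\top y$) together with the definition \eqref{eq:Dual} of $D$ to get $\|\phi(y)-x_*\|_\mB^2=\|x_0-x_*\|_\mB^2-2D(y)$. Since $x_*=\Pi_\cL^\mB(x_0)$ satisfies $x_*-x_0\in{\rm Range}(\mB^{-1}\mA^\top)$, the affine set $\{\phi(y):y\in\R^m\}=x_0+{\rm Range}(\mB^{-1}\mA^\top)$ contains $x_*$, so the right-hand side is maximized precisely at any $y_*$ with $\phi(y_*)=x_*$, whence $D(y_*)=\tfrac12\|x_0-x_*\|_\mB^2$ and $D(y_*)-D(y)=\tfrac12\|\phi(y)-x_*\|_\mB^2$. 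With the initialization $y_0=y_1=0$ of mSDSA we also get $D(y_0)=D(0)=0$, i.e. $D(y_*)-D(y_0)=\tfrac12\|x_0-x_*\|_\mB^2$. For step (2), applying Theorem~\ref{thm:dual_corresp} with $x_0=x_1$ on the primal side, the same $\omega$, $\beta$, and the same matrices $\mS_k$ gives $x_k=\phi(y_k)$ for all $k$, hence $D(y_*)-D(y_k)=\tfrac12\|x_k-x_*\|_\mB^2$. Taking expectations and using the $L2$ bound of Theorem~\ref{L2} (legitimate since exactness, $0<\omega<2$, $\beta\ge0$, and $a_1+a_2<1$ are assumed) yields $\Exp[D(y_*)-D(y_k)]=\tfrac12\Exp[\|x_k-x_*\|_\mB^2]\le\tfrac12 q^k(1+\delta)\|x_0-x_*\|_\mB^2=q^k(1+\delta)\,[D(y_*)-D(y_0)]$, with the same $q=\tfrac{a_1+\sqrt{a_1^2+4a_2}}{2}$ and $\delta=q-a_1$; the inequalities $a_1+a_2\le q<1$ are inherited verbatim from Theorem~\ref{L2}.

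There is essentially no analytic obstacle: all the work is already contained in Theorems~\ref{L2} and \ref{thm:dual_corresp}. The only point requiring a little care is the duality identity in step (1) — in particular verifying that the dual optimum $y_*$ is mapped by $\phi$ onto the primal optimum $x_*$, so that the supremum of $D$ is attained with value $\tfrac12\|x_0-x_*\|_\mB^2$ (this uses $x_*-x_0\in{\rm Range}(\mB^{-1}\mA^\top)$), and observing that $D(y_0)=0$ thanks to the zero initialization, which is what lets the final bound be phrased cleanly in terms of $D(y_*)-D(y_0)$ rather than $\|x_0-x_*\|_\mB^2$.
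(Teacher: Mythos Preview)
Your proposal is correct and follows essentially the same route as the paper: combine Theorem~\ref{thm:dual_corresp} with Theorem~\ref{L2} via the duality identity $D(y_*)-D(y_k)=\tfrac12\|x_k-x_*\|_\mB^2$. The paper simply quotes this identity (in fact with a typo, writing $x_0$ in place of $x_*$), whereas you carefully derive it from the definition of $D$ and verify that $\phi$ maps some dual optimum onto $x_*$; this extra care is warranted but does not constitute a different approach.
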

\begin{proof} This follows by applying Theorem~\ref{L2} together with Theorem \ref{thm:dual_corresp} and the identity $\tfrac{1}{2}\|x_k-x_0\|^2_\mB = D(y_*) - D(y_k)$.
\end{proof}

Note that for $\beta=0$, mSDSA simplifies to SDSA. Also recall that for unit stepsize ($\omega=1$), SDSA was analyzed in  \cite{gower2015randomized}. In the $\omega=1$ and $\beta=0$ case, our result specializes to that established in  \cite{gower2015randomized}. Following similar arguments to those in  \cite{gower2015randomized},  the same rate of convergence can be proved for the duality gap $\Exp[P(x_k)-D(y_k)]$.

\section{Methods  with Stochastic Momentum} \label{sec:sm}

To motivate {\em stochastic momentum}, for simplicity fix $\mB=\mI$, and assume that $\mS_k$ is chosen as the $j$th random unit coordinate vector  of $\R^m$ with probability $p_j>0$. In this case, SGD \eqref{eq:SGD} reduces to the randomized Kaczmarz method for solving the linear system $\mA x = b$, first analyzed for $p_j\sim \|\mA_{j:}\|^2$ by Strohmer and Vershynin \cite{RK}. 

In this case, mSGD becomes the {\em randomized Kaczmarz method with momentum} (mRK), and the iteration \eqref{eq:SHB-intro}  takes the explicit form
\[x_{k+1} = x_k - \omega \frac{\bA_{j:} x_k -b_j}{\|\bA_{j:}\|^2} \bA_{j:}^ \top + \beta(x_k-x_{k-1}).\]
Note that the cost of one iteration of this method is $\cO(\|\mA_{j:}\|_0 + n)$, where the cardinality term $\|\mA_{j:}\|_0$ comes from the stochastic gradient part, and $n$ comes from the momentum part.
When $\mA$ is sparse, the second term will dominate. Similar considerations apply for many other (but clearly not all) distributions~$\cD$. 

In such circumstances, we propose to replace the expensive-to-compute momentum term by a cheap-to-compute stochastic approximation thereof. In particular, we let $i_k$ be chosen from $[n]$ uniformly at random,  and replace $x_k-x_{k-1}$ with $v_{i_k} \eqdef e_{i_k}^\top (x_k-x_{k-1})e_{i_k}^\top$,  where $e_{i_k} \in \R^n$ is the $i_k$th unit basis vector in $\R^n$, and $\beta$ with $n\beta$. Note that $v_{i_k}$ can be computed in $\cO(1)$ time. Moreover, \[\Exp_{i_k}[n\beta v_{i_k}] =\beta( x_k-x_{k-1}).\] Hence,  we replace the momentum term by  an unbiased estimator, which allows us to cut the cost to $\cO(\|\mA_{j:}\|_0)$.

\subsection{Primal methods with stochastic momentum}

We now propose a  variant of the SGD/SN/SPP methods employing stochastic momentum  (smSGD/smSN/smSPP). Since SGD, SN and SPP are equivalent, we will describe the development from the perspective of SGD.
In particular, we propose the following method:
\begin{equation}\label{eq:DSHB-intro} x_{k+1} = x_k - \omega \nabla f_{\mS_k}(x_k) + n\beta e_{i_k}^\top (x_k-x_{k-1})e_{i_k}.\end{equation}

The method is formalize below:

\begin{center}
\boxed{
\begin{minipage}[!h][][b]{0,9\textwidth}
\noindent \textbf{smSGD/smSN/smSPP}

\bigskip

{\bf Parameters:} 
Distribution $\mathcal{D}$ from which the method samples matrices;  stepsize/relaxation parameter $\omega \in \R$ the heavy ball/momentum parameter $\beta$.

\bigskip
{\bf Initialize:} Choose initial points $x_1=x_0  \in \R^n$; set $\bB  = \mI\in \R^{n\times n}$

\bigskip

For $k\geq 1$ do
\begin{enumerate}
\item Draw a fresh $\bS_k \sim \cD$
\item Pick $i_k\in [n]$ uniformly at random 
\item Set $$x_{k+1}=x_k -\omega \nabla f_{\bS_k}(x_k) + \beta e_{i_k}^\top (x_k - x_{k-1})e_{i_k} $$
\end{enumerate}
{\bf Output:} last iterate $x_k$
\end{minipage}
}
\end{center}

\subsection{Convergence}

In the next result we establish L2 linear convergence of smSGD/smSN/smSPP. For this we will require the matrix $\mB$ to be equal to the identity matrix.

\begin{thm}\label{thm:DSHB-L2} Choose $x_0= x_1\in \R^n$.  Assume exactness. Let $\mB=\mI$.  Let $\{x_k\}_{k=0}^\infty$ be the sequence of random iterates produced by smSGD/smSN/smSPP.  Assume $0< \omega < 2$ and $\beta \geq 0$ and that the expressions
\begin{equation}\label{eq:98ys8h89dh} a_1 \eqdef 1+3\tfrac{\beta}{n}+2\tfrac{\beta^2}{n} - \left(\omega(2-\omega) +\omega\tfrac{\beta}{n}\right)\lambda_{\min}^+, \qquad \text{and}\qquad
a_2 \eqdef \tfrac{1}{n}(\beta +2\beta^2 + \omega \beta \lambda_{\max}) 
\end{equation}
satisfy $a_1+a_2<1$. Let $x_* = \Pi_{\mathcal{L}}^{\bI}(x_0)$. Then 
\begin{equation}\label{eq:nfiug582X}\Exp[\|x_{k+1}-x_*\|^2] \leq q^k (1+\delta)  \|x_{0}-x_*\|^2
\end{equation}
and 
$\Exp[f(x_k)] \leq q^k  \frac{\lambda_{\max}}{2} (1+\delta) \|x_{0}-x_*\|^2,$
where  $q\eqdef \frac{a_1+\sqrt{a_1^2+4a_2}}{2}$ and $\delta\eqdef q-a_1$. Moreover, $ a_1+a_2 \leq q < 1$.
\end{thm}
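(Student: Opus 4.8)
The plan is to mirror the proof of Theorem~\ref{L2} (the deterministic-momentum case) but carry along the extra randomness introduced by the coordinate selection $i_k$. First I would set up the state vector. Writing $r_k := x_k - x_*$, the iteration \eqref{eq:DSHB-intro} reads $r_{k+1} = r_k - \omega \nabla f_{\mS_k}(x_k) + n\beta\, e_{i_k}^\top r_k e_{i_k} - n\beta\, e_{i_k}^\top r_{k-1} e_{i_k}$, and using \eqref{eq:grad_f_S} with $\mB = \mI$ this becomes $r_{k+1} = (\mI - \omega \mZ_k) r_k + n\beta\, e_{i_k} e_{i_k}^\top (r_k - r_{k-1})$ where $\mZ_k = \mA^\top \mH_k \mA$. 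The key point enabling the analysis is that, conditioned on everything up to step $k$, the two sources of randomness $\mS_k$ and $i_k$ are independent, and $\Exp_{i_k}[n e_{i_k}e_{i_k}^\top] = \mI$, so in expectation the stochastic-momentum term reproduces exactly the deterministic momentum term $\beta(r_k - r_{k-1})$. I would also record the second-moment identity $\Exp_{i_k}[\|n\beta\, e_{i_k}e_{i_k}^\top v\|^2] = n\beta^2 \|v\|^2$, which is what replaces $\beta^2\|v\|^2$ and accounts for every factor of $1/n$ appearing in \eqref{eq:98ys8h89dh}.

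The core step is to derive a recursion for the pair $(\Exp\|r_{k+1}\|^2, \Exp\|r_k\|^2)$ of the form
\begin{equation*}
\Exp\|r_{k+1}\|^2 \le a_1 \Exp\|r_k\|^2 + a_2 \Exp\|r_{k-1}\|^2,
\end{equation*}
with $a_1, a_2$ as in \eqref{eq:98ys8h89dh}. To get this I would expand $\|r_{k+1}\|^2$, take $\Exp_{i_k}$ first (using linearity in the cross terms and the second-moment identity above), then take $\Exp_{\mS_k}$ using the facts, established in \cite{ASDA} and used in Theorem~\ref{L2}, that $\Exp_{\mS_k}[\mZ_k] = \mW$ (since $\mB=\mI$), that $\|\nabla f_{\mS_k}(x_k)\|^2 = 2 f_{\mS_k}(x_k)$ because the Hessian is a projection, and the exactness-based bounds $\lambda_{\min}^+\|\Pi r_k\|^2 \le \langle \mW r_k, r_k\rangle \le \lambda_{\max}\|\Pi r_k\|^2$ relating the quadratic form to $\|r_k\|^2$. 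The only genuinely new terms relative to Theorem~\ref{L2} are those quadratic in the momentum term and the cross term between $-\omega \nabla f_{\mS_k}$ and the momentum term; handling these via Cauchy–Schwarz/Young's inequality (e.g.\ $2\langle u,v\rangle \le \|u\|^2 + \|v\|^2$) in the same way as in Appendix~\ref{app:1}, but now with the $n\beta^2 \to \beta^2$-type replacements in the variance terms, yields precisely $a_1$ and $a_2$ as stated.

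Once the two-term recursion is in place, the rest is a standard linear-recurrence argument, identical to the one in Theorem~\ref{L2}: one shows that if $a_1 + a_2 < 1$ and $q = \tfrac{1}{2}(a_1 + \sqrt{a_1^2 + 4a_2})$ then $q \in (0,1)$, that $a_1 + a_2 \le q$ (so $q$ is a valid contraction factor), and that with $\delta = q - a_1 \ge 0$ the sequence $V_k := \Exp\|r_k\|^2 + \delta\, \Exp\|r_{k-1}\|^2$ satisfies $V_{k+1} \le q V_k$; since $x_0 = x_1$ gives $V_1 = (1+\delta)\|r_0\|^2$, inequality \eqref{eq:nfiug582X} follows. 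The function-value bound comes from $\Exp f(x_k) = \tfrac12\langle \mW r_k, r_k\rangle \le \tfrac{\lambda_{\max}}{2}\Exp\|r_k\|^2$, exactly as in Theorem~\ref{L2}. I also need the invariance $\Pi_\cL^\mI(x_k) = \Pi_\cL^\mI(x_0)$ so that the $x_*$ in the quadratic-form bounds is legitimate; this follows as in Theorem~\ref{prop:projections}, noting that $e_{i_k}e_{i_k}^\top(x_k - x_{k-1})$ stays in $\mathrm{Range}(\mA^\top)$ only if $x_k - x_{k-1}$ does — which is true by induction since $x_0 = x_1$ and both the gradient term and (inductively) the momentum term lie in $\mathrm{Range}(\mA^\top)$. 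The main obstacle I anticipate is bookkeeping: getting every coefficient of $\beta$, $\beta^2$ and $1/n$ to land exactly as in \eqref{eq:98ys8h89dh} requires careful tracking of which cross terms get an $\Exp_{i_k}$-averaging (hence no $1/n$ penalty) versus which get a second-moment evaluation (hence the $1/n$), and choosing the Young's-inequality split consistently with the $\beta=0$ and deterministic-momentum special cases.
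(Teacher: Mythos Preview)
Your overall plan matches the paper's proof: expand $\|x_{k+1}-x_*\|^2$, condition first on $(x_k,\mS_k)$ so that only $i_k$ is random, use $\Exp_{i_k}[e_{i_k}e_{i_k}^\top]=\tfrac1n\mI$ and $\Exp_{i_k}[\|e_{i_k}e_{i_k}^\top v\|^2]=\tfrac1n\|v\|^2$, then take $\Exp_{\mS_k}$ and the outer expectation to get a two-term recursion handled by Lemma~\ref{LemmaGlobal}. Two concrete issues, though.

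First, a scaling slip. The coefficients in \eqref{eq:98ys8h89dh} correspond to the algorithm as written in the boxed description, with momentum term $\beta\, e_{i_k}^\top(x_k-x_{k-1})e_{i_k}$, \emph{not} the $n\beta$-scaled version of \eqref{eq:DSHB-intro}. With $\beta$ (not $n\beta$), the conditional mean of the momentum term is $\tfrac{\beta}{n}(x_k-x_{k-1})$ and its squared norm has expectation $\tfrac{\beta^2}{n}\|x_k-x_{k-1}\|^2$; that is where the $1/n$ factors in $a_1,a_2$ come from. Your $n\beta$ scaling would put factors of $n$, not $1/n$, in the constants. Relatedly, the paper bounds the gradient--momentum cross term via convexity, $\langle \nabla f(x_k),x_{k-1}-x_k\rangle\le f(x_{k-1})-f(x_k)$, after taking $\Exp_{\mS_k}$---exactly as in Appendix~\ref{app:1}---not via Young's inequality; Young would give different constants than \eqref{eq:98ys8h89dh}.

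Second, and more serious, your invariance argument is wrong. It is \emph{not} true that $e_{i}e_{i}^\top v \in \mathrm{Range}(\mA^\top)$ whenever $v\in\mathrm{Range}(\mA^\top)$: take $\mA=(1\;\;1)$, $v=(1,1)^\top$; then $e_1e_1^\top v=(1,0)^\top\notin\mathrm{Range}(\mA^\top)$. So the coordinate-wise momentum step can push $x_k$ out of $x_0+\mathrm{Range}(\mA^\top)$, and $\Pi_\cL^{\mI}(x_k)=\Pi_\cL^{\mI}(x_0)$ need not hold pathwise for smSGD. The paper's own proof simply invokes \eqref{b3} with the fixed $x_*=\Pi_\cL^{\mI}(x_0)$ without addressing this point; but you should be aware that the inductive justification you sketched for the invariance does not go through.
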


\begin{proof} See Appendix~\ref{app:7}.
\end{proof}

It is straightforward to see that if we choose $\omega \in (0,2)$, then the condition $a_1+a_2<1$ is satisfied for all $\beta$ belonging to the interval   \[ 0\leq \beta < \tfrac{1}{8}\left (-4+\omega \lambda_{\min}^+-\omega \lambda_{\max}+\sqrt{(4-\omega \lambda_{\min}^++\omega \lambda_{\max})^2+16n\omega (2-\omega)  \lambda_{\min}^+}\right).\]
The  upper bound  is similar to that for mSGD/mSN/mSPP; the only difference is an extra factor of $n$ next to the constant~16.

\subsection{Momentum versus stochastic momentum}
\label{comparison}

As indicated in the introduction, if we wish to compare mSGD to smSGD used with momentum parameter $\beta$, it makes sense to use momentum parameter $\beta n$ in smSGD. This is because  the momentum term in smSGD will then be an unbiased estimator of the deterministic momentum term used in mSGD.

Let $q(\beta)$ be the convergence constant for mSGD with stepsize $\omega=1$ and an admissible momentum parameter $\beta\geq 0$. Further, let $\bar{a}_1(\beta),\bar{a}_2(\beta),\bar{q}(\beta)$ be the convergence constants for smSGD with stepsize $\omega=1$ and momentum parameter $\beta$.  We have
\begin{eqnarray*}\bar{q}(\beta n) \geq  \bar{a}_1(\beta n) + \bar{a}_2(\beta n) &\overset{\eqref{eq:98ys8h89dh}}{=}& 1 + 4\beta + 4\beta^2n + \beta(\lambda_{\max}-\lambda_{\min}^+) - \lambda_{\min}^+\\
&\overset{\eqref{eq:qbeta}}{=}& a_1(\beta) + a_2(\beta) + 4\beta^2 (n-1)\\
&\geq & a_1(\beta) + a_2(\beta).
 \end{eqnarray*}
 Hence, the lower bound on the rate for smSGD is worse than the lower bound for mSGD. 
 
The same conclusion holds for the convergence rates themselves. Indeed, note that since $\bar{a}_1(\beta n) - a_1(\beta) = 2\beta^2(n-1) \geq 0$ and $\bar{a}_2(\beta n) - a_2(\beta) = 2\beta^2 (n-1) \geq 0$, we have
\[\bar{q}(\beta n) = \frac{\bar{a}_1(\beta n) + \sqrt{\bar{a}^2_1(\beta n) +4 \bar{a}_2(\beta n)}}{2} \geq \frac{a_1(\beta ) + \sqrt{a_1^2(\beta ) +4 a_2(\beta )}}{2} = q(\beta),\]
and hence the rate of mSGD is always better than that of smSGD.

However, the expected cost of a single iteration of mSGD may be significantly larger than that of smSGD. Indeed, let $g$ be the expected cost of evaluating a stochastic gradient. Then we need to compare $\cO(g+n)$ (mSGD) against $\cO(g)$ (smSGD). If $g\ll n$, then one iteration of smSGD is significantly cheaper than one iteration of mSGD. Let us now compare the total complexity to investigate the trade-off between the rate and cost of stochastic gradient evaluation. Ignoring constants, the total cost of the two methods (cost of a single iteration multiplied by the number of iterations) is:
\begin{equation} \label{eq:C-SHB}C_{\text{mSGD}}(\beta) \eqdef \frac{g+n}{1-q(\beta)} = \frac{g+n}{1-\frac{a_1(\beta) + \sqrt{a_1^2(\beta) + 4 a_2(\beta)}}{2}}, \end{equation}
and
\begin{equation} \label{eq:C-DSHB}C_{\text{smSGD}}(\beta n) \eqdef \frac{g}{1-\bar{q}(\beta n)} = \frac{g}{1-\frac{\bar{a}_1(\beta n) + \sqrt{\bar{a}_1^2(\beta n) + 4 \bar{a}_2(\beta n)}}{2}}. \end{equation}

Since  \begin{equation}\label{eq:iod886562}q(0) = \bar{q}(0 n),\end{equation} and since $q(\beta)$ and $\bar{q}(\beta n)$ are continuous functions of $\beta$, then because $g+n > g$, for small enough $\beta$ we will have 
$C_{\text{mSGD}}(\beta) > C_{\text{smSGD}}(\beta n).$ In particular, the speedup of smSGD compared to mSGD for $\beta \approx 0$ will be close to
\[\frac{C_{\text{mSGD}}(\beta)}{C_{\text{smSGD}}(\beta n)}  \approx \lim_{\beta' \to_+ 0} \frac{C_{\text{mSGD}}(\beta')}{C_{\text{smSGD}}(\beta' n)} \overset{\eqref{eq:C-SHB}+\eqref{eq:C-DSHB}+\eqref{eq:iod886562}}{=} \frac{g+n}{g} = 1 + \frac{n}{g}.\]

Thus, we have shown the following statement.

\begin{thm} \label{thm:DSHBspeedup} For small $\beta$, the total complexity of smSGD is approximately $1+ n/g$ times smaller than the total complexity of mSGD, where $n$ is the number of columns of $\mA$, and $g$ is the expected cost of evaluating a stochastic gradient $\nabla f_{\mS}(x)$.  
\end{thm}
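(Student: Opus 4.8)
The plan is to deduce Theorem~\ref{thm:DSHBspeedup} from the two already-established linear rate results, Theorem~\ref{L2} (for mSGD) and Theorem~\ref{thm:DSHB-L2} (for smSGD), via an elementary limiting argument as $\beta \to 0^+$; essentially all the content of the statement is contained in the discussion preceding it, and the proof merely has to assemble it.

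First I would pin down the ``total complexity'' bookkeeping. By Theorem~\ref{L2} with stepsize $\omega=1$, mSGD run with an admissible momentum parameter $\beta$ needs $O\big(\tfrac{1}{1-q(\beta)}\log(1/\epsilon)\big)$ iterations to reach $\Exp[\|x_k-x_*\|_{\mathbf B}^2]\le\epsilon$, and each iteration costs $O(g+n)$, where $g$ is the expected cost of a stochastic-gradient evaluation $\nabla f_{\mathbf S}(x)$ and $n$ is the cost of the dense momentum update; multiplying and discarding $\log(1/\epsilon)$ and absolute constants yields $C_{\mathrm{mSGD}}(\beta)=\tfrac{g+n}{1-q(\beta)}$ as in \eqref{eq:C-SHB}. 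Likewise, Theorem~\ref{thm:DSHB-L2} gives $O\big(\tfrac{1}{1-\bar q(\beta n)}\log(1/\epsilon)\big)$ iterations for smSGD, each costing $O(g)$ because the stochastic momentum step is $O(1)$, so $C_{\mathrm{smSGD}}(\beta n)=\tfrac{g}{1-\bar q(\beta n)}$ as in \eqref{eq:C-DSHB}. Here the relevant smSGD quantity is $\bar q(\beta n)$ rather than $\bar q(\beta)$ because, for a fair comparison, smSGD is run with momentum parameter $\beta n$ so that its stochastic momentum term is an unbiased estimator of the deterministic term used by mSGD with parameter $\beta$.

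Next I would record the two facts that drive the limit: (i) the base-case identity $q(0)=\bar q(0)=1-\lambda_{\min}^+$ of \eqref{eq:iod886562}, which holds because at $\beta=0$ (with $\omega=1$) both pairs of constants collapse to $a_1=1-\lambda_{\min}^+$, $a_2=0$; and (ii) continuity of $\beta\mapsto q(\beta)$ and $\beta\mapsto\bar q(\beta n)$ near $0$, which holds since $a_1(\beta),a_2(\beta),\bar a_1(\beta n),\bar a_2(\beta n)$ are polynomials in $\beta$ and the map $(s,t)\mapsto\tfrac{s+\sqrt{s^2+4t}}{2}$ is continuous on $\{t\ge 0\}$ (and $a_2,\bar a_2\ge 0$ for $\beta\ge 0$). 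Exactness guarantees $\lambda_{\min}^+>0$, hence $1-q(0)=\lambda_{\min}^+>0$, so near $\beta=0$ both denominators in $C_{\mathrm{mSGD}}$ and $C_{\mathrm{smSGD}}$ stay bounded away from zero. Combining, as $\beta\to 0^+$,
$$\frac{C_{\mathrm{mSGD}}(\beta)}{C_{\mathrm{smSGD}}(\beta n)}=\frac{g+n}{g}\cdot\frac{1-\bar q(\beta n)}{1-q(\beta)}\ \longrightarrow\ \frac{g+n}{g}\cdot\frac{1-q(0)}{1-q(0)}=1+\frac{n}{g},$$
so for sufficiently small admissible $\beta$ the ratio is arbitrarily close to $1+n/g$, which is the claimed approximate speedup.

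The hard part is, frankly, already done: it lives in Theorems~\ref{L2} and~\ref{thm:DSHB-L2}. The only points needing a moment's care are checking that small $\beta>0$ is genuinely admissible for both methods --- true because the admissibility intervals following Theorem~\ref{L2} and Theorem~\ref{thm:DSHB-L2} are of the form $[0,\beta_{\max})$ with $\beta_{\max}>0$ --- and being explicit that the statement is asymptotic in $\beta$ (hence the word ``approximately''), so no uniform-in-$\beta$ claim is being made.
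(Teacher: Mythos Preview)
Your proposal is correct and follows essentially the same approach as the paper: the paper's ``proof'' is precisely the discussion immediately preceding the theorem, which defines $C_{\text{mSGD}}(\beta)$ and $C_{\text{smSGD}}(\beta n)$, notes $q(0)=\bar q(0)$ and continuity, and takes the limit $\beta\to 0^+$ to obtain the ratio $1+n/g$. You have simply spelled out the continuity and admissibility checks a bit more carefully than the paper does.
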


\section{Special Cases: Randomized Kaczmarz with Momentum and Randomized Coordinate Descent with Momentum} \label{sec:cases}

In Table~\ref{SpecialCasesAlgorithms} we specify several special instances of mSGD  by choosing distinct combinations of the parameters $\cD$ and $\bB$.  We use $e_i$ to denote the $i$th unit coordinate vector in $\R^m$, and $\bI_{:C}$ for the column submatrix of the $m\times m$ identity matrix indexed by (a random) set $C$.

\begin{table}[t!]
\begin{center}
\scalebox{0.9}{
\begin{tabular}{ | p{4.3cm} | p{1.1 cm} | p{1.2cm} |  p{8.7cm} | }
  \hline
 \multicolumn{4}{| c |}{\textbf{Variants of mSGD}}\\
 \hline
 \begin{center}Variant of mSGD \end{center}& \begin{center}$\mS$\end{center} & \begin{center}$\bB$\end{center}  & \begin{center}$x_{k+1}$\end{center}\\
 \hline
  \hline
\begin{center}{\bf mRK}: randomized Kaczmarz with momentum \end{center} & $$e_{i}$$ & $$\bI$$ & $$x_k -\omega \frac{\bA_{i :} x_k -b_{i}}{\|\bA_{i :}\|_2^2} \bA_{i :}^ \top + \beta(x_k - x_{k-1}) $$\\
  \hline
 \begin{center} {\bf mRCD = mSDSA}: randomized coordinate desc. with momentum \end{center} & $$e_{i}$$ &  $$\bA\succ 0$$  & $$ x_k -\omega \frac{(\bA_{i:})^\top x_k -b_i}{\bA_{ii}} e_i  + \beta(x_k - x_{k-1}) $$\\
 \hline
 \begin{center} {\bf mRBK}: randomized block Kaczmarz with momentum \end{center} & $$\bI_{:C}$$ & $$\bI$$ &  $$x_k -\omega \bA_{C:}^\top (\bA_{C:}\bA_{C:}^\top)^\dagger (\bA_{C:}x_k-b_C) + \beta(x_k - x_{k-1}) $$\\
  \hline  
\begin{center}{\bf mRCN = mSDSA}:  randomized coordinate Newton descent with momentum  \end{center}& $$\bI_{:C}$$ & $$\bA\succ0$$& $$x_k -\omega \bI_{:C} (\bI_{:C}^\top\bA \bI_{:C})^\dagger \bI_{:C}^\top (\bA x_k-b) + \beta(x_k - x_{k-1}) $$\\
  \hline
\begin{center}{\bf mRGK}: randomized Gaussian Kaczmarz \end{center}   &$$ N(0,\bI)$$ & $$\bI$$ & $$x_k -\omega \frac{\mS^\top (\bA x_k -b)}{\|\bA^\top \mS \|_2^2} \bA^\top \mS + \beta(x_k - x_{k-1}) $$\\
  \hline 
 \begin{center} {\bf mRCD}: randomized coord. descent (least squares)\end{center} & $$ \bA_{:i}$$ & $$\bA^\top \bA$$ & $$x_k -\omega \frac{(\bA_{:i})^\top (\bA x_k -b)}{\|\bA_{:i}\|_2^2} e_i + \beta(x_k - x_{k-1}) $$ \\
 \hline
\end{tabular}
}
\end{center}
\caption{Selected special cases of mSGD.  In the special case of $\mB=\mA$, mSDSA is directly equivalent to mSGD (this is due to the primal-dual relationship \eqref{eq:dual-corresp}; see also Theorem~\ref{thm:dual_corresp}).  Randomized coordinate Newton  (RCN) method was first proposed in \cite{qu2015sdna}; mRCN is its momentum variant. Randomized Gaussian Kaczmarz (RGK) method was first proposed in \cite{gower2015randomized}; mRGK is its momentum variant.
}
\label{SpecialCasesAlgorithms}
\end{table}

The updates for smSGD can be derived by substituting the momentum term $\beta(x_k-x_{k-1})$ with its stochastic variant $n\beta e_{i_k}^\top (x_k-x_{k-1})e_{i_k}$. We do not aim to be comprehensive. For more details on the possible combinations of the parameters $\bS$ and $\bB$  we refer the interested reader to Section~3 of \cite{gower2015randomized}. 

In the rest of this section we present in detail two special cases: the randomized Kaczmarz method with momentum (mRK) and the randomized coordinate descent method with momentum (mRCD). Further, we compare the $L1$ convergence rates (i.e., bounds on $\|\E{x_k}-x_*\|_\mB^2$) obtained in this paper  with rates that can be inferred from known results for their no-momentum variants.

\subsection{mRK: randomized Kaczmarz with momentum} We now provide a discussion on mRCD (the method in the first row of Table~\ref{SpecialCasesAlgorithms}).  Let $\bB= \bI$ and let pick in each iteration the random matrix $\bS=e_i$ with probability $p_i=\|\bA_{i:}\|^2 / \|\bA\|_F^2$. In this setup the update rule of the mSGD simplifies to $$x_{k+1}=x_k -\omega \frac{\bA_{i:} x_k -b_i}{\|\bA_{i:}\|_2^2} \bA_{i:}^ \top + \beta(x_k - x_{k-1}) $$
and 
\begin{eqnarray}
\bW &\overset{\eqref{eq:W-def}}{=}& \bB^{-1/2} \mA^\top \Exp_{\mS\sim \cD}[\mH] \mA \bB^{-1/2} \; = \; \Exp[\mA ^\top \mH \mA] \notag \\
&=& \sum_{i=1}^{m}p_i\frac{\bA_{i:}^\top\bA_{i:}}{\|\bA_{i:}\|^2_2} \; = \; \frac{1}{\|\bA\|^2_{F}} \sum_{i=1}^{m}\bA_{i:}^\top\bA_{i:} \; =\; \frac{\bA^\top \bA}{\|\bA\|^2_F}.\label{matrixW}
\end{eqnarray}

The objective function takes the following form:
\begin{equation}
\label{functionRK}
f(x)=\Exp_{\bS \sim \mathcal{D}} [f_{\bS}(x)]= \sum_{i=1}^m p_i f_{\bS_i}(x)=\frac{\|\bA x -b\|^2_2}{2\|\bA\|^2_{F}}.
\end{equation}

For $\beta=0$, this method reduces to the \textit{randomized Kaczmarz method} with relaxation, first analyzed in \cite{ASDA}. If we also have $ \omega=1$, this is equivalent with the \textit{randomized Kaczmarz method} of Strohmer and Vershynin \cite{RK}.  RK without momentum ($\beta=0$)  and without relaxation ($\omega=1$) 
converges  with iteration complexity \cite{RK, gower2015randomized, gower2015stochastic} of \begin{equation}\label{eq:biugs897*9h8}\tilde{O}(1/\lambda_{\min}^+(\bW))=\tilde{O}\left(\frac{\|\bA\|^2_F}{\lambda_{\min}^+(\bA^\top \bA)}\right).\end{equation}

In contrast, based on Theorem~\ref{theoremheavyball} we have
\begin{itemize}
\item For $ \omega= 1$ and $\beta= \left(1- \sqrt{0.99 \lambda_{\min}^+}\right)^2=\left(1- \sqrt{\frac{0.99}{\|\bA\|^2_F} \lambda_{\min}^+(\bA^\top \bA)}\right)^2$, the iteration complexity of the mRK is: \[ \tilde{O}\left(\sqrt{\frac{\|\bA\|^2_F}{\lambda_{\min}^+(\bA^\top \bA)}}\right).\]
\item For $ \omega= \|\bA\|^2_F/\lambda_{\max}(\bA^\top \bA)$ and $\beta= \left(1- \sqrt{ \frac{0.99 \lambda_{\min}^+(\bA^\top \bA)}{\lambda_{\max}(\bA^\top \bA)}}\right)^2$
the iteration complexity becomes:
\[\tilde{O}\left(\sqrt{\frac{ \lambda_{\max}(\bA^\top \bA)}{\lambda_{\min}^+(\bA^\top \bA)}}\right).\]
\end{itemize}

 This is quadratic improvement on the  previous best result \eqref{eq:biugs897*9h8}.

\paragraph{Related Work.} 
The Kaczmarz method for solving consistent linear systems was originally introduced by Kaczmarz in 1937 \cite{kaczmarz1937angenaherte}. This classical method selects the rows to project onto in a cyclic manner.  In practice, many different selection rules can be adopted. For non-random selection rules (cyclic, greedy, etc) we refer the interested reader to \cite{popa1995least,byrne2008applied, nutini2016convergence, popa2017convergence, CsibaPL}. In this work we are interested in {\em randomized} variants of the Kaczmarz method, first analyzed by Strohmer and Vershynin \cite{RK}. In \cite{RK} it was shown that RK converges with a linear convergence rate to the unique solution of a full-rank consistent linear system. This result sparked renewed interest in design of randomized methods for solving linear systems \cite{needell2010randomized, RBK, eldar2011acceleration, MaConvergence15, zouzias2013randomized, l2015randomized, schopfer2016linear}.  All existing results on accelerated variants of RK use the Nesterov's approach of acceleration \cite{lee2013efficient, liu2016accelerated, tu2017breaking, ASDA}. To the best of our knowledge, no convergence analysis of mRK exists in the literature (Polyak's momentum). Our work fills this gap.

\subsection{mRCD: randomized coordinate descent with momentum} 

We now provide a discussion on the mRCD method (the method in the second row of Table~\ref{SpecialCasesAlgorithms}). If the matrix $\bA$ is positive definite, then we can choose $\bB= \bA$  and $\bS=e_i$ with probability $p_i=\frac{\bA_{ii}}{{\rm Trace}(\bA)}$. It is easy to see that $\bW=\frac{\bA}{{\rm Trace}(\bA)}$. In this case,  $\bW$ is positive definite and as a result, $\lambda_{\min}^+(\bW)=\lambda_{\min}(\bW)$. Moreover, we have
\begin{equation}
\label{functionRCD}
f(x)=\Exp_{\bS \sim \mathcal{D}} [f_{\bS}(x)]= \sum_{i=1}^m p_i f_{\bS_i}(x)=\frac{\|\bA x -b\|^2_2}{2{\rm Trace}(\bA)}.
\end{equation}

For $\beta=0$ and $ \omega=1$ the method is equivalent with  \emph{randomized coordinate descent} of Leventhal and Lewis \cite{leventhal2010randomized}, which was shown to converge with iteration complexity  \begin{equation}\label{eq:hbis8690d9}\text{Previous  best result:} \qquad  \tilde{O}\left(\frac{{\rm Trace}(\bA)}{\lambda_{\min}(\bA)}\right).\end{equation}

In contrast, following  Theorem~\ref{theoremheavyball}, we can obtain the following  $L_1$ iteration complexity results for mRCD:
 \begin{itemize}
  \item For $ \omega= 1$ and $\beta= \left(1- \sqrt{\frac{0.99}{{\rm Trace}(\bA)} \lambda_{\min}(\bA)}\right)^2$, the iteration complexity is \[ \tilde{O}\left(\sqrt{\frac{{\rm Trace}(\bA)}{\lambda_{\min}(\bA)}}\right).\]
\item For  $ \omega= {\rm Trace}(\bA)/\lambda_{\max}(\bA)$ and $\beta= \left(1- \sqrt{ \frac{0.99 \lambda_{\min}(\bA)}{\lambda_{\max}(\bA)}}\right)^2 $  the iteration complexity becomes 
\[\tilde{O}\left(\sqrt{\frac{ \lambda_{\max}(\bA)}{\lambda_{\min}(\bA)}}\right).\]
 \end{itemize}
 
 This is quadratic improvement on the previous best result \eqref{eq:hbis8690d9}.

\paragraph{Related Work.}  It is known that if $\mA$ is positive definite, the popular {\em randomized Gauss-Seidel} method can be interpreted as randomized coordinate descent (RCD). RCD methods were first analyzed by Lewis and Leventhal in the context of linear systems and least-squares problems \cite{leventhal2010randomized}, and later extended by several authors to more general settings, including smooth convex optimization \cite{nesterov2012efficiency}, composite convex optimization \cite{richtarik2014iteration}, and parallel/subspace descent variants  \cite{richtarik2016parallel}. These results were later further extended to handle  arbitrary sampling distributions  \cite{qu2016coordinate,qu2016coordinate2, qu2015quartz, SCP} . Accelerated variants of RCD were studied in \cite{lee2013efficient, fercoq2015accelerated, allen2016even}. For other non-randomized coordinate descent variants and their convergence analysis, we refer the reader to \cite{wright2015coordinate,nutini2015coordinate, CsibaPL}. To the best of our knowledge, mRCD and smRCD have never  been analyzed before in any setting.

\subsection{Visualizing the acceleration mechanism}
\label{graphica}

We devote this section to the graphical illustration of the acceleration mechanism behind momentum. Our goal is to shed more light on how the proposed algorithm works in practice. For simplicity, we illustrate this by comparing RK and mRK.

\begin{figure}[h]
\centering
\begin{subfigure}{.5\textwidth}
  \centering
  \includegraphics[width=1\linewidth]{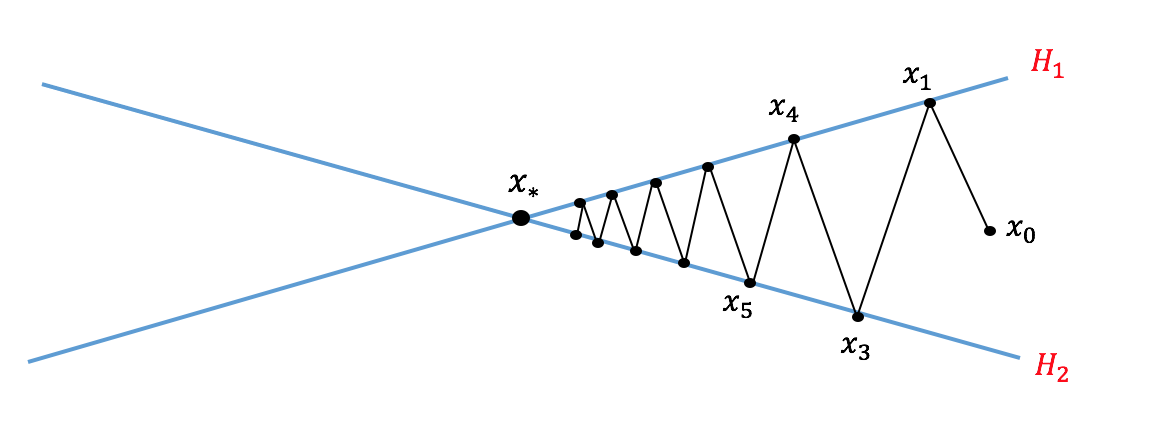}
  \caption{\footnotesize Randomized Kaczmarz Method \cite{RK}}
\end{subfigure}%
\begin{subfigure}{.5\textwidth}
  \centering
  \includegraphics[width=1\linewidth]{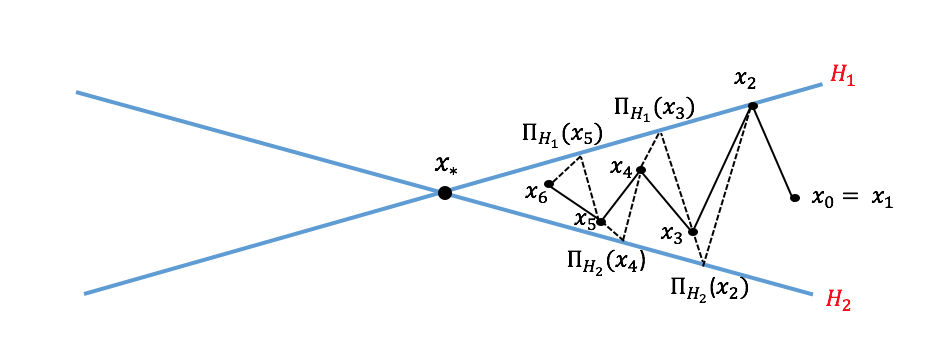}
  \caption{\footnotesize Randomized Kaczmarz Method with Momentum}
\end{subfigure}
\caption{Graphical interpretation of the {\em randomized Kaczmarz method} and the {\em randomized Kaczmarz method with momentum} in a simple example with only two  hyperplanes $H_i=\{x\;:\; \bA_{i:} x=b_i\}$ where $i=1,2$ and a unique solution $x_*$.}
\label{graphicalrepresentation}
\end{figure}

In Figure~\ref{graphicalrepresentation} we present in a simple $\R^2$ illustration of the difference between the workings of RK and mRK. Our goal is to show graphically how the addition of momentum leads to acceleration.  Given iterate $x_k$, one can think of the update rule of the mRK \eqref{eq:SHB-intro} in two steps:
\begin{enumerate}
\item \emph{The Projection:} The projection step corresponds to the first part $x_k - \omega  \nabla f_{\bS_k}(x_k)$ of the  mRK update \eqref{eq:SHB-intro} and it means that the current iterate $x_k$ is projected onto a randomly chosen hyperplane $H_i$\footnote{In the plots of Figure ~\ref{graphicalrepresentation}, the hyperplane of each update is chosen in an alternating fashion for illustration purposes}. The value of the stepsize $\omega \in (0,2)$ defines whether the projection is exact or not. When $\omega=1$ (no relaxation) the projection is exact, that is the point  $\Pi_{H_i}(x_k)$ belongs in the hyperplane $H_i$.  In Figure~\ref{graphicalrepresentation} all projections are exact.
\item \emph{Addition of the momentum term}: The momentum term (right part of the update rule) $\beta (x_k-x_{k-1})$ forces the next iterate $x_{k+1}$ to be closer to the solution $x_*$ than the corresponding point $\Pi_{H_i}(x_k)$. Note also that the vector $x_{k+1}-\Pi_{H_i}(x_k)$ is always parallel to $x_k-x_{k-1}$ for all $k\geq0$.
\end{enumerate}

\begin{rem}
In the example of Figure~\ref{graphicalrepresentation}, the performance of mRK is similar to the performance of RK until iterate $x_3$. After this point, the momentum parameter becomes more effective and the mRK method accelerates. This behavior appears also in our experiments in the next section where we work with  matrices with many rows. There we can notice that the momentum parameter seems to become more effective after the first $m+1$ iterations.
\end{rem}

\section{Numerical Experiments} \label{sec:experiments}

In this section we study the computational behavior of the two proposed algorithms, mSGD and smSGD. In particular, we focus mostly on the evaluation of the performance of mSGD. To highlight the usefulness of smSGD, an empitical verification of Theorem~\ref{thm:DSHBspeedup} is presented in subsection~\ref{DSHB comp}. As we have already mentioned, both mSGD and smSGD can be interpreted as sketch-and-project methods (with relaxation), and as a result a comprehensive array of well-known algorithms can be recovered as special cases by varying the main parameters of the methods (check Section~\ref{sec:cases}). In our experiments we focus on the popular special cases of randomized Kaczmarz method (RK) and the randomized coordinate descent method (RCD) without relaxation ($\omega=1$), and  show the practical benefits of the addition of the momentum term\footnote{The experiments were repeated with various values of the main parameters and initializations, and similar results were obtained in all cases.}. The choice of the stepsize $\omega=1$ is not arbitrary. Recently, in \cite{ASDA} both relaxed RK and relaxed RCD were analyzed, and it was proved that the quantity $\E{\|x_k-x_*\|^2_{\bB}}$ converges linearly to zero for $\omega \in (0,2)$, and that the best convergence rate is obtained precisely for $\omega=1$. Thus the comparison is with the best-in-theory  no-momentum variants.

Note that, convergence analysis of the error $\E{\|x_k-x_*\|^2_{\bB}}$ (L2 convergence) and of the expected function values $\E{f(x_k)}$ in Theorem~\ref{L2} shows that mSGD enjoys global non-asymptotic linear convergence rate but not faster than the no-momentum method. 
The accelerated linear convergence rate has been obtained only in the weak sense (Theorem \ref{theoremheavyball}). Nevertheless, in practice as indicated from our experiments, mSGD is faster than its no momentum variant. Note also that in all of the presented experiments the momentum parameters $\beta$ of the methods are chosen to be positive constants that do not depend on parameters that are not known to the users such as $\lambda_{\min}^+$ and $\lambda_{\max}$.

In comparing the methods with their momentum variants we use both the relative error measure $\|x_k-x_*\|^2_\bB / \|x_0-x_*\|^2_\bB $ and the function values $f(x_k)$\footnote{Remember that in our setting we have $f(x_*)=0$ for the optimal solution $x_*$ of the best approximation problem; thus $f(x)-f(x_*)=f(x)$. The function values $f(x_k)$ refer to function~\eqref{functionRK} in the case of RK and to function~\eqref{functionRCD} for the RCD.  For block variants the objective function of problem \eqref{eq:stoch_reform_intro} has also closed form expression but it can be very difficult to compute. In these cases one can instead evaluate the quantity $\|\bA x-b\|^2_{\bB}$.}. In all implementations, except for the experiments on average consensus (Section~\ref{consensus}), the starting point is chosen to be $x_0=0$. In the case of average consensus the starting point must be the vector with the initial private values of the nodes of the network. 
All the code for the experiments is written in the Julia programming language.
For the horizontal axis we use either the number of iterations or the wall-clock time measured using the tic-toc Julia function.

This section is divided in three main experiments. In the first one we evaluate the performance of the mSGD method in the special cases of mRK and mRCD for solving both synthetic consistent Gaussian systems and consistent linear systems with real matrices. In the second experiment we computationally verify Theorem \ref{thm:DSHBspeedup} (comparison between the mSGD and smSGD methods). In the last experiment building upon the recent results of \cite{LoizouRichtarik} we show how the addition of the momentum accelerates the pairwise randomized gossip (PRG) algorithm for solving the average consensus problem.

\begin{table}[H]
\begin{center}
{\footnotesize
\begin{tabular}{| p{4cm} | p{3cm} | p{3cm} | p{4cm} |  }
 % \multicolumn{4}{|c|}{Complexity Results} \\
 \hline
Assumptions & No-momentum, & Momentum, &  Stochastic Momentum,\\
& $\beta=0$ & $\beta\geq0$ &$\beta\geq0$\\
   \hline
$\bA$ general, $\bB=\bI$ & RK & mRK & smRK\\
 \hline
$\bA\succ 0$, $\bB=\bA$ & RCD & mRCD & smRCD\\
  \hline
$\bA$ incidence matrix, $\bB=\bI$ & PRG &  mPRG & smPRG \\
  \hline
\end{tabular}
}
\end{center}
\caption{Abbreviations of the algorithms (special cases of general framework) that we use in the numerical evaluation section. In all methods the random matrices are chosen to be unit coordinate vectors in $\R^m$  ($\bS=e_i$). With PRG we denote the Pairwise Randomized Gossip algorithm for solving the average consensus problem first proposed in \cite{boyd2006randomized}. Following similar notation with the rest of the paper with mPRG and smPRG we indicate its momentum and stochastic momentum variants respectively.}
\label{tableSHBandDSHB}
\end{table}

\subsection{Evaluation of mSGD}
\label{subsectionEvaluation}
In this subsection we study the computational behavior of mRK and mRCD when they compared with their no momentum variants for both synthetic and real data.

\subsubsection{Synthetic Data}
\label{gaussiansyntheric}
The synthetic data for this comparison is generated as follows\footnote{Note that in the first experiment we use Gaussian matrices which by construction are full rank matrices with probability 1 and as a result the consistent linear systems have unique solution. Thus, for any starting point $x_0$, the vector $z$ that used to create the linear system is the solution mSGD converges to.  This is not true for general consistent linear systems, with no full-rank matrix. In this case, the solution   $x_{*}=\Pi_{\cL}^{\bB}(x_0)$ that mSGD converges to is not necessarily equal to $z$. For this reason, in the evaluation of the relative error measure $\|x_k-x_*\|^2_\bB / \|x_0-x_*\|^2_\bB$, one should be careful and use the value $x_*=x_0+\bA^\dagger (b- \bA x_0)\overset{x_0=0}{=} \bA^\dagger b$.}.

\textbf{For mRK:}
All elements of matrix $\bA \in \R^{m \times n}$ and of vector $z \in \R^n$ are chosen to be i.i.d $\mathcal{N}(0,1)$. Then the right hand side of the linear system is set to $b=\bA z$. With this way the consistency of the linear system with matrix $\bA$ and right hand side $b$ is ensured. 

\textbf{For mRCD:}  A Gaussian matrix $\bP \in \R^{m \times n}$ is generated and then matrix $\bA = \bP^\top \bP \in \R^{n \times n}$ is used in the linear system. The vector $z\in \R^n$ is chosen to be i.i.d $\mathcal{N}(0,1)$ and again to ensure consistency of the linear system, the right hand side is set to $b=\bA z$.

In particular for the evaluation of mRK we generate Gaussian matrices with $m=300$ rows and several columns while for the case of mRCD the matrix $\bP$ is chosen to be Gaussian with $m=500$ rows and several columns\footnote{RCD converge to the optimal solution only in the case of positive definite matrices. For this reason $\bA = \bP^\top \bP \in \R^{n \times n}$ is used which with probability $1$ is a full rank matrix}. 
Linear systems of these forms were extensively studied \cite{RK, geman1980limit} and it was shown that the quantity $1/\lambda_{\min}^+$(condition number) can be easily controlled. 
%For fixed number of rows by simply increasing the number of columns the value $1/\lambda_{\min}^+$ is also increasing. 

For each linear system we run mRK (Figure~\ref{RKperformace1})  and mRCD (Figure~\ref{RCDperformance1}) for several values of momentum parameters $\beta$ and fixed stepsize $\omega=1$ and we plot the performance of the methods (average after 10 trials) for both the relative error measure and the function values. Note that for $\beta=0$ the methods are equivalent with their no-momentum variants RK and RCD respectively.

From Figures~\ref{RKperformace1} and \ref{RCDperformance1} it is clear that the addition of momentum term leads to an improvement in the performance of RK and RCD, respectively. More specifically, from the two figures we observe the following:

\begin{itemize}
\item For the well conditioned linear systems ($1/\lambda_{\min}^+$ small) it is known that even the no-momentum variant converges rapidly to the optimal solution. In these cases the benefits of the addition of momentum are not obvious. The momentum term is beneficial for the case where the no-momentum variant ($\beta=0$) converges slowly, that is when  $1/\lambda_{\min}^+$ is large (ill-conditioned linear systems). 
\item For the case of fixed stepsize $\omega=1$, the problems with small condition number require smaller momentum parameter $\beta$ to have faster convergence. Note the first two rows of Figures~\ref{RKperformace1} and \ref{RCDperformance1}, where $\beta=0.3$ or $\beta =0.4$, are good options. 
\item For large values of $1/\lambda_{\min}^+$, it seems that the choice of $\beta=0.5$ is the best. As an example for matrix $\bA\in \R^{300 \times 280}$ in Figure~\ref{RKperformace1}, (where $1/\lambda_{\min}^+=208,730$), note that to reach relative error $10^{-10}$, RK needs around 2 million iterations, while mRK with momentum parameter $\beta=0.5$ requires only half that many iterations. The acceleration is obvious also in terms of time where in 12 seconds the mRK with momentum parameter $\beta=0.5$ achieves relative error of the order $10^{-9}$ and RK requires more than 25 seconds to obtain the same accuracy. 
\item We observe that both mRK and mRCD, with appropriately chosen momentum parameters $0< \beta \leq 0.5$, always converge  faster than their no-momentum variants, RK and RCD, respectively. This is a smaller momentum parameter than $\beta \approx 0.9$ which is being used extensively with mSGD for training deep neural networks \cite{zhang2017yellowfin, wilson2017marginal, sutskever2013importance}. 
\item In \cite{de2017accelerated} a stochastic power iteration with momentum is proposed for principal component analysis (PCA). There it was demonstrated empirically that a naive application of momentum to the stochastic power iteration does not result in a faster method. To achieve faster convergence, the authors proposed mini-batch and variance-reduction techniques on top of the addition of momentum. In our setting, mere addition of the momentum term  to SGD (same is true for  special cases such as RK and RCD) leads to empirically faster methods.
\end{itemize}

\begin{figure}[!]
\centering
\begin{subfigure}{.23\textwidth}
  \centering
  \includegraphics[width=1\linewidth]{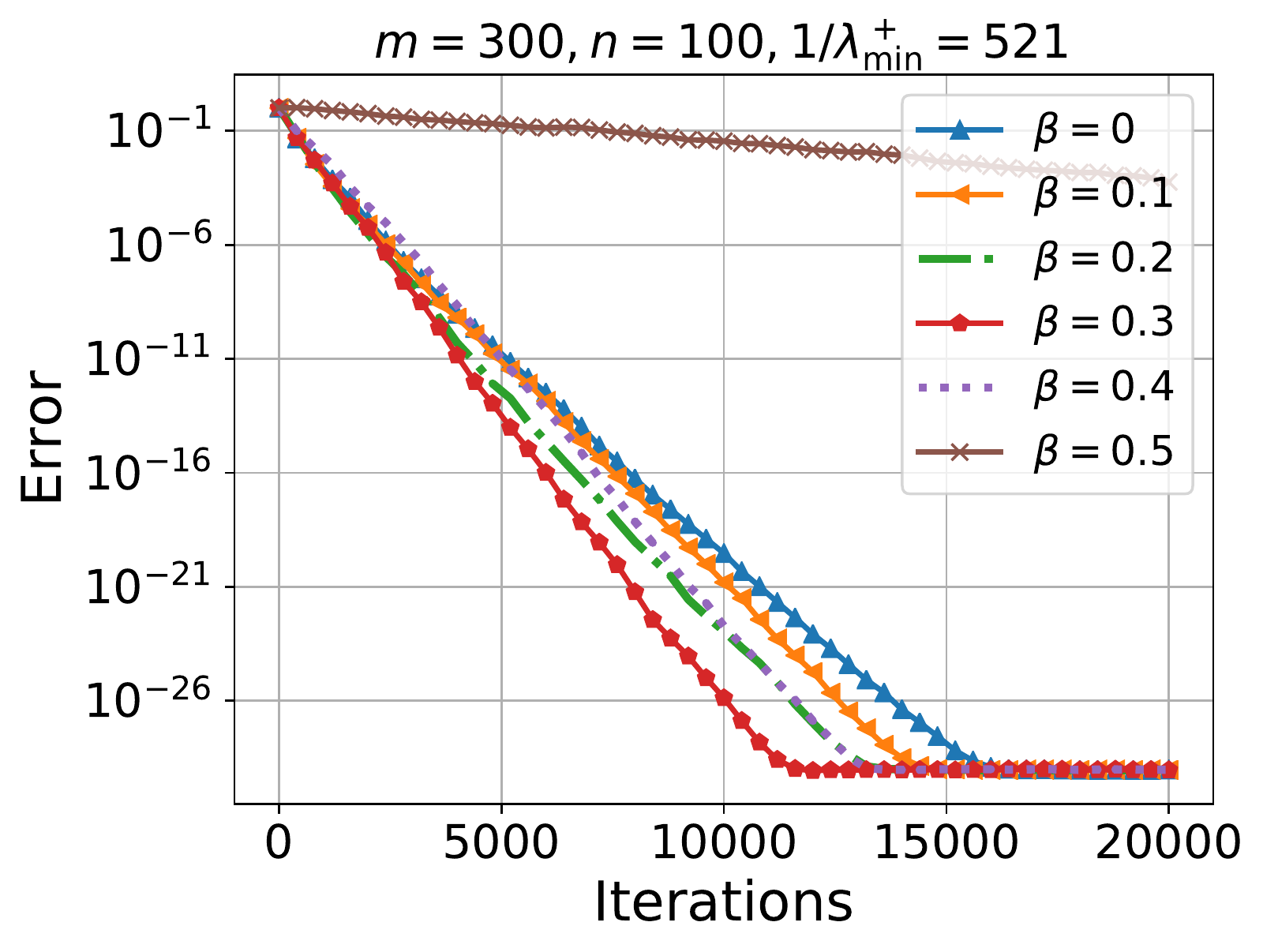}
  %\caption{}
\end{subfigure}%
\begin{subfigure}{.23\textwidth}
  \centering
  \includegraphics[width=1\linewidth]{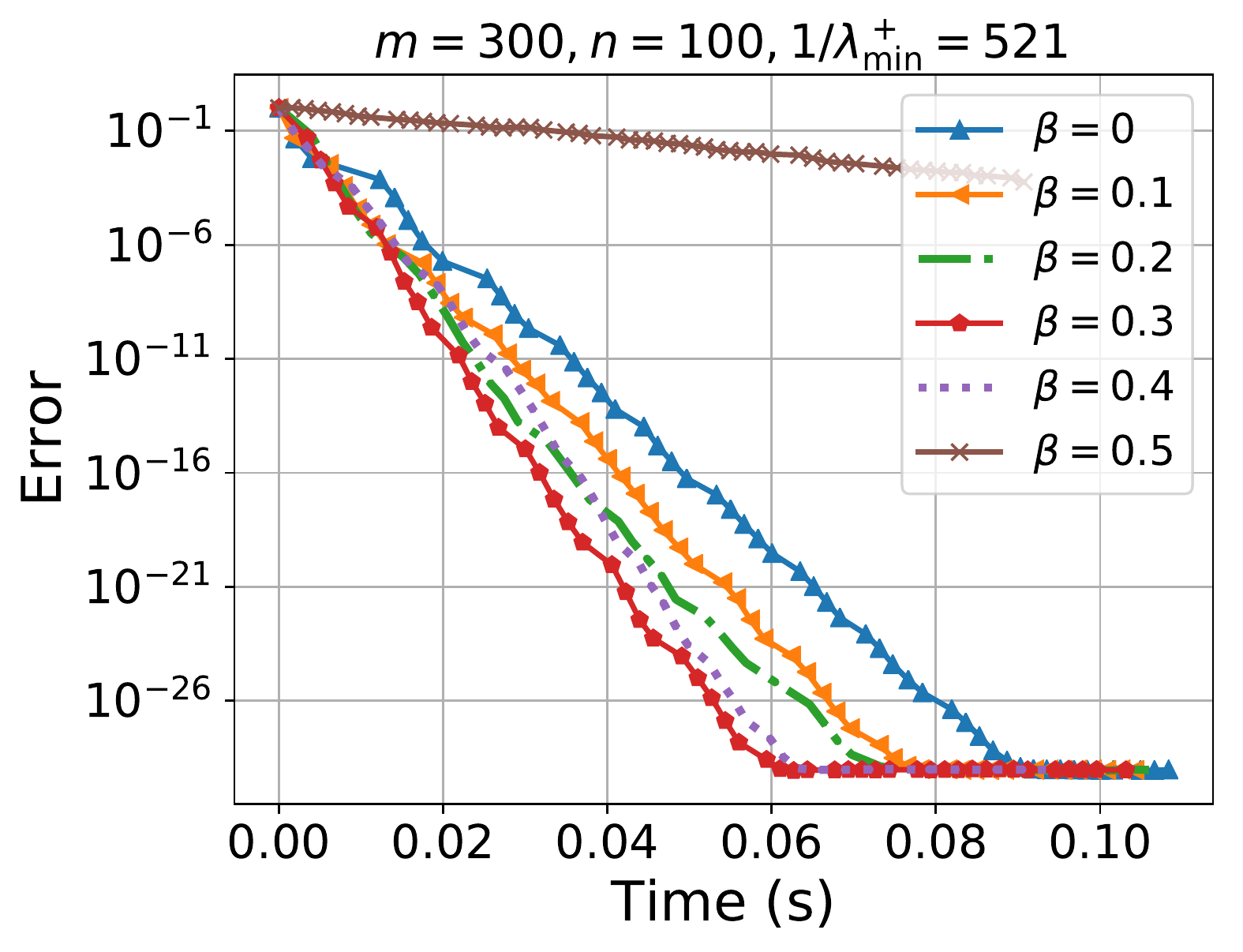}
 % \caption{}
\end{subfigure}
\begin{subfigure}{.23\textwidth}
  \centering
  \includegraphics[width=\linewidth]{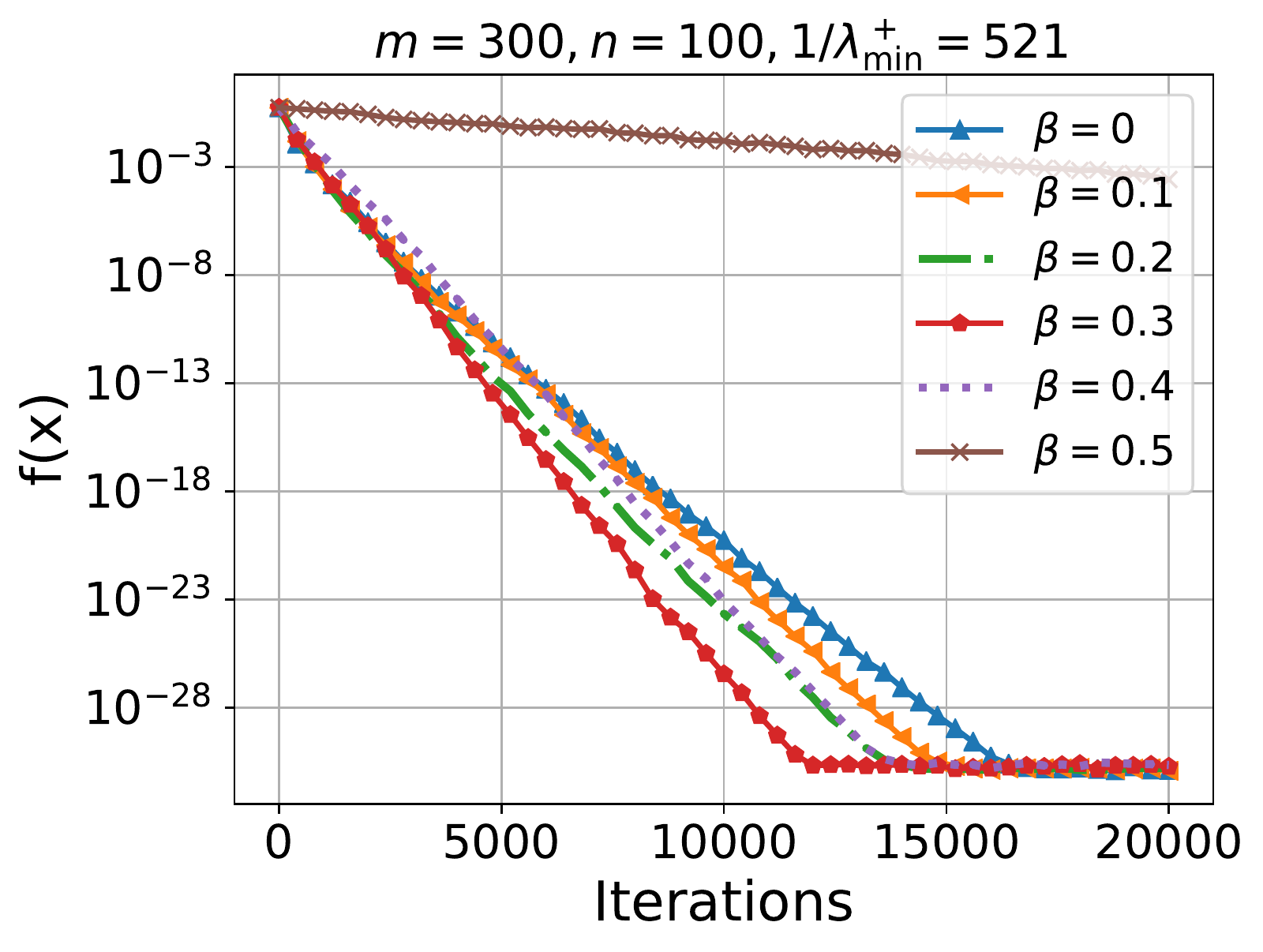}
 % \caption{}
\end{subfigure}
\begin{subfigure}{.23\textwidth}
  \centering
  \includegraphics[width=1\linewidth]{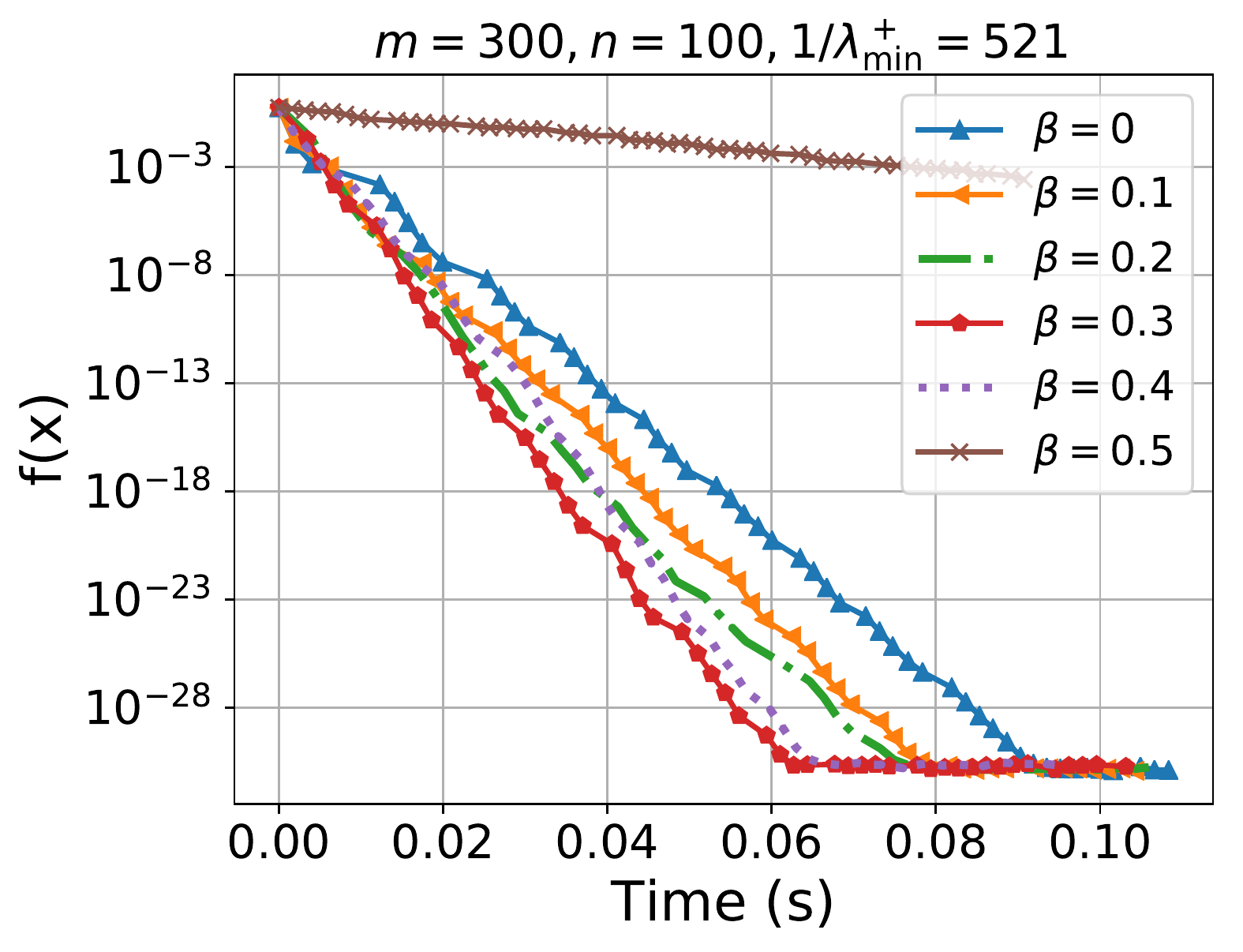}
%  \caption{}
\end{subfigure}\\
\begin{subfigure}{.23\textwidth}
  \centering
  \includegraphics[width=1\linewidth]{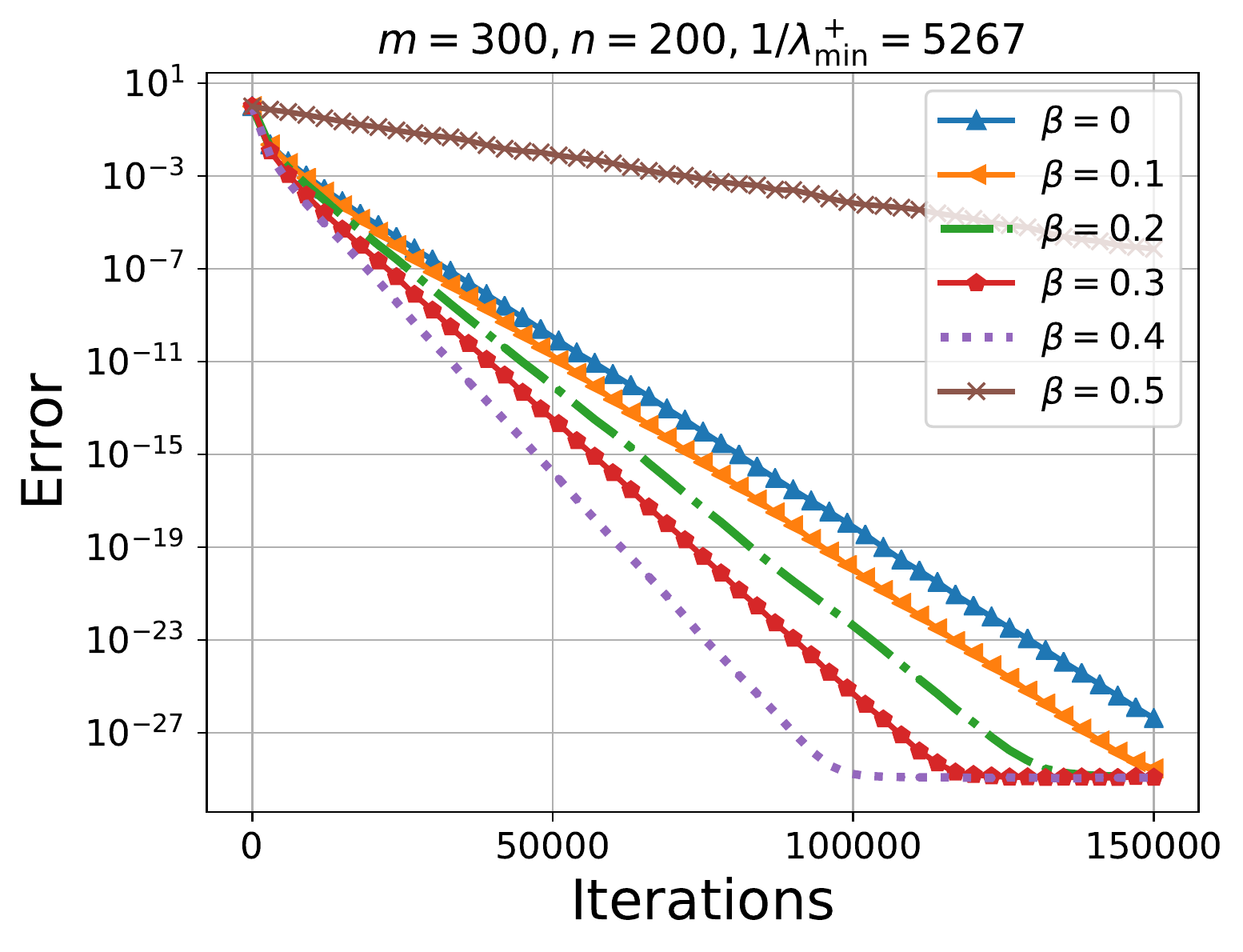}
  %\caption{}
\end{subfigure}%
\begin{subfigure}{.23\textwidth}
  \centering
  \includegraphics[width=1\linewidth]{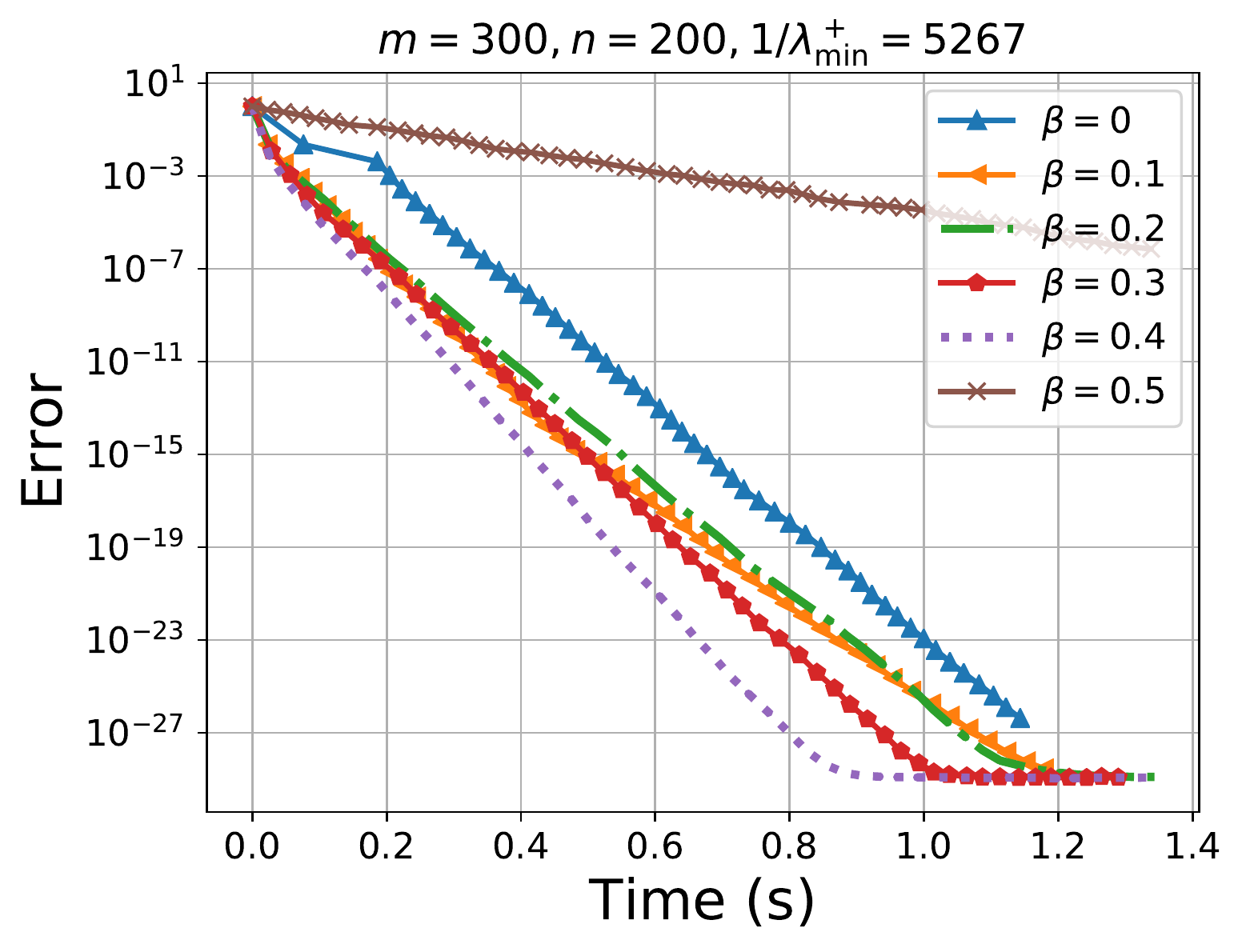}
 % \caption{}
\end{subfigure}
\begin{subfigure}{.23\textwidth}
  \centering
  \includegraphics[width=\linewidth]{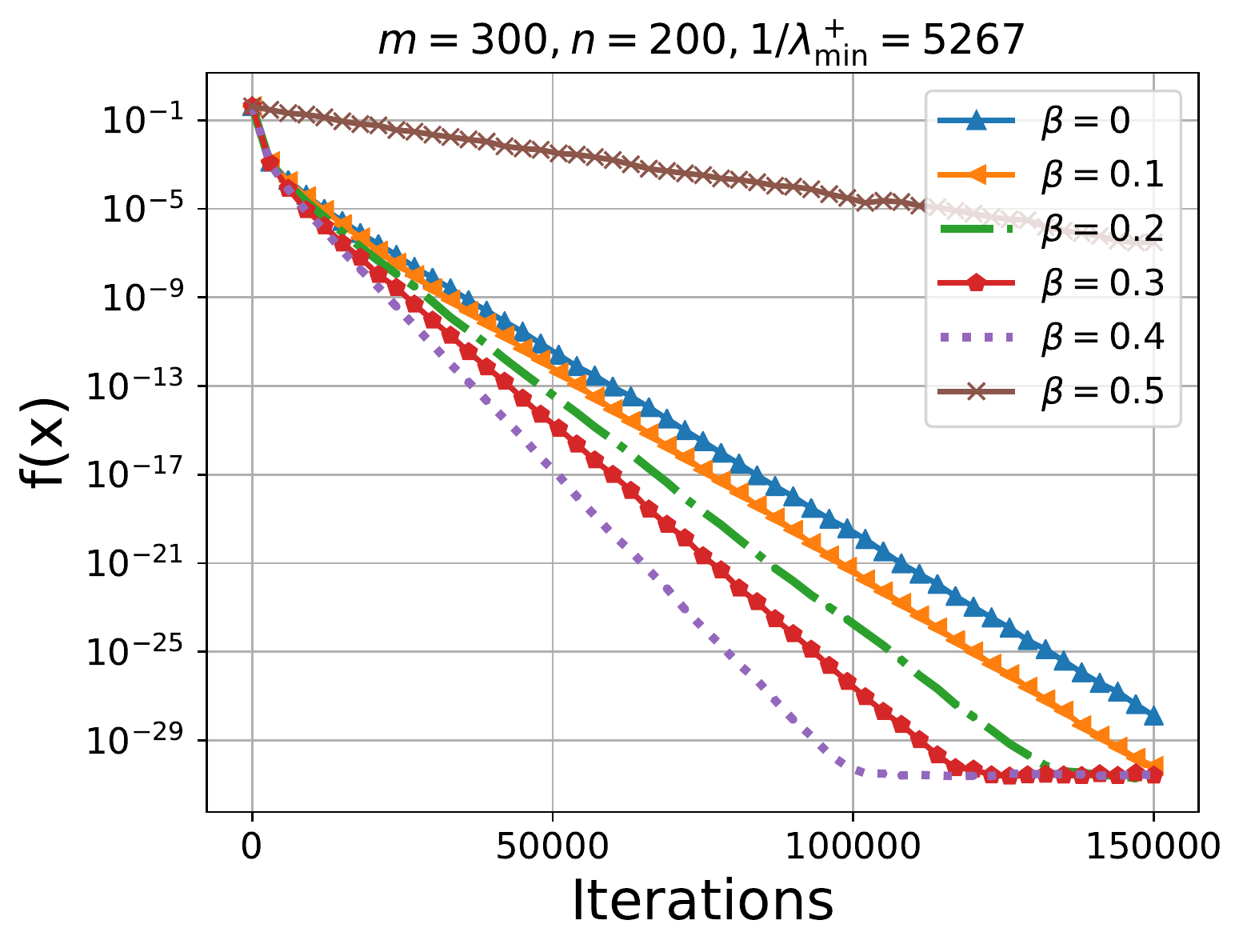}
 % \caption{}
\end{subfigure}
\begin{subfigure}{.23\textwidth}
  \centering
  \includegraphics[width=1\linewidth]{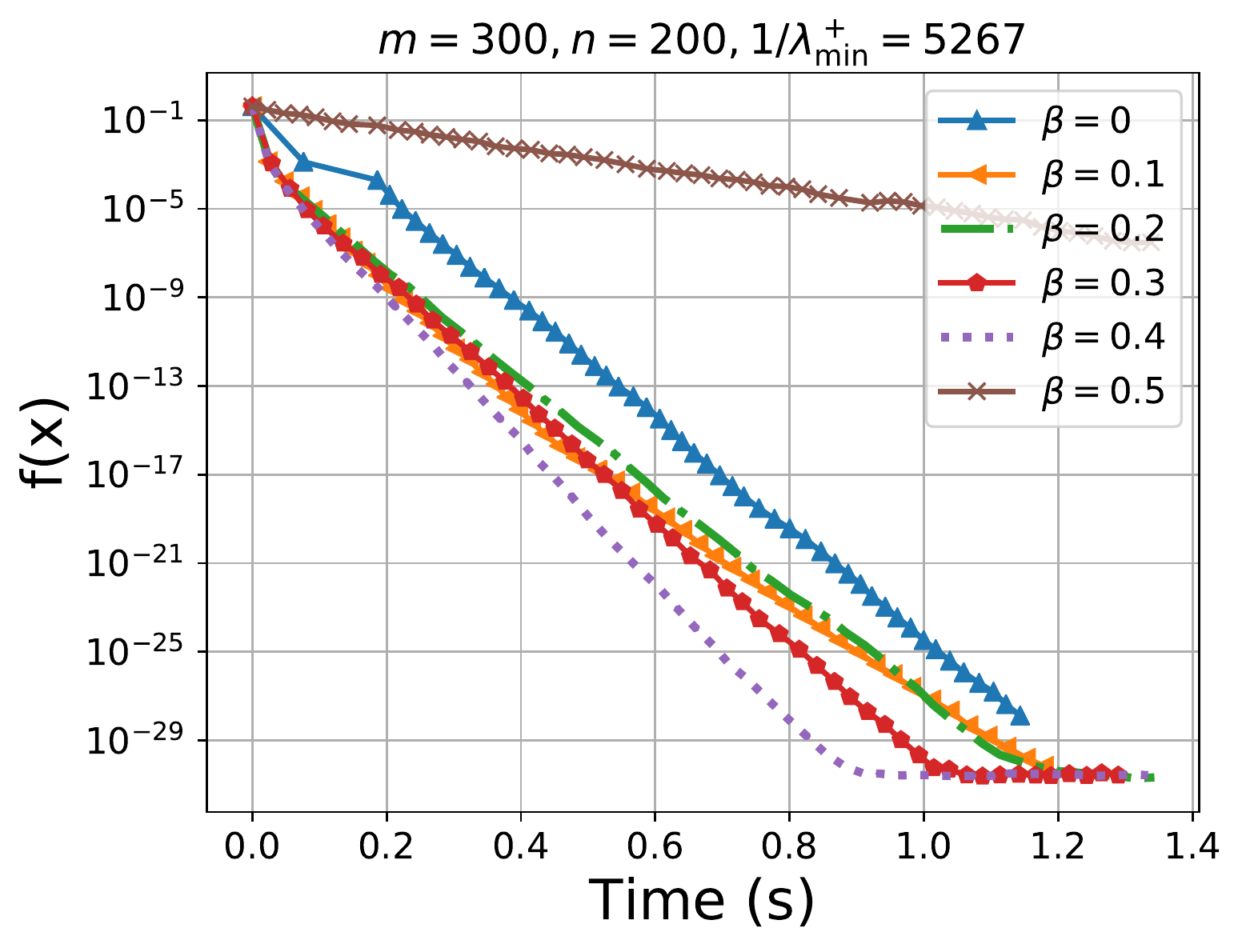}
%  \caption{}
\end{subfigure}\\
\begin{subfigure}{.23\textwidth}
  \centering
 \includegraphics[width=1\linewidth]{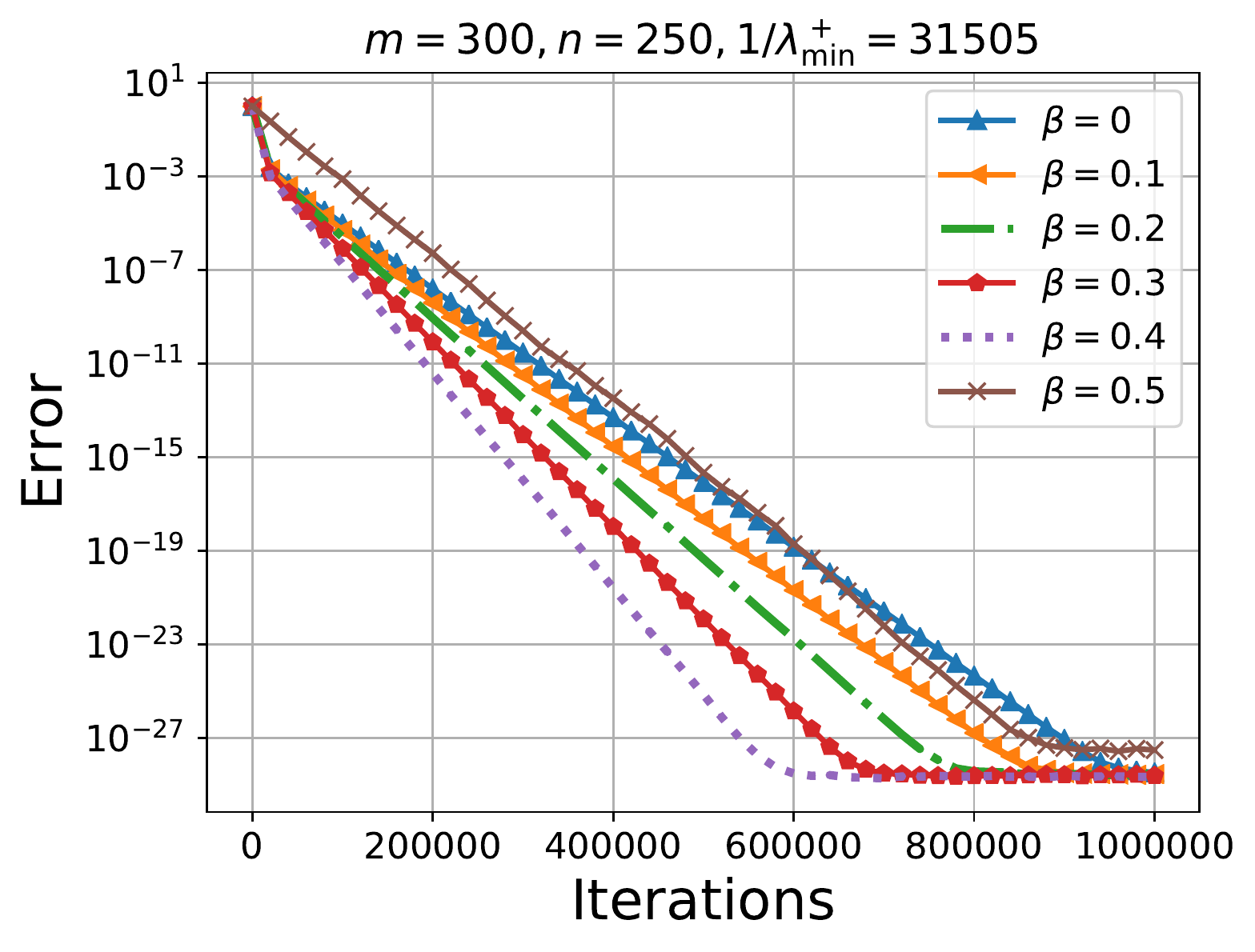}
  %\caption{}
\end{subfigure}%
\begin{subfigure}{.23\textwidth}
  \centering
\includegraphics[width=1\linewidth]{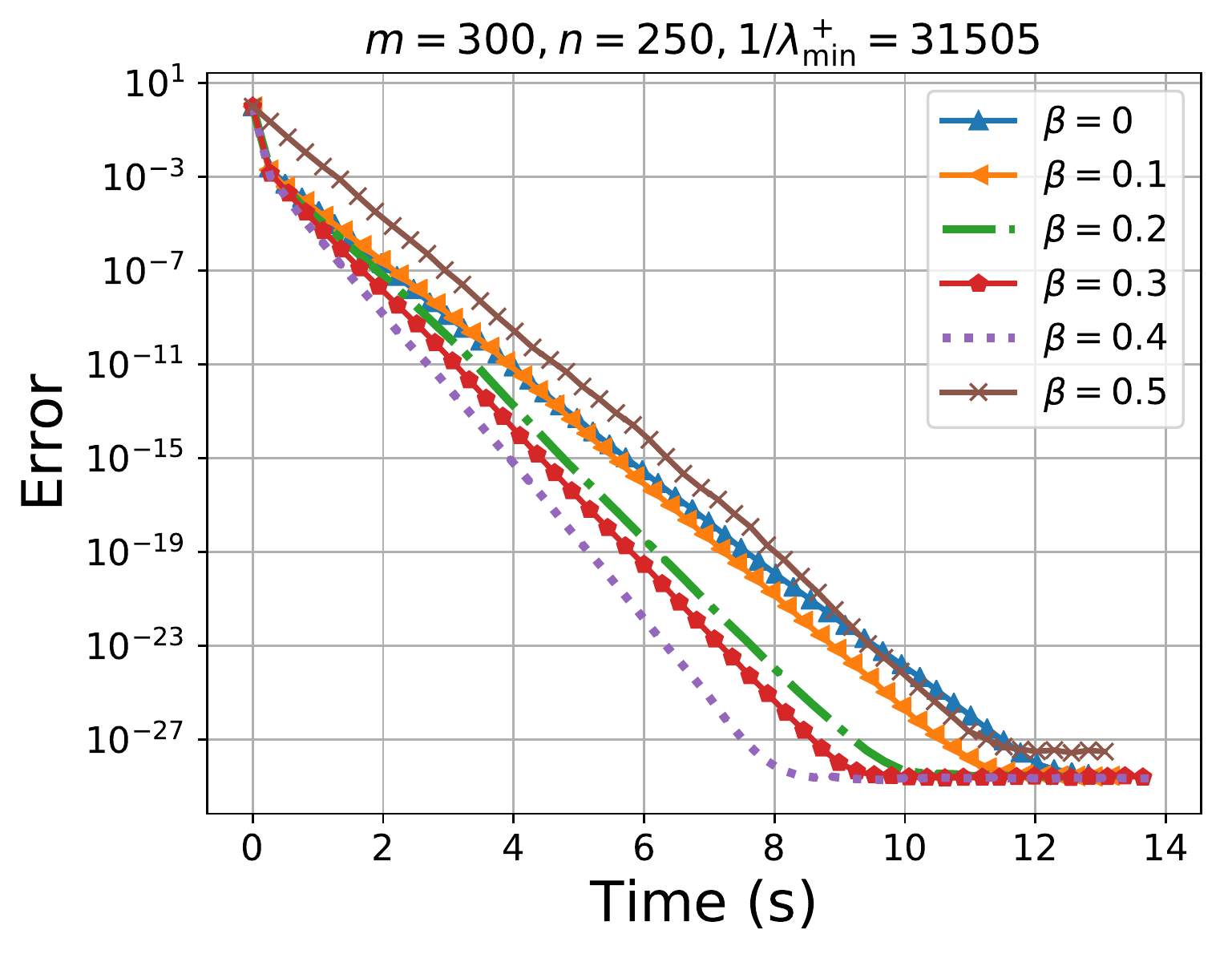}
 % \caption{}
\end{subfigure}
\begin{subfigure}{.23\textwidth}
  \centering
    \includegraphics[width=1\linewidth]{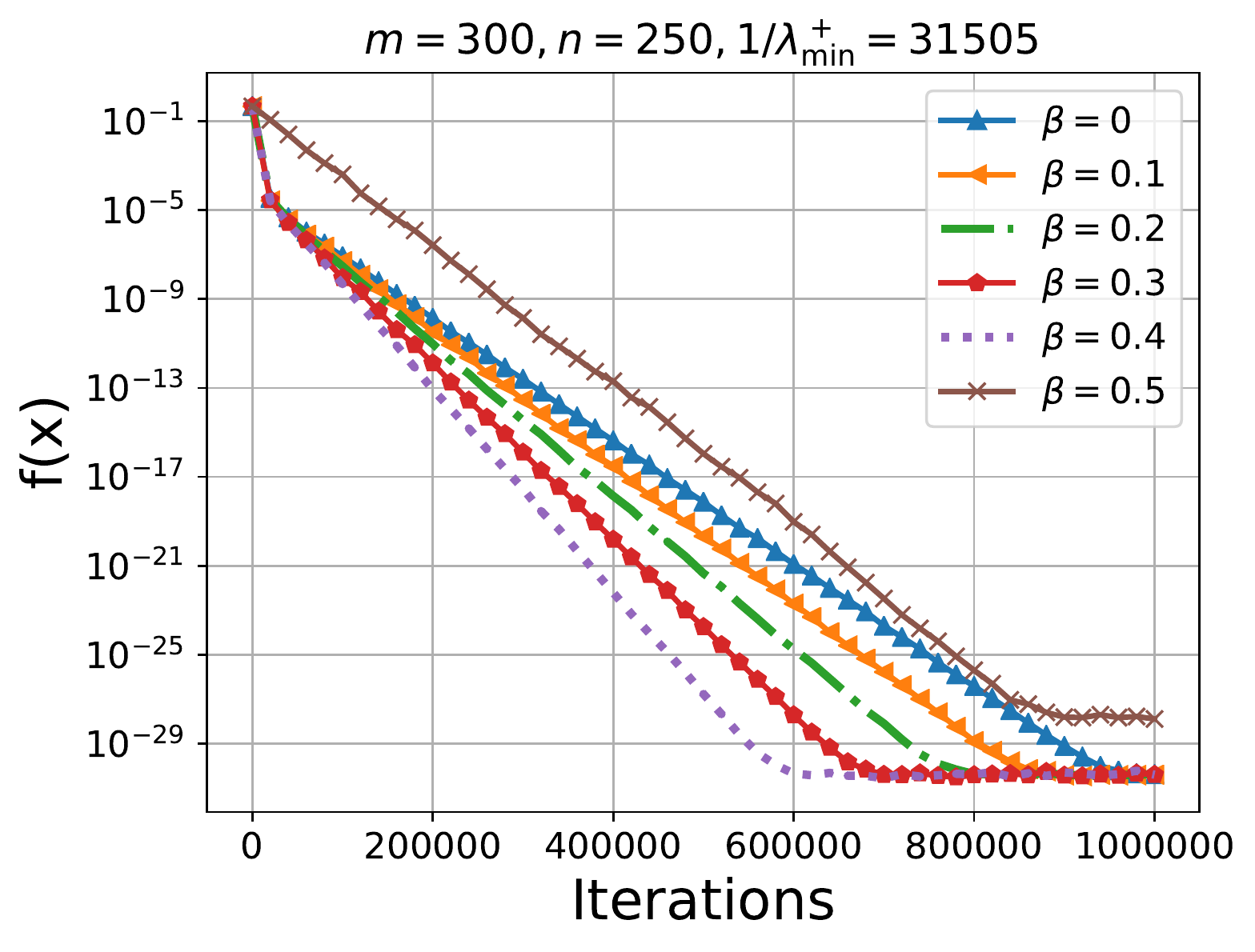}
 % \caption{}
\end{subfigure}
\begin{subfigure}{.23\textwidth}
  \centering
  \includegraphics [width=1\linewidth] {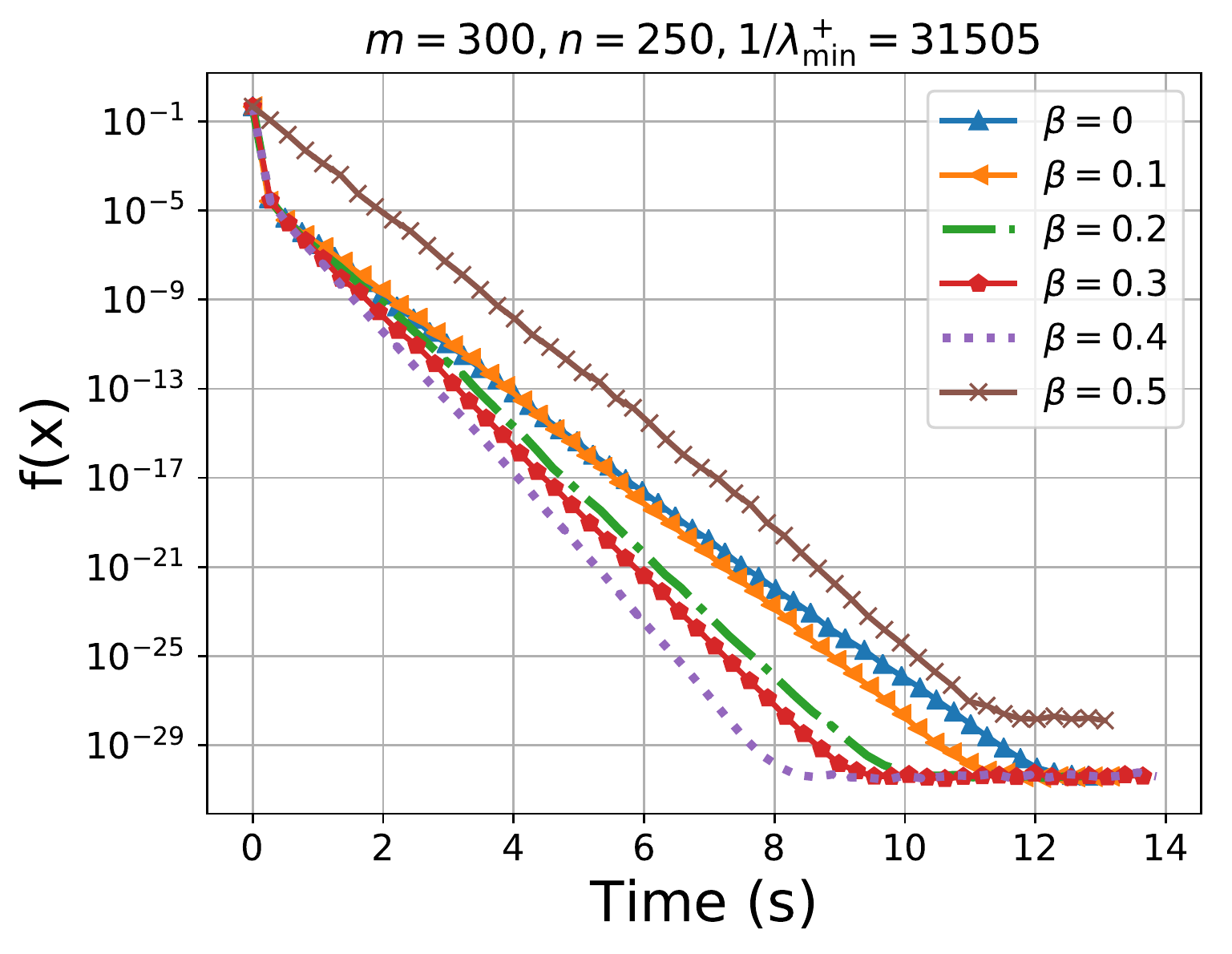} %[scale=0.24]
%  \caption{}
\end{subfigure}\\
\begin{subfigure}{.23\textwidth}
  \centering
  \includegraphics[width=1\linewidth]{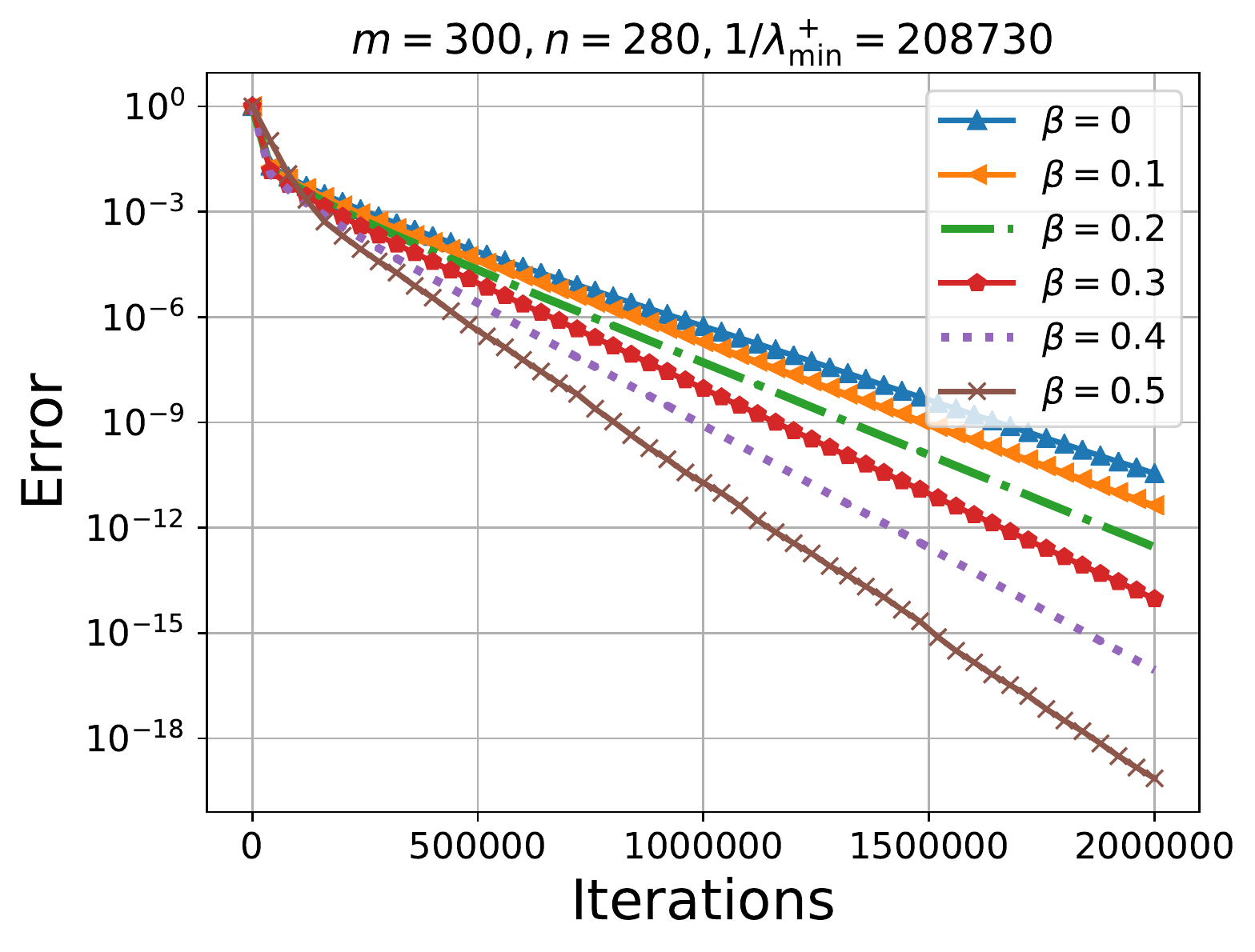}
  %\caption{}
\end{subfigure}%
\begin{subfigure}{.23\textwidth}
  \centering
  \includegraphics[width=1\linewidth]{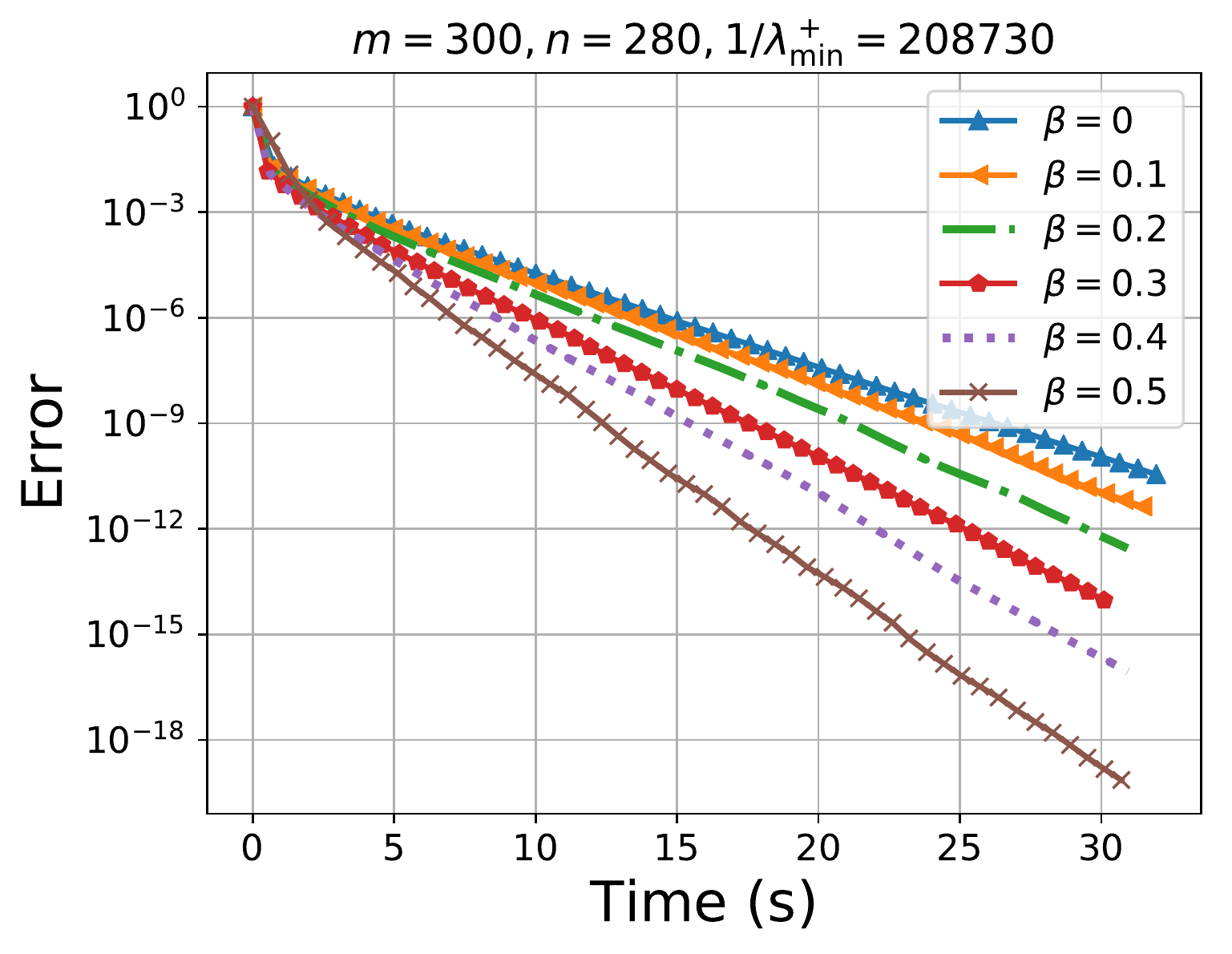}
 % \caption{}
\end{subfigure}
\begin{subfigure}{.23\textwidth}
  \centering
  \includegraphics[width=1\linewidth]{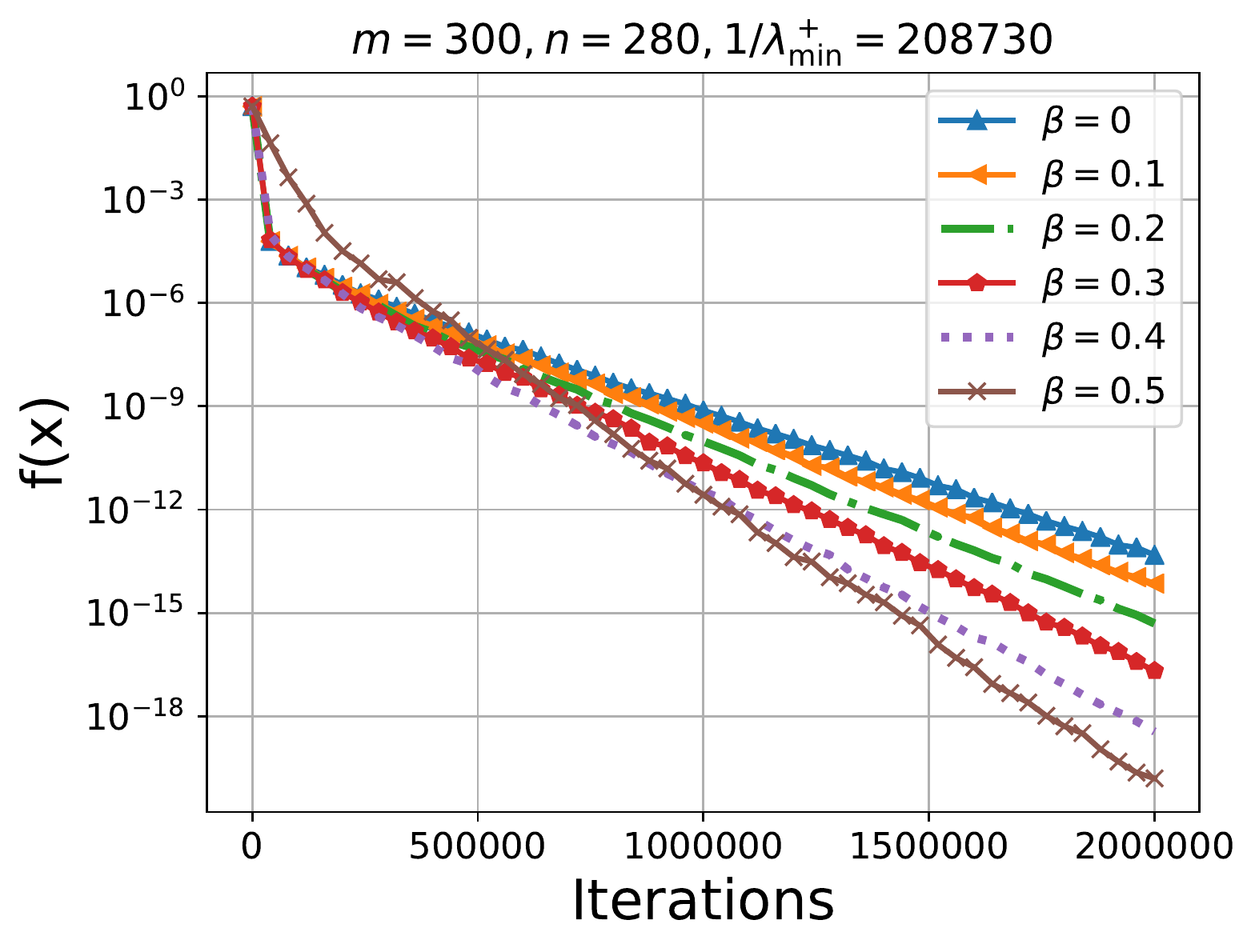}
 % \caption{}
\end{subfigure}
\begin{subfigure}{.23\textwidth}
  \centering
  \includegraphics[width=1\linewidth]{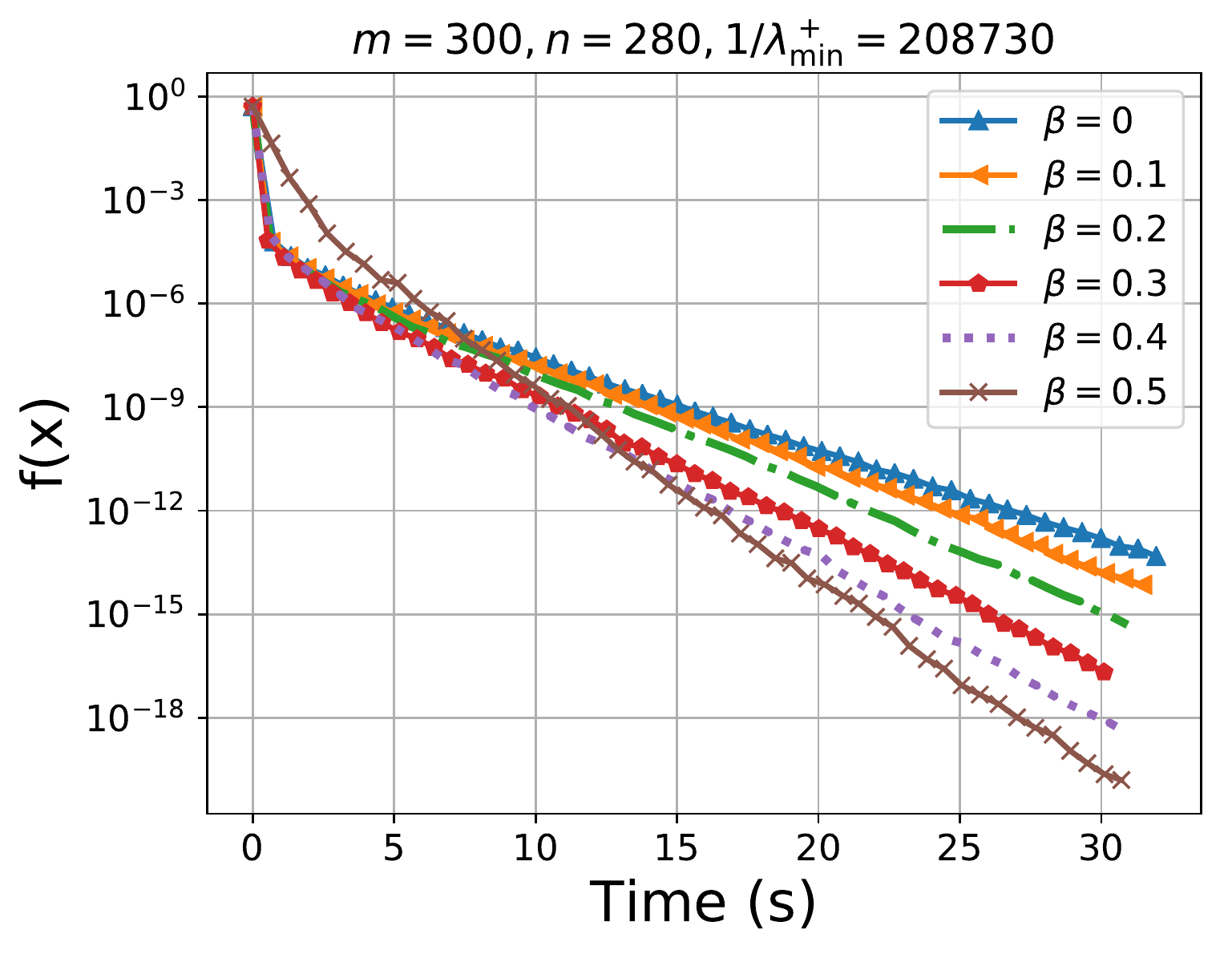}
%  \caption{}
\end{subfigure}\\
\begin{subfigure}{.23\textwidth}
  \centering
  \includegraphics[width=1\linewidth]{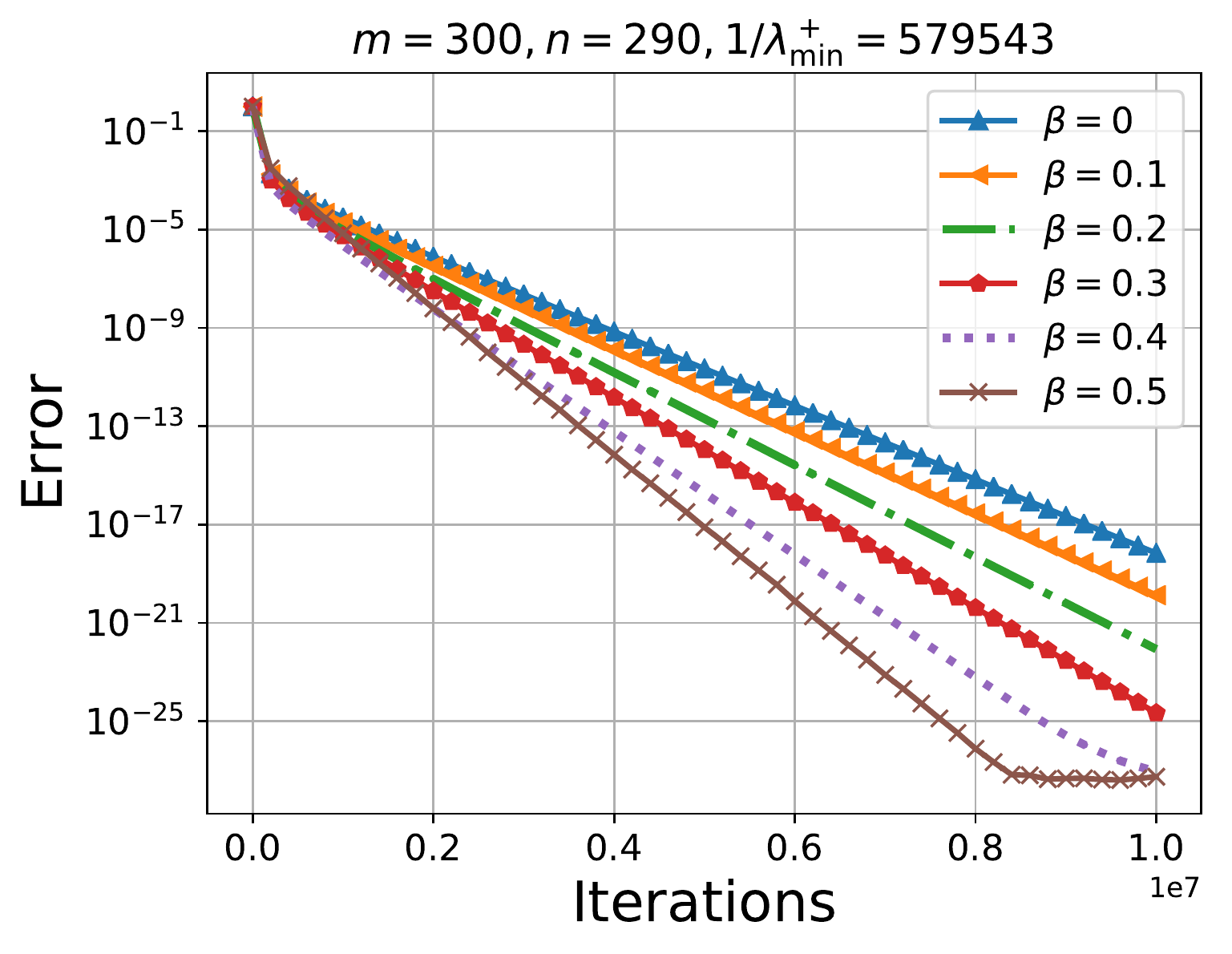}
  %\caption{}
\end{subfigure}%
\begin{subfigure}{.23\textwidth}
  \centering
  \includegraphics[width=1\linewidth]{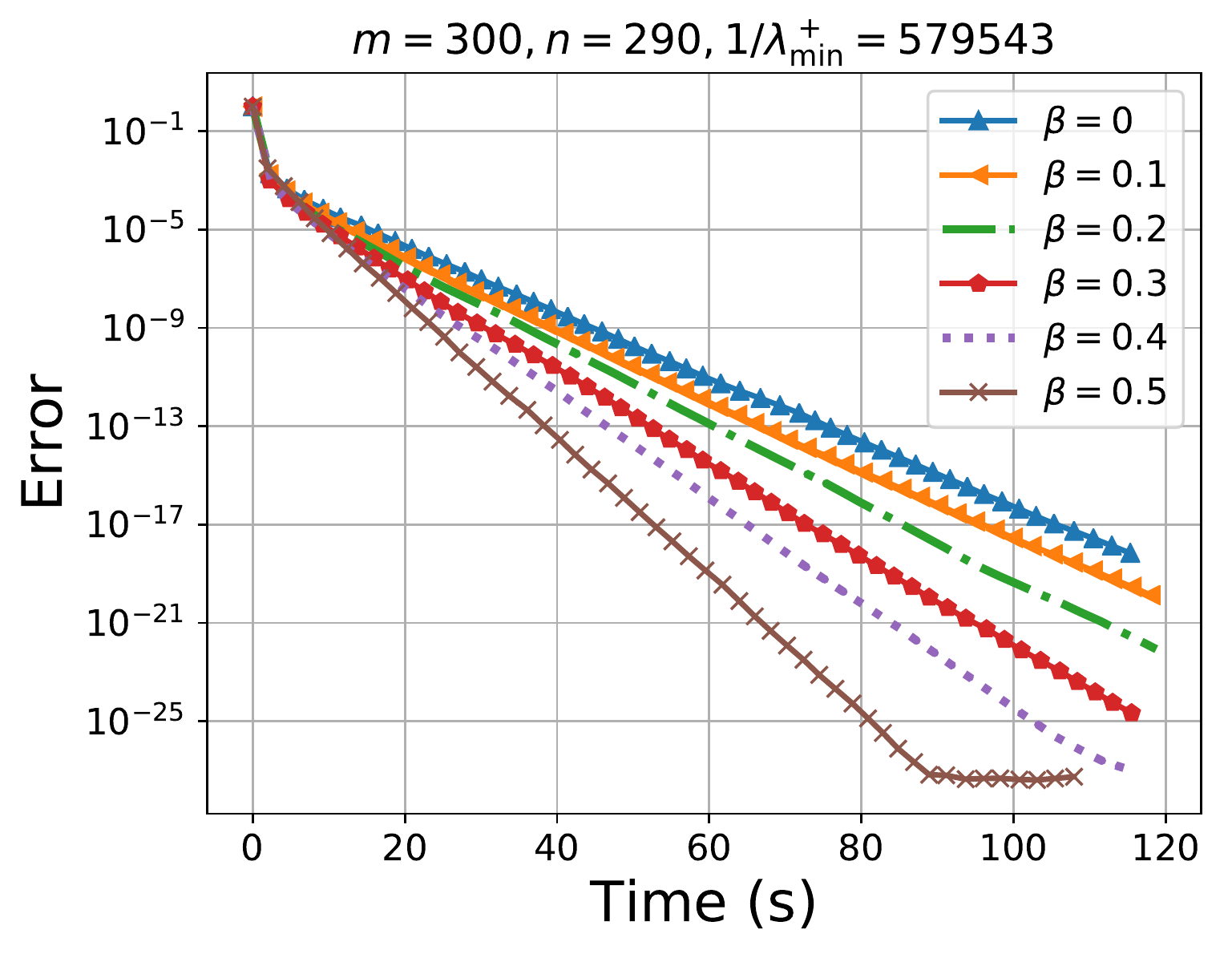}
 % \caption{}
\end{subfigure}
\begin{subfigure}{.23\textwidth}
  \centering
  \includegraphics[width=1\linewidth]{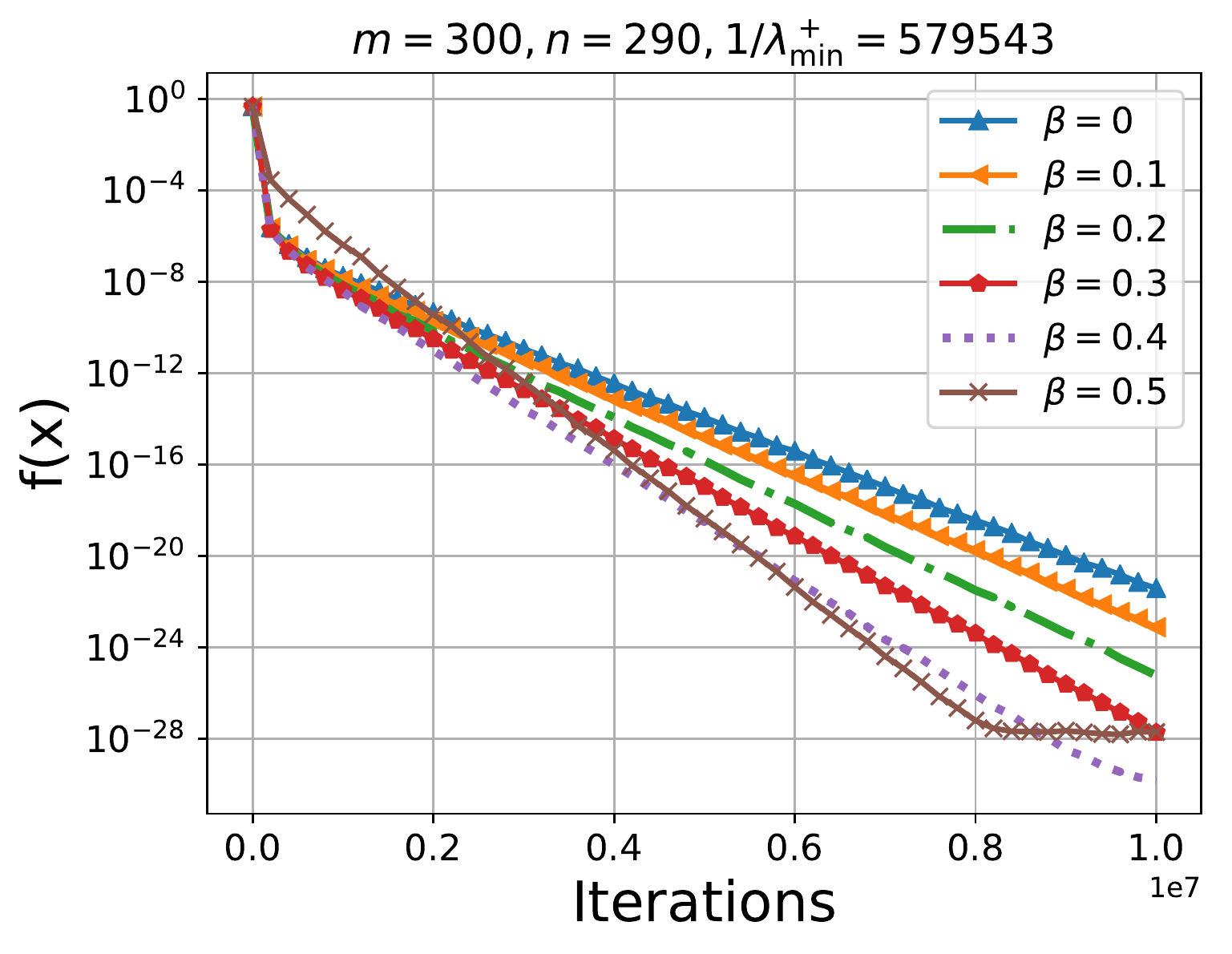}
 % \caption{}
\end{subfigure}
\begin{subfigure}{.23\textwidth}
  \centering
  \includegraphics[width=1\linewidth]{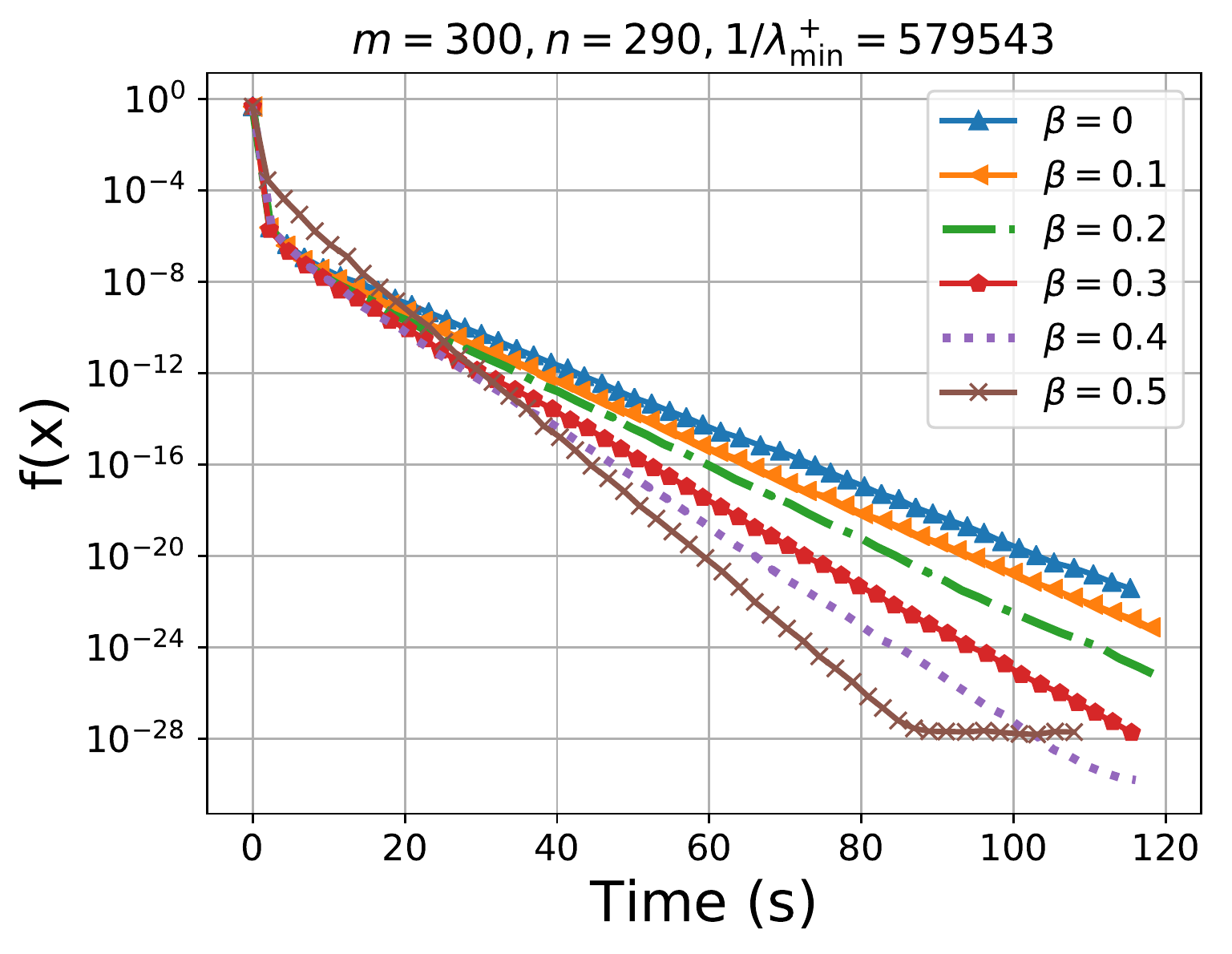}
%  \caption{}
\end{subfigure}\\
\caption{Performance of mRK for fixed stepsize $\omega=1$ and several momentum parameters $\beta$ for consistent linear systems with Gaussian matrix $\bA$ with $m=300$ rows and $n=100,200,250,280,290$ columns. The graphs in the first (second) column plot iterations (time) against residual error while those in the third (forth) column plot iterations (time) against function values. All plots are averaged over 10 trials. The title of each plot indicates the dimensions of the matrix $\bA$ and the value of $1/\lambda_{\min}^+$. The ``Error" on the vertical axis represents the relative error $\|x_k-x_*\|^2_\bB / \|x_0-x_*\|^2_\bB \overset{\bB=\bI, x_0=0}{=}\|x_k-x_*\|^2 / \|x_*\|^2_\bB$ and the function values $f(x_k)$ refer to function~\eqref{functionRK}.}

\label{RKperformace1}
\end{figure}

\begin{figure}[!]
\centering
\begin{subfigure}{.23\textwidth}
  \centering
  \includegraphics[width=1\linewidth]{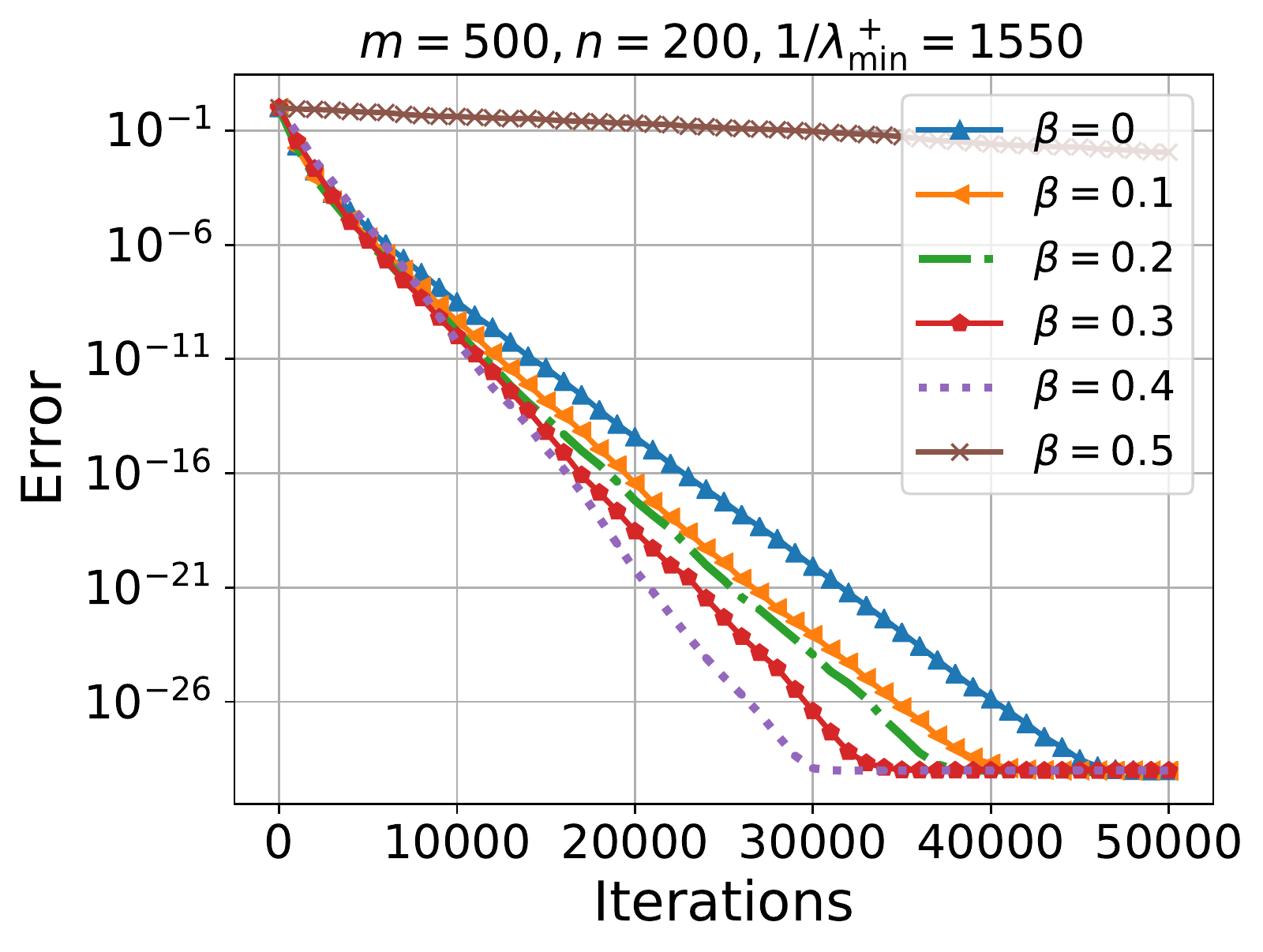}
  %\caption{}
\end{subfigure}%
\begin{subfigure}{.23\textwidth}
  \centering
  \includegraphics[width=1\linewidth]{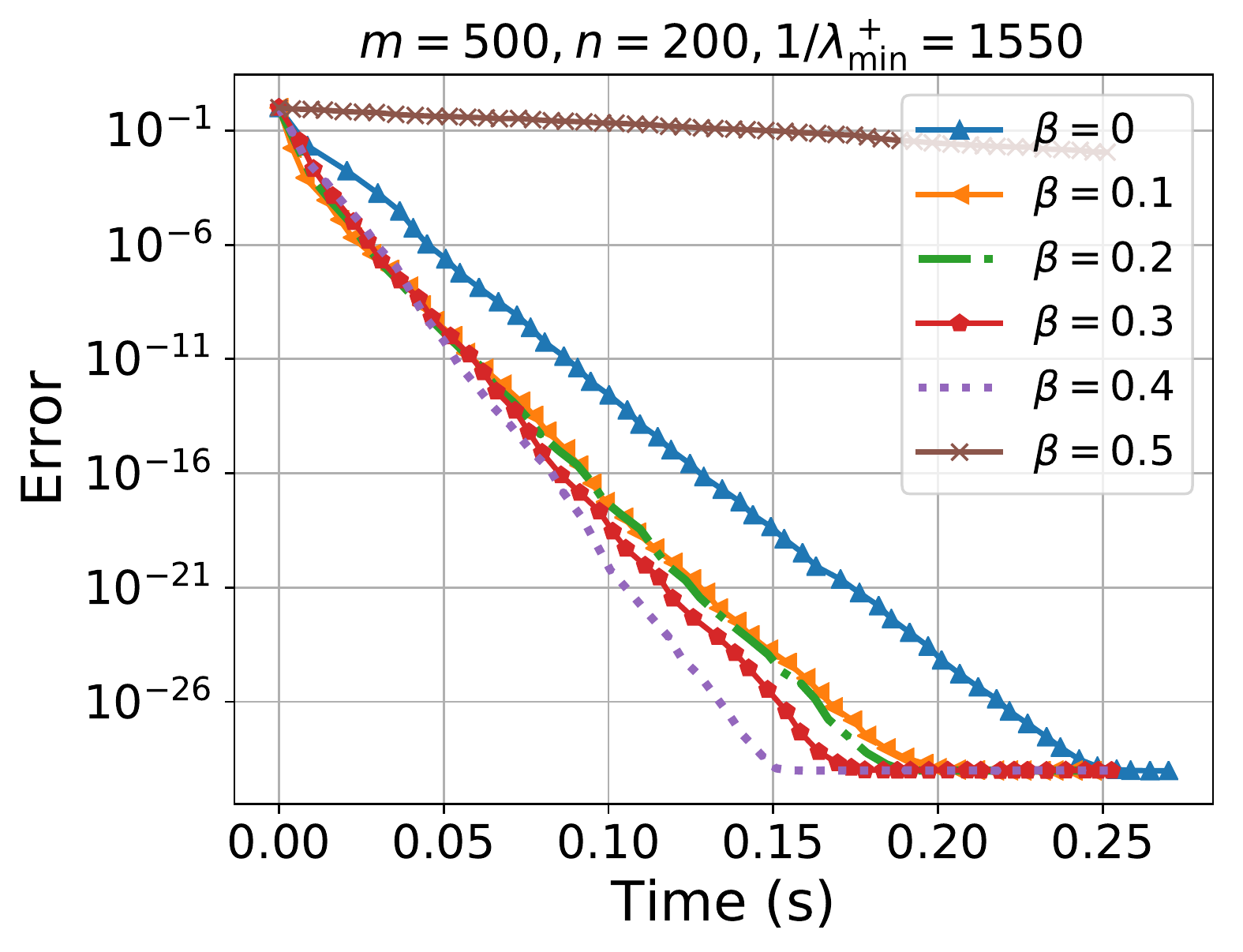}
 % \caption{}
\end{subfigure}
\begin{subfigure}{.23\textwidth}
  \centering
  \includegraphics[width=1\linewidth]{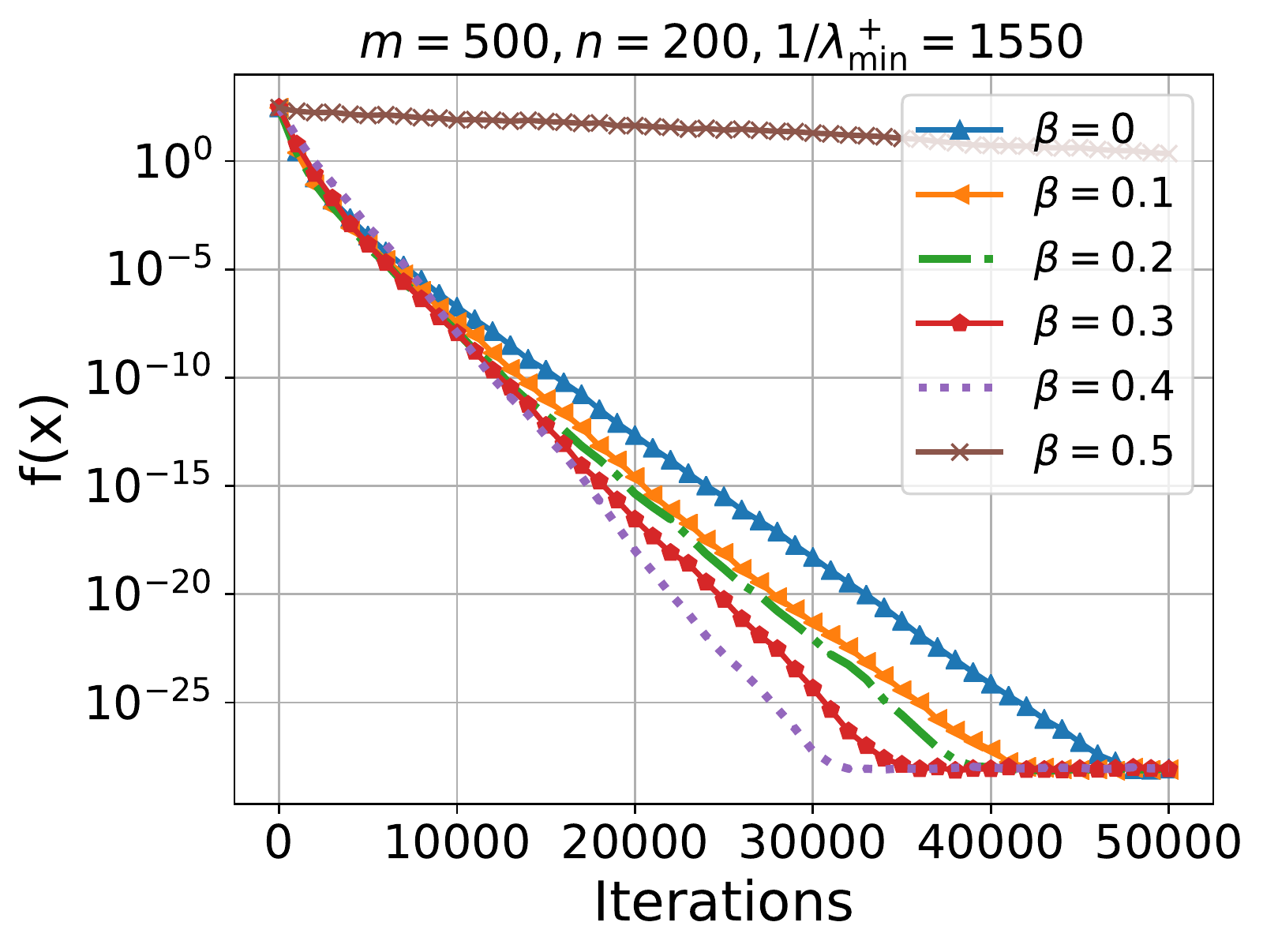}
 % \caption{}
\end{subfigure}
\begin{subfigure}{.23\textwidth}
  \centering
  \includegraphics[width=1\linewidth]{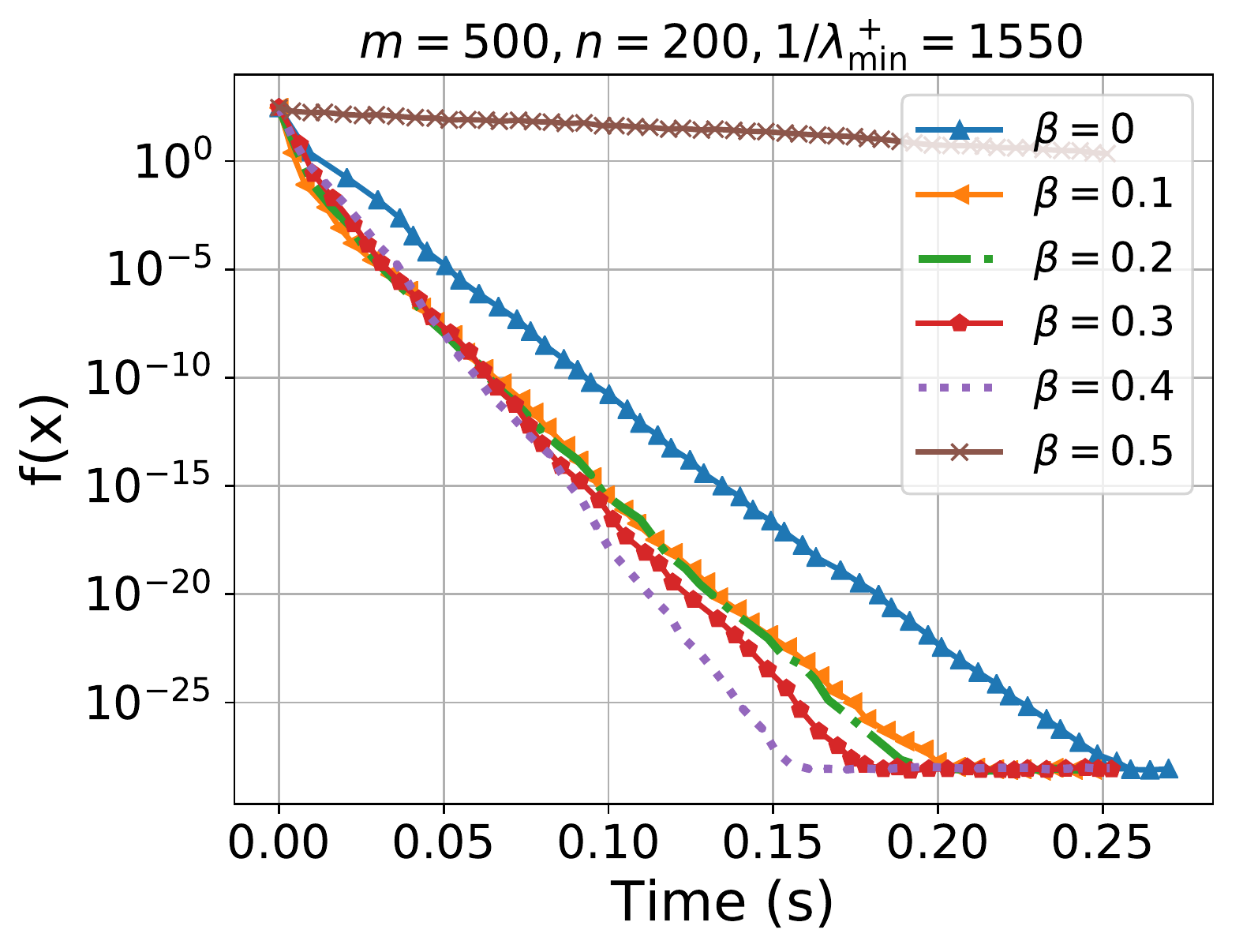}
%  \caption{}
\end{subfigure}\\
\begin{subfigure}{.23\textwidth}
  \centering
  \includegraphics[width=1\linewidth]{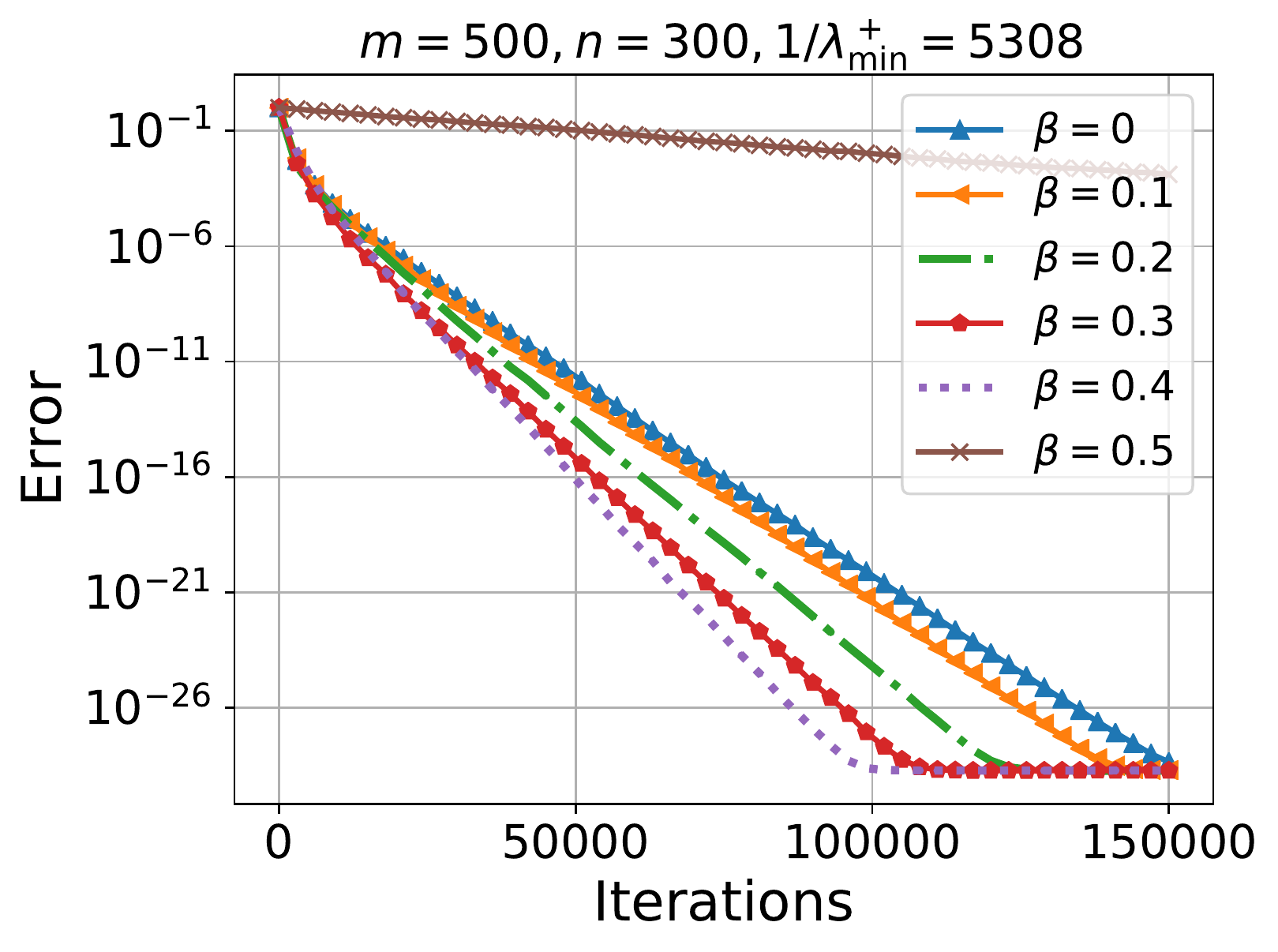}
  %\caption{}
\end{subfigure}%
\begin{subfigure}{.23\textwidth}
  \centering
  \includegraphics[width=1\linewidth]{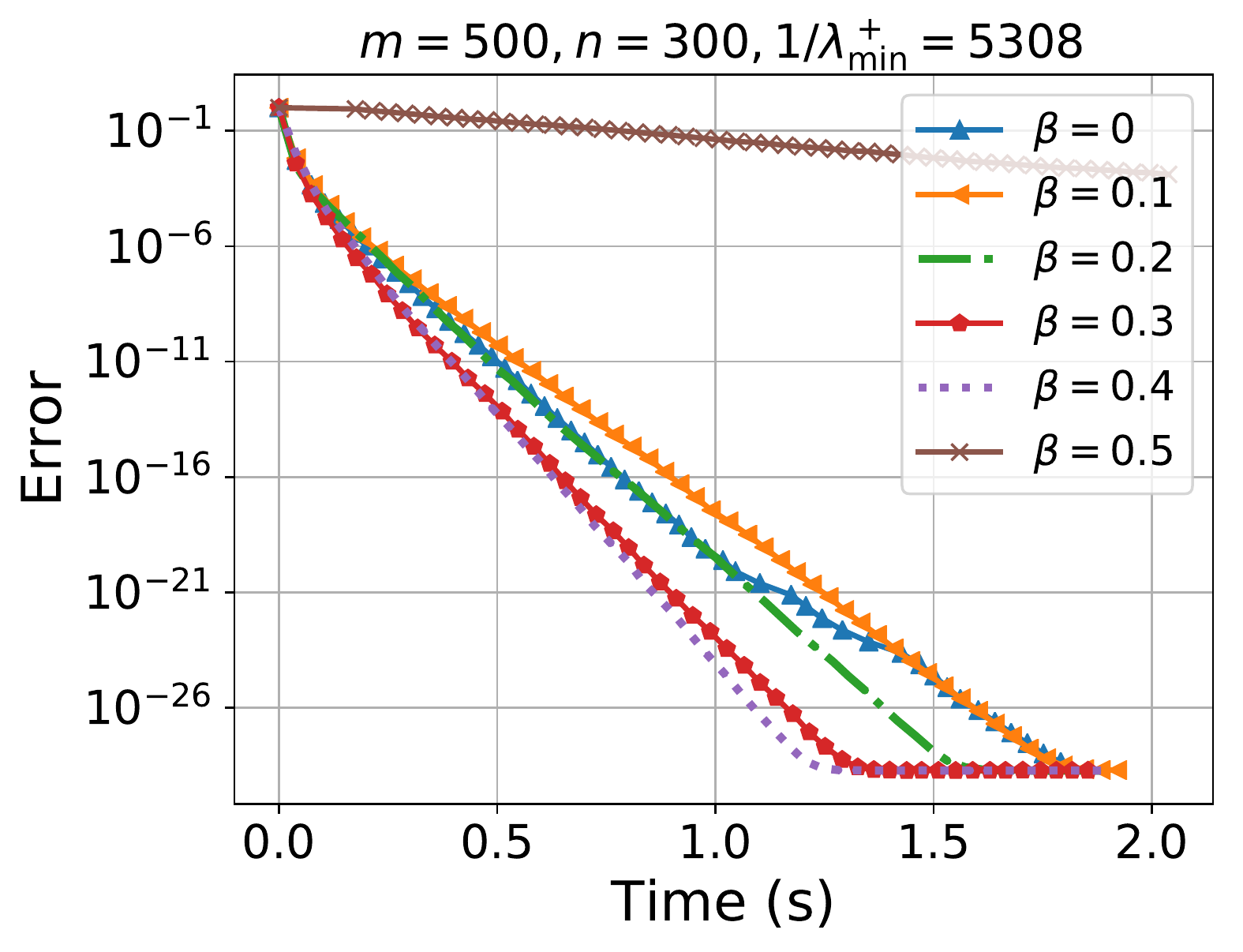}
 % \caption{}
\end{subfigure}
\begin{subfigure}{.23\textwidth}
  \centering
  \includegraphics[width=1\linewidth]{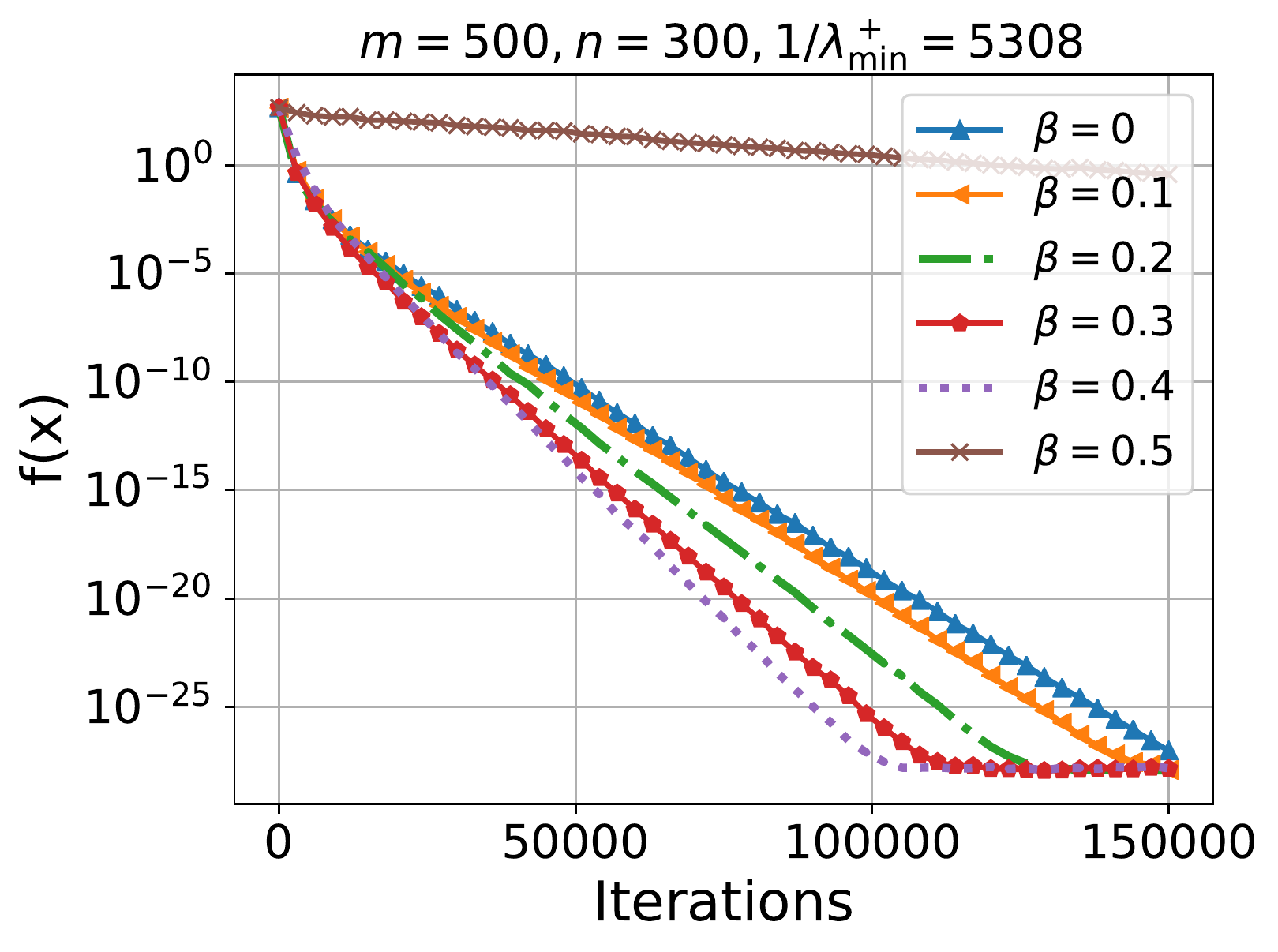}
 % \caption{}
\end{subfigure}
\begin{subfigure}{.23\textwidth}
  \centering
  \includegraphics[width=1\linewidth]{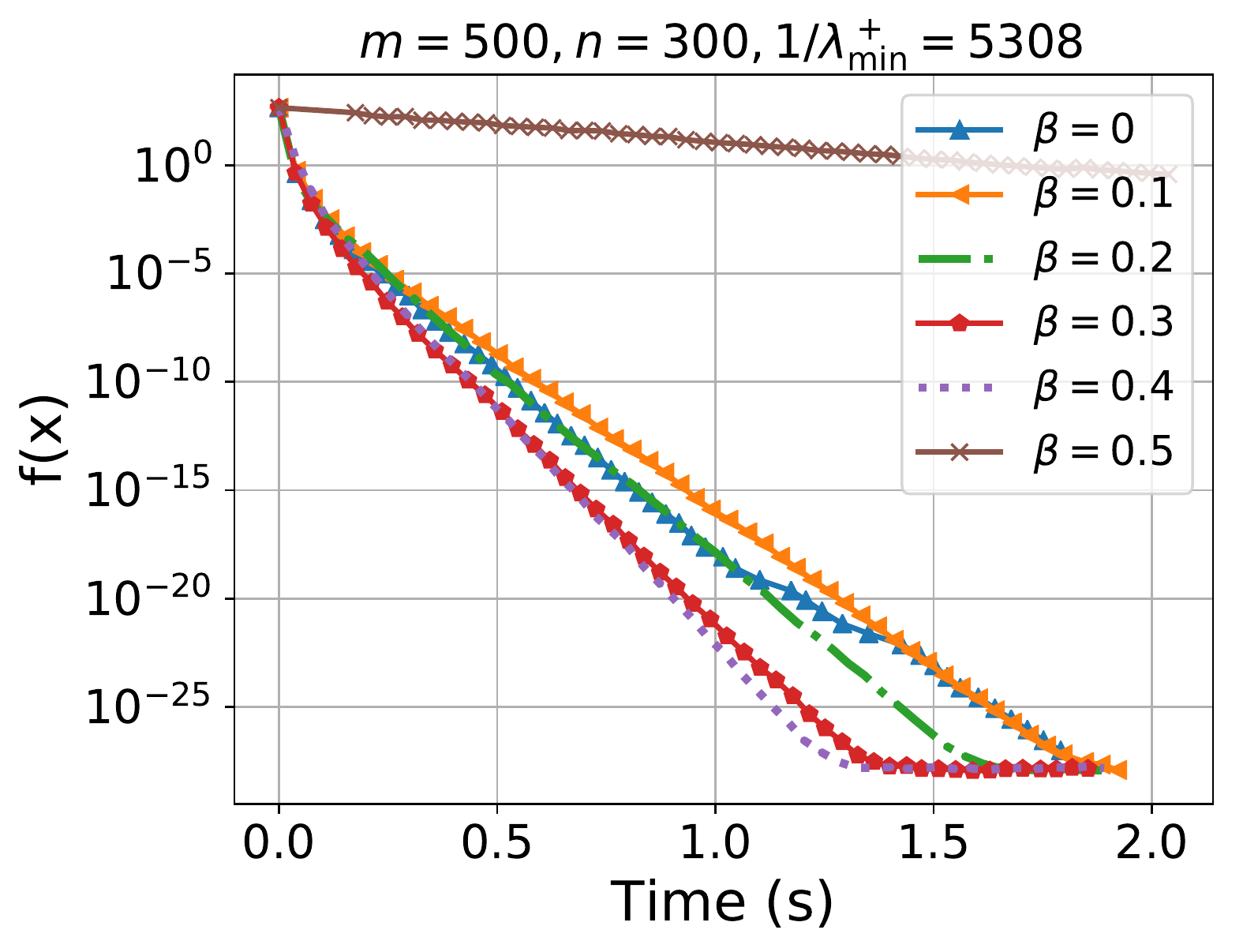}
%  \caption{}
\end{subfigure}\\
\begin{subfigure}{.23\textwidth}
  \centering
  \includegraphics[width=1\linewidth]{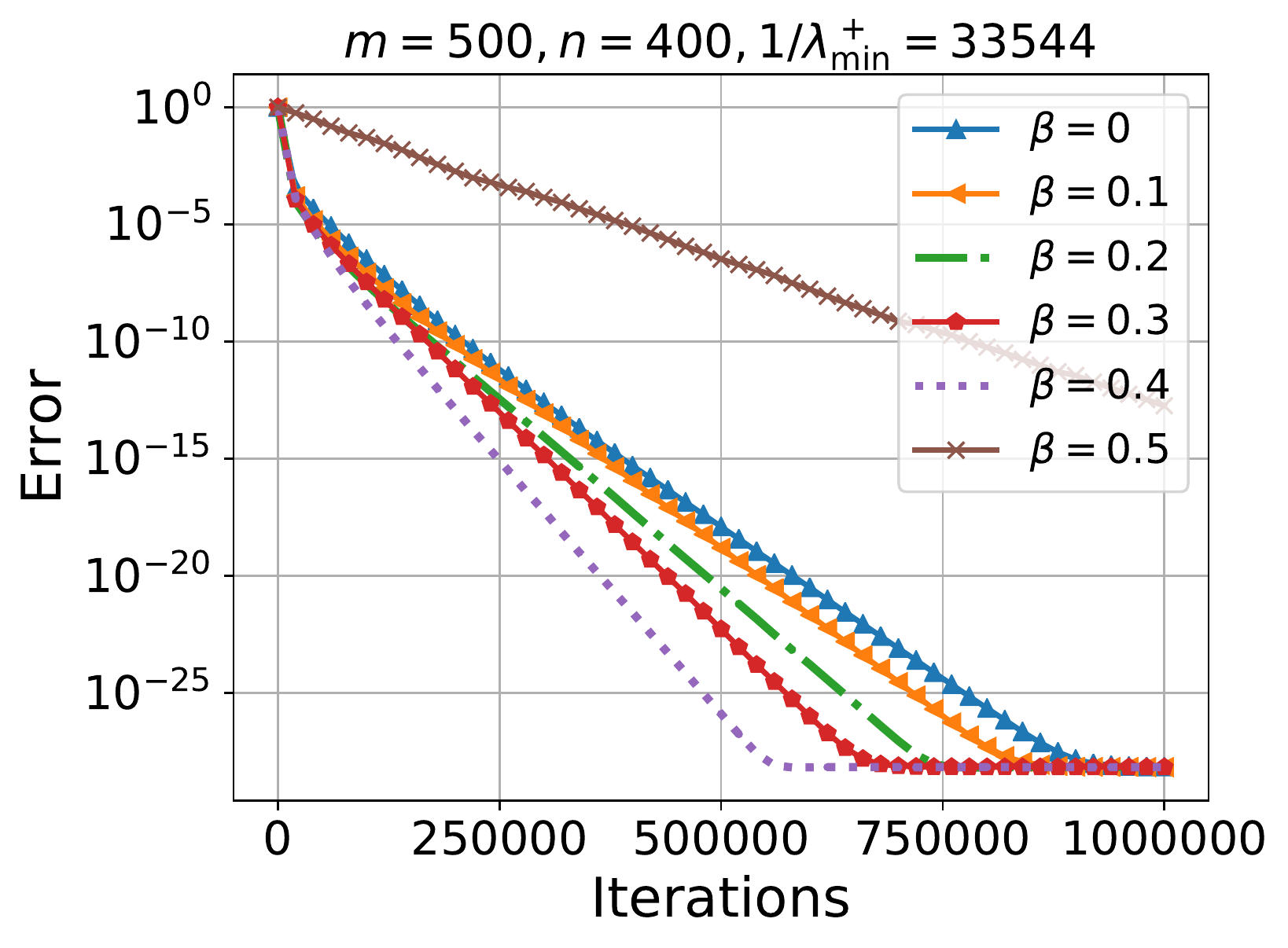}
  %\caption{}
\end{subfigure}%
\begin{subfigure}{.23\textwidth}
  \centering
  \includegraphics[width=1\linewidth]{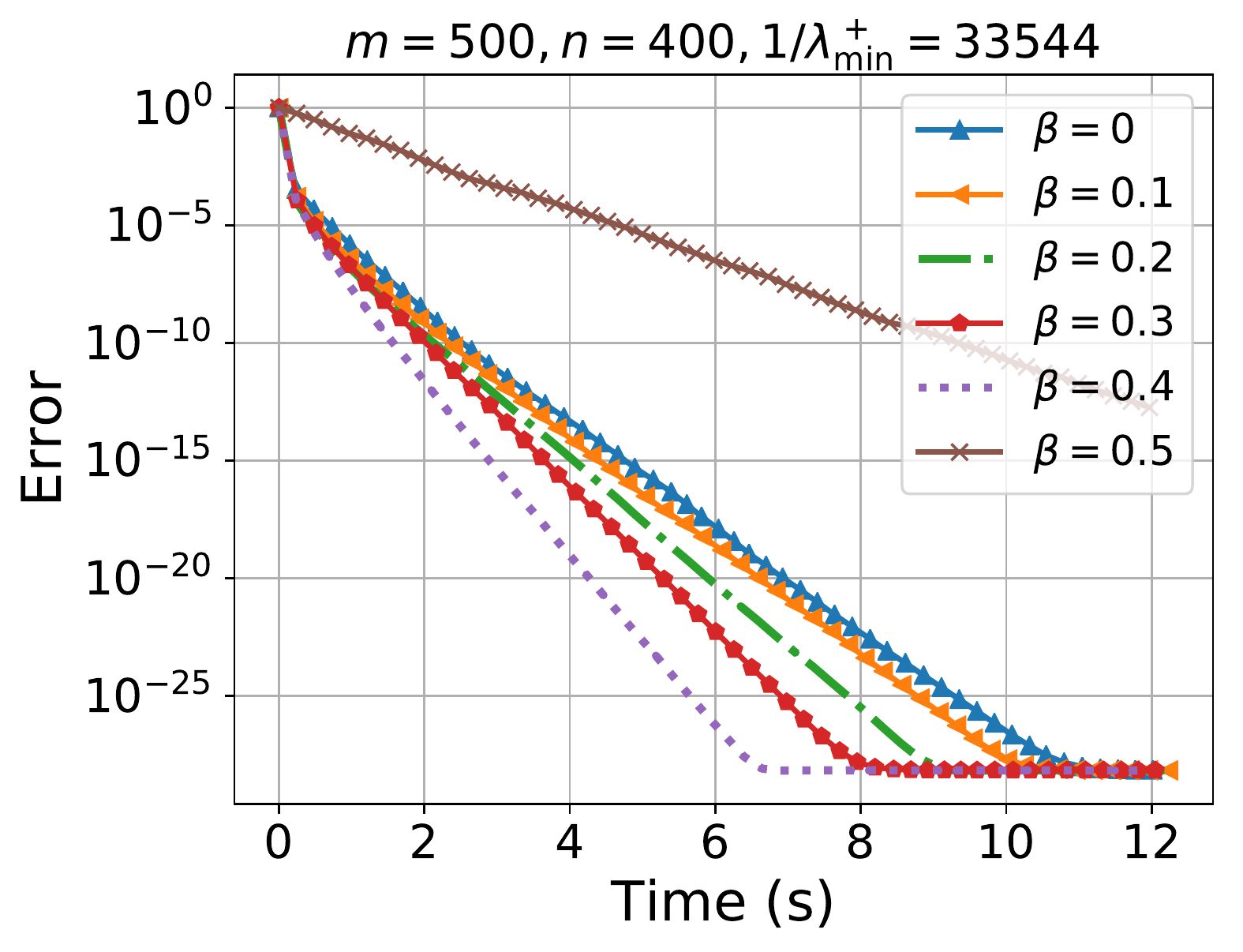}
 % \caption{}
\end{subfigure}
\begin{subfigure}{.23\textwidth}
  \centering
  \includegraphics[width=1\linewidth]{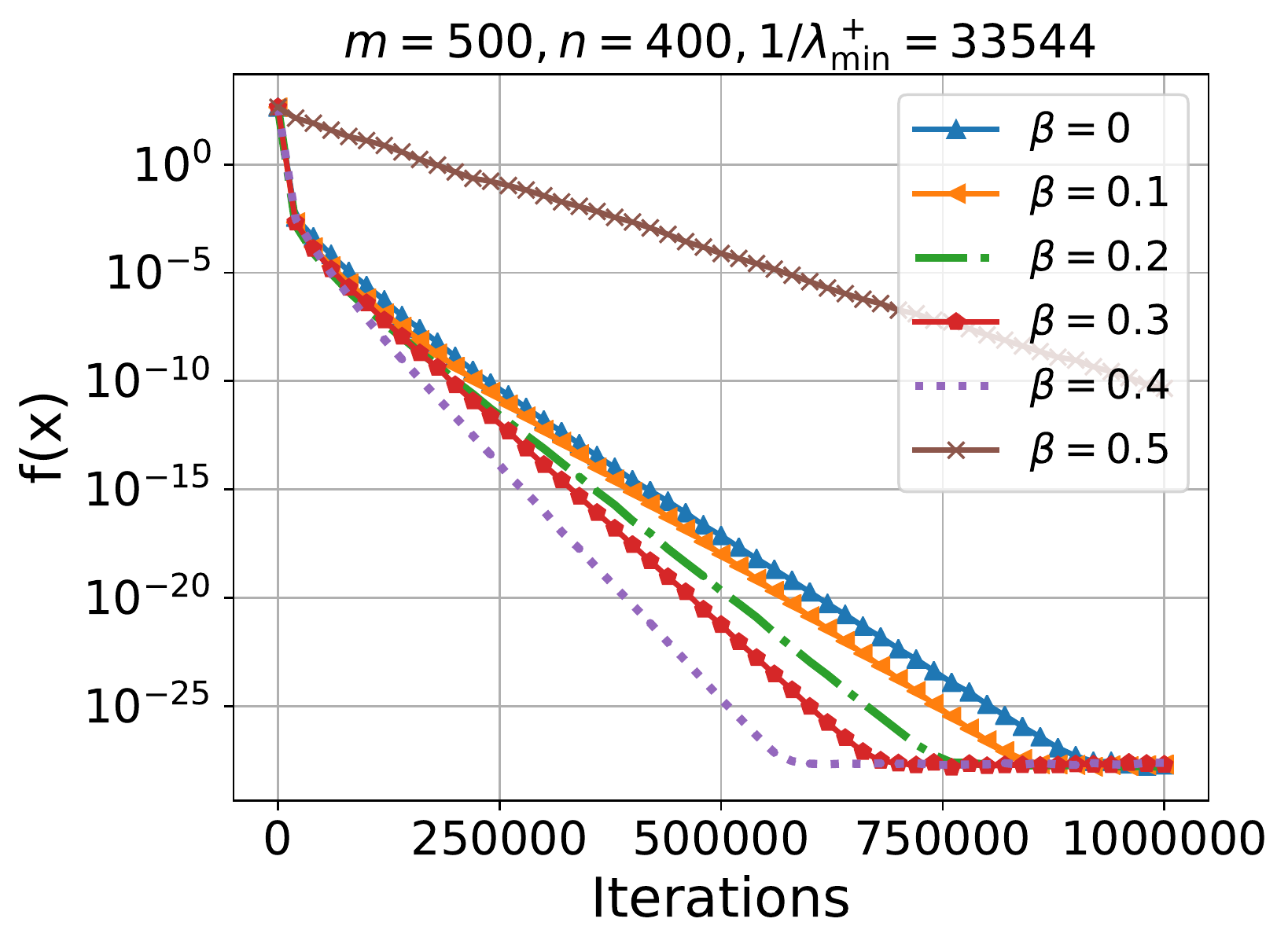}
 % \caption{}
\end{subfigure}
\begin{subfigure}{.23\textwidth}
  \centering
  \includegraphics[width=1\linewidth]{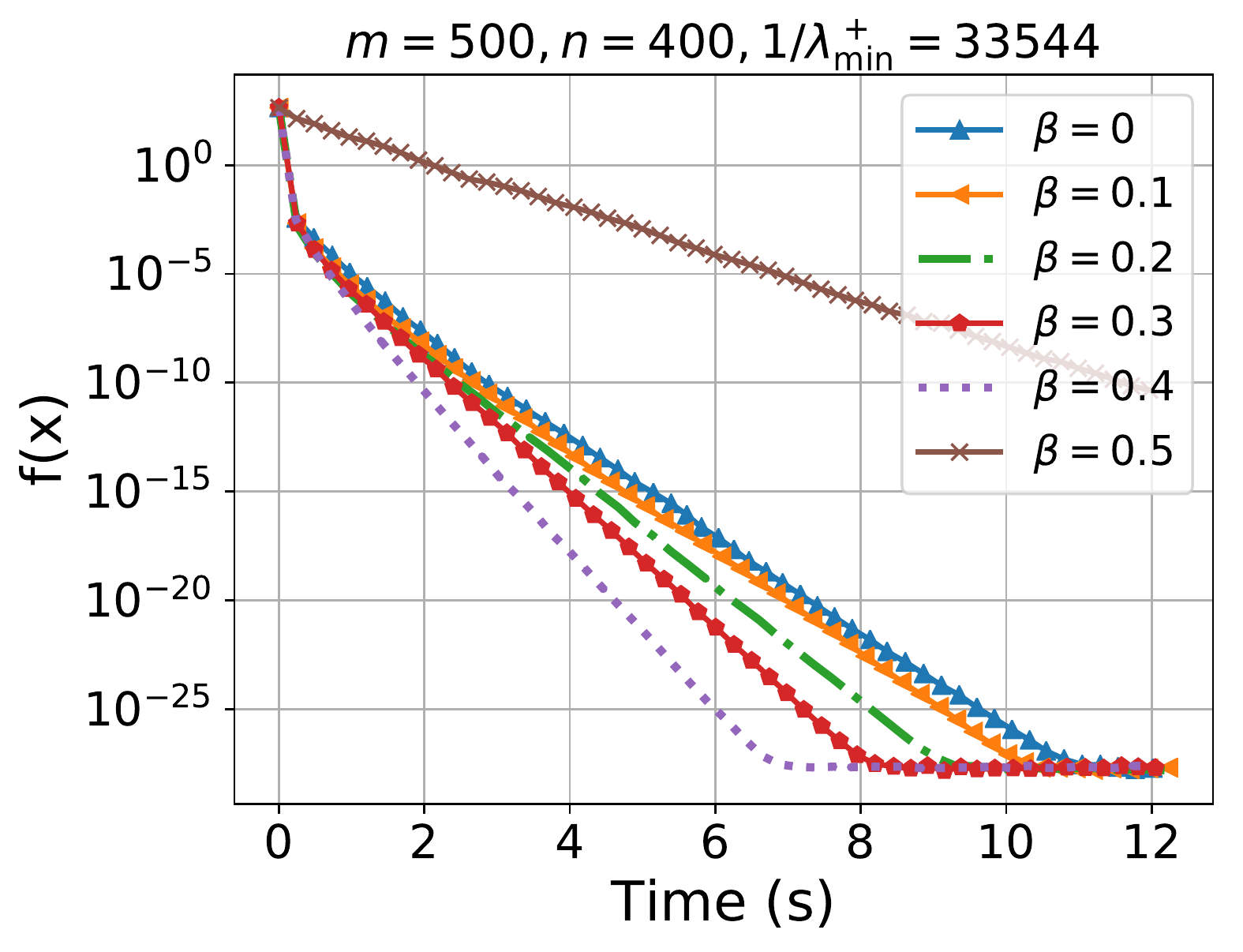}
%  \caption{}
\end{subfigure}\\
\begin{subfigure}{.23\textwidth}
  \centering
  \includegraphics[width=1\linewidth]{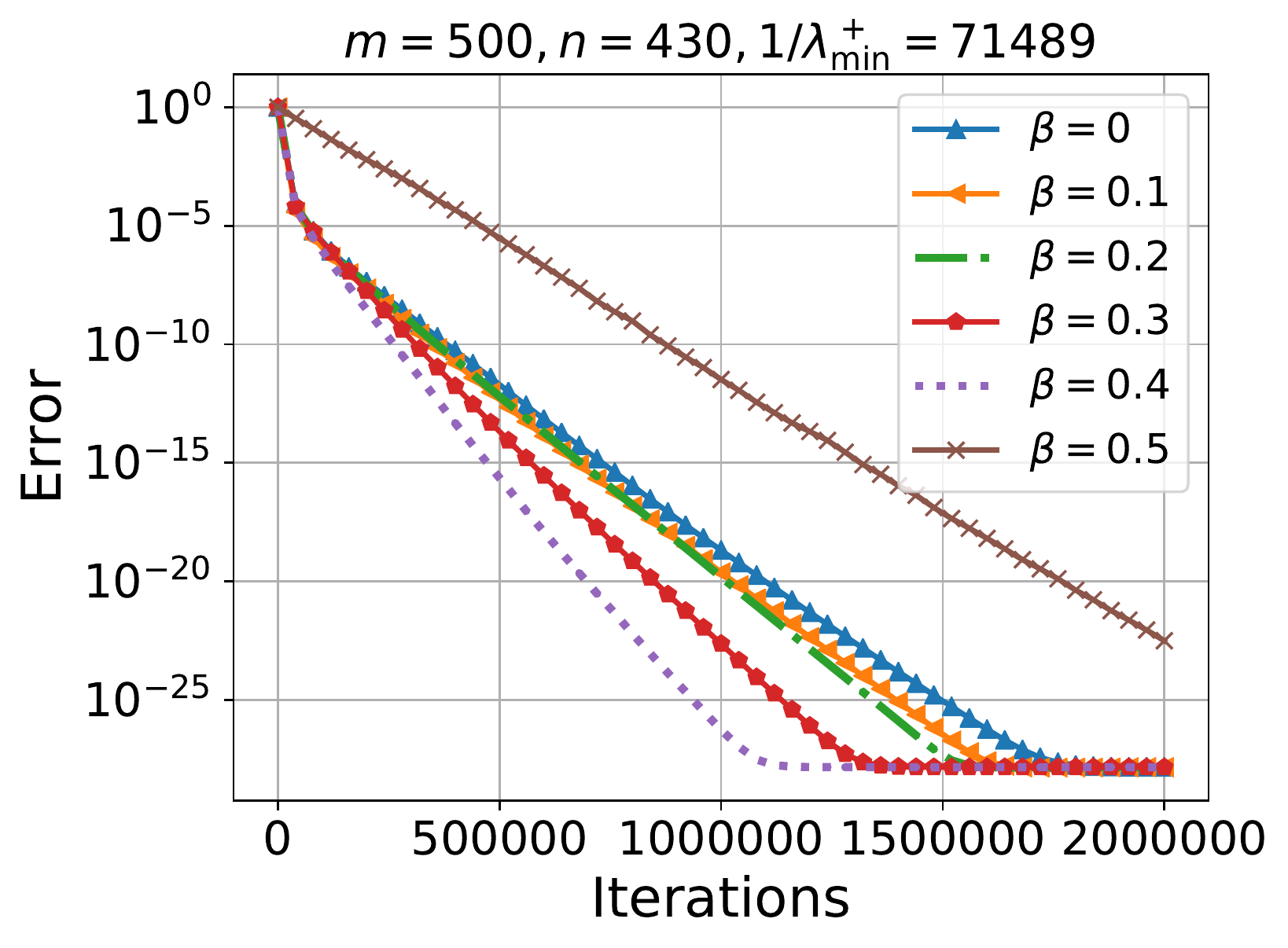}
  %\caption{}
\end{subfigure}%
\begin{subfigure}{.23\textwidth}
  \centering
  \includegraphics[width=1\linewidth]{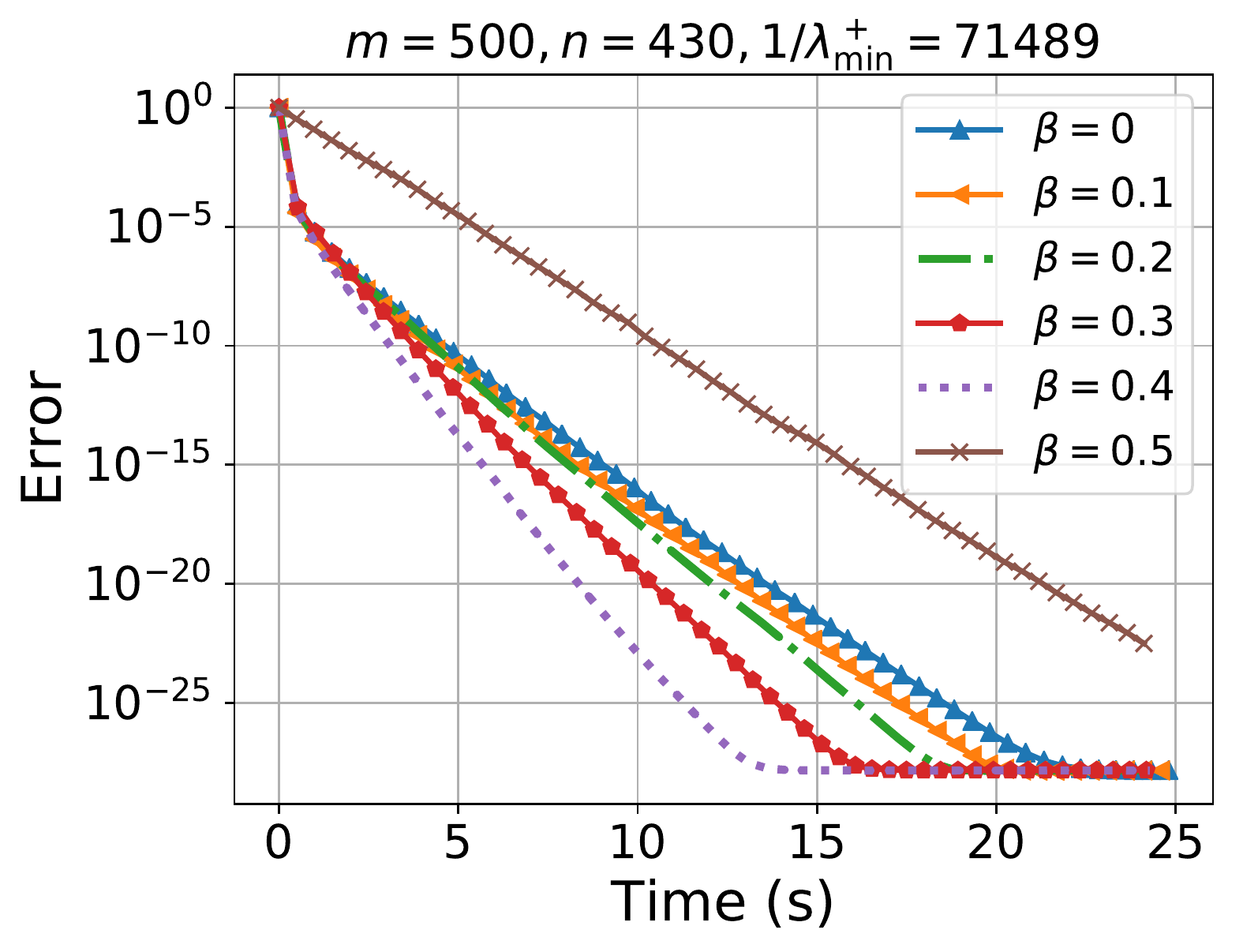}
 % \caption{}
\end{subfigure}
\begin{subfigure}{.23\textwidth}
  \centering
  \includegraphics[width=1\linewidth]{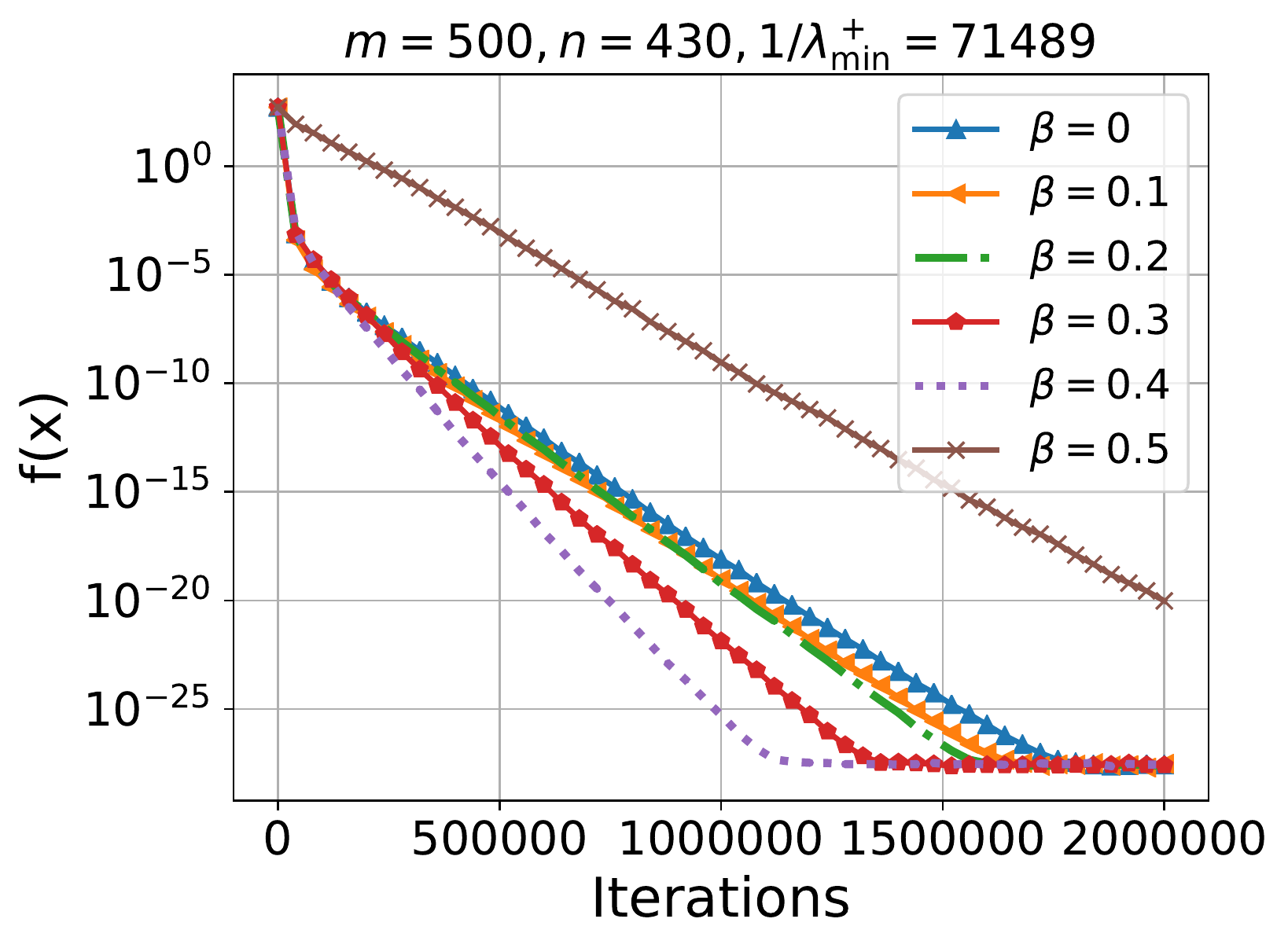}
 % \caption{}
\end{subfigure}
\begin{subfigure}{.23\textwidth}
  \centering
  \includegraphics[width=1\linewidth]{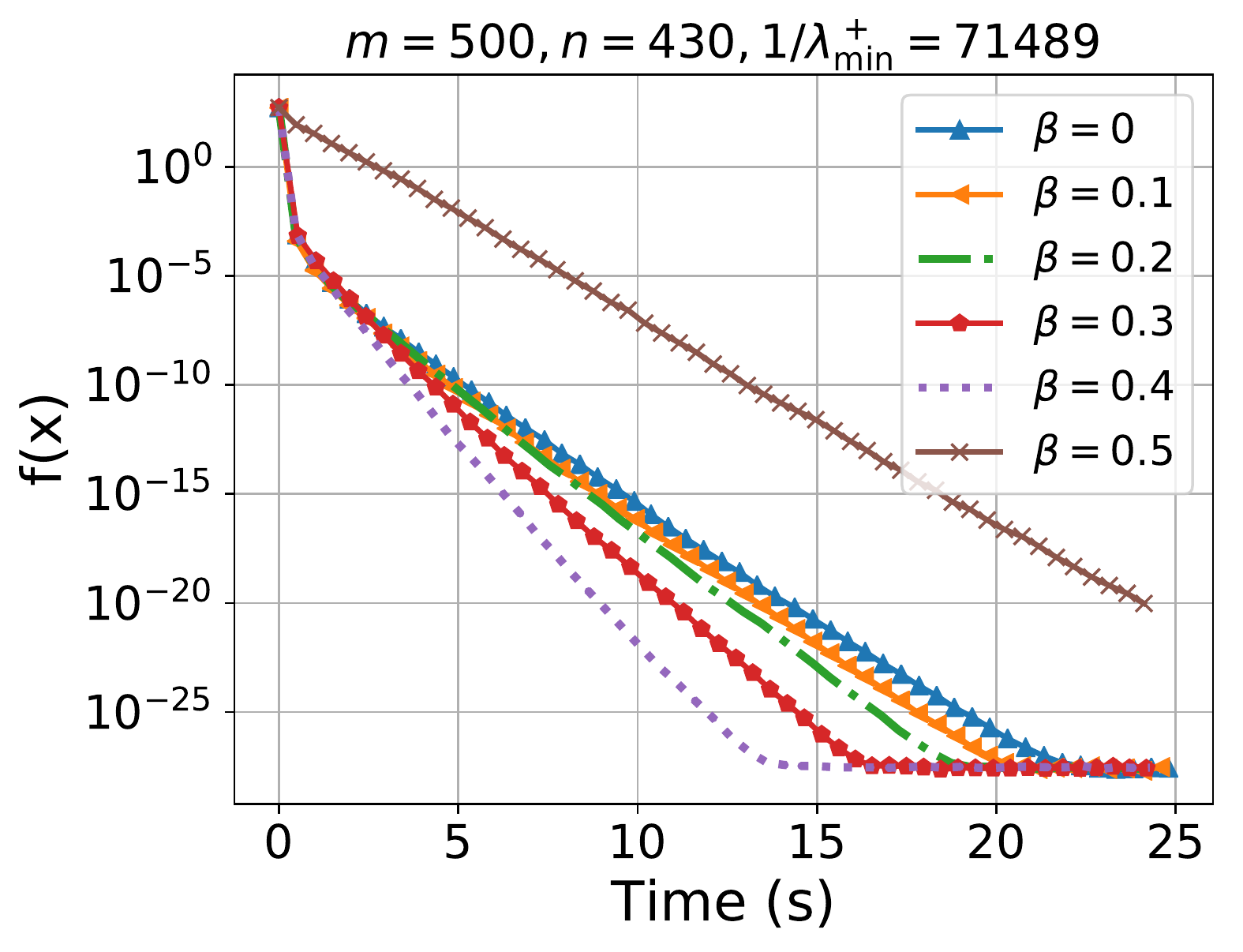}
%  \caption{}
\end{subfigure}\\
\begin{subfigure}{.23\textwidth}
  \centering
  \includegraphics[width=1\linewidth]{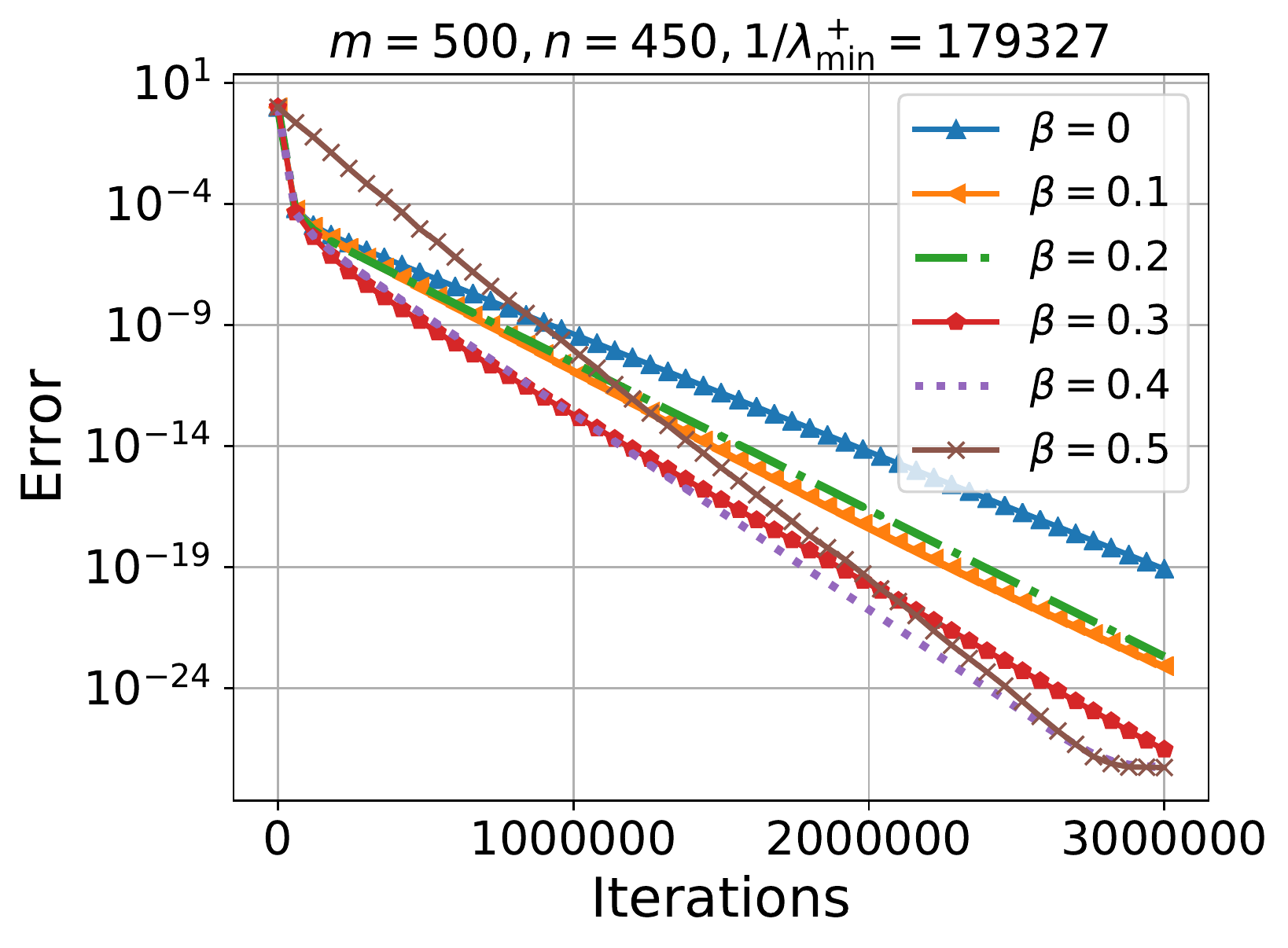}
  %\caption{}
\end{subfigure}%
\begin{subfigure}{.23\textwidth}
  \centering
  \includegraphics[width=1\linewidth]{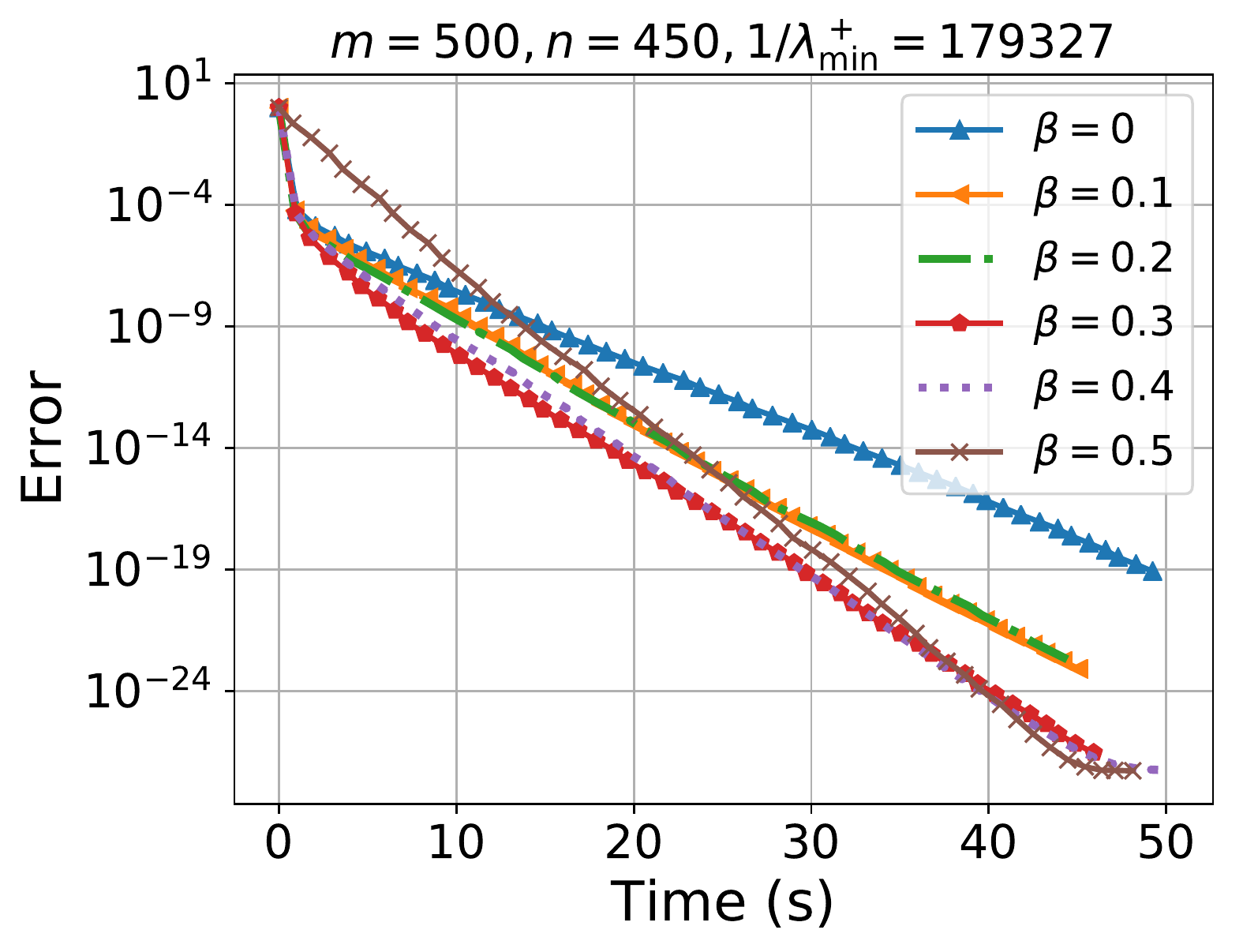}
 % \caption{}
\end{subfigure}
\begin{subfigure}{.23\textwidth}
  \centering
  \includegraphics[width=1\linewidth]{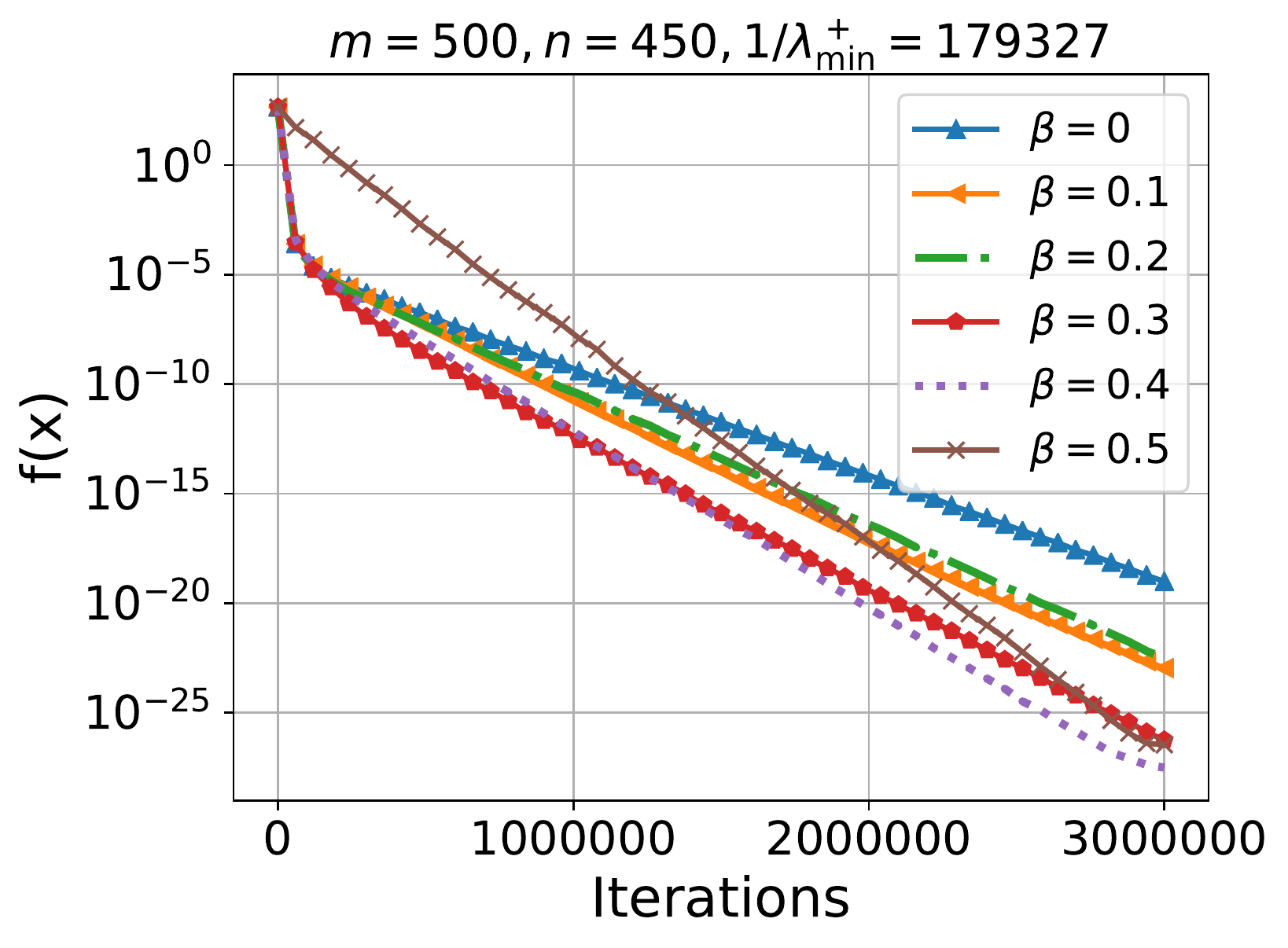}
 % \caption{}
\end{subfigure}
\begin{subfigure}{.23\textwidth}
  \centering
  \includegraphics[width=1\linewidth]{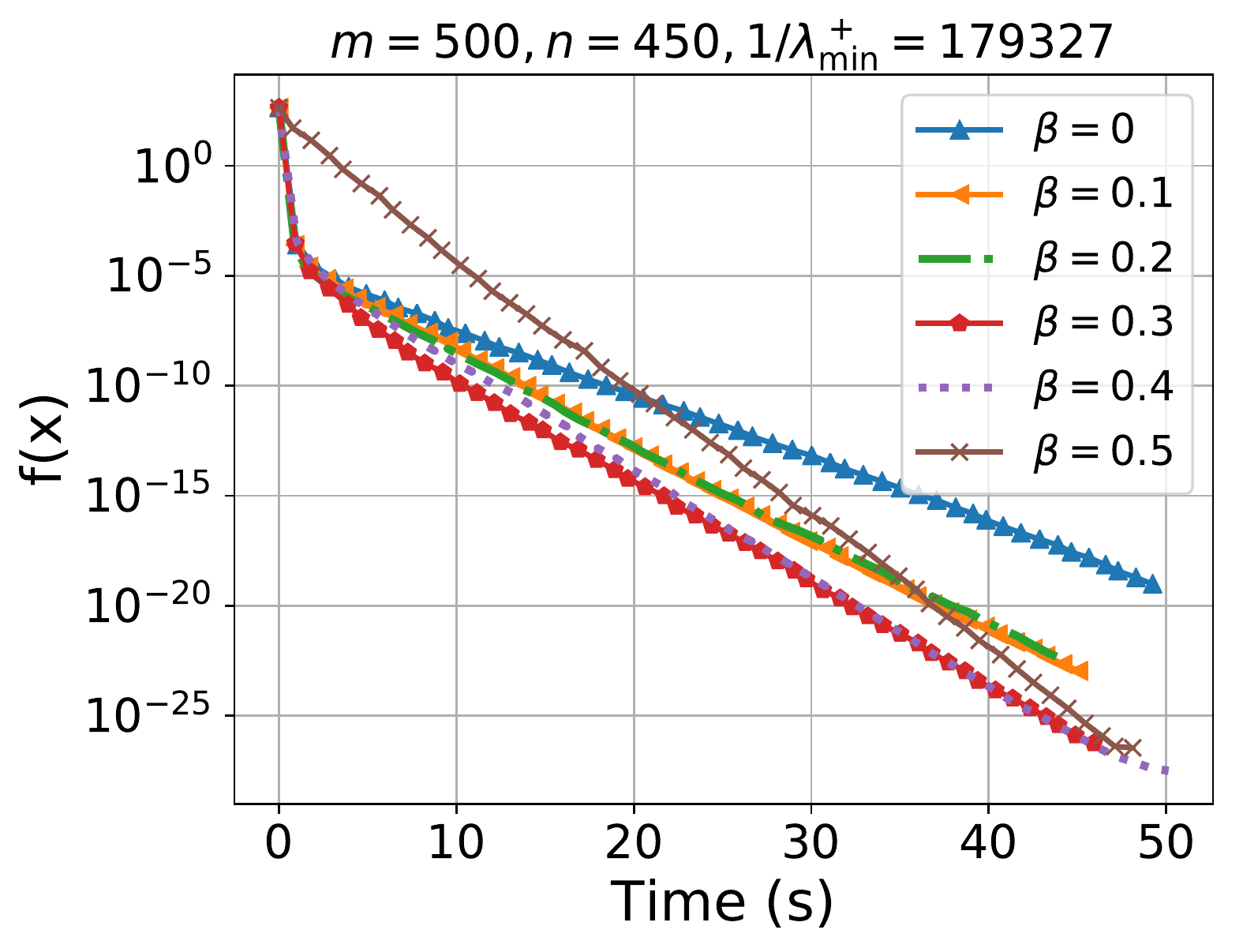}
%  \caption{}
\end{subfigure}
\caption{Performance of mRCD for fixed stepsize $\omega=1$ and several momentum parameters $\beta$ for consistent linear systems with positive definite matrices $\bA=\bP^\top \bP$ where $\bP \in \R^{m \times n}$ is Gaussian matrix with $m=500$ rows and $n=200,300,400,430,450$. The graphs in the first (second) column plot iterations (time) against residual error while those in the third (forth) column plot iterations (time) against function values. All plots are averaged over 10 trials. The title of each plot indicates the dimensions of the matrix $\bP$ and the value of $1/\lambda_{\min}^+$. The ``Error" on the vertical axis represents the relative error $\|x_k-x_*\|^2_\bB / \|x_0-x_*\|^2_\bB \overset{\bB=\bA, x_0=0}{=}\|x_k-x_*\|^2_\bA / \|x_*\|^2_\bA$ and the function values $f(x_k)$ refer to function~\eqref{functionRCD}.}
\label{RCDperformance1}
\end{figure}

\subsubsection{Real Data}
In the following experiments we test the performance of mRK using real matrices (datasets) from the library of support vector machine problems LIBSVM \cite{chang2011libsvm}. Each dataset consists of a matrix $\bA \in \R^{m \times n}$  ($m$ features and $n$ characteristics) and a vector of labels $b \in \R^m$.  In our experiments we choose to use only the matrices of the datasets and ignore the label vector. As before, to ensure consistency of the linear system,  we choose a Gaussian vector $z\in \R^n$ and the right hand side of the linear system is set to $b=\bA z$. 
Similarly as in  the case of synthetic data, mRK is tested for several values of momentum parameters $\beta$ and fixed stepsize $\omega=1$. 

In Figure~\ref{RealDataplots} the performance of all methods for both relative error measure $\|x_k-x_*\|^2 / \|x_*\|^2_\bB$ and function values $f(x_k)$ is presented. Note again that $\beta=0$ represents the baseline RK method. The addition of momentum parameter is again often beneficial and leads to faster convergence. As an example, inspect the plots for the \textit{mushrooms} dataset in Figure~\ref{RealDataplots}, where mRK with $\beta=0.5$ is much faster than the simple RK method in all presented plots, both in terms of iterations and time. In particular, the addition of a momentum parameter leads to visible speedup for the datasets \textit{mushrooms}, \textit{splice}, \textit{a9a} and \textit{ionosphere}. For these datasets the acceleration is obvious in all plots both in terms of relative error and function values. For the datasets  \textit{australian}, \textit{gisette} and \textit{madelon} the speedup is less obvious in the plots of the relative error, while for the plots of function values it is not present at all.

\begin{figure}[!]
\centering
\begin{subfigure}{.23\textwidth}
  \centering
  \includegraphics[width=1\linewidth]{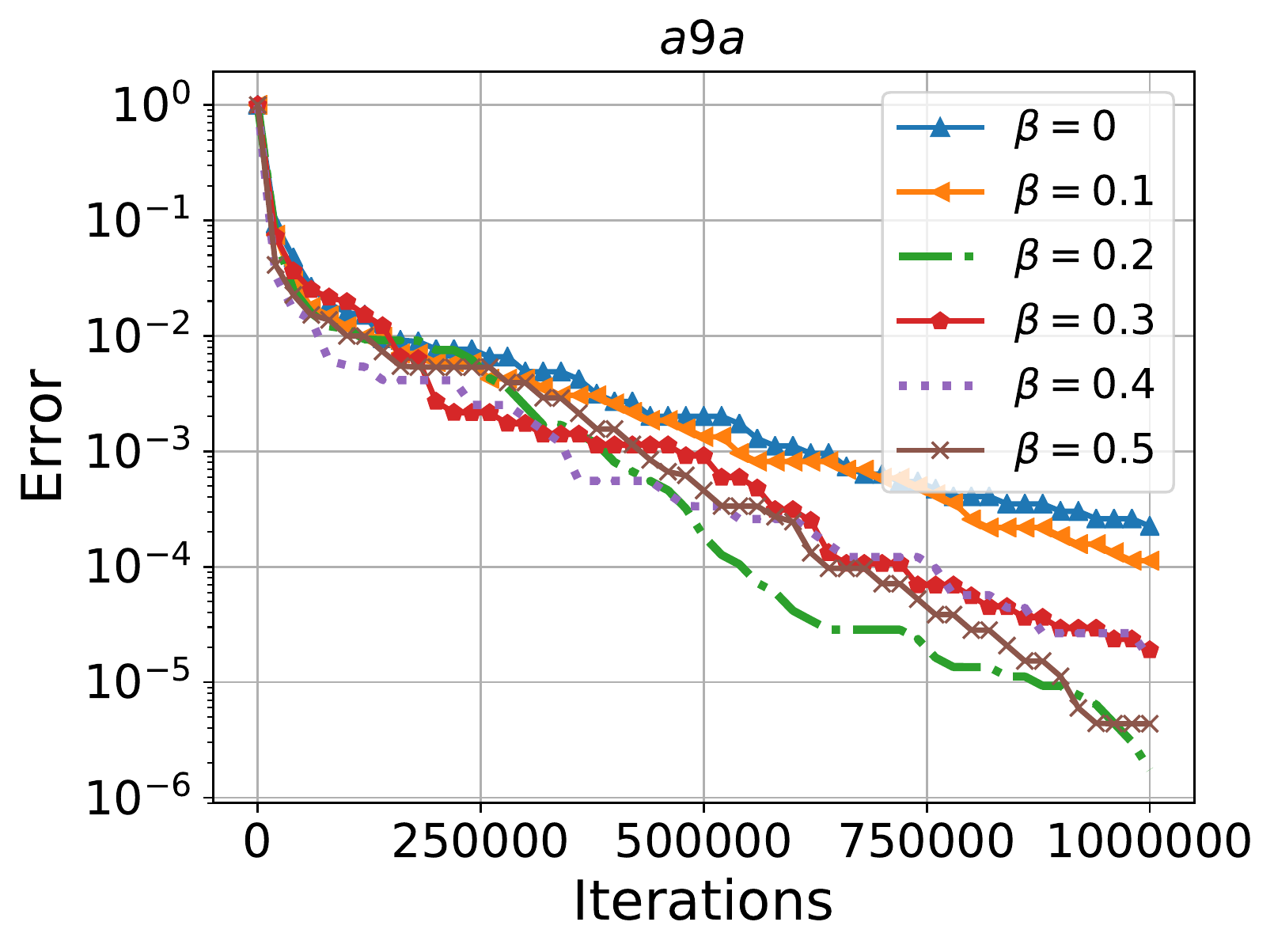}
  %\caption{}
\end{subfigure}%
\begin{subfigure}{.23\textwidth}
  \centering
  \includegraphics[width=1\linewidth]{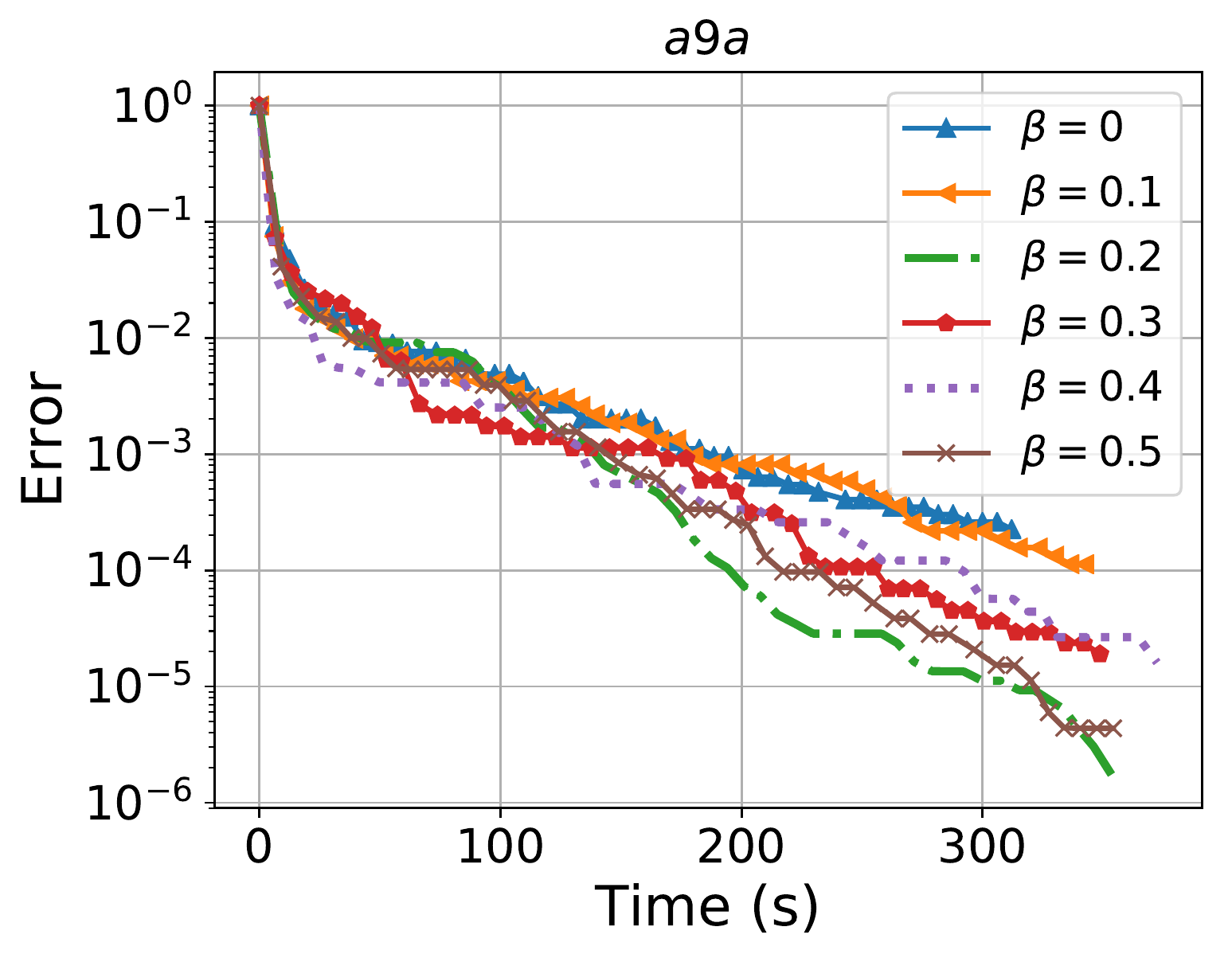}
 % \caption{}
\end{subfigure}
\begin{subfigure}{.23\textwidth}
  \centering
  \includegraphics[width=1\linewidth]{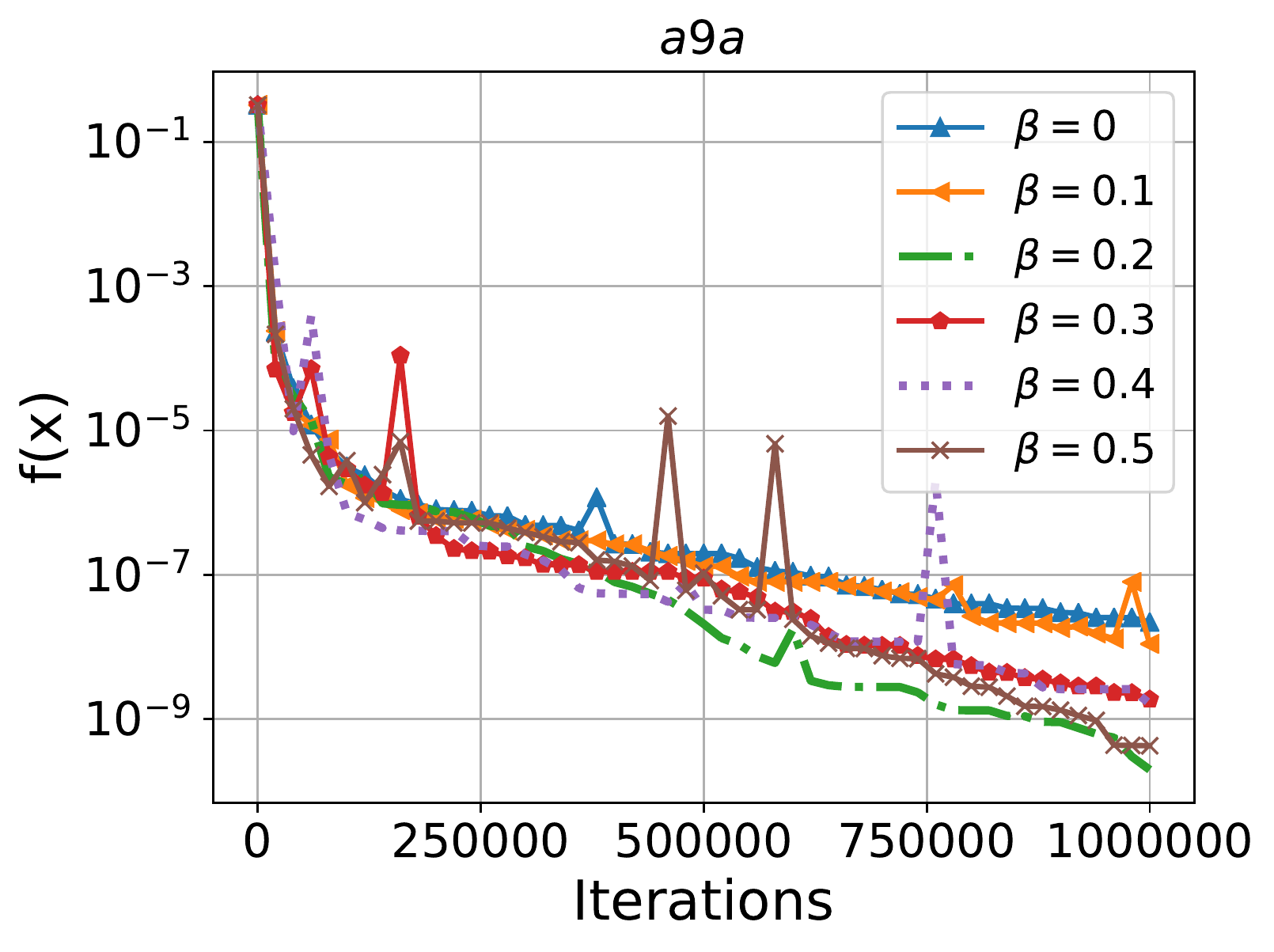}
 % \caption{}
\end{subfigure}
\begin{subfigure}{.23\textwidth}
  \centering
  \includegraphics[width=1\linewidth]{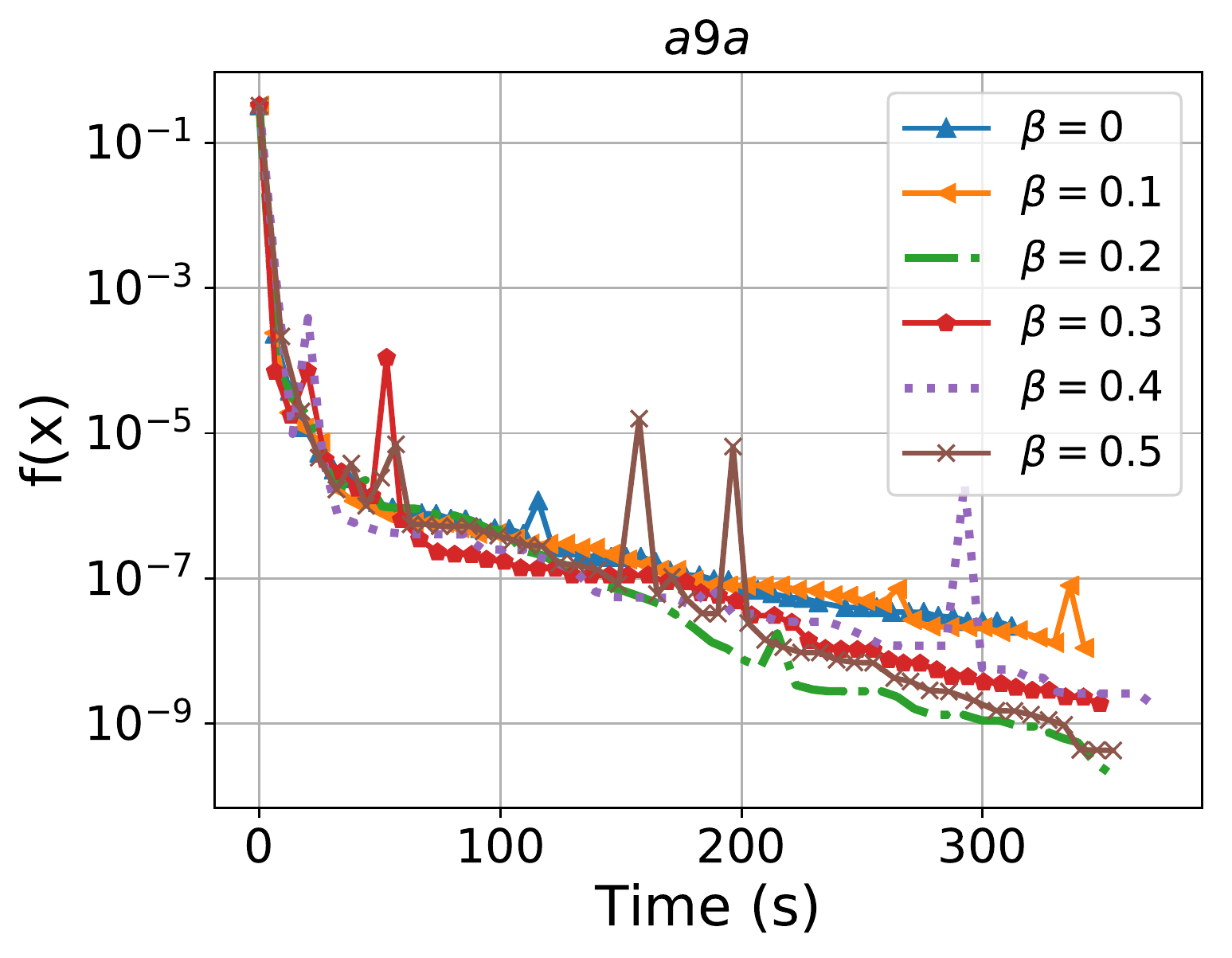}
%  \caption{}
\end{subfigure}\\
\begin{subfigure}{.23\textwidth}
  \centering
  \includegraphics[width=1\linewidth]{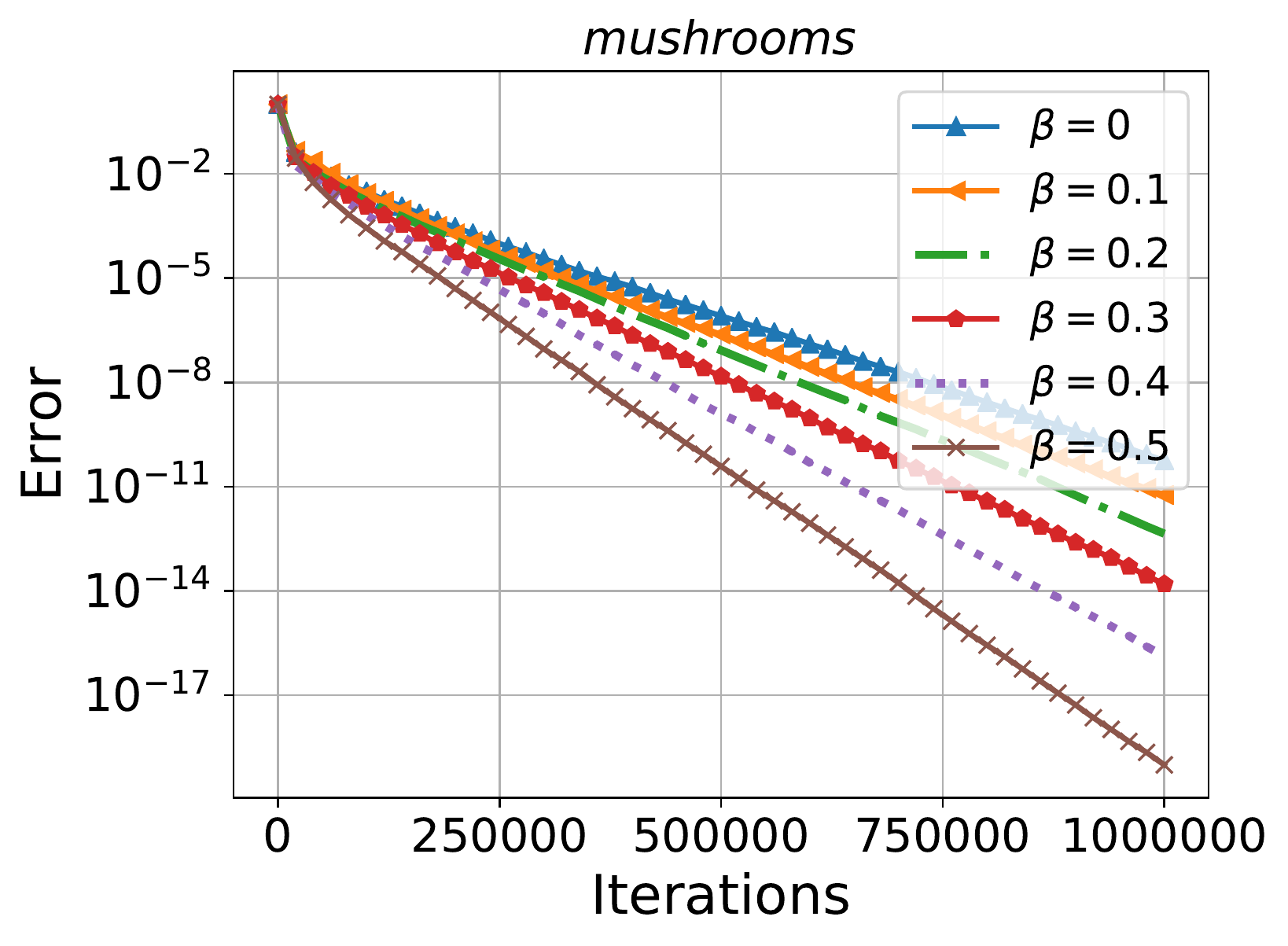}
  %\caption{}
\end{subfigure}%
\begin{subfigure}{.23\textwidth}
  \centering
  \includegraphics[width=1\linewidth]{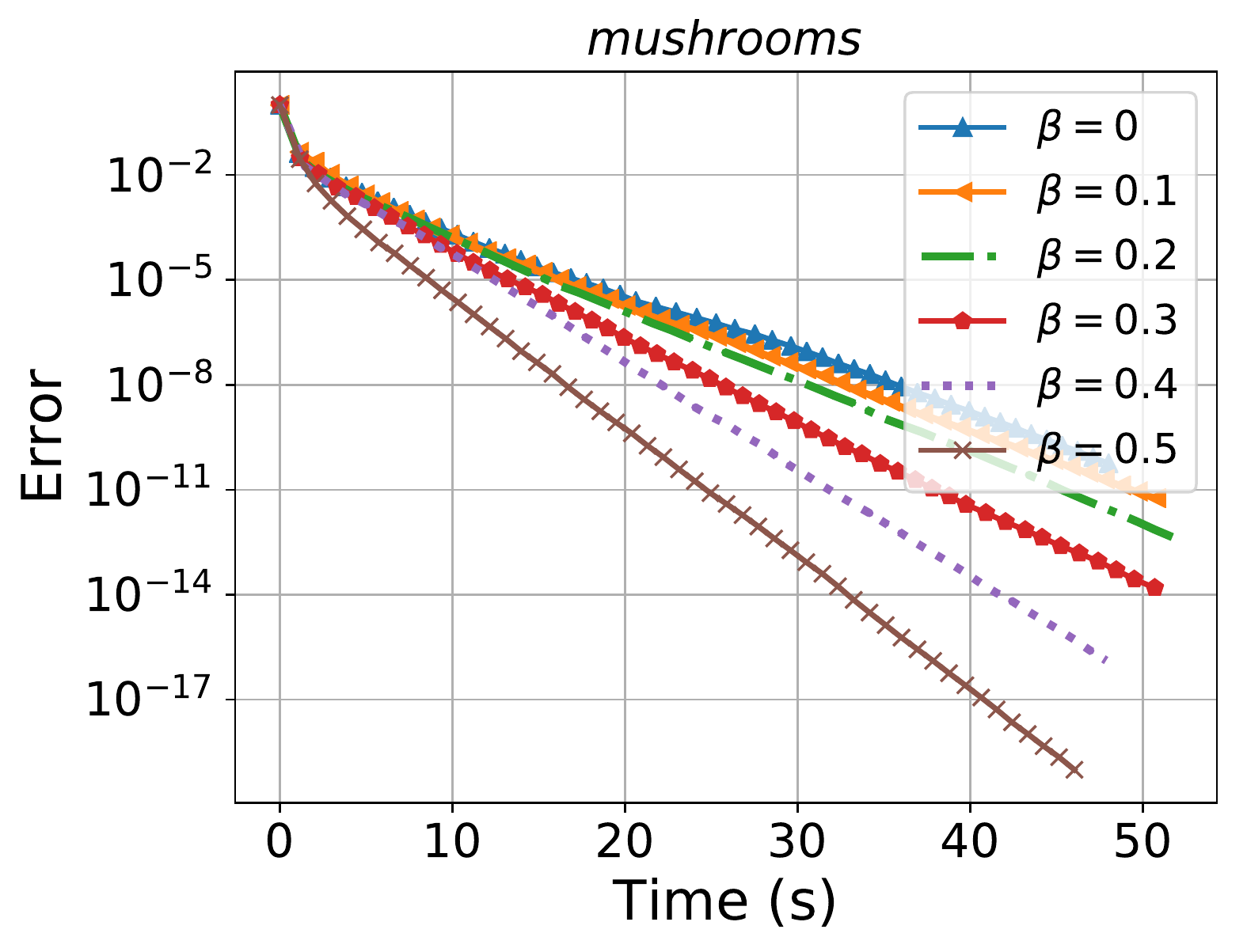}
 % \caption{}
\end{subfigure}
\begin{subfigure}{.23\textwidth}
  \centering
  \includegraphics[width=1\linewidth]{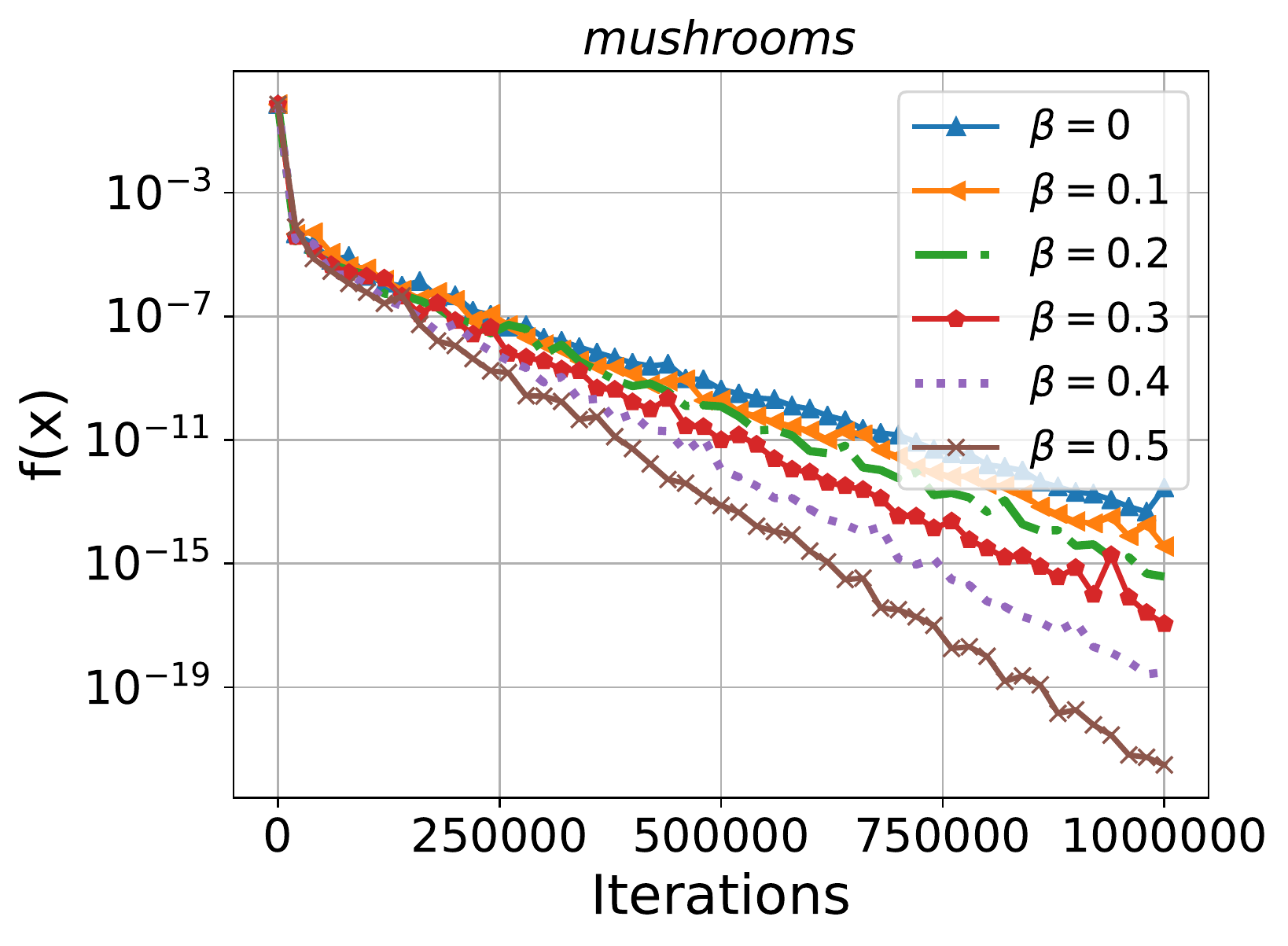}
 % \caption{}
\end{subfigure}
\begin{subfigure}{.23\textwidth}
  \centering
  \includegraphics[width=1\linewidth]{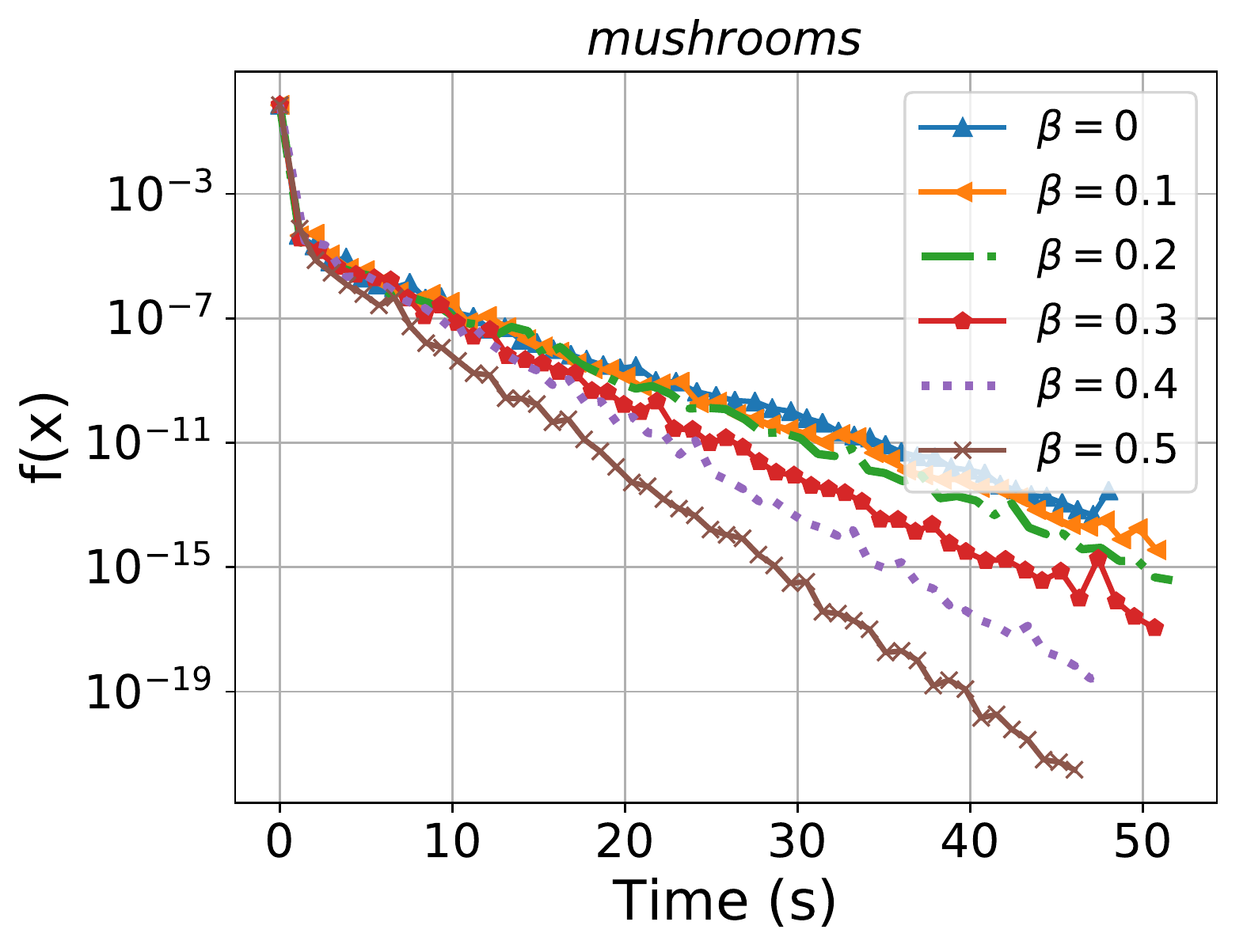}
%  \caption{}
\end{subfigure}\\
\begin{subfigure}{.23\textwidth}
  \centering
  \includegraphics[width=1\linewidth]{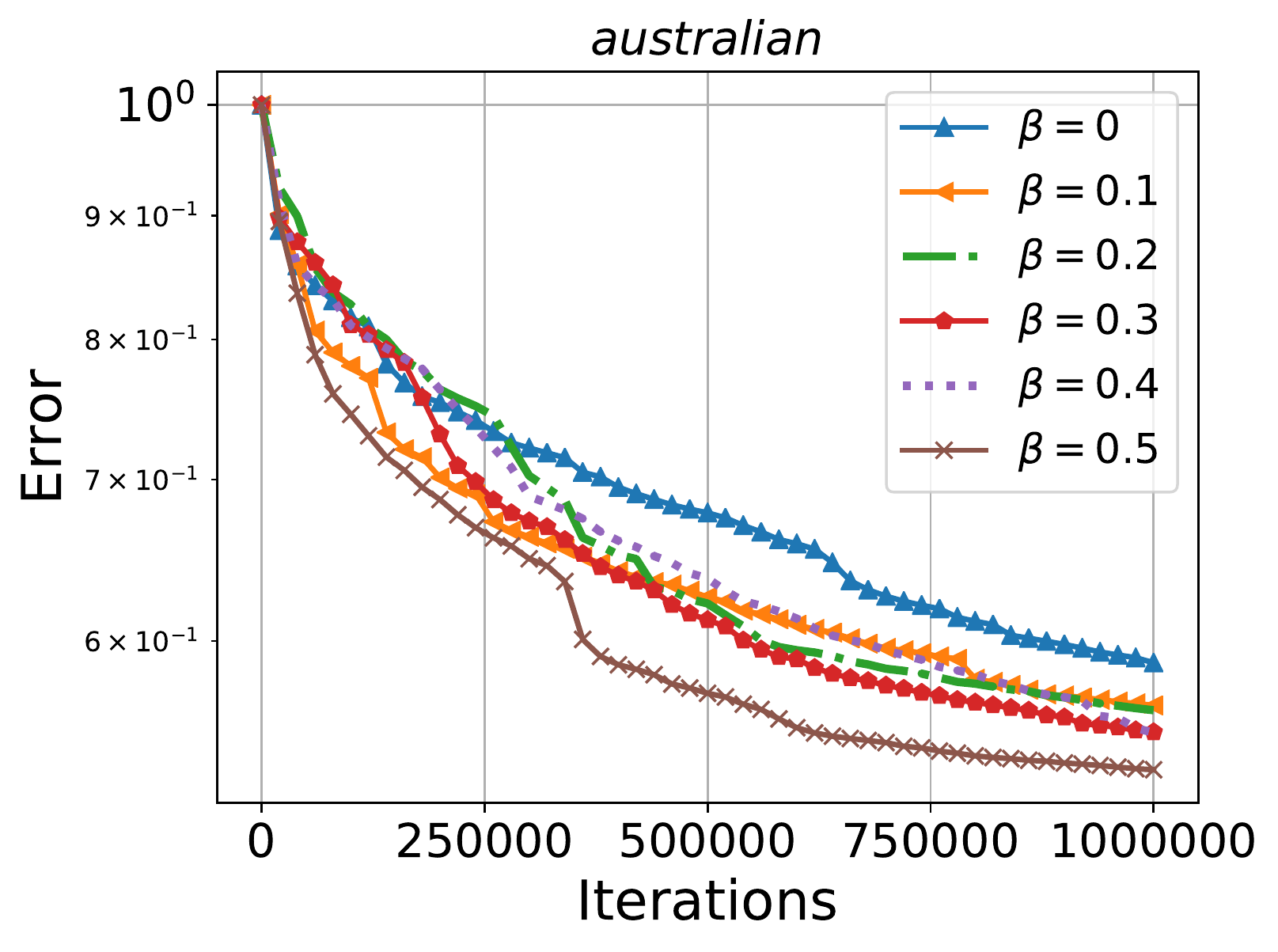}
  %\caption{}
\end{subfigure}%
\begin{subfigure}{.23\textwidth}
  \centering
  \includegraphics[width=1\linewidth]{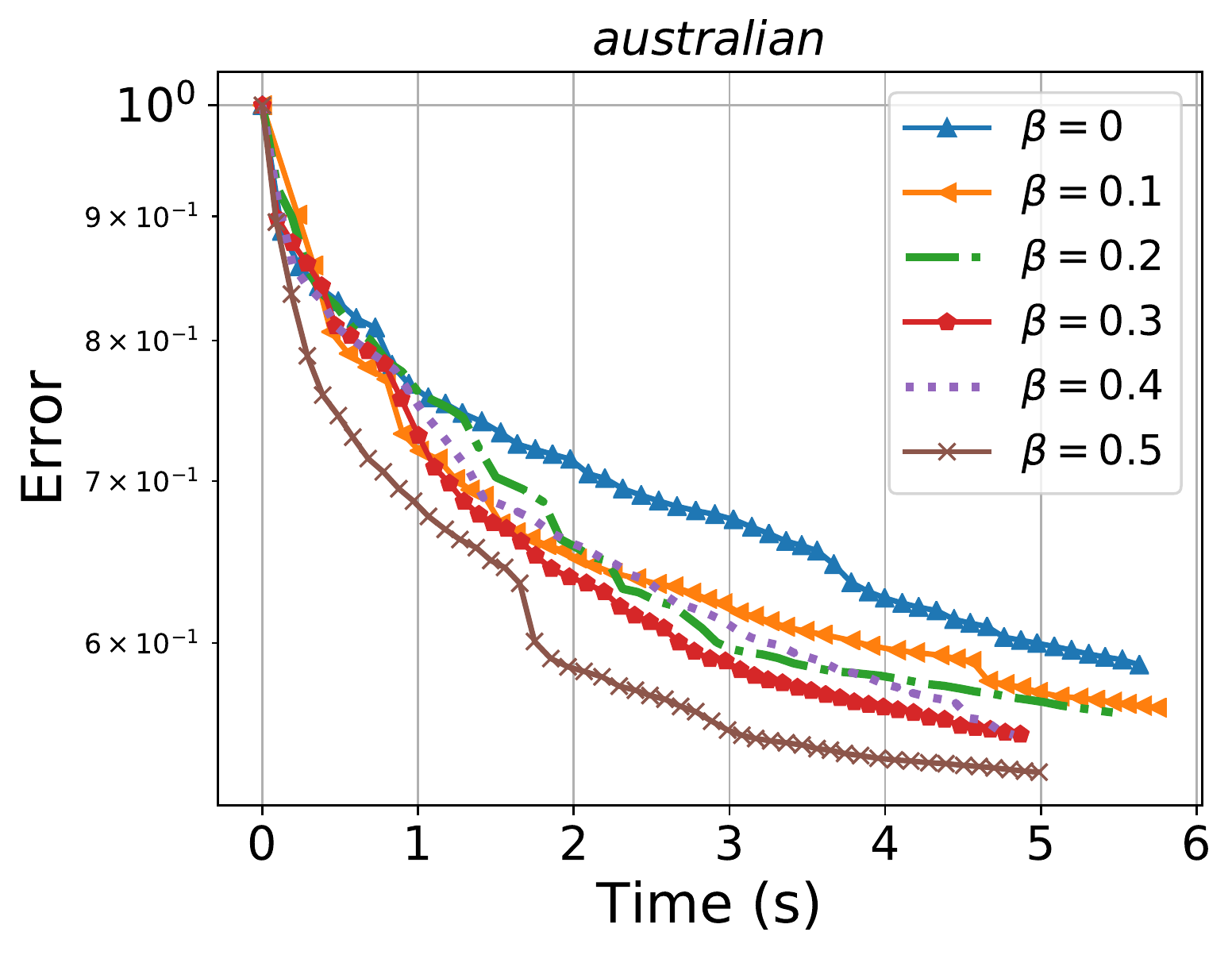}
 % \caption{}
\end{subfigure}
\begin{subfigure}{.23\textwidth}
  \centering
  \includegraphics[width=1\linewidth]{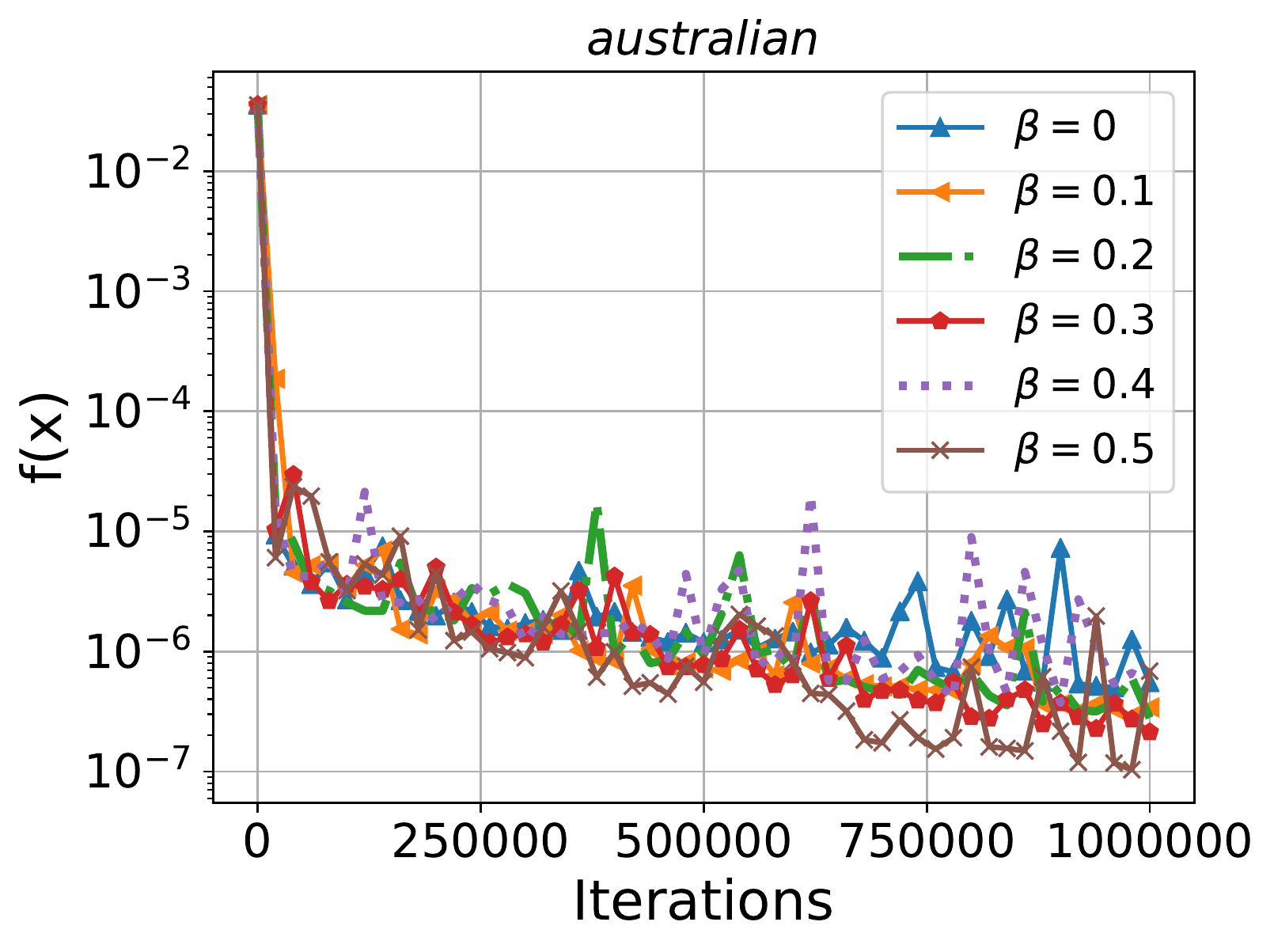}
 % \caption{}
\end{subfigure}
\begin{subfigure}{.23\textwidth}
  \centering
  \includegraphics[width=1\linewidth]{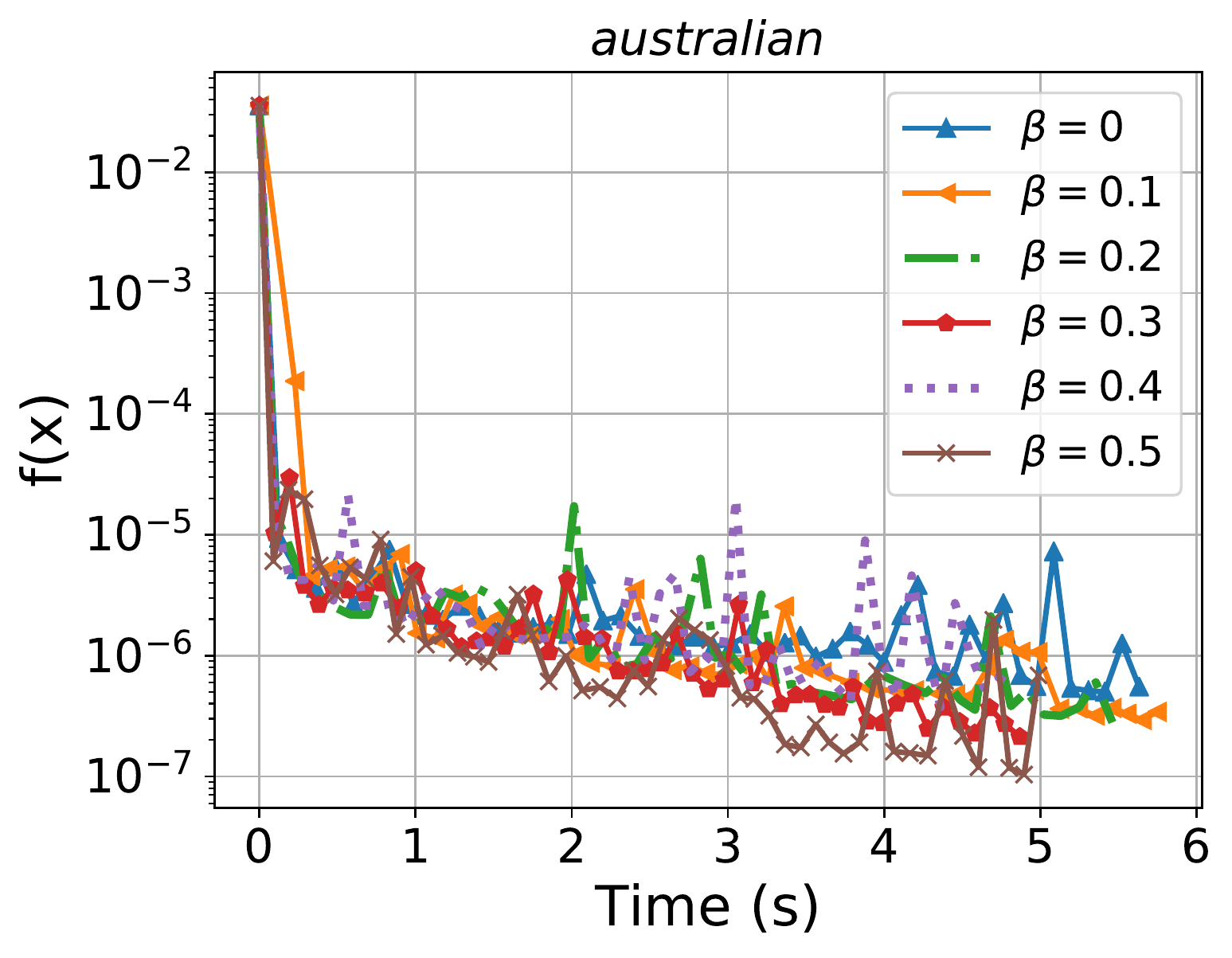}
%  \caption{}
\end{subfigure}\\
\begin{subfigure}{.23\textwidth}
  \centering
  \includegraphics[width=1\linewidth]{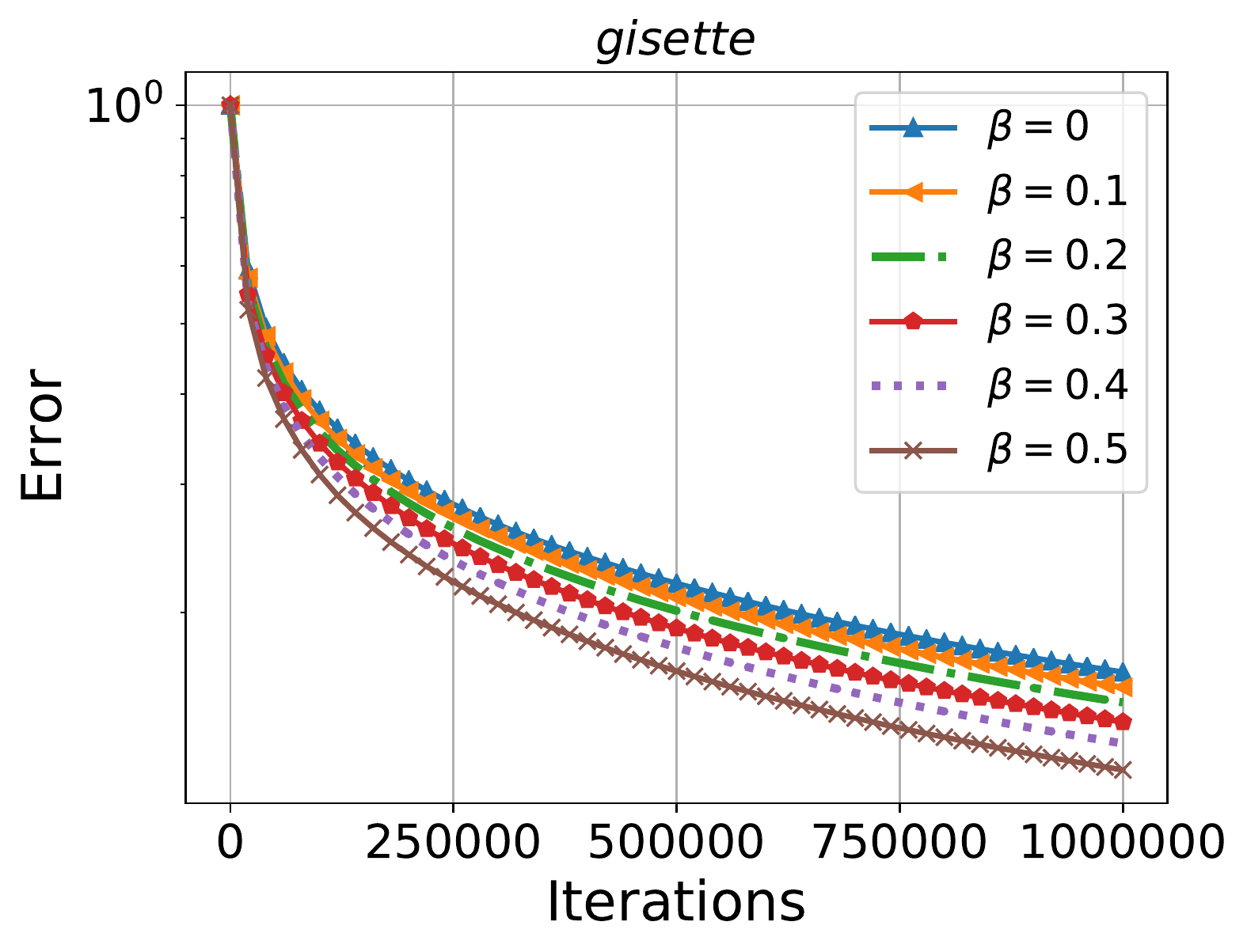}
  %\caption{}
\end{subfigure}%
\begin{subfigure}{.23\textwidth}
  \centering
  \includegraphics[width=1\linewidth]{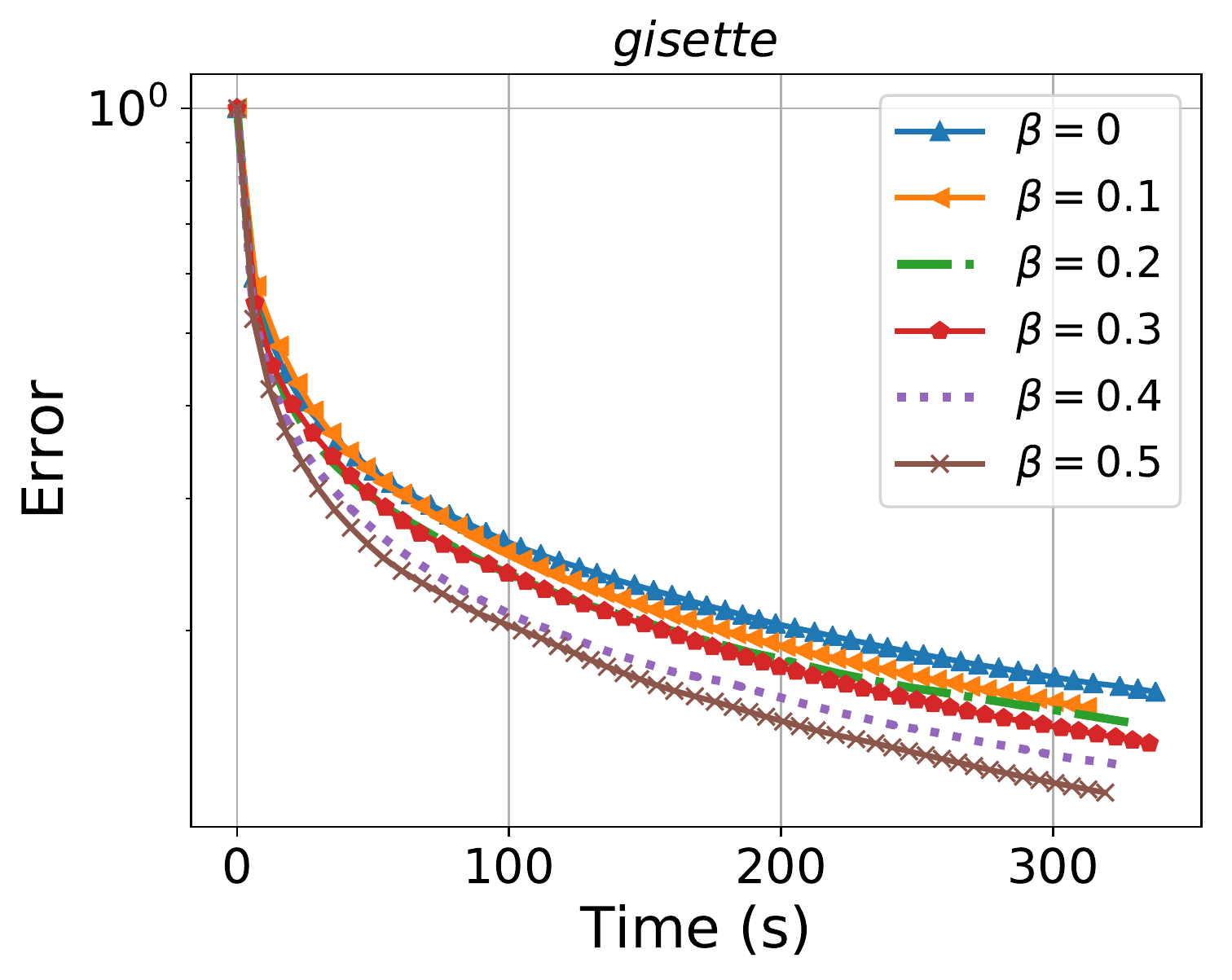}
 % \caption{}
\end{subfigure}
\begin{subfigure}{.23\textwidth}
  \centering
  \includegraphics[width=1\linewidth]{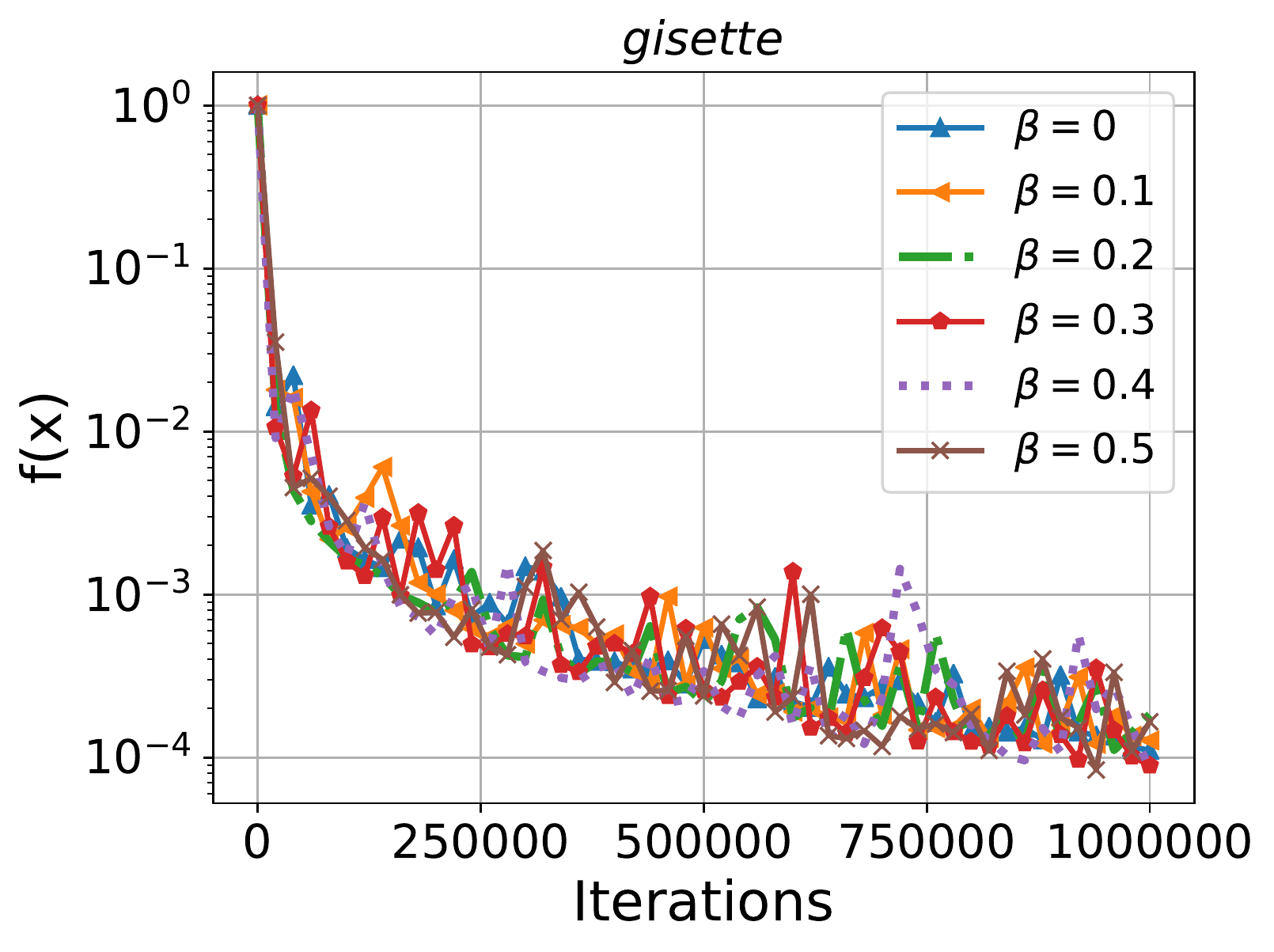}
 % \caption{}
\end{subfigure}
\begin{subfigure}{.23\textwidth}
  \centering
  \includegraphics[width=1\linewidth]{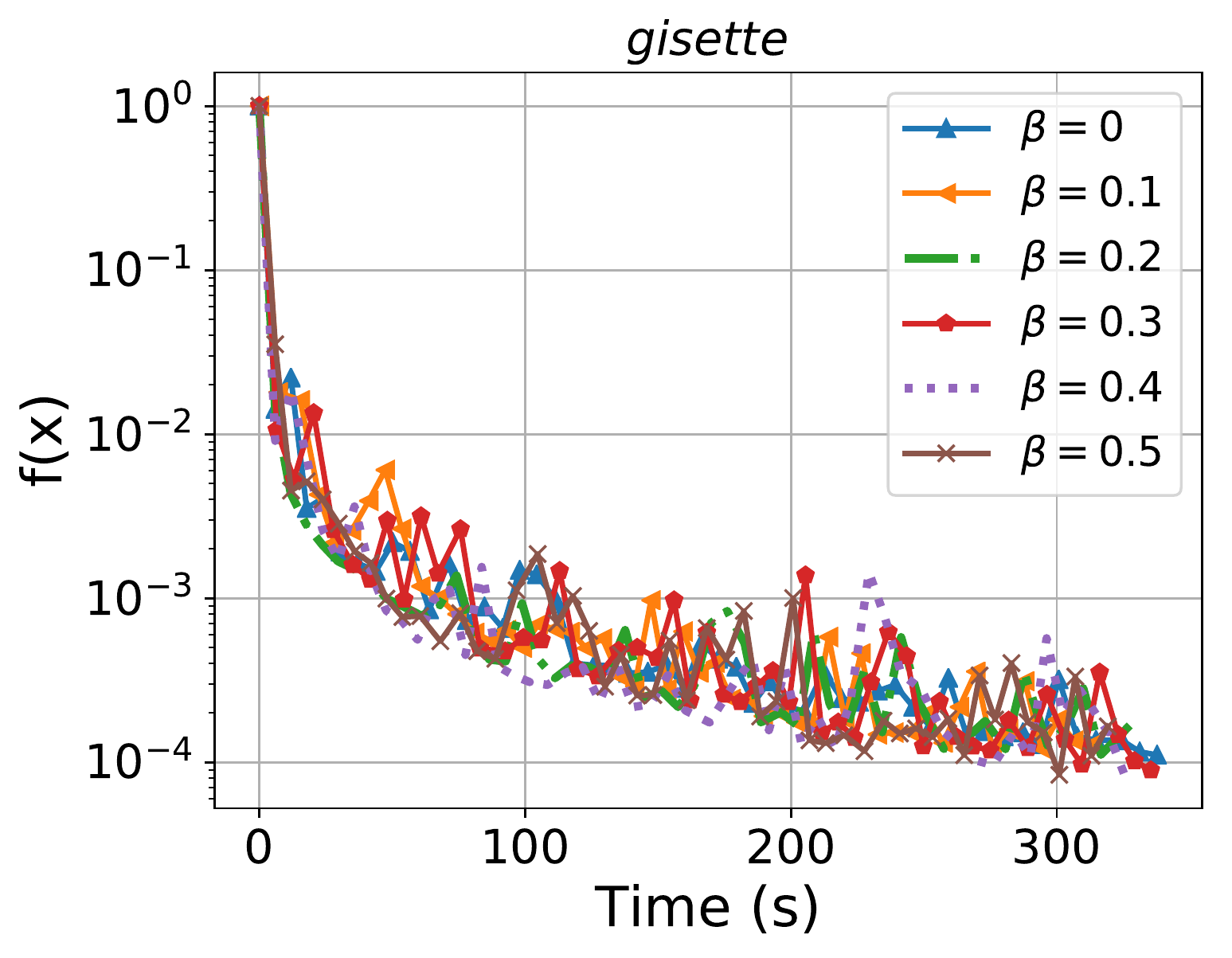}
%  \caption{}
\end{subfigure}\\
\begin{subfigure}{.23\textwidth}
  \centering
  \includegraphics[width=1\linewidth]{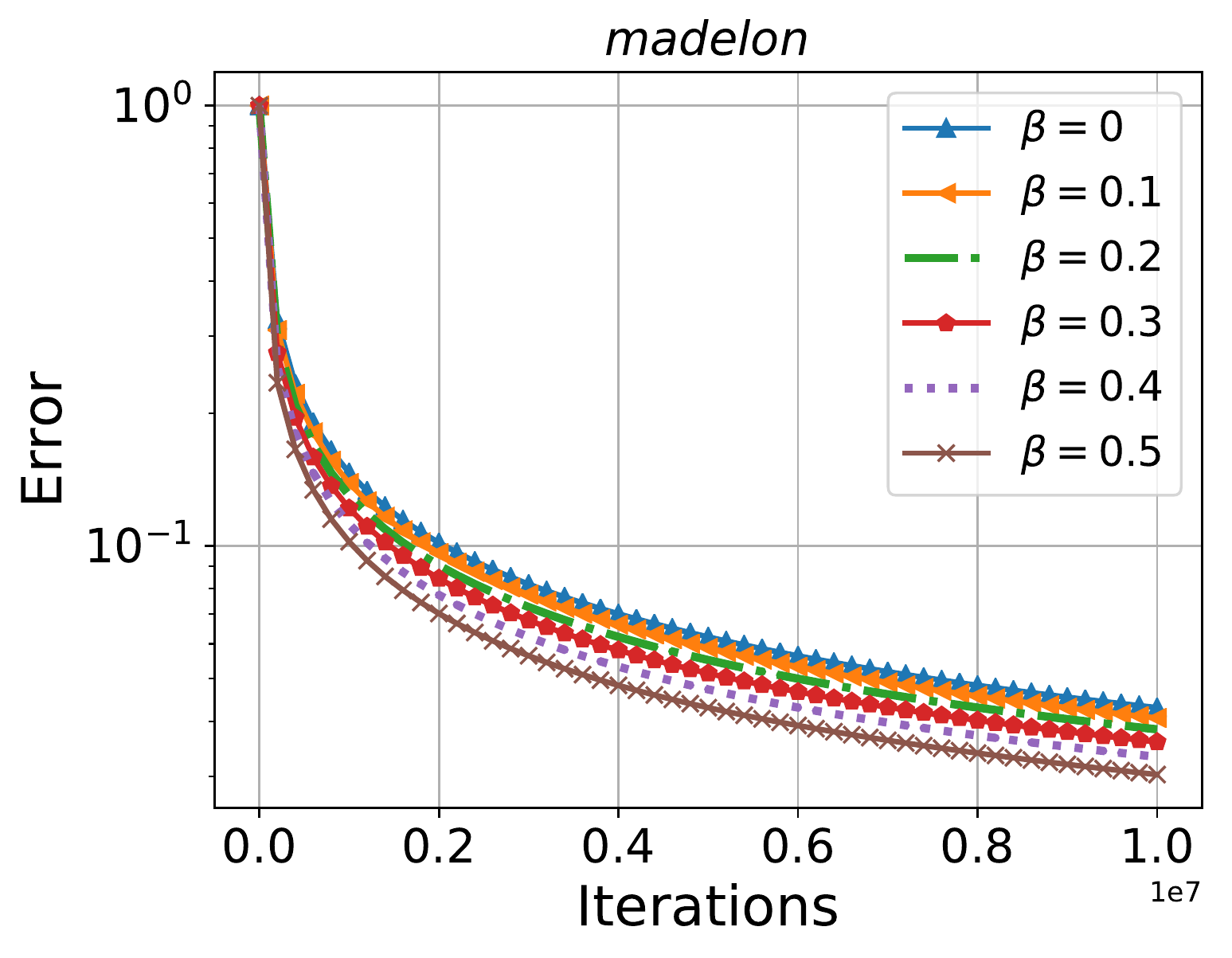}
  %\caption{}
\end{subfigure}%
\begin{subfigure}{.23\textwidth}
  \centering
  \includegraphics[width=1\linewidth]{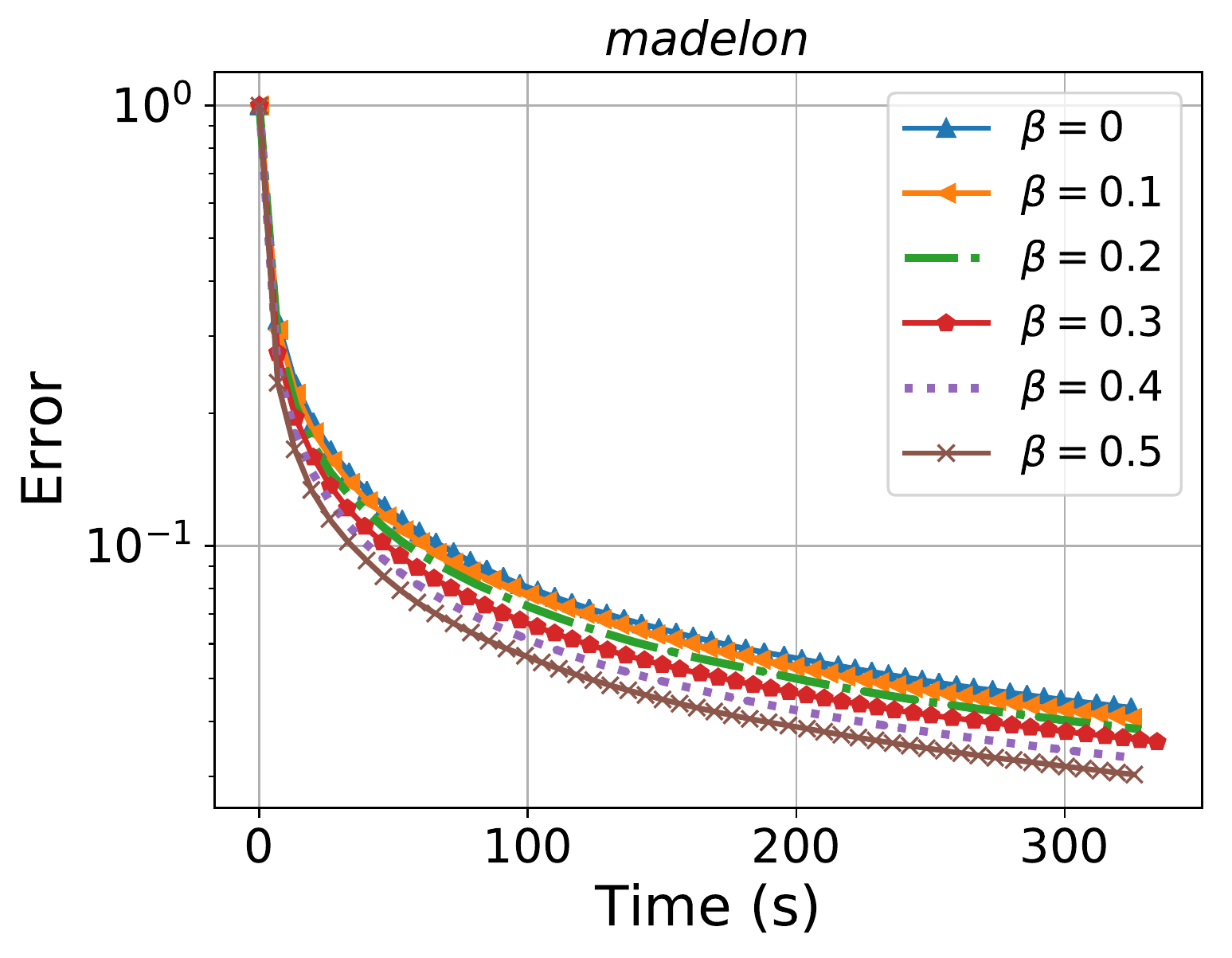}
 % \caption{}
\end{subfigure}
\begin{subfigure}{.23\textwidth}
  \centering
  \includegraphics[width=1\linewidth]{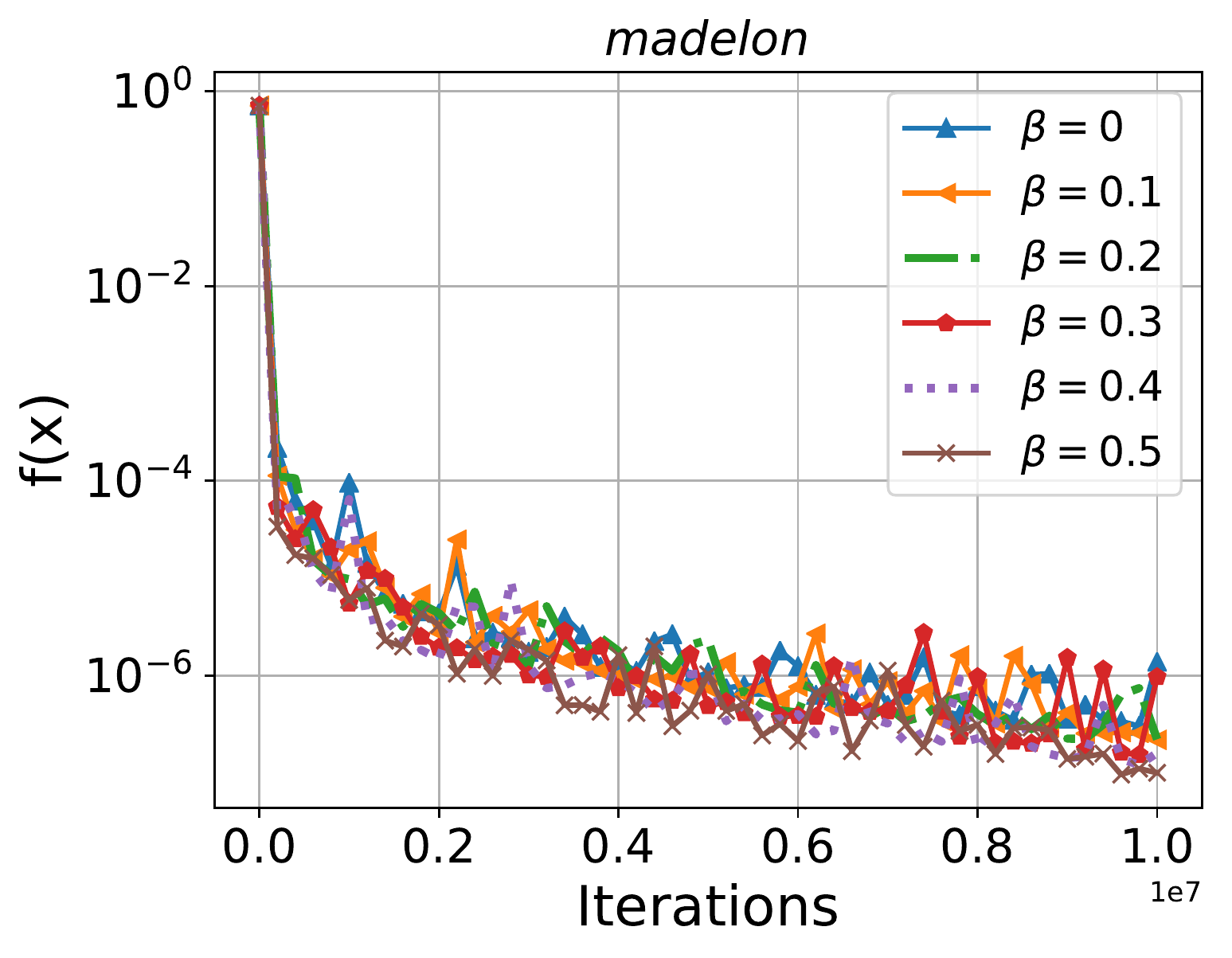}
 % \caption{}
\end{subfigure}
\begin{subfigure}{.23\textwidth}
  \centering
  \includegraphics[width=1\linewidth]{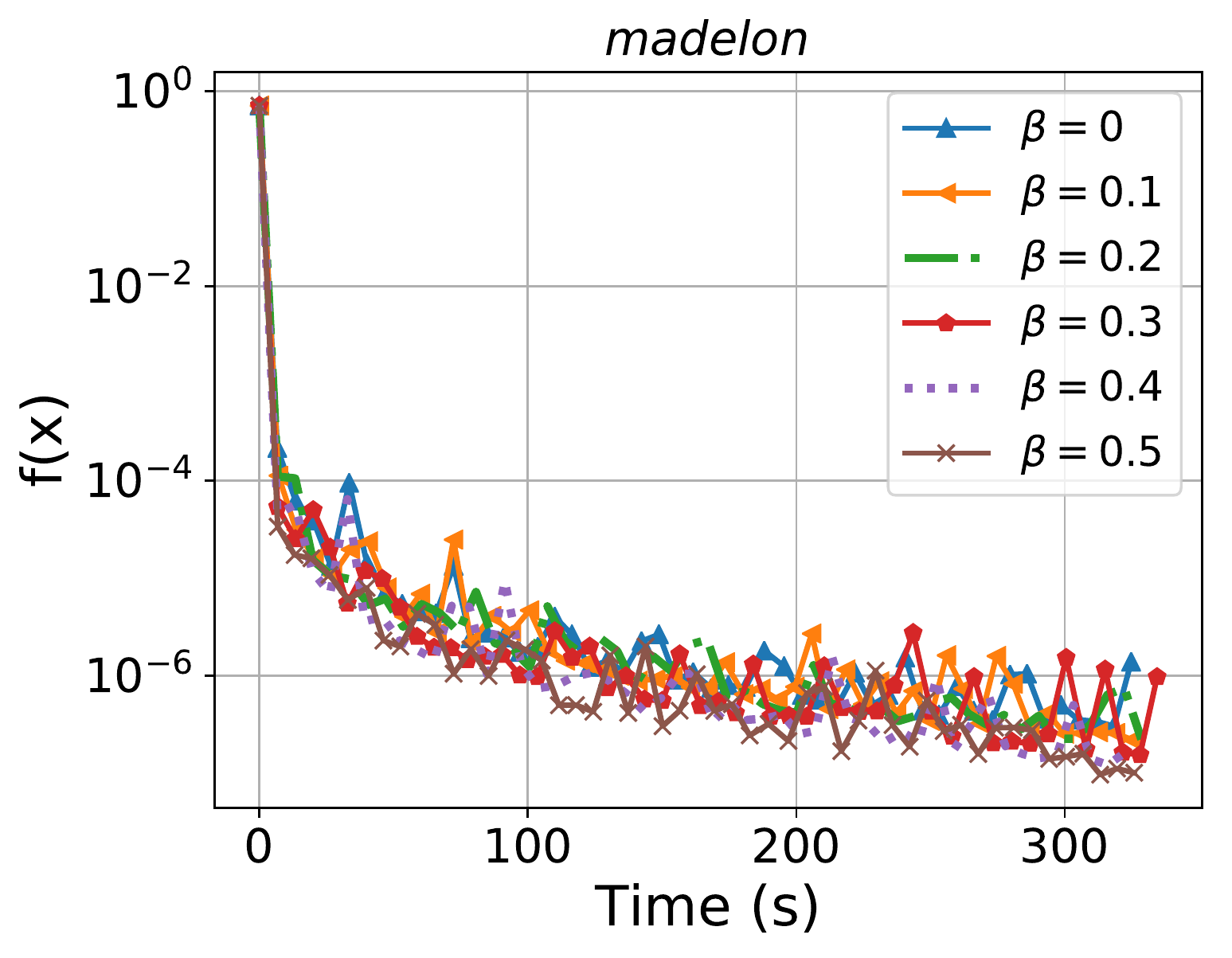}
%  \caption{}
\end{subfigure}\\
\begin{subfigure}{.23\textwidth}
  \centering
  \includegraphics[width=1\linewidth]{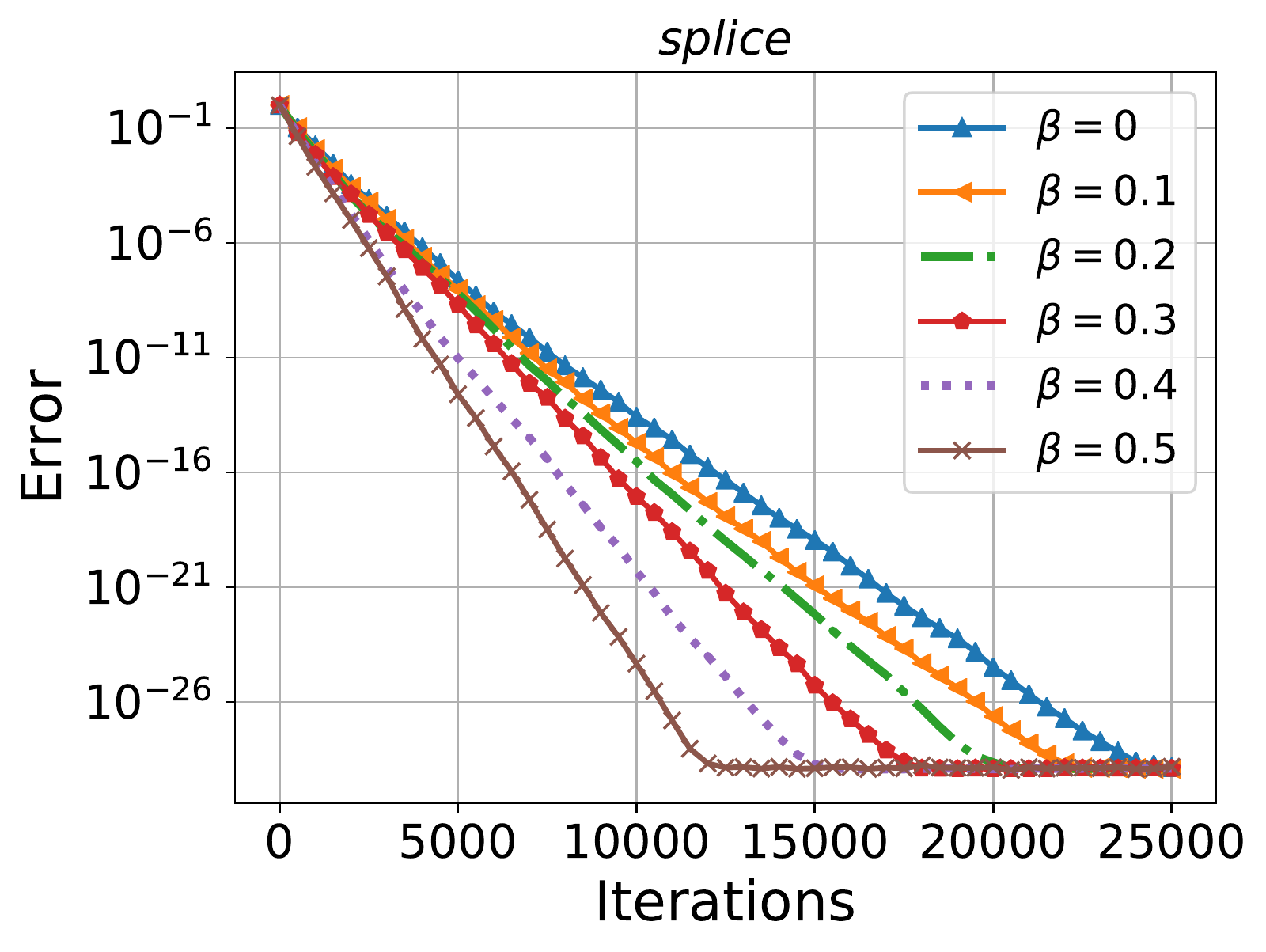}
  %\caption{}
\end{subfigure}%
\begin{subfigure}{.23\textwidth}
  \centering
  \includegraphics[width=1\linewidth]{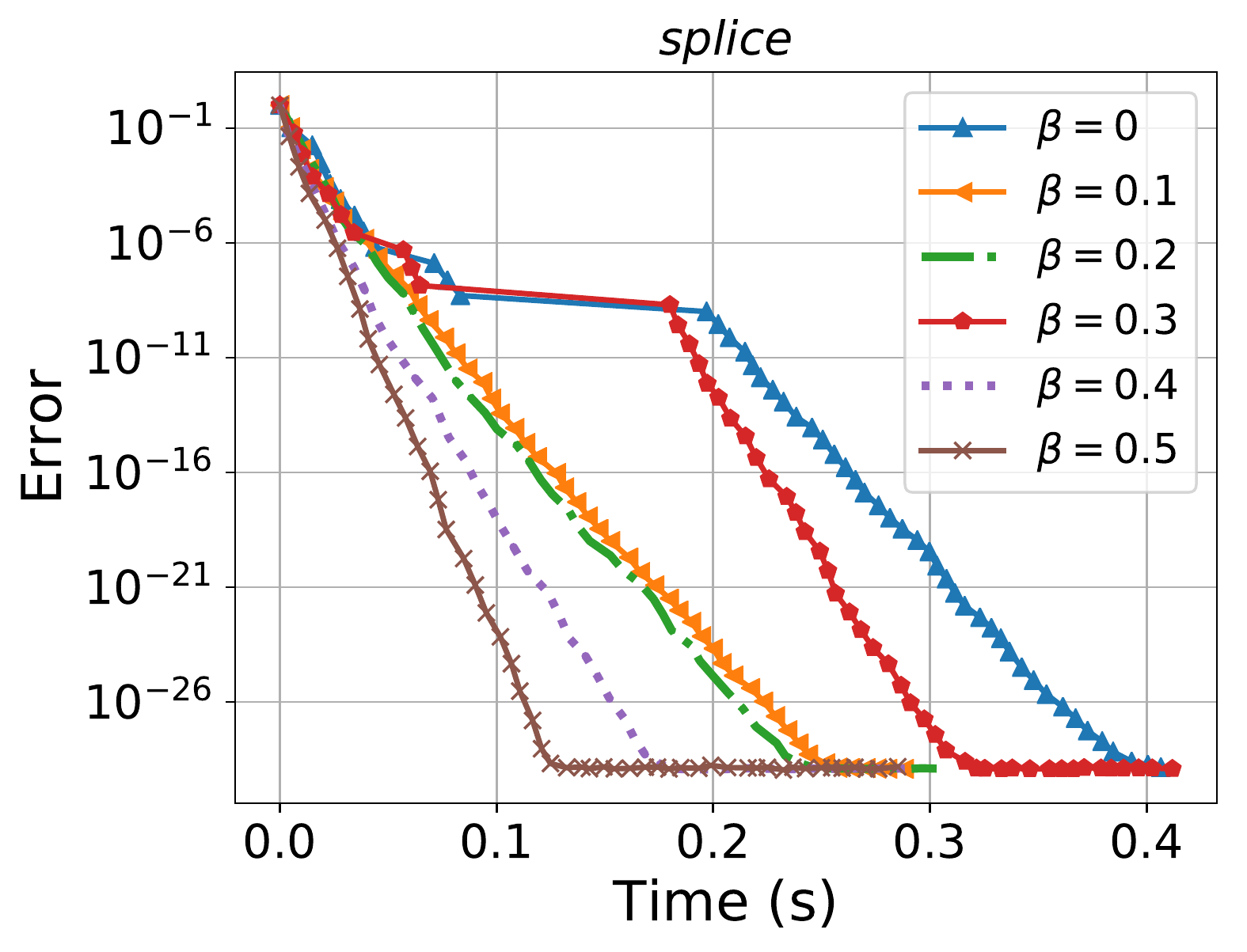}
 % \caption{}
\end{subfigure}
\begin{subfigure}{.23\textwidth}
  \centering
  \includegraphics[width=1\linewidth]{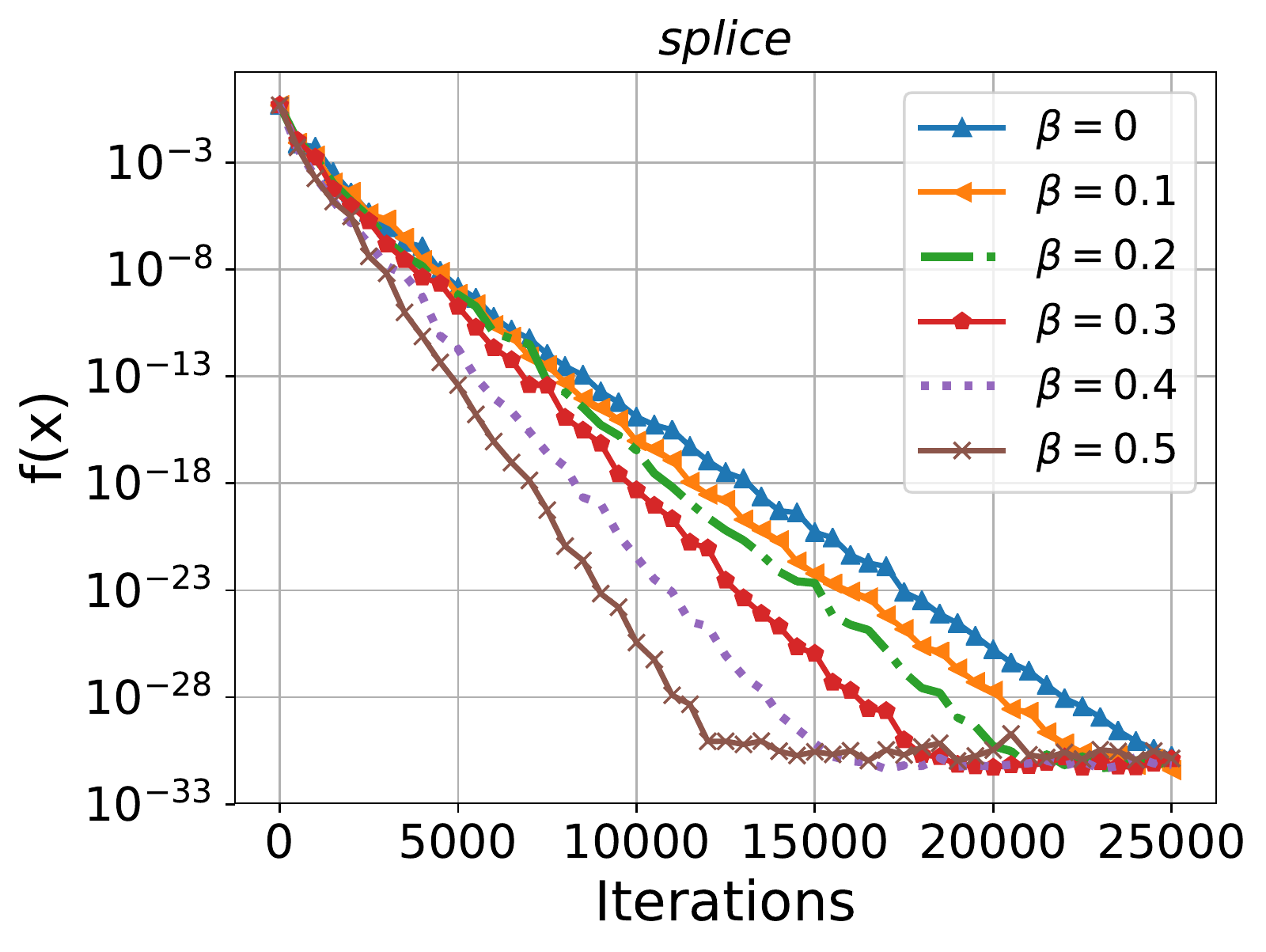}
 % \caption{}
\end{subfigure}
\begin{subfigure}{.23\textwidth}
  \centering
  \includegraphics[width=1\linewidth]{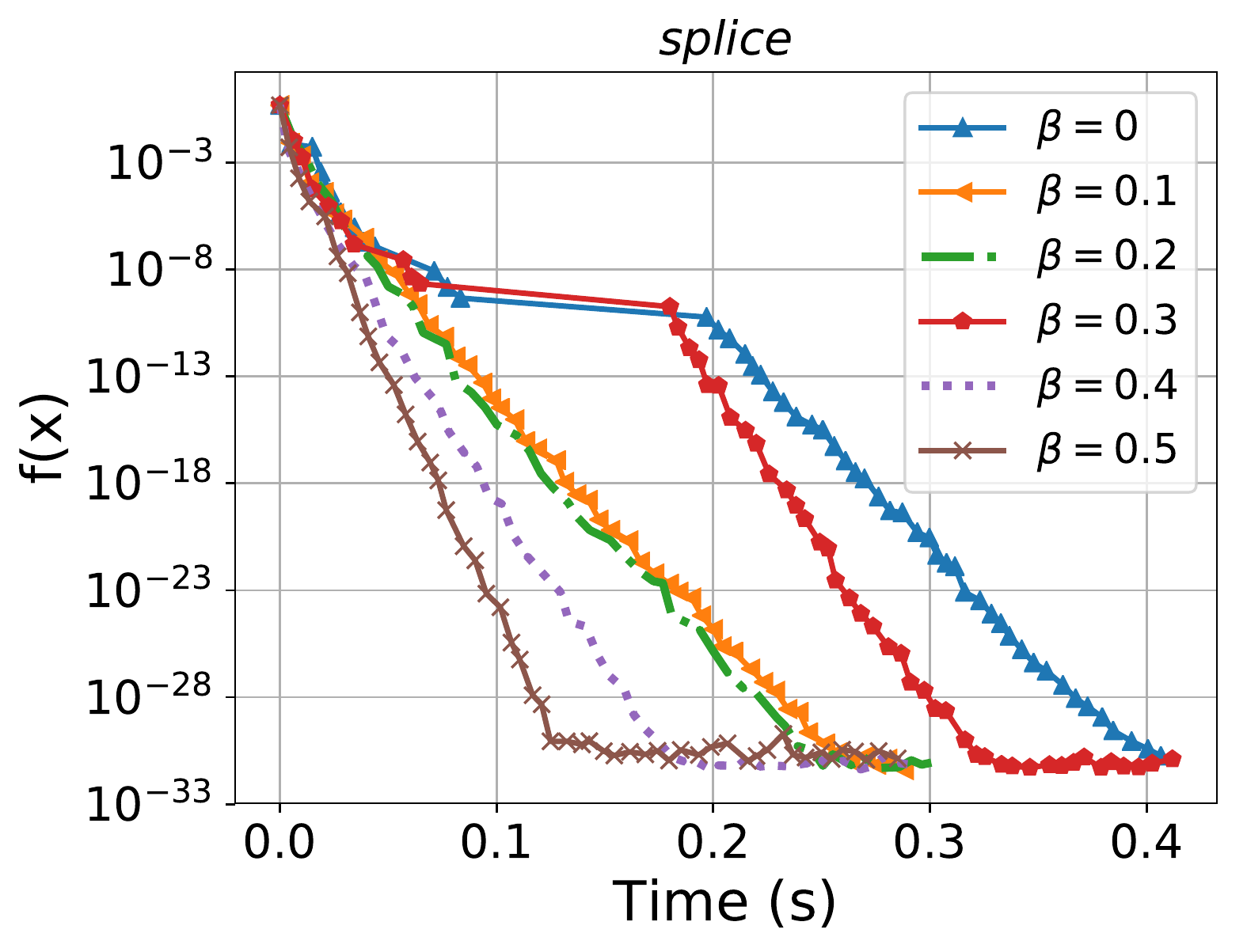}
%  \caption{}
\end{subfigure}\\
\begin{subfigure}{.23\textwidth}
  \centering
  \includegraphics[width=1\linewidth]{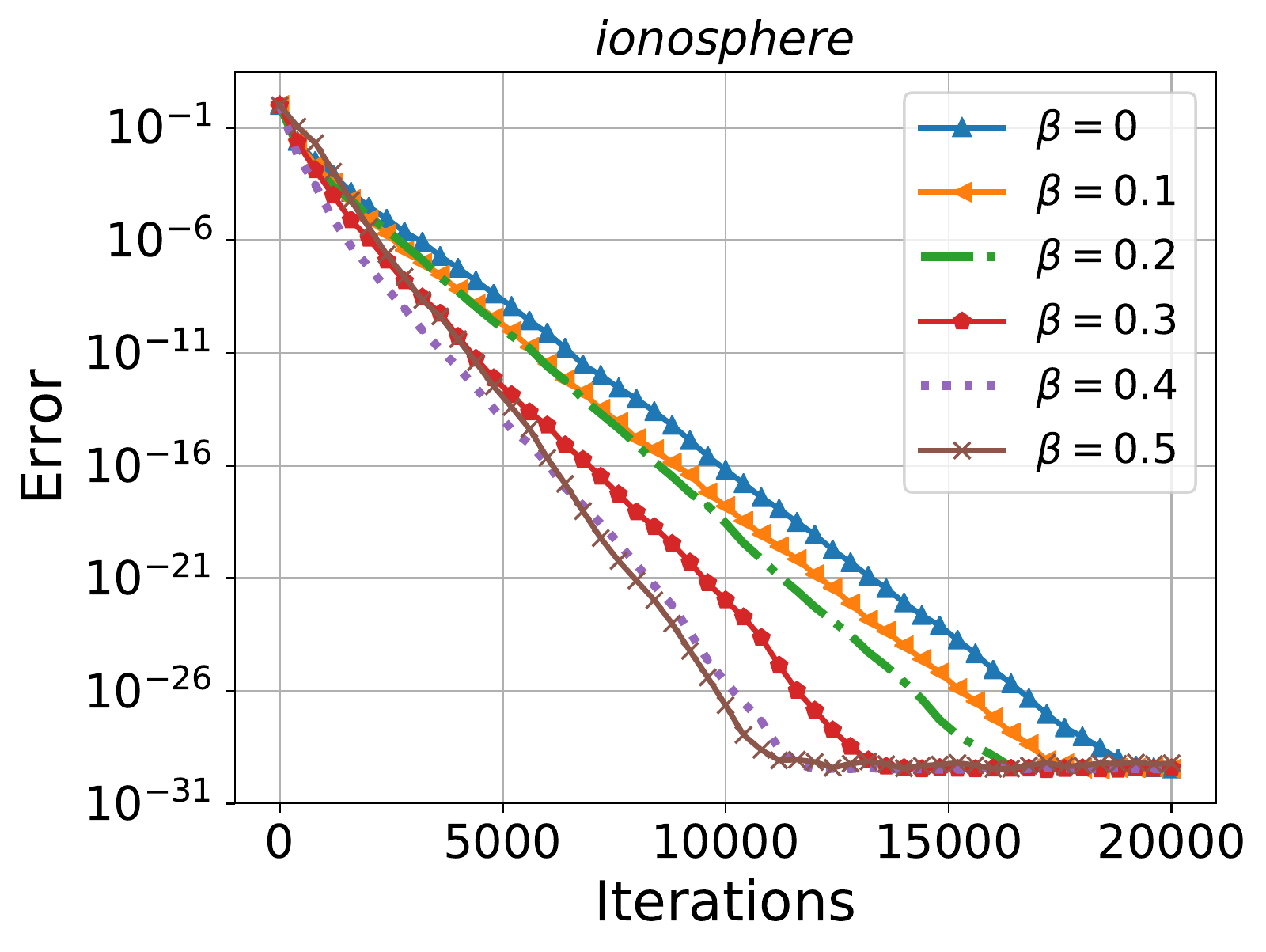}
  %\caption{}
\end{subfigure}%
\begin{subfigure}{.23\textwidth}
  \centering
  \includegraphics[width=1\linewidth]{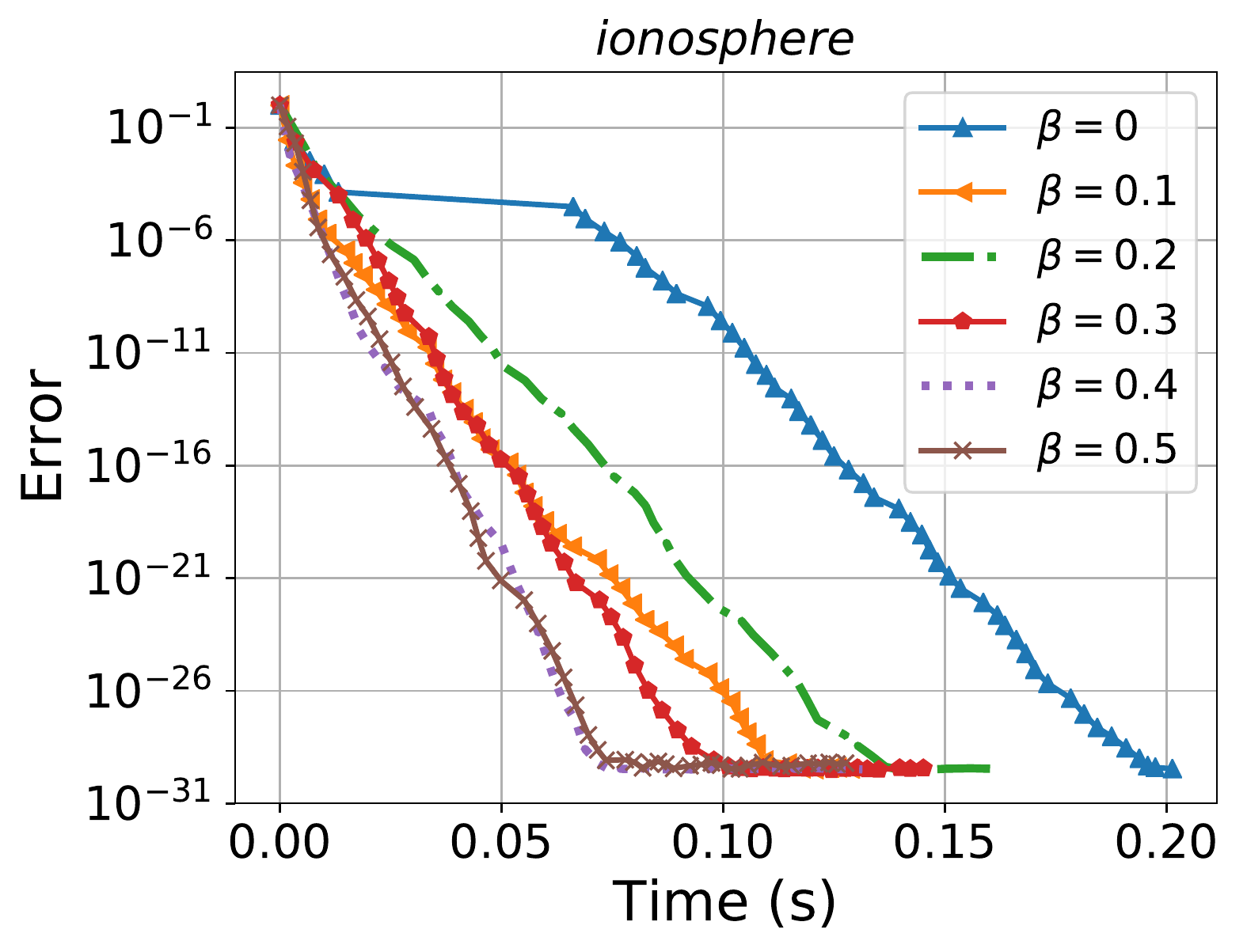}
 % \caption{}
\end{subfigure}
\begin{subfigure}{.23\textwidth}
  \centering
  \includegraphics[width=1\linewidth]{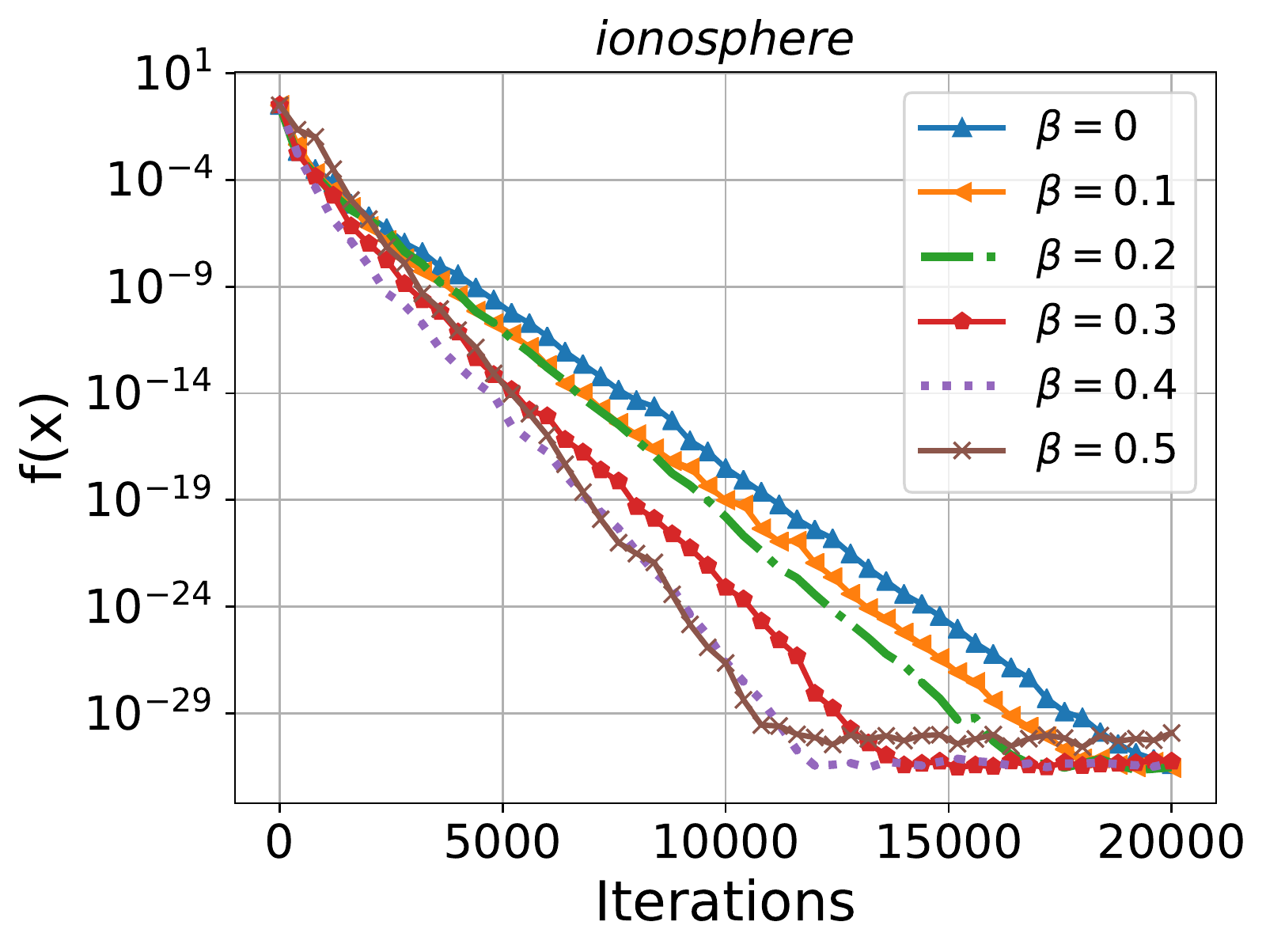}
 % \caption{}
\end{subfigure}
\begin{subfigure}{.23\textwidth}
  \centering
  \includegraphics[width=1\linewidth]{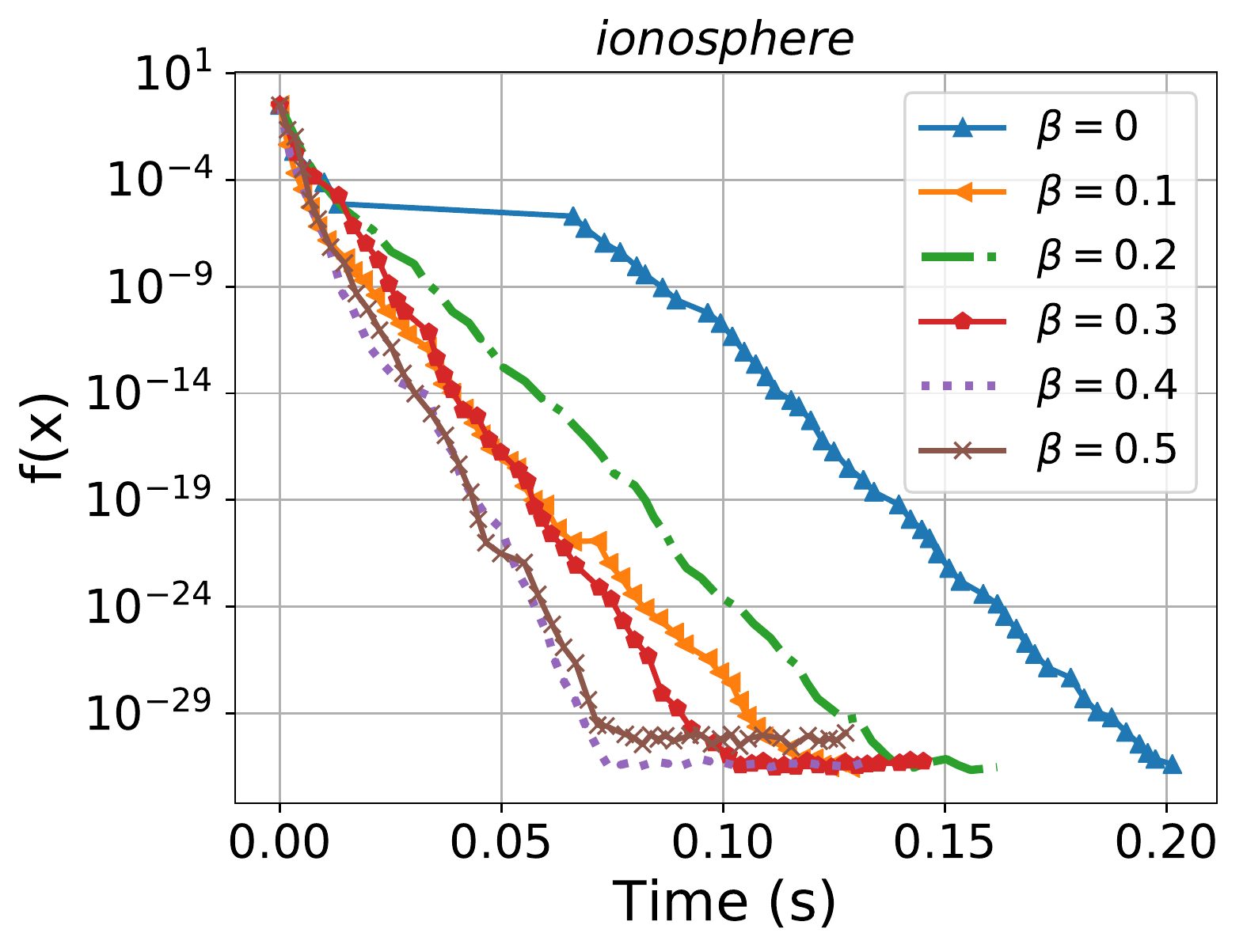}
%  \caption{}
\end{subfigure}\\
\caption{The performance of mRK for several momentum parameters $\beta$ on real data from LIBSVM \cite{chang2011libsvm}. a9a: $(m,n)=(32561,123)$, mushrooms: $(m,n)=(8124,112)$,  australian: $(m,n)=(690,14)$, gisette: $(m,n)=(6000,5000)$,  madelon: $(m,n)=(2000,500)$, splice: $(m,n)=(1000,60)$, ionosphere: $(m,n)=(351,34)$. The graphs in the first (second) column plot iterations (time) against residual error while those in the third (forth) column plot iterations (time) against function values. The ``Error" on the vertical axis represents the relative error $\|x_k-x_*\|^2_\bB / \|x_0-x_*\|^2_\bB \overset{\bB=\bI, x_0=0}{=}\|x_k-x_*\|^2 / \|x_*\|^2_\bB$ and the function values $f(x_k)$ refer to function~\eqref{functionRK}.}
\label{RealDataplots}
\end{figure}

\subsection{Comparison of momentum \& stochastic momentum}
\label{DSHB comp}
In Theorem \ref{thm:DSHBspeedup}, the total complexities (number of operations needed to achieve a given accuracy) of mSGD and smSGD have been compared and it has been shown that for small momentum parameter $\beta$,
\[C_{\beta}=\frac{C_{\text{mSGD}}(\beta)}{C_{\text{smSGD}}(\beta n)}  \approx  1 + \frac{n}{g},\]  where $C_{\text{mSGD}}$ and $C_{\text{smSGD}}$ represent the total costs of the two methods. The goal of this experiment is to show that this relationship holds also in practice. 

For this experiment we assume that the non-zeros of matrix $\bA$ are not concentrated in certain rows but instead that each row has the same number of non-zero coordinates. We denote by $g$ the number the non-zero elements per row. Having this assumption it can be shown that for the RK method the cost of one projection is equal to $4g$ operations while 
the cost per iteration of the mRK and of the smRK are $4g+3n$ and $4g+1$ respectively. For more details about the cost per iteration of the general mSGD and smSGD check Table~\ref{tableSHBandDSHB}.

As a first step a Gaussian matrix $\bA \in \R^{m \times n}$ is generated.
Then using this matrix several consistent linear systems are obtained as follows. Several values for $g \in [1,n]$ are chosen and for each one of these a matrix $\bA_g \in \R^{m \times n}$ with the same elements as $\bA$ but with $n-g$ zero coordinates per row is produced. 
For every matrix $\bA_g$, a Gaussian vector $z_g \in \R^n$ is drawn and to ensure consistency of the linear system, the right hand side is set to $b_g=\bA_g z$.

We run both mSGD and smSGD with small momentum parameter $\beta=0.0001$ for solving the linear systems $\bA_g x=b_g$ for all selected values of $g \in [1,n]$. The starting point for each run is taken to be $x_0=0 \in \R^n$.  The methods run until $\epsilon=\|x_k-x_*\|<0.001$, where $x_*=\Pi_{\cL_g}(x_0)$ and $\cL_g$ is the solution set of the linear system $\bA_g x=b_g$. In each run the number of operations needed to achieve the accuracy $\epsilon$ have been counted. 
For each linear system the average after $10$ trials of the value $\frac{C_{\text{mSGD}}(\beta)}{C_{\text{smSGD}}(\beta n)}$ is computed. 

\begin{table}[H]
\begin{center}
{\footnotesize
\begin{tabular}{ | p{4cm} | p{4cm} | p{3cm} | p{4cm} | }
 % \multicolumn{4}{|c|}{Complexity Results} \\
 \hline
 Algorithm & Cost per iteration & Cost per Iteration (RK, mRK, smRK) \\
 \hline
 Basic Method ($\beta=0$) & $O(g)$ & $4g$  \\
  \hline
 mSGD & $O(g) +O(n) =O(n)$ &  $4g+3n$  \\
  \hline
smSGD & $O(g) +O(1) =O(g)$ & $4g+1$ \\
 \hline
\end{tabular}
}
\end{center}
\caption{Cost per iteration of the basic, mSGD and smSGD in the general setting and in the special cases of RK,mRK and smRK.}
\label{tableSHBandDSHB}
\end{table}

In Figure \ref{comparisonFigure} the actual ratio $\frac{C_{\text{mSGD}}(\beta)}{C_{\text{smSGD}}(\beta n)}$ and the theoretical approximation $1 + \frac{n}{g}$ are plot and it is shown that they have similar behavior. Thus the theoretical prediction of Theorem~\ref{thm:DSHBspeedup} is numerically confirmed.
In particular in the implementations we use the Gaussian matrices $\bA \in \R^{200 \times 100}$ and $\bA \in \R^{1000 \times 300}$.

\begin{figure}[!]
\centering
\begin{subfigure}{.4\textwidth}
  \centering
  \includegraphics[width=1\linewidth]{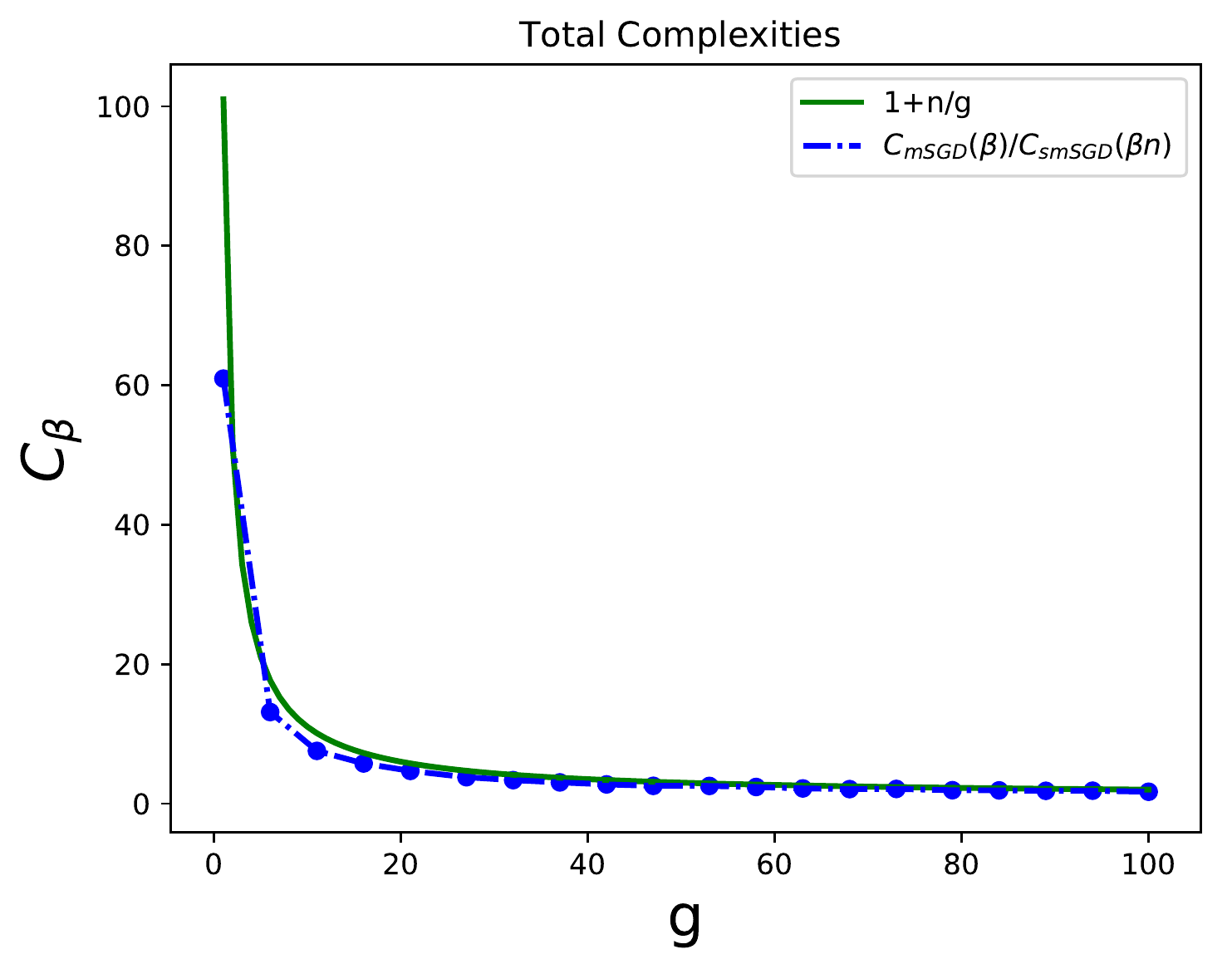}
  \caption{$\bA \in \R^{200 \times 100}$}
\end{subfigure}
\begin{subfigure}{.4\textwidth}
  \centering
  \includegraphics[width=1\linewidth]{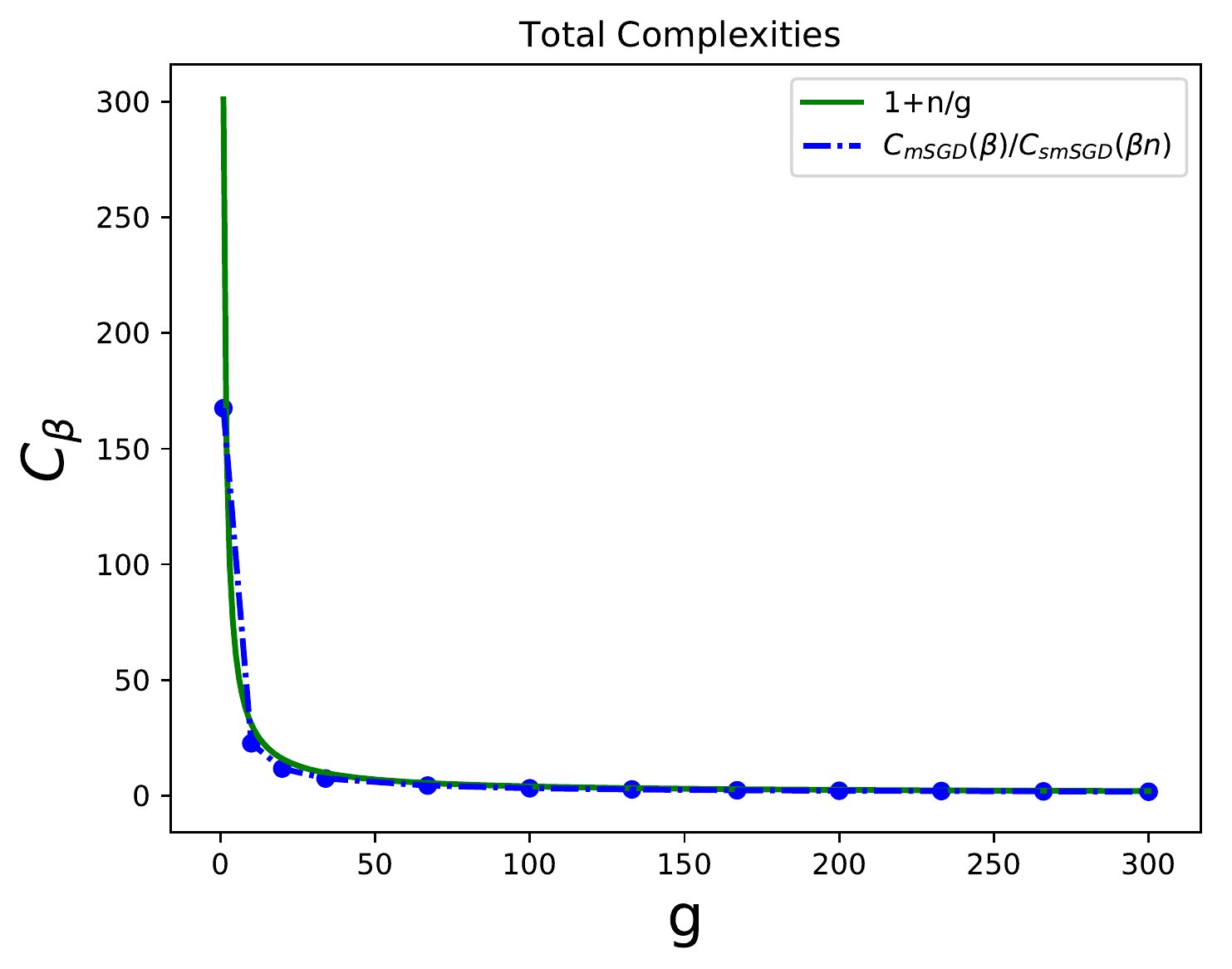}
  \caption{$\bA \in \R^{1000 \times 300}$}
\end{subfigure}
\caption{Comparison of the total complexities of mRK and smRK. The green continuous line denotes the theoretical relationship $1 + \frac{n}{g}$ that we predict in Theorem~\ref{thm:DSHBspeedup}. The blue dotted line shows the ratio of the total complexities $\frac{C_{\text{mSGD}}(\beta)}{C_{\text{smSGD}}(\beta n)} $ for several linear systems $\bA_g x=b_g$ where $g \in [1,n]$. The momentum parameter $\beta=0.0001$ is used for both methods.}
\label{comparisonFigure}
\end{figure}

\subsection{Faster method for average consensus}
\label{consensus}

\subsubsection{Background}
Average consensus (AC) is a fundamental problem in distributed computing and multi-agent systems \cite{dimakis2010gossip, boyd2006randomized}. Consider a connected undirected network $\cG=(\cV,\cE)$ with node set $\cV=\{1,2,\dots,n\}$ and edges $\cE$, ($|\cE|=m$), where each node $i \in \cV$ owns a private value $c_i \in \R$. The goal of the AC problem is each node of the network to compute the average of these private values, $\bar{c}\eqdef\tfrac{1}{n}\sum_i c_i$, via a protocol which allows communication between neighbours only. The problem comes up in many real world applications such as coordination of autonomous agents, estimation, rumour spreading in social networks, PageRank and distributed data fusion on ad-hoc networks and decentralized optimization. 

It was shown recently that several randomized methods for solving linear systems  can be interpreted as randomized gossip algorithms for solving the AC problem when applied to a special system encoding the underlying network \cite{gower2015stochastic, LoizouRichtarik}. As we have already explained both basic method \cite{ASDA} and basic method with momentum (this paper) find the solution of the linear system that is closer to the starting point of the algorithms. That is, both methods converge linearly to $x_*=\Pi^{\mB}_{\cL}(x_0)$; the projection of the initial iterate onto the solution set of the linear system and as a result (check Introduction) can be interpreted as methods for solving the best approximation problem~\eqref{eq:primal}.
In the special case that
\begin{enumerate}
\item the linear system in the constraints of \eqref{eq:primal} is the homogeneous linear system ($\bA x=0$) with matrix $\bA\in \R^ {m \times n}$ being the incidence matrix of the undirected graph $\cG=(\cV,\cE)$, and 
\item the starting point of the method are the initial values of the nodes $x_0=c$,
\end{enumerate} 
it  is straightforward to see that the solution of the best approximation problem is a vector with all components equal to the consensus value $\bar{c}\eqdef\tfrac{1}{n}\sum_i c_i$. Under this setting, the famous randomized pairwise gossip algorithm (randomly pick an edge $e\in E$ and replace the private values of its two nodes to their average) that was first proposed and analyzed in \cite{boyd2006randomized}, is equivalent with the RK method without relaxation ($\omega=1$) \cite{gower2015stochastic, LoizouRichtarik}.

 %In the gossip setting having stepsize $\omega=1$ has practical meaning. In each step the exact average of the two randomly chosen neignbours is computed.

\begin{rem}
In the gossip framework, the condition number of the linear system when RK is used has a simple structure and it depends on the characteristics of the network under study. More specifically, it depends on the number of the edges $m$ and on the Laplacian matrix of the network\footnote{Matrix $\bA$ of the linear system is the incidence matrix of the graph and it is known that the Laplacian matrix is equal to $\bL=\bA^\top \bA$, where $\|\bA\|^2_F=2m$.}:
\begin{equation}
\label{algebconeec}
\frac{1}{\lambda_{\min}^+(\bW)}\overset{\eqref{matrixW}}{=}\frac{1}{\lambda_{\min}^+(\bA^\top \bA/ \|\bA\|^2_F)}\overset{\|\bA\|^2_F=2m}{=}\frac{2m}{\lambda_{\min}^+(\bA^\top \bA)}=\frac{2m}{\lambda_{\min}^+(\bL)},
\end{equation}
where $\bL=\bA^\top \bA$ is the Laplacian matrix of the network and the quantity $\lambda_{\min}^+(\bL)$ is the very well studied \textit{algebraic connectivity} of the graph \cite{de2007old}. 
\end{rem}

\begin{rem} The convergence analysis in this paper holds for any consistent linear system $\bA x= b$ without any assumption on the rank of the matrix $\bA$. The lack of any assumption on the form of matrix $\bA$ allows us to solve the homogeneous linear system $\bA x=0$ where $\bA$ is the incidence matrix of the network which by construction is rank deficient. More specifically, it can be shown that ${\rm rank}(\bA)=n-1$ \cite{LoizouRichtarik}. Note that many existing methods for solving linear systems make the assumption that the matrix $\bA$ of the linear systems is full rank \cite{RK, needell2010randomized, RBK} and as a result can not be used to solve the AC problem.
\end{rem}

\subsubsection{Numerical Setup}
Our goal in this experiment is to show that the addition of the momentum term to the randomized pairwise gossip algorithm (RK in the gossip setting) can lead to faster gossip algorithms and as a result the nodes of the network will converge to the average consensus faster both in number of iterations and in time. We do not intend to analyze the distributed behavior of the method (this is on-going research work). In our implementations we use three of the most popular graph topologies in the literature of wireless sensor networks. These are the line graph, cycle graph and the random geometric graph $G(n,r)$. In practice, $G(n,r)$ consider ideal for modeling wireless sensor networks, because of their particular formulation. In the experiments the $2$-dimensional $G(n,r)$ is used which is formed by placing $n$ nodes uniformly at random in a unit square with edges only between nodes that have euclidean distance less than the given radius $r$. To preserve the connectivity of $G(n, r)$ a radius $r = r(n) =  \log(n)/n$ is used \cite{penrose2003random}. 
The AC problem is solved for the three aforementioned networks for both $n=100$ and $n=200$ number of nodes. We run mRK with several momentum parameters $\beta$ for 10 trials and we plot their average. Our results are available in Figures~\ref{consensus100} and \ref{consensus200}.

Note that the vector of the initial values of the nodes can be chosen arbitrarily, and the proposed algorithms will  find the average  of these values. In Figures~\ref{consensus100} and \ref{consensus200} the initial value of each node is chosen independently at random from the uniform distribution in the interval $(0,1)$.

\subsubsection{Experimental Results}

By observing Figures~\ref{consensus100} and \ref{consensus200}, it is clear that the addition of the momentum term improves the performance of the popular pairwise randomized gossip (PRG) method \cite{boyd2006randomized}.  The choice $\beta=0.4$ as the momentum parameter improves the performance of the vanilla PRG for all networks under study and $\beta=0.5$ is a good choice for the cases of the cycle and line graph. Note that for networks such as the cycle and line graphs there are known closed form expressions for the algebraic connectivity \cite{de2007old}. Thus, using equation \eqref{algebconeec}, we can compute the exact values of the condition number $1/\lambda_{\min}^+$ for these networks. Interestingly, as we can see in Table~\ref{Algebraic Connectvity} for $n=100$ and $n=200$ (number of nodes), the condition number $1/\lambda_{\min}^+$ appearing in the iteration complexity of our methods is not very large. This is in contrast with experimental  observations from Section~\ref{gaussiansyntheric} where it was shown that the choice $\beta=0.5$ is good  for very ill conditioned problems only ($1/\lambda_{\min}^+$ very large). 
% We believe that this happens because of the very specific structure of the incidence matrix (matrix of the linear system) for these particular graphs.

\begin{table}[H]
\begin{center}
{\footnotesize
\begin{tabular}{ | p{2cm} | p{4cm} | p{3cm} | p{3cm}|  }
 % \multicolumn{4}{|c|}{Complexity Results} \\
 \hline
Network & Formula for $\lambda_{\min}^+(\bL)$ & $1/\lambda_{\min}^+$ for $n=100$ & $1/\lambda_{\min}^+$ for $n=200$\\
   \hline
Line & $2\left(1-\cos ({\pi}/{n})\right)$& 1013 &4052\\
 \hline
Cycle &  $2\left(1-\cos ({2\pi}/{n})\right)$&253 &1013\\
 \hline
\end{tabular}
}
\end{center}
\caption{Algebraic connectivity of cycle and line graph for $n=100$ and $n=200$}
\label{Algebraic Connectvity}
\end{table}
 
\begin{figure}[!]
\centering
\begin{subfigure}{.23\textwidth}
  \centering
  \includegraphics[width=1\linewidth]{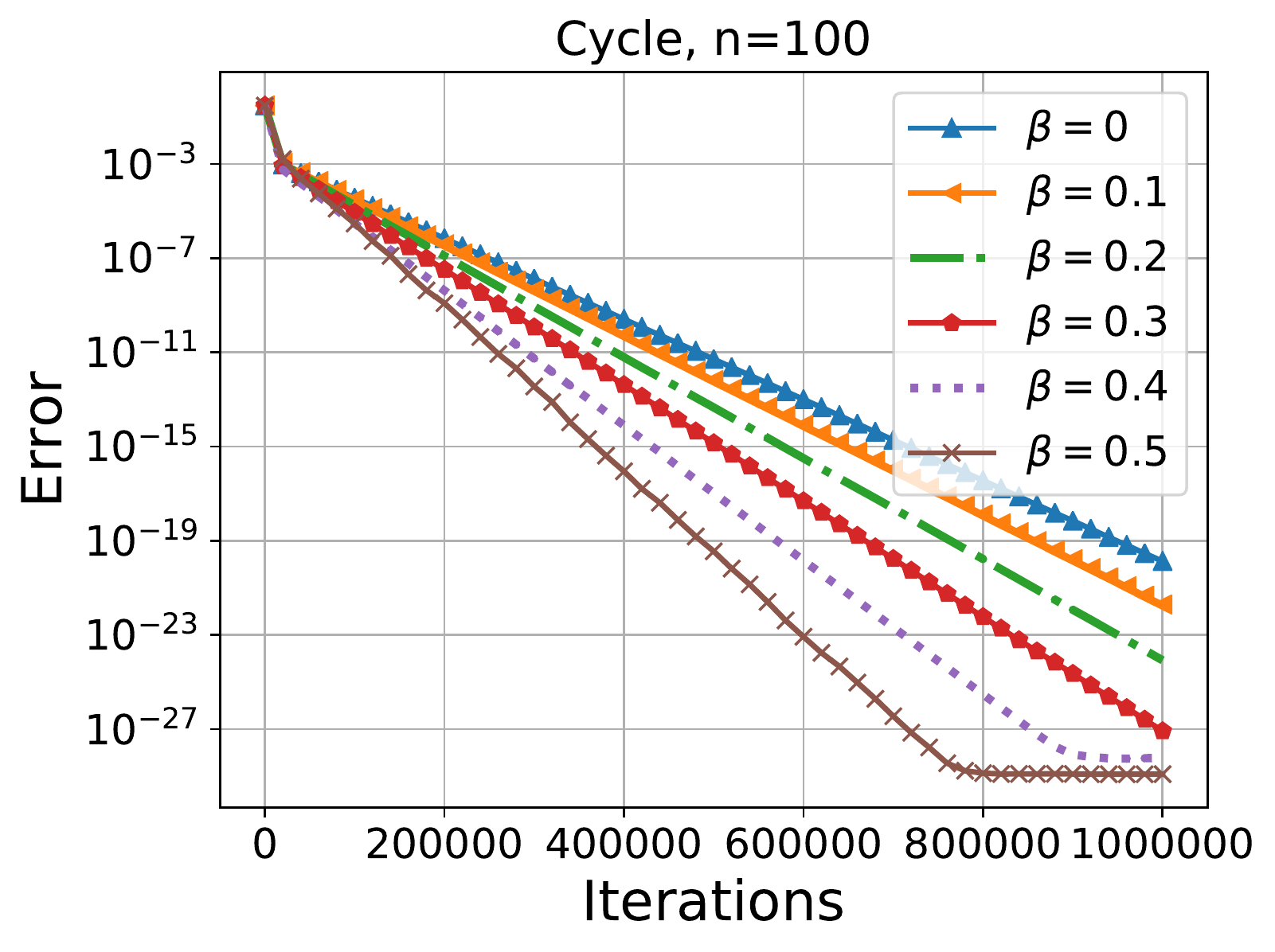}
  %\caption{}
\end{subfigure}%
\begin{subfigure}{.23\textwidth}
  \centering
  \includegraphics[width=1\linewidth]{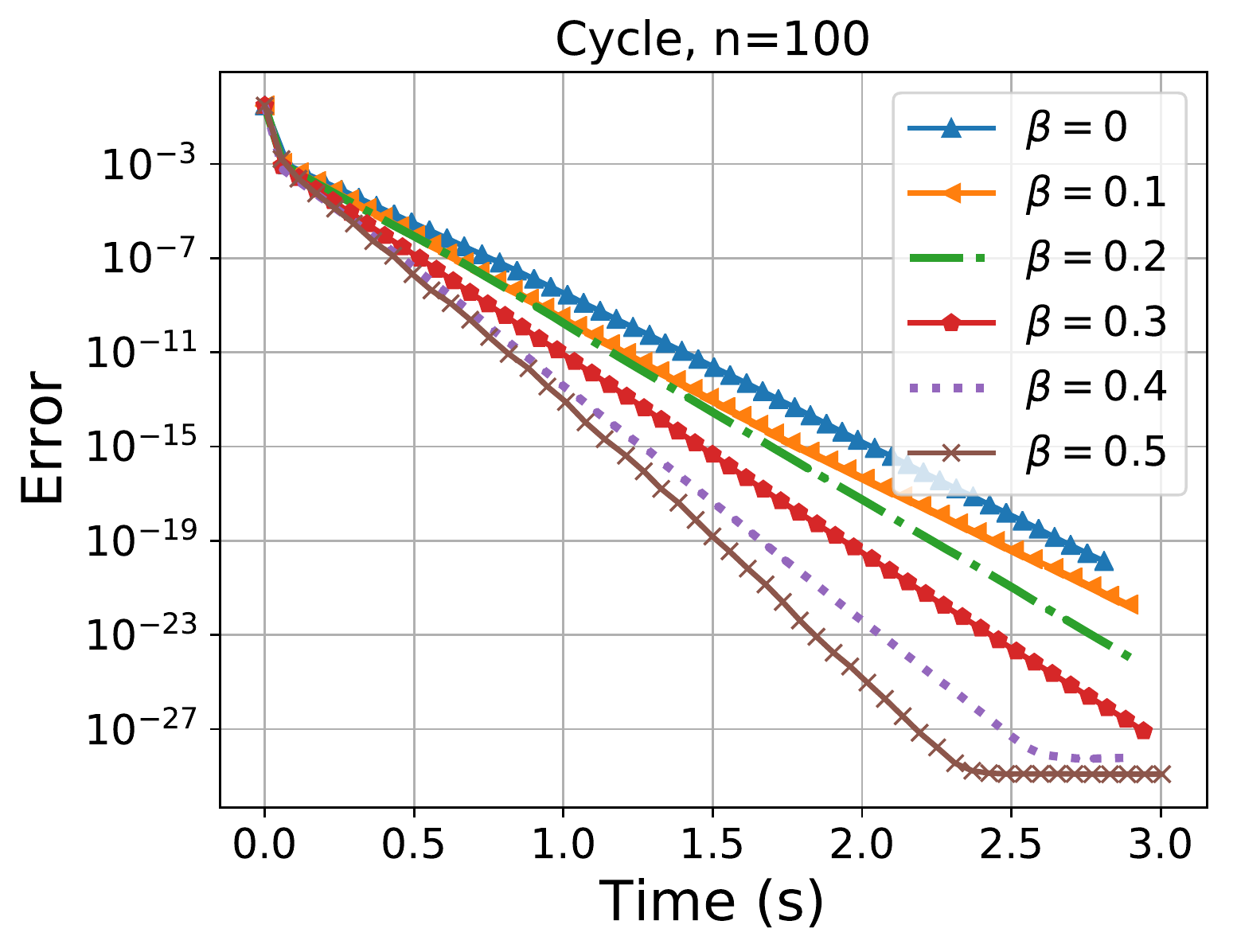}
 % \caption{}
\end{subfigure}
\begin{subfigure}{.23\textwidth}
  \centering
  \includegraphics[width=1\linewidth]{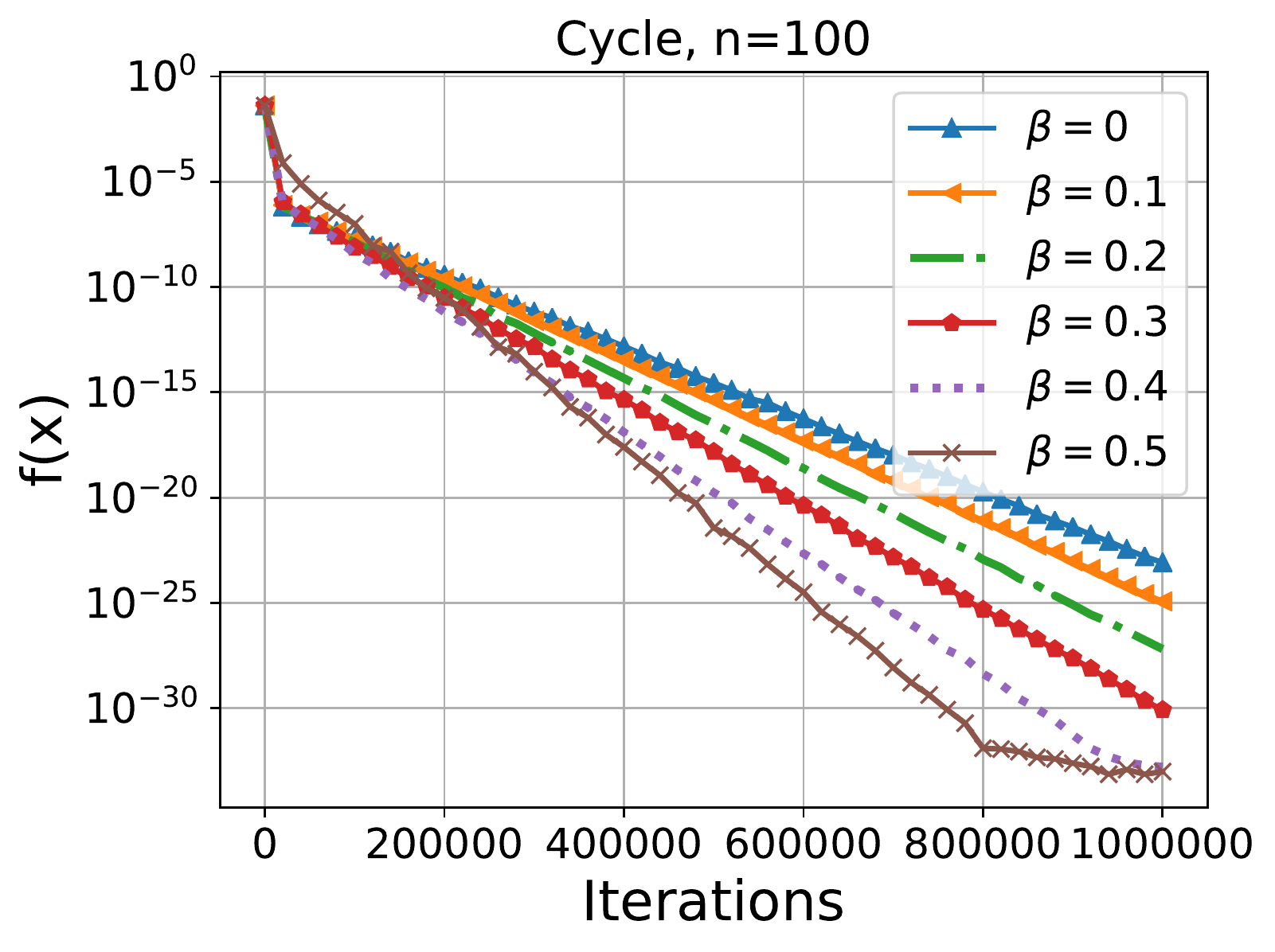}
 % \caption{}
\end{subfigure}
\begin{subfigure}{.23\textwidth}
  \centering
  \includegraphics[width=1\linewidth]{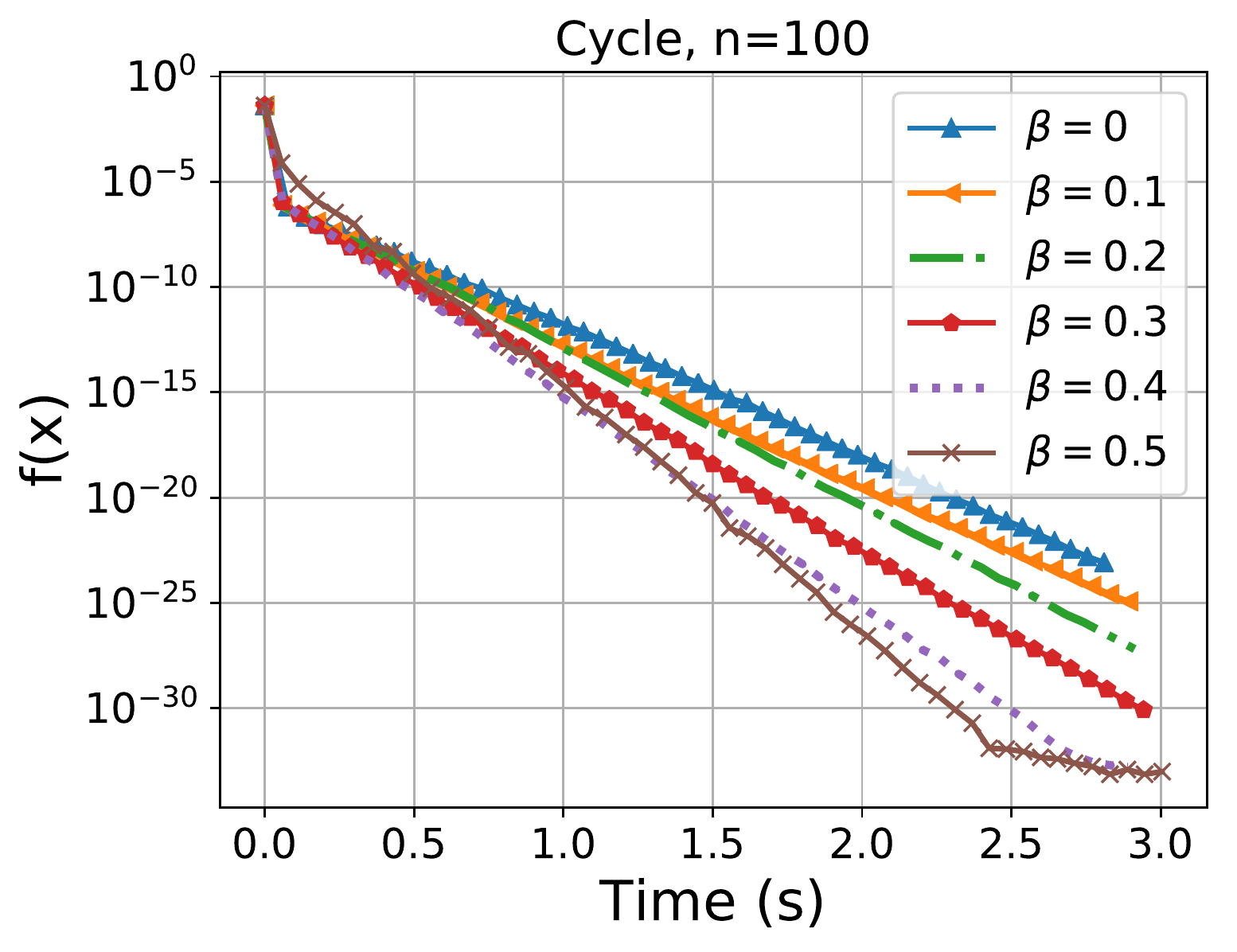}
%  \caption{}
\end{subfigure}\\
\begin{subfigure}{.23\textwidth}
  \centering
  \includegraphics[width=1\linewidth]{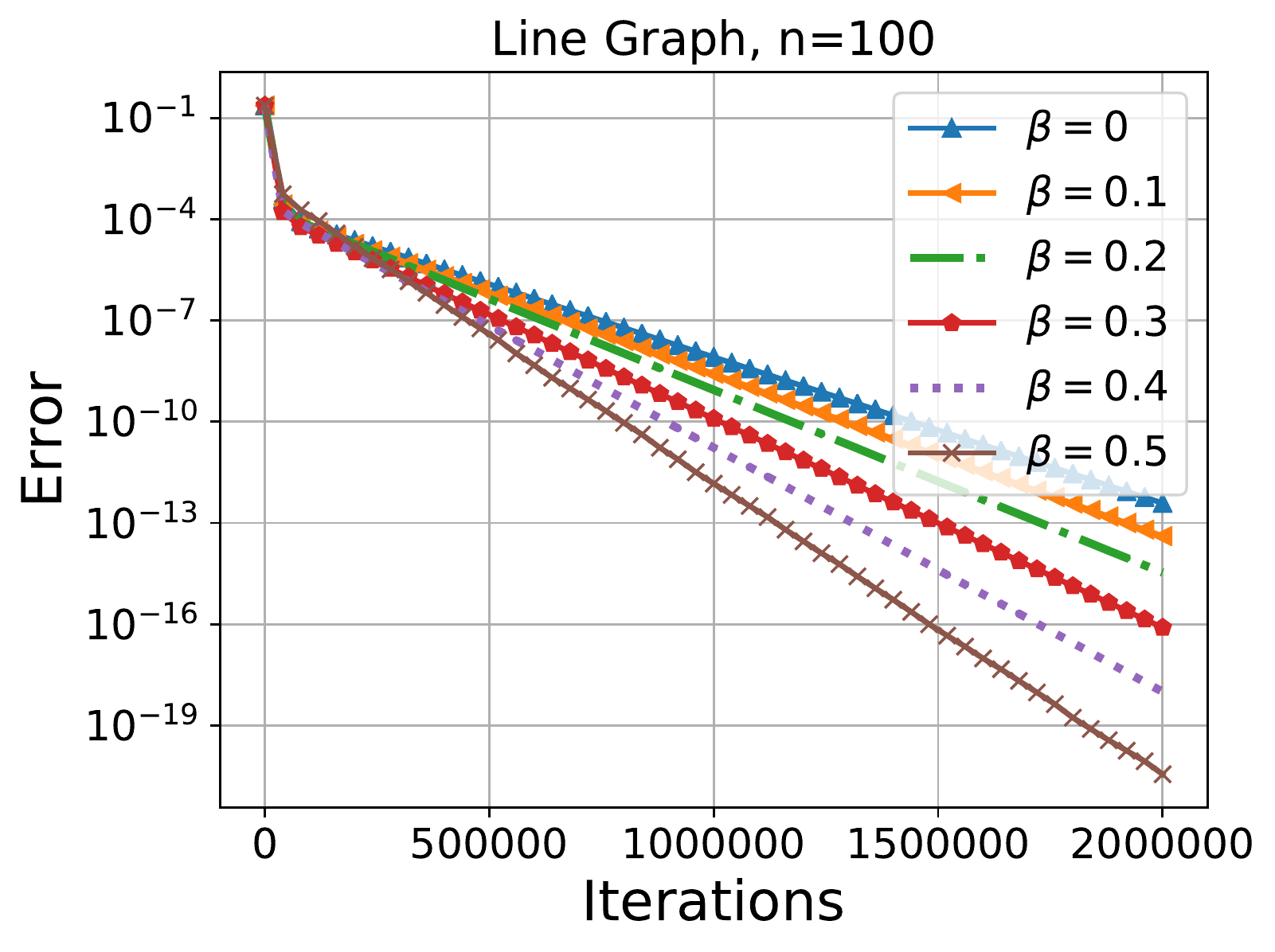}
  %\caption{}
\end{subfigure}%
\begin{subfigure}{.23\textwidth}
  \centering
  \includegraphics[width=1\linewidth]{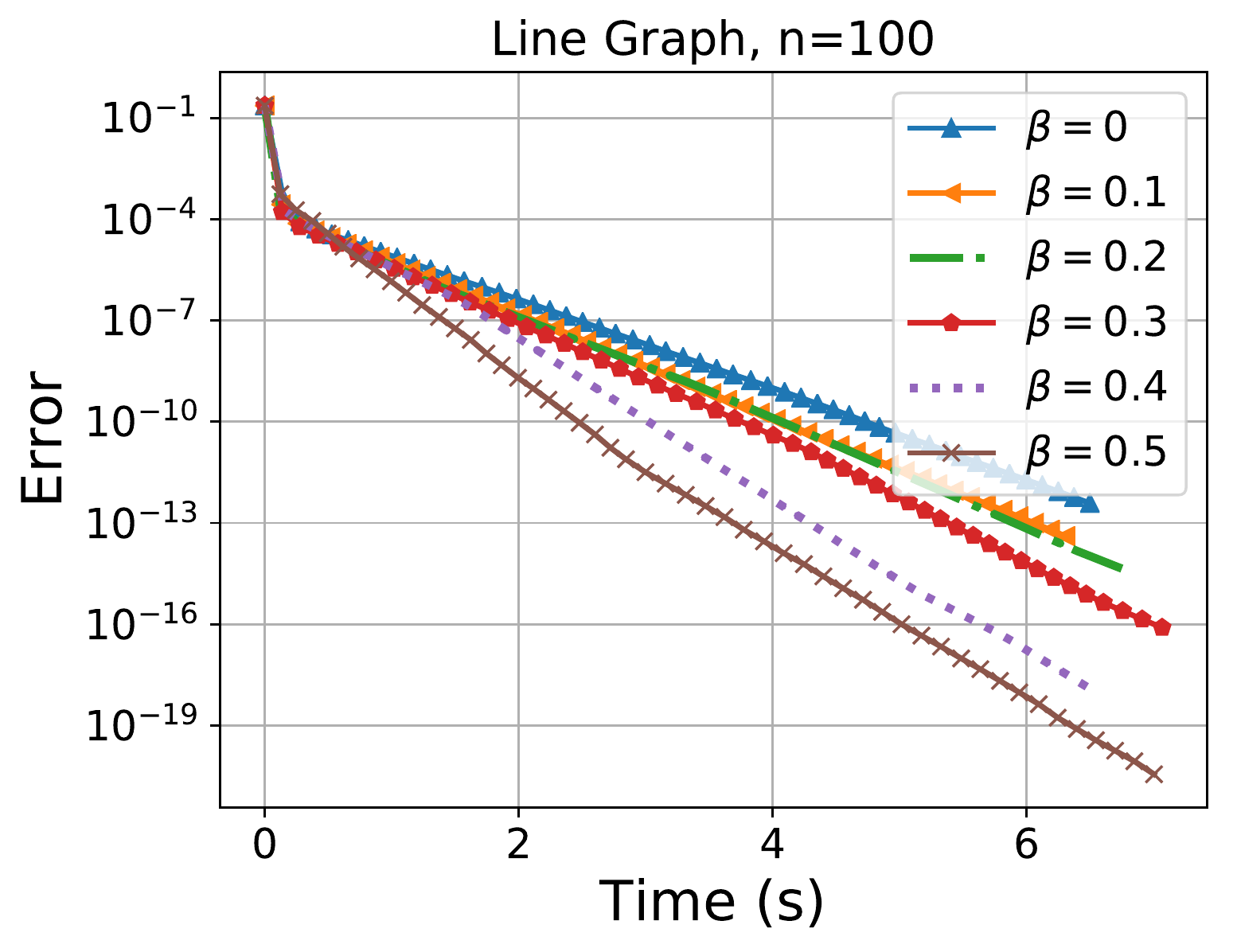}
 % \caption{}
\end{subfigure}
\begin{subfigure}{.23\textwidth}
  \centering
  \includegraphics[width=1\linewidth]{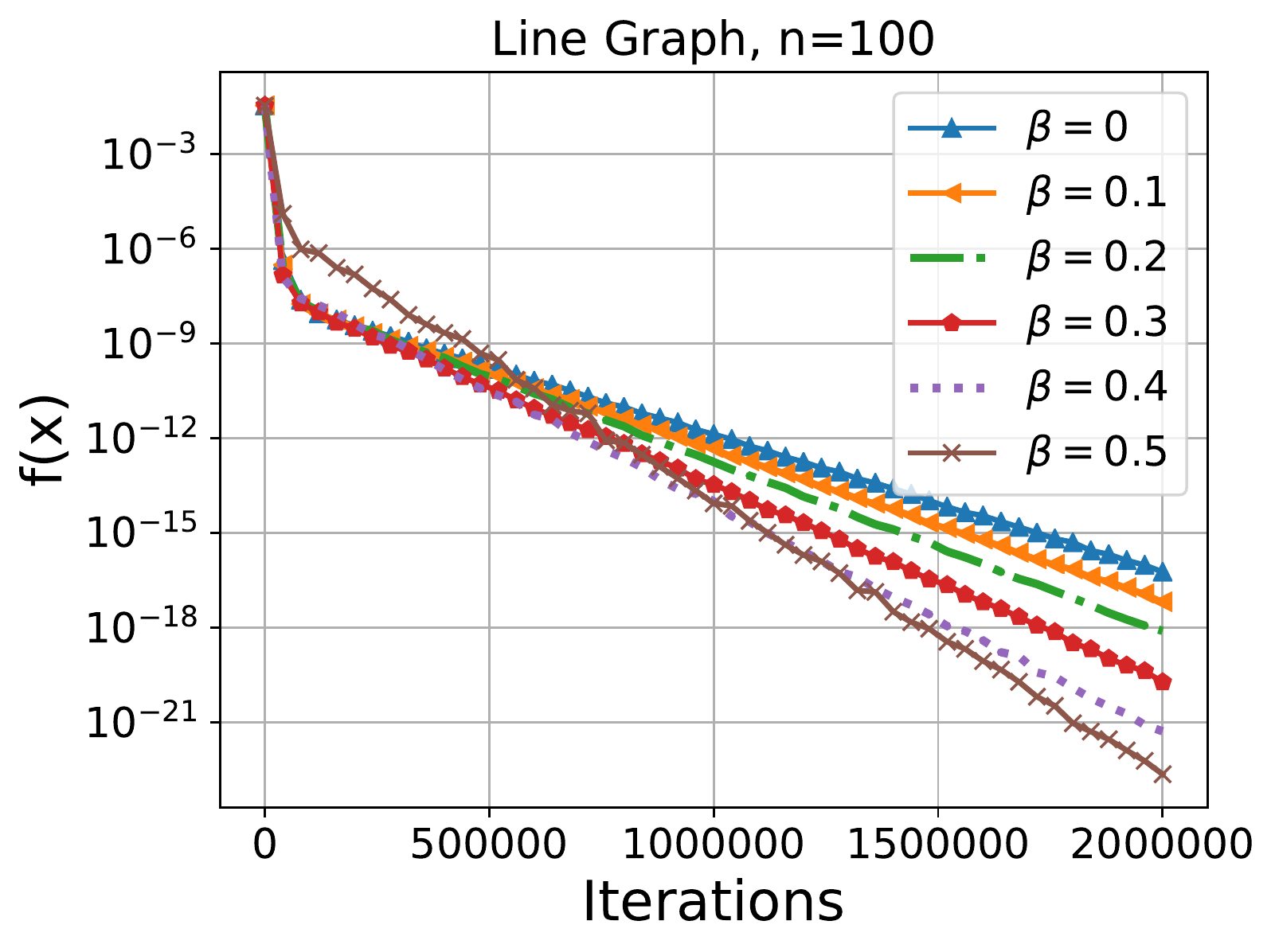}
 % \caption{}
\end{subfigure}
\begin{subfigure}{.23\textwidth}
  \centering
  \includegraphics[width=1\linewidth]{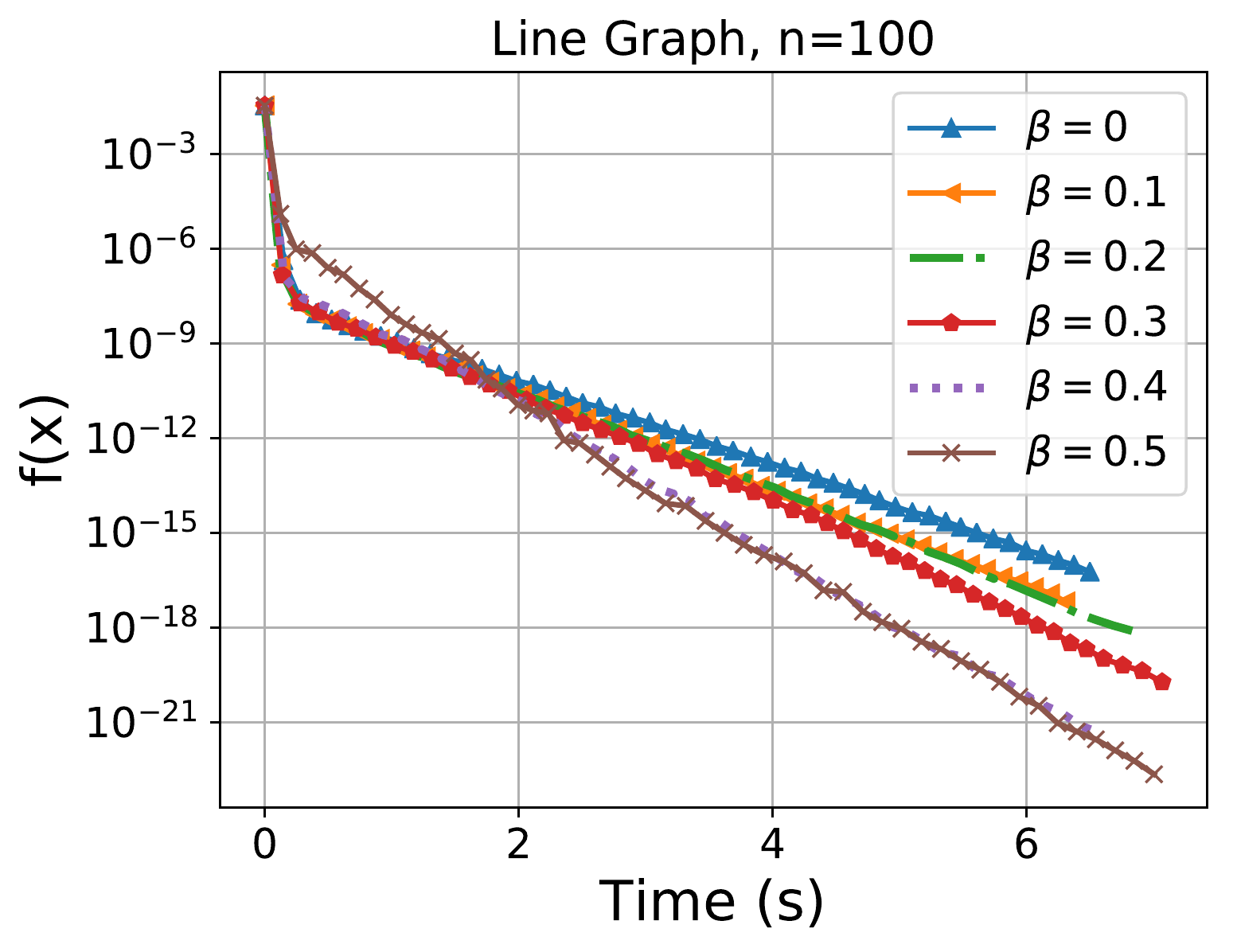}
%  \caption{}
\end{subfigure}\\
\begin{subfigure}{.23\textwidth}
  \centering
  \includegraphics[width=1\linewidth]{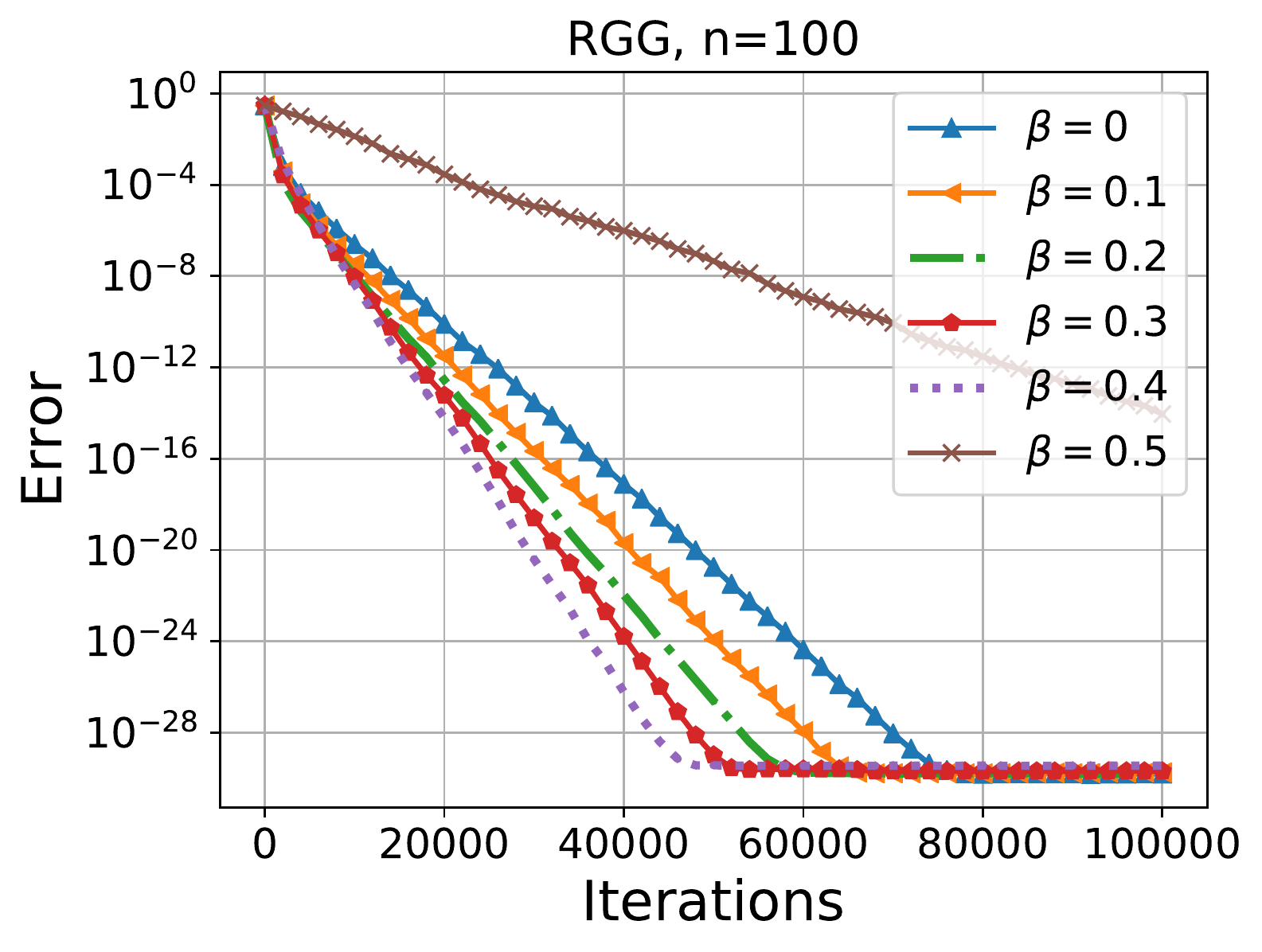}
  %\caption{}
\end{subfigure}%
\begin{subfigure}{.23\textwidth}
  \centering
  \includegraphics[width=1\linewidth]{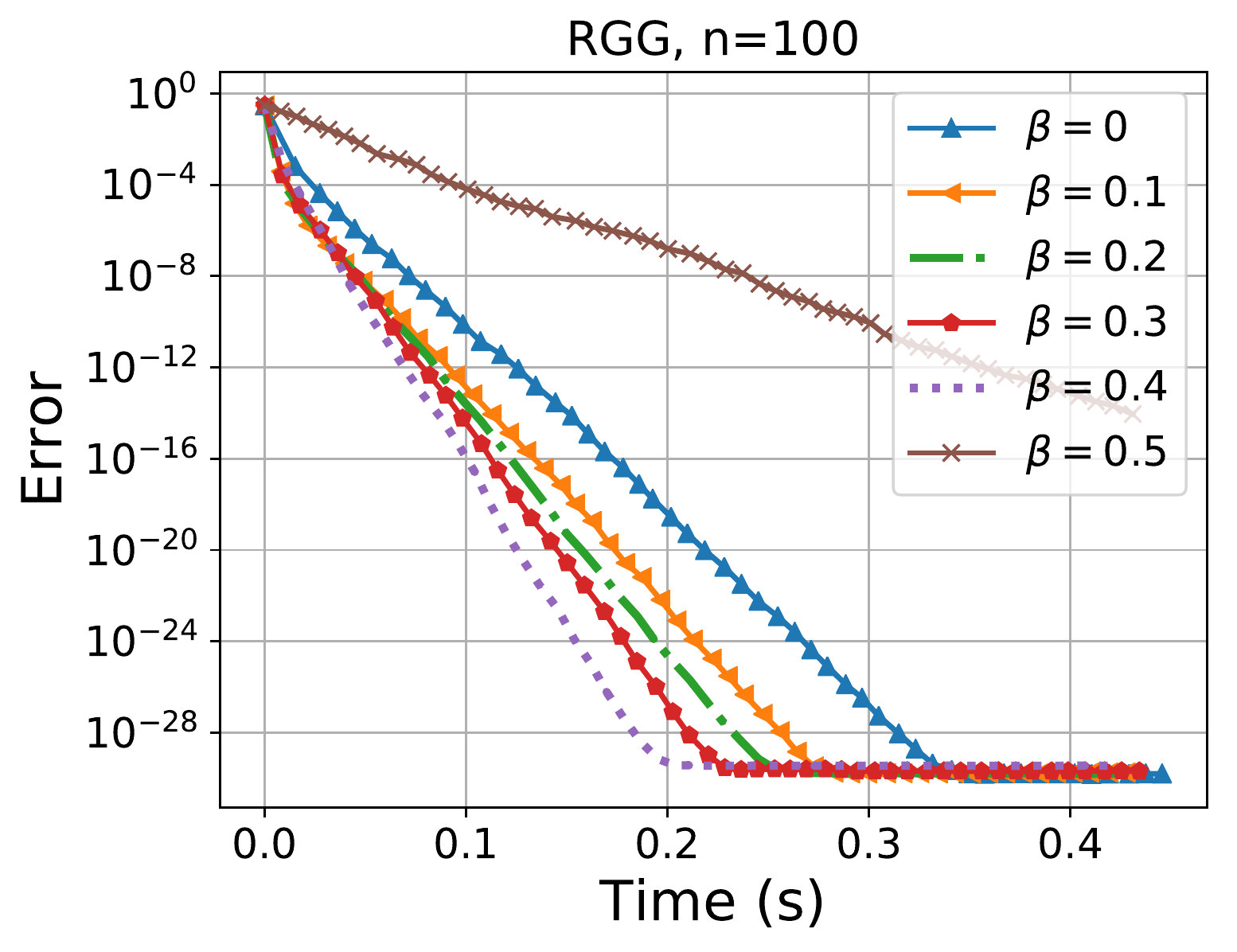}
 % \caption{}
\end{subfigure}
\begin{subfigure}{.23\textwidth}
  \centering
  \includegraphics[width=1\linewidth]{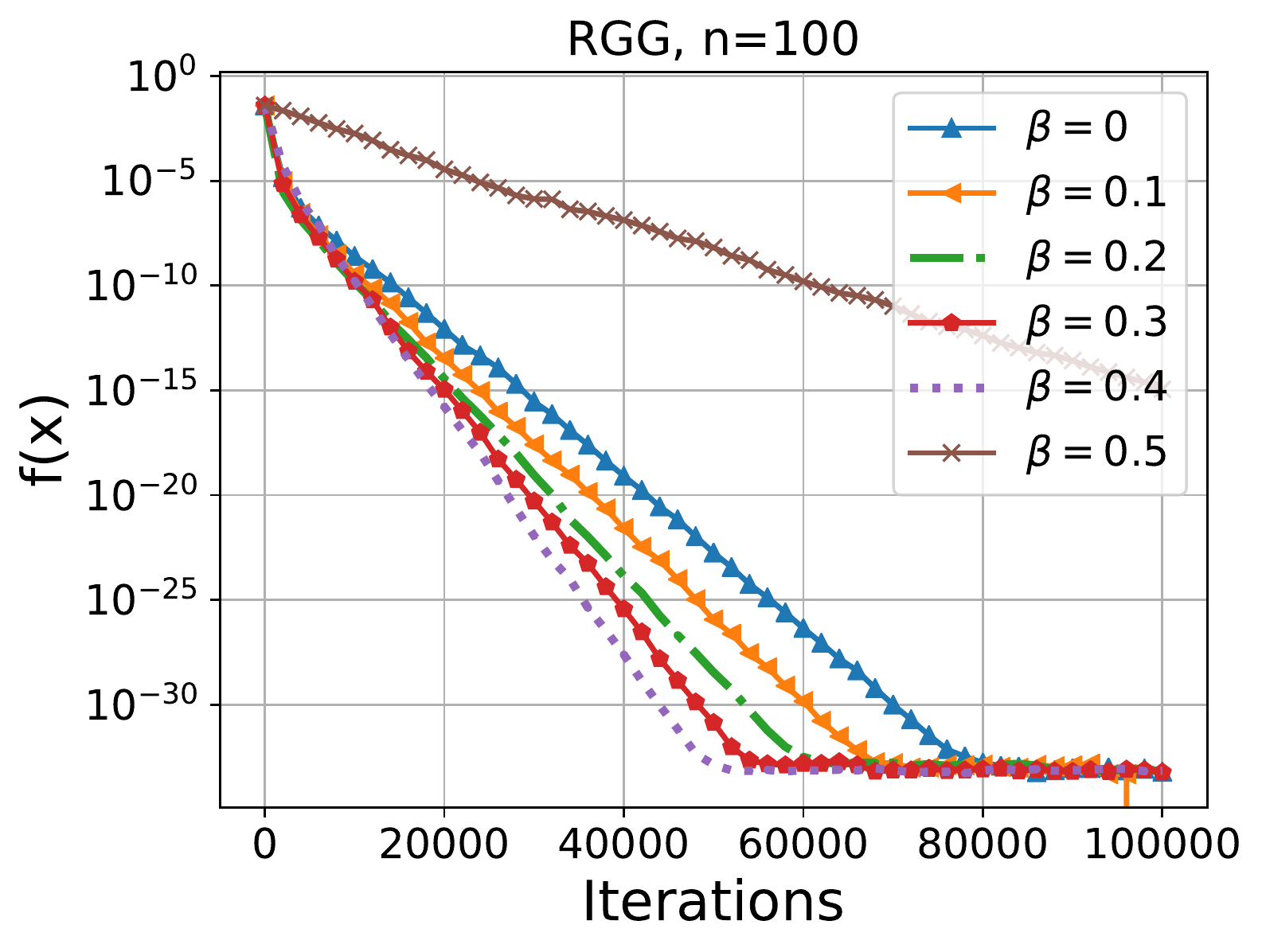}
 % \caption{}
\end{subfigure}
\begin{subfigure}{.23\textwidth}
  \centering
  \includegraphics[width=1\linewidth]{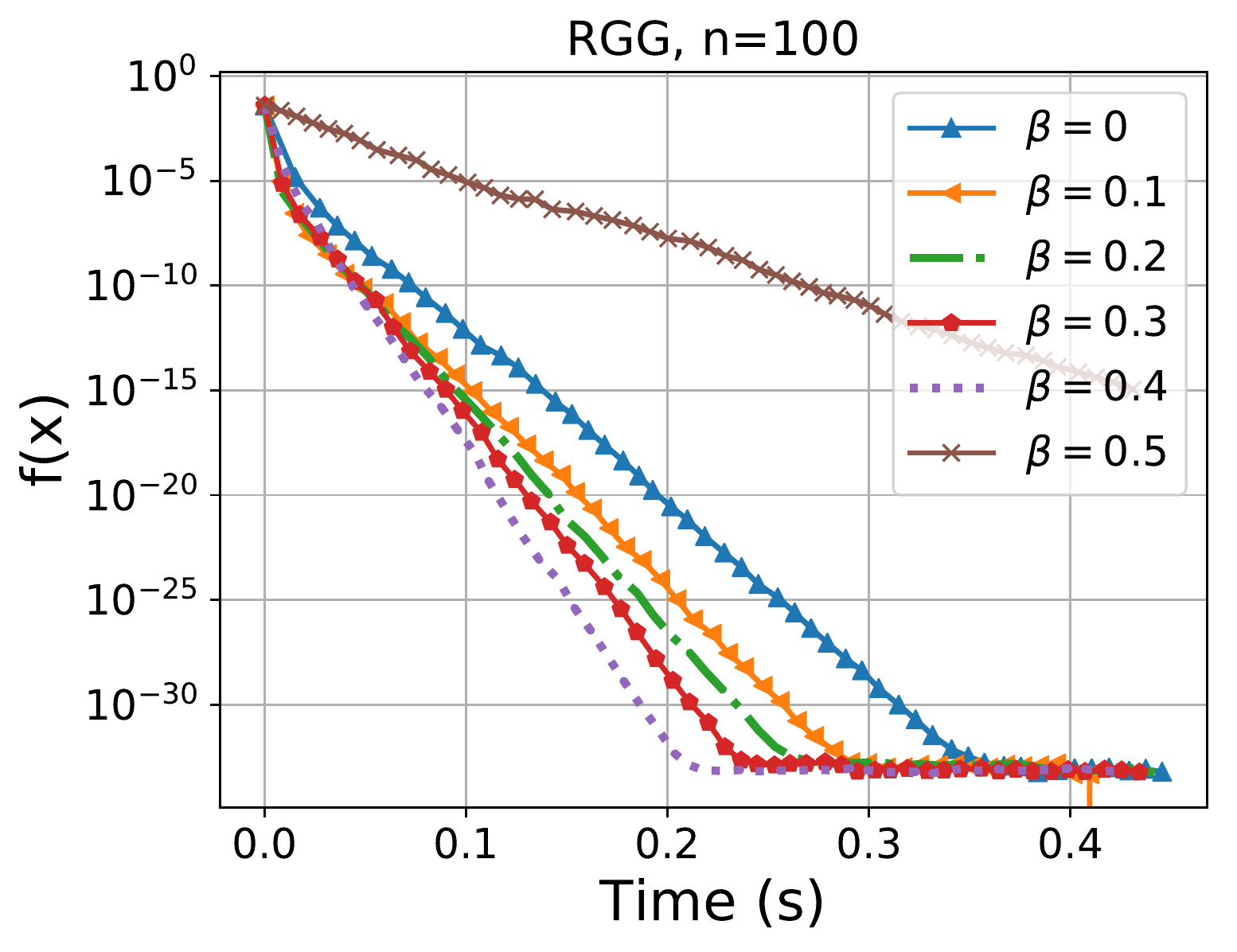}
%  \caption{}
\end{subfigure}\\
\caption{Performance of mPRG for several momentum parameters $\beta$ for solving the average consensus problem in a cycle graph, line graph and random geometric graph $G(n,r)$ with $n=100$ nodes. For the $G(n,r)$ to ensure connectivity of the network a radius $r=\sqrt{\log(n)/n}$ is used. The graphs in the first (second) column plot iterations (time) against residual error while those in the third (forth) column plot iterations (time) against function values. The ``Error" in the vertical axis represents the relative error $\|x_k-x_*\|^2_\bB / \|x_0-x_*\|^2_\bB \overset{\bB=\bI, x_0=c}{=}\|x_k-x_*\|^2 / \|c-x_*\|^2_\bB$ and the function values $f(x_k)$ refer to function~\eqref{functionRK}.}
\label{consensus100}
\end{figure}

\begin{figure}[!]
\centering
\begin{subfigure}{.23\textwidth}
  \centering
  \includegraphics[width=1\linewidth]{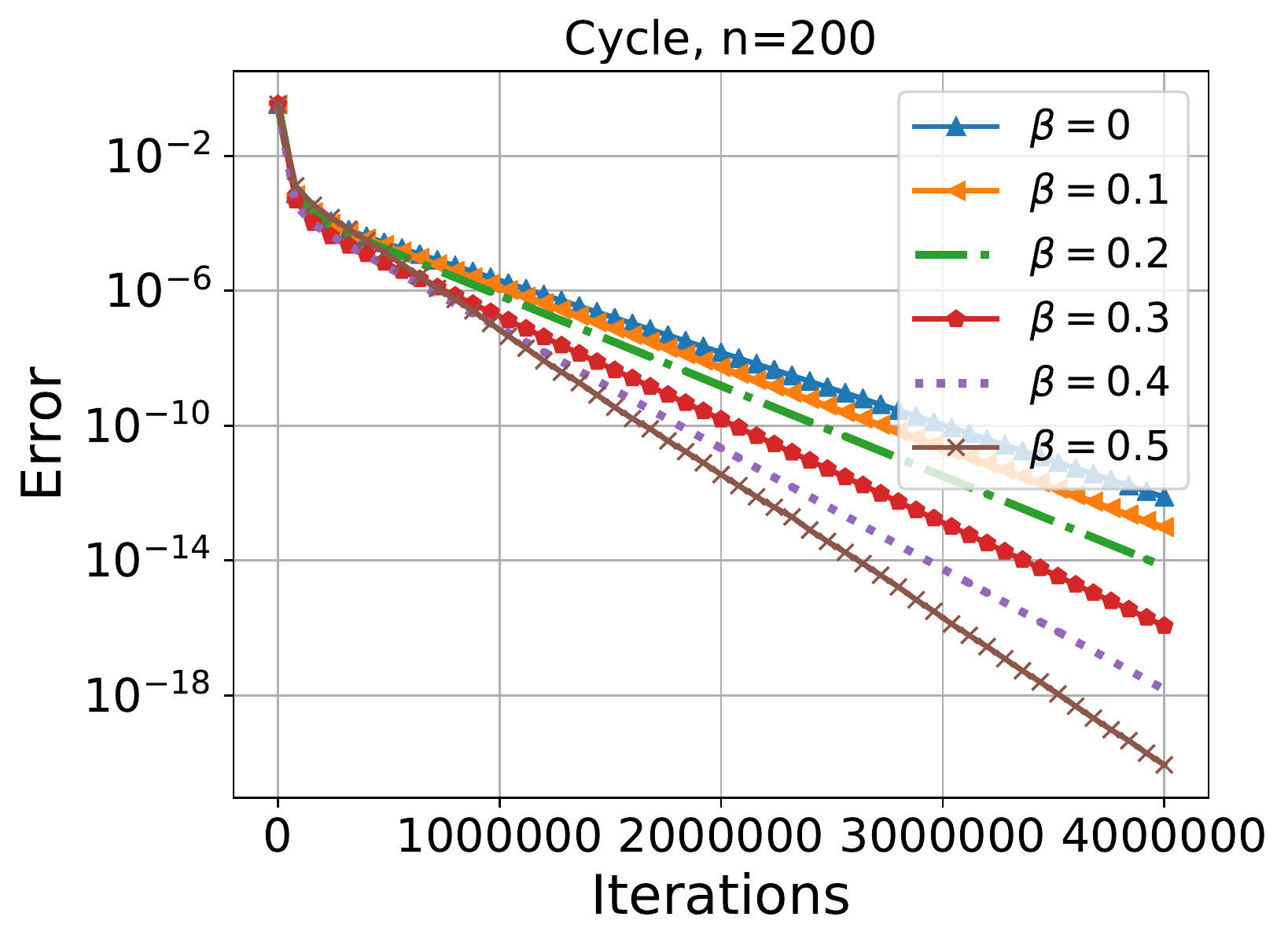}
  %\caption{}
\end{subfigure}%
\begin{subfigure}{.23\textwidth}
  \centering
  \includegraphics[width=1\linewidth]{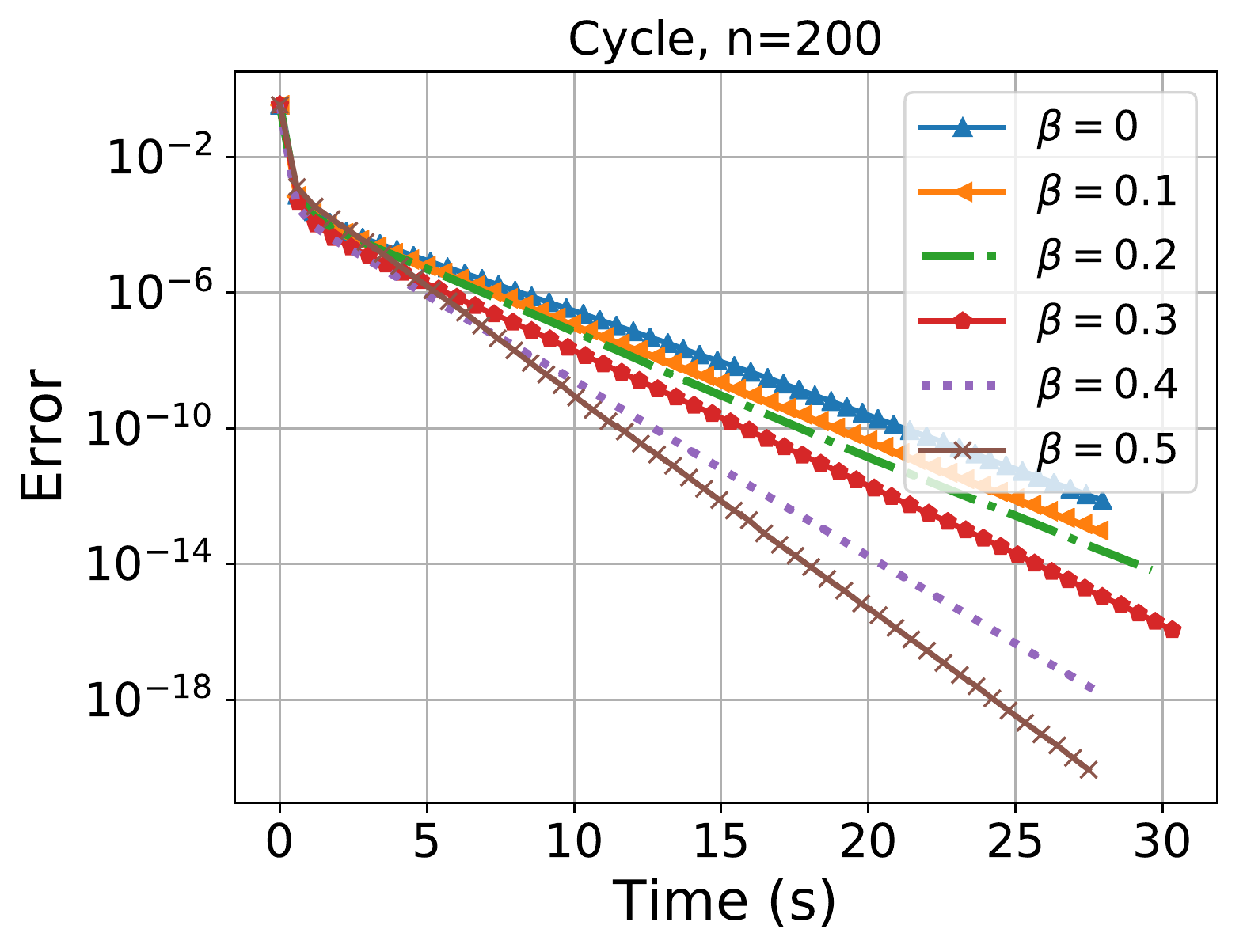}
 % \caption{}
\end{subfigure}
\begin{subfigure}{.23\textwidth}
  \centering
  \includegraphics[width=1\linewidth]{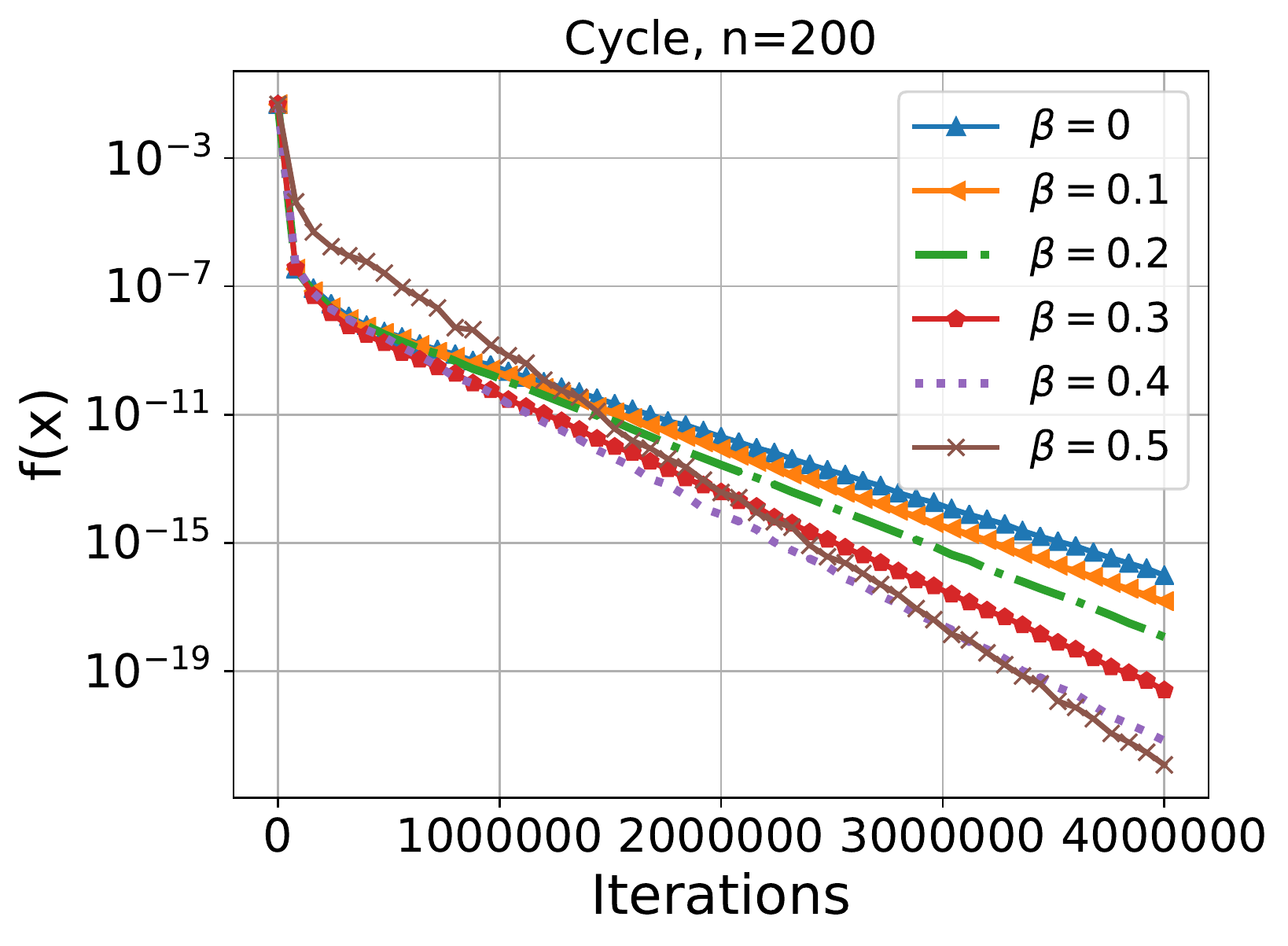}
 % \caption{}
\end{subfigure}
\begin{subfigure}{.23\textwidth}
  \centering
  \includegraphics[width=1\linewidth]{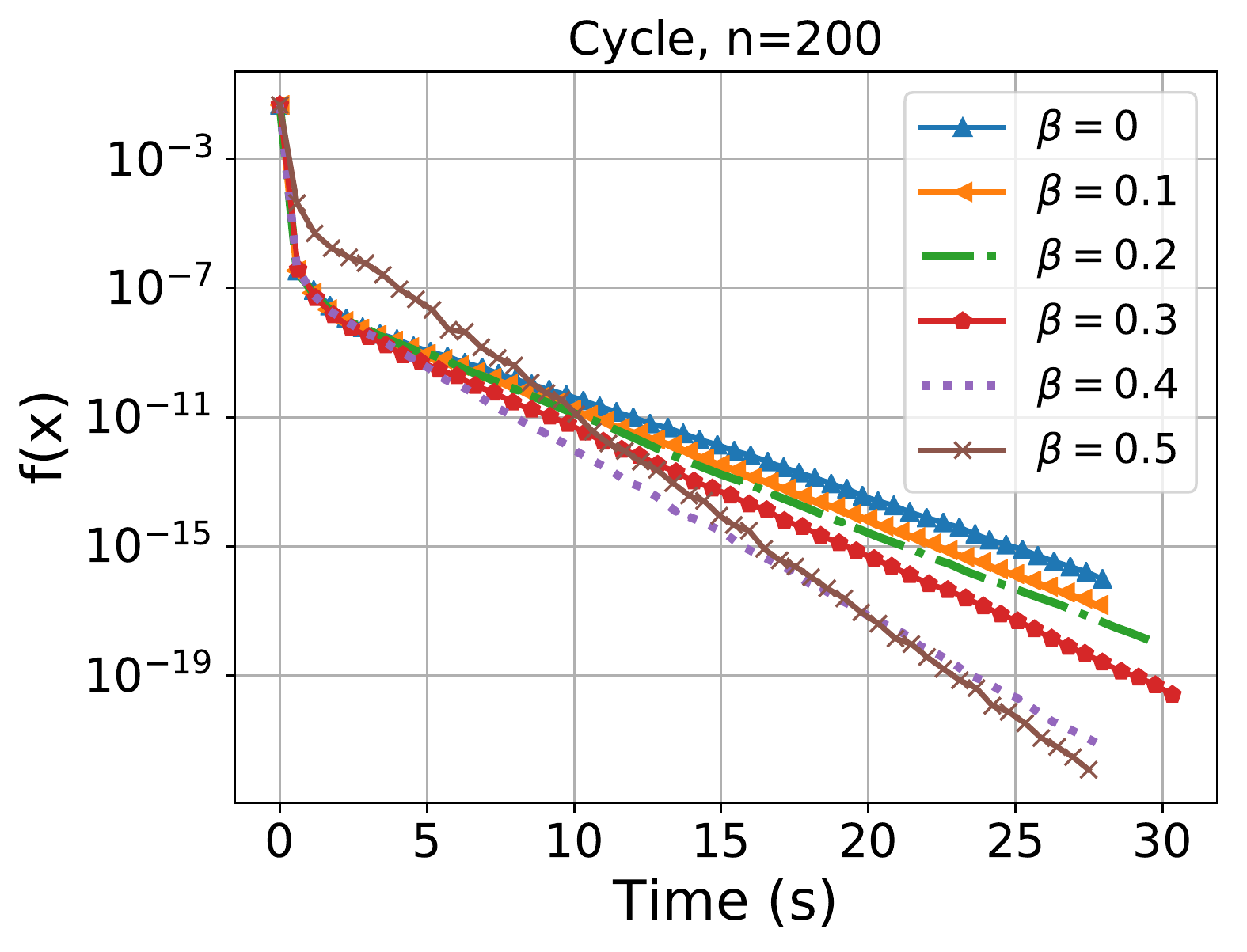}
%  \caption{}
\end{subfigure}\\
\begin{subfigure}{.23\textwidth}
  \centering
  \includegraphics[width=1\linewidth]{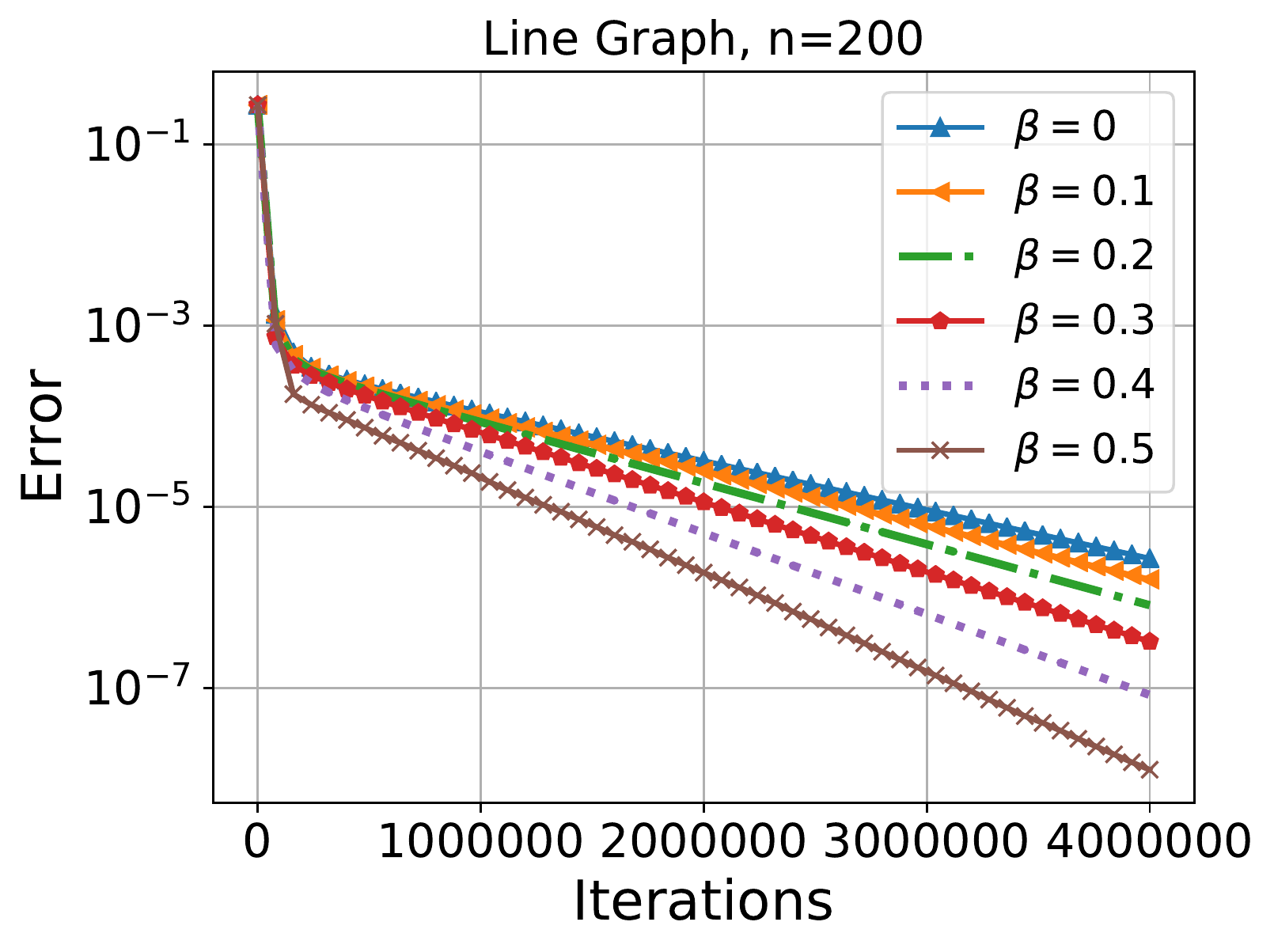}
  %\caption{}
\end{subfigure}%
\begin{subfigure}{.23\textwidth}
  \centering
  \includegraphics[width=1\linewidth]{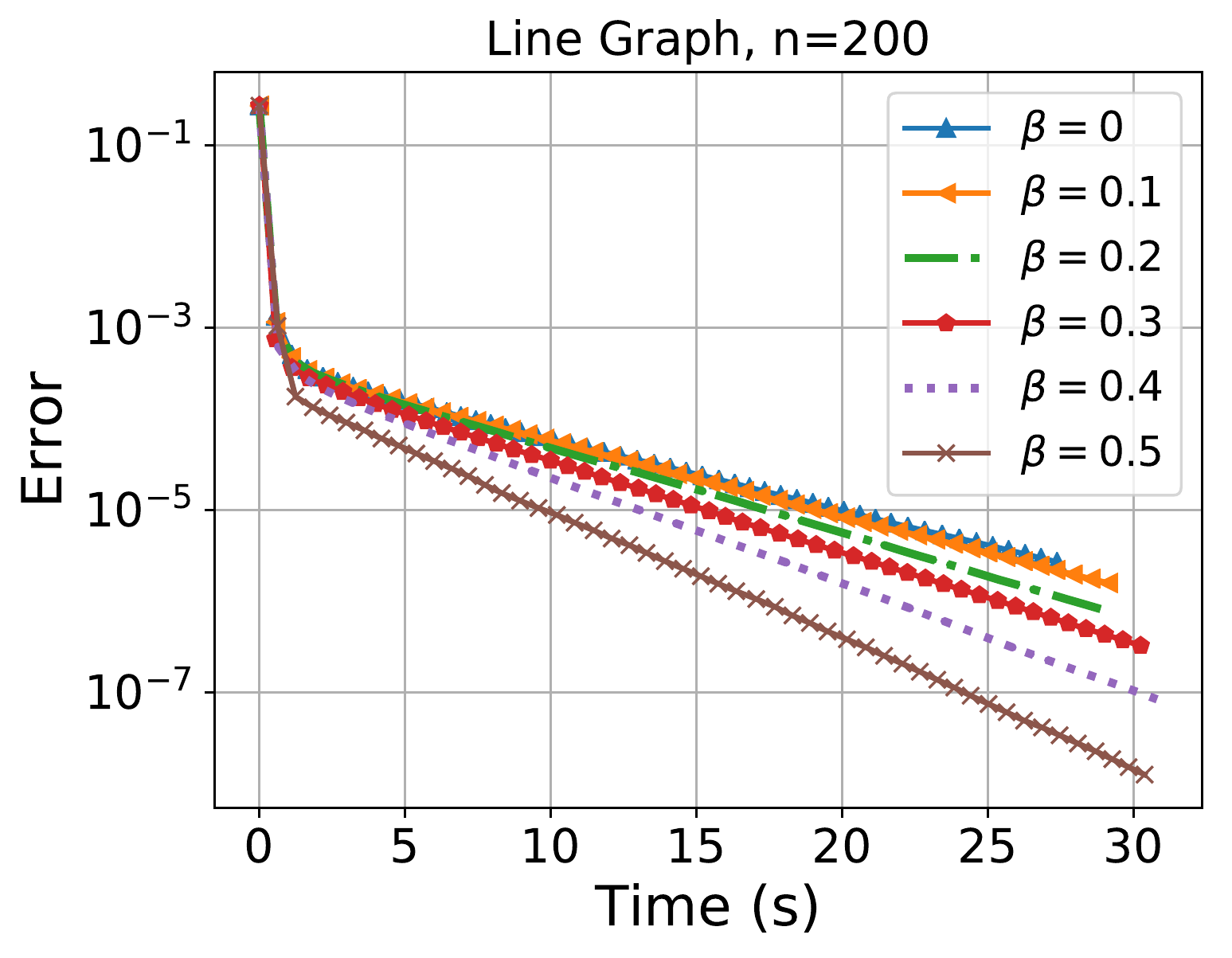}
 % \caption{}
\end{subfigure}
\begin{subfigure}{.23\textwidth}
  \centering
  \includegraphics[width=1\linewidth]{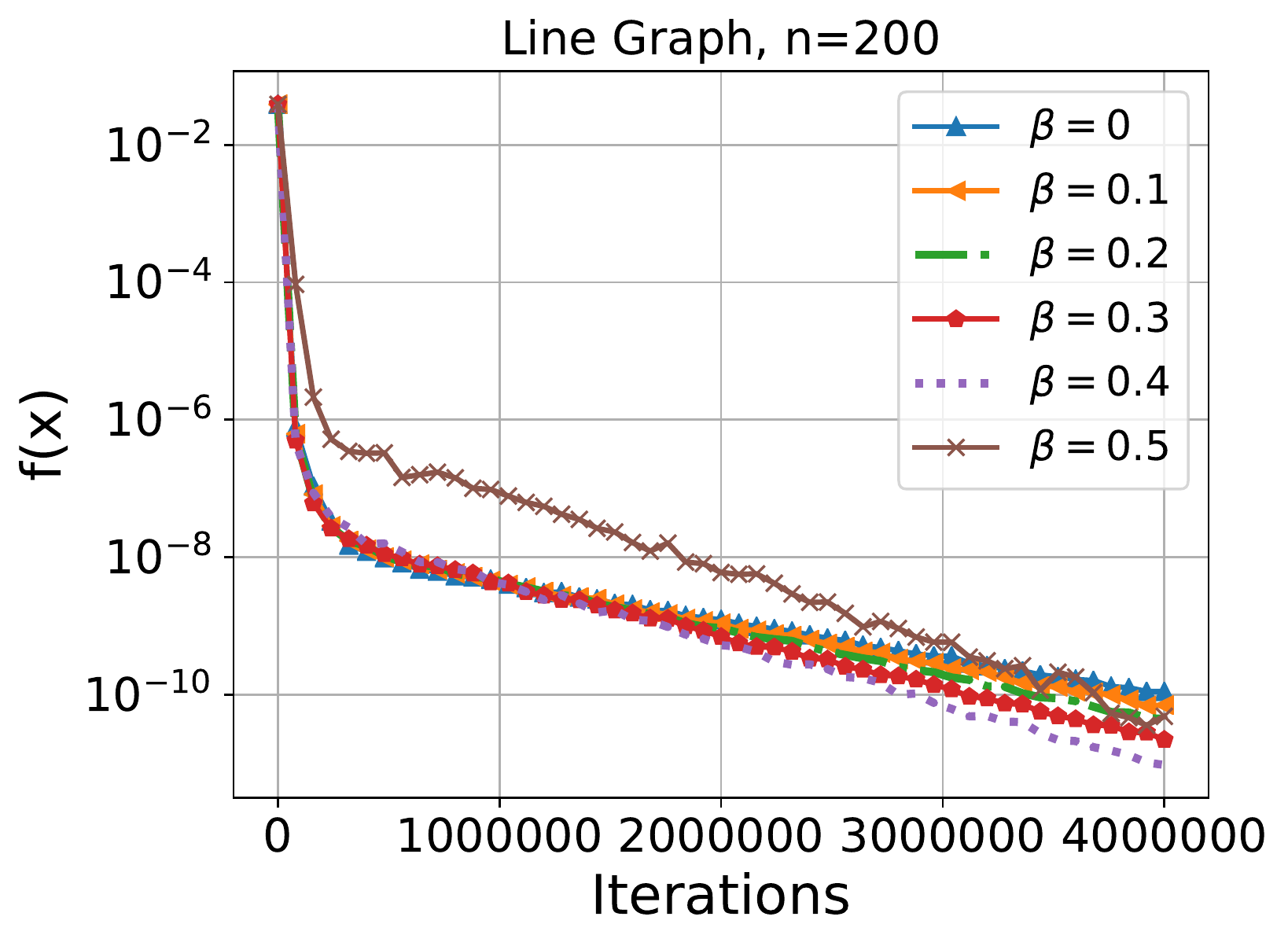}
 % \caption{}
\end{subfigure}
\begin{subfigure}{.23\textwidth}
  \centering
  \includegraphics[width=1\linewidth]{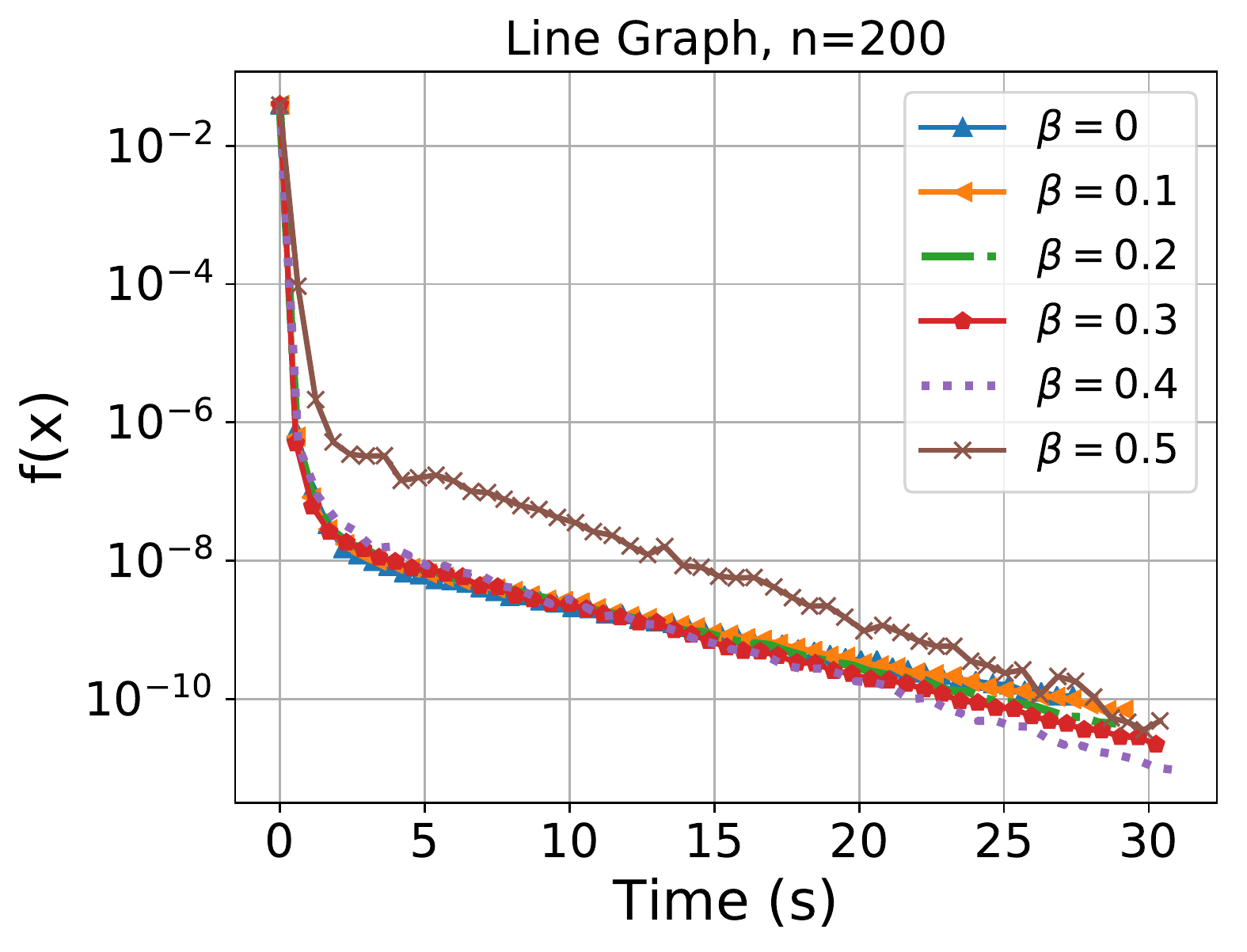}
%  \caption{}
\end{subfigure}\\
\begin{subfigure}{.23\textwidth}
  \centering
  \includegraphics[width=1\linewidth]{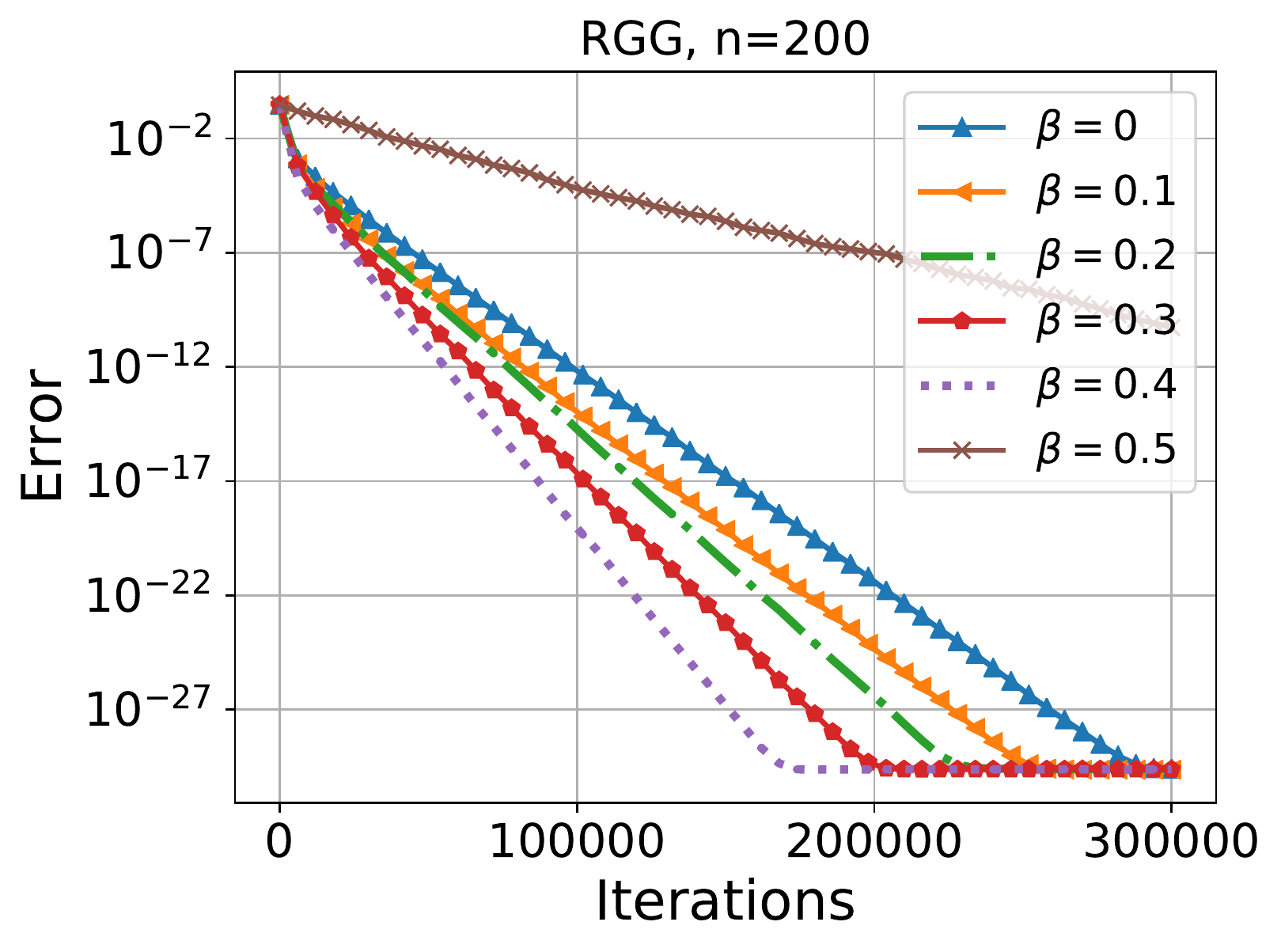}
  %\caption{}
\end{subfigure}%
\begin{subfigure}{.23\textwidth}
  \centering
  \includegraphics[width=1\linewidth]{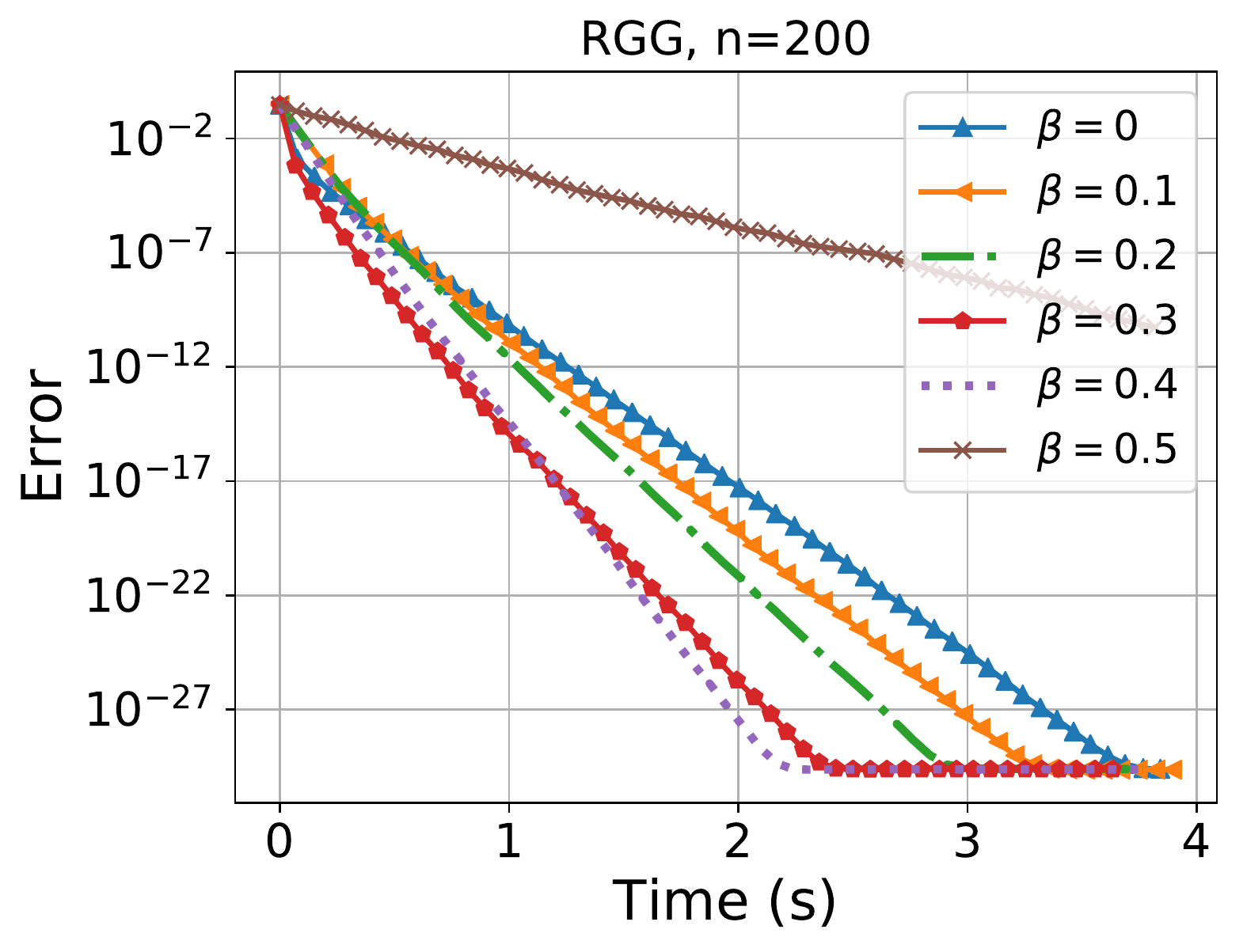}
 % \caption{}
\end{subfigure}
\begin{subfigure}{.23\textwidth}
  \centering
  \includegraphics[width=1\linewidth]{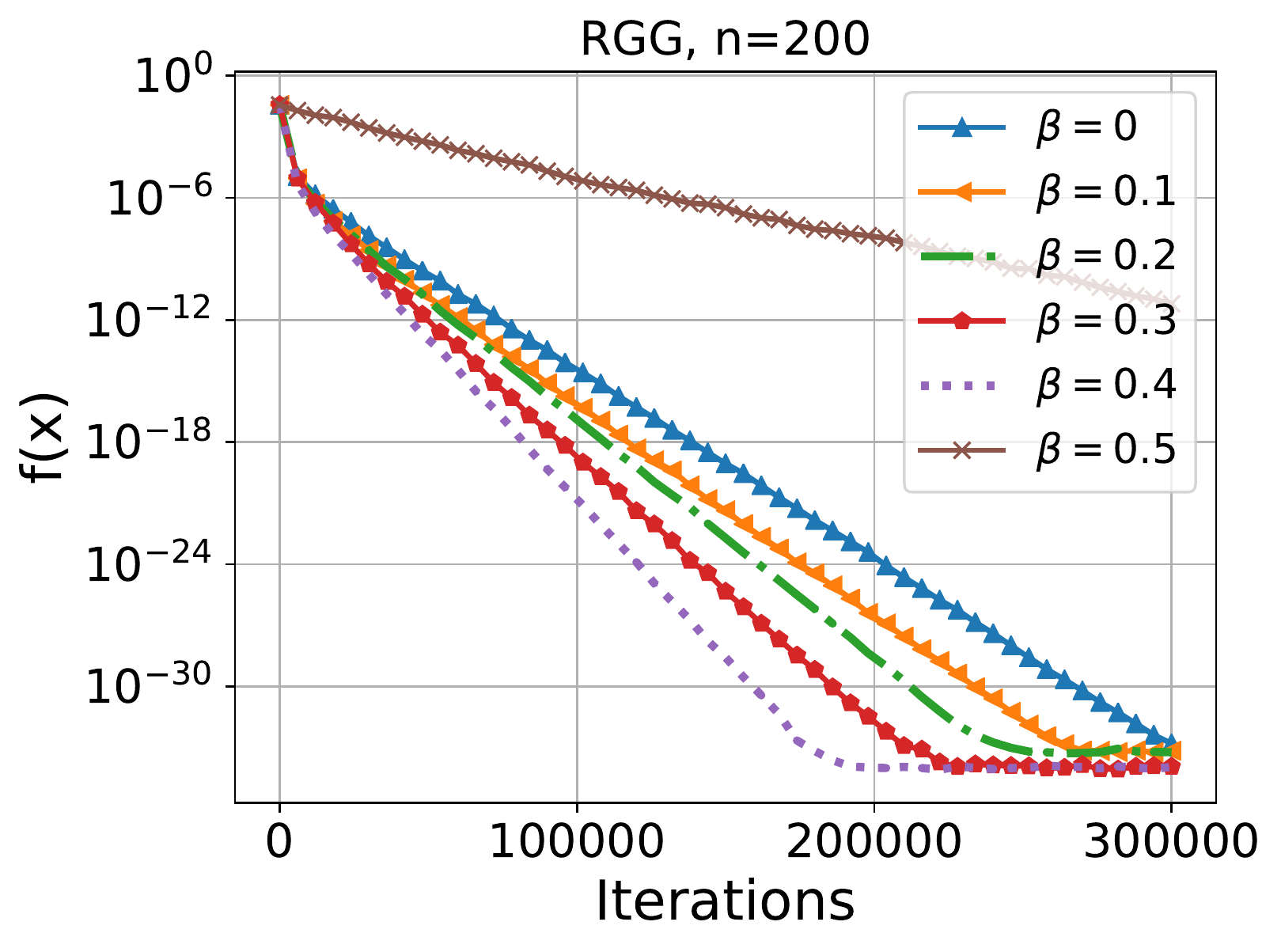}
 % \caption{}
\end{subfigure}
\begin{subfigure}{.23\textwidth}
  \centering
  \includegraphics[width=1\linewidth]{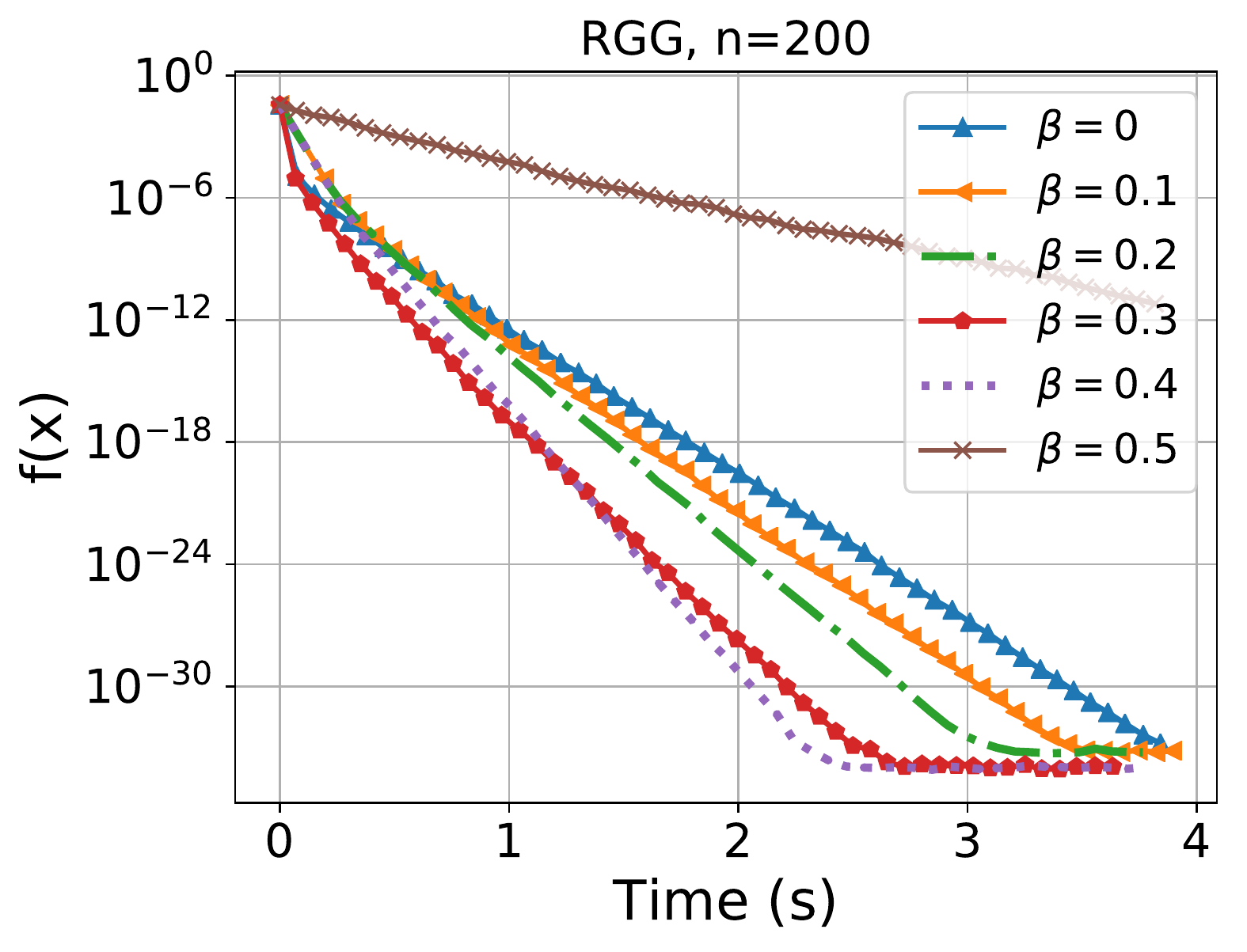}
%  \caption{}
\end{subfigure}\\
\caption{Performance of mPRG for several momentum parameters $\beta$ for solving the average consensus problem in a cycle graph, line graph and random geometric graph $G(n,r)$ with $n=200$ nodes. For the $G(n,r)$ to ensure connectivity of the network a radius $r=\sqrt{\log(n)/n}$ is used. The graphs in the first (second) column plot iterations (time) against residual error while those in the third (forth) column plot iterations (time) against function values. The ``Error" in the vertical axis represents the relative error $\|x_k-x_*\|^2_\bB / \|x_0-x_*\|^2_\bB \overset{\bB=\bI, x_0=c}{=}\|x_k-x_*\|^2 / \|c-x_*\|^2$ and the function values $f(x_k)$ refer to function~\eqref{functionRK}.}
\label{consensus200}
\end{figure}

\clearpage

\appendix

\section{Proof of Theorem~\ref{L2}} \label{app:1}

\subsection{Lemmas}

We start with a lemma.

\begin{lem}
\label{LemmaGlobal}
Fix $F_1=F_0\geq 0$ and let $\{F_k\}_{k\geq 0}$ be a  sequence of nonnegative  real numbers satisfying the relation 
\begin{equation} \label{eq:809h9s94nbd}F_{k+1}\leq a_1F_k +a_2 F_{k-1}, \qquad \forall k\geq 1,\end{equation} where  $a_2 \geq 0 $,  $ a_1 + a_2 <1$ and at least one of the coefficients $a_1,a_2$ is positive. Then the sequence satisfies the relation
$ F_{k+1}\leq q^{k} (1+ \delta)  F_0$ for all $k\geq 1,$
where $q=\frac{a_1+\sqrt{a_1^2+4a_2}}{2}$  and $\delta=q-a_1\geq 0$. Moreover,
\begin{equation}\label{eq:98sh8hgbf93nd} q \geq a_1 + a_2,\end{equation}
with equality if and only if $a_2=0$ (in which case $q=a_1$ and $\delta=0$).
\end{lem}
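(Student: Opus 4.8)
The plan is to collapse the second-order recursive inequality \eqref{eq:809h9s94nbd} into a first-order one via a telescoping substitution, and then to extract the comparison $q \ge a_1 + a_2$ from the characteristic polynomial of the recursion.

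First I would record the algebraic facts about $q$ that the whole argument rests on. Since $q = \tfrac{a_1+\sqrt{a_1^2+4a_2}}{2}$ is the larger root of $t^2 - a_1 t - a_2 = 0$, it satisfies $q^2 = a_1 q + a_2$, hence $a_2 = q(q-a_1) = q\delta$. Also $\delta = q - a_1 = \tfrac{1}{2}\bigl(\sqrt{a_1^2+4a_2} - a_1\bigr) \ge 0$, because $\sqrt{a_1^2+4a_2}\ge a_1$ (immediate when $a_1\le 0$, and a consequence of $a_2\ge 0$ when $a_1>0$); and $q>0$, since $q=0$ would force both $a_2 = q\delta = 0$ and $a_1 = -\delta \le 0$, contradicting the hypothesis that one of $a_1,a_2$ is positive.

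The key step is to set $u_k \eqdef F_{k+1} + \delta F_k$ for $k\ge 0$ and to show $u_{k+1} \le q\,u_k$ for all $k\ge 0$. Indeed, using \eqref{eq:809h9s94nbd} at index $k+1\ge 1$ and then the identities $a_1 + \delta = q$ and $a_2 = q\delta$,
\begin{align*}
u_{k+1} = F_{k+2} + \delta F_{k+1} &\le a_1 F_{k+1} + a_2 F_k + \delta F_{k+1} \\
&= (a_1+\delta)F_{k+1} + a_2 F_k = q F_{k+1} + q\delta F_k = q\,u_k .
\end{align*}
Iterating gives $u_k \le q^k u_0$ for every $k\ge 0$, and $u_0 = F_1 + \delta F_0 = (1+\delta)F_0$ since $F_1 = F_0$. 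Because $\delta F_k \ge 0$ we conclude $F_{k+1} \le u_k \le q^k(1+\delta)F_0$ for all $k\ge 0$, in particular for all $k\ge 1$, which is the asserted bound. I would stress that nothing here uses the sign of $a_1$: the inequality $u_{k+1}\le q u_k$ relies only on $a_2\ge 0$, $\delta\ge 0$, nonnegativity of the $F_k$, and the recursion — which is exactly why it is preferable to push the iteration through $u_k$ rather than to induct on $F_k$ directly, where a negative $a_1$ would flip the inequality.

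For the ``moreover'' part I would first establish $q<1$: the upward parabola $g(t)\eqdef t^2 - a_1 t - a_2$ has roots $q$ and $a_1 - q \le 0$, and $g(1) = 1 - a_1 - a_2 > 0$ by hypothesis, so $1\notin[a_1-q,\,q]$; since $1 > a_1 - q$, this forces $q < 1$. Then $q - (a_1+a_2) = (q-a_1) - a_2 = \delta - q\delta = \delta(1-q) \ge 0$, which is the claimed inequality. Equality holds iff $\delta(1-q)=0$, i.e. (as $q<1$) iff $\delta = 0$, i.e. $q = a_1$, i.e. $a_2 = q\delta = 0$; conversely, if $a_2 = 0$ then $a_1>0$, whence $q = a_1$ and $\delta = 0$, giving equality. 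The argument is routine once the substitution $u_k$ is in place; the only point needing a little care is sign bookkeeping, so as not to accidentally assume $a_1 \ge 0$.
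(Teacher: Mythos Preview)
Your proof is correct and follows essentially the same approach as the paper: both introduce the auxiliary sequence $F_{k+1}+\delta F_k$ (your $u_k$), use the characteristic-root identities $a_1+\delta=q$ and $a_2=q\delta$ to collapse the second-order inequality into a first-order contraction, and then unroll. Your treatment is slightly more careful about sign bookkeeping and gives an explicit parabola argument for $q<1$, but the core idea is identical.
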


\begin{proof}
Choose any $\delta \geq 0$ satisfying $a_2 \leq (a_1+\delta)\delta$. Adding $\delta F_k$  to both sides of \eqref{eq:809h9s94nbd},  we get 
\begin{equation}\label{eq:09h9sh9089ns}
F_{k+1} + \delta F_k  \leq   (a_1+\delta)F_k + a_2 F_{k-1}\\
\leq (a_1+\delta)(F_k + \delta F_{k-1}) = q(F_k+\delta F_{k-1}).
\end{equation}

We now claim that   $\delta = \frac{-a_1+\sqrt{a_1^2+4a_2}}{2}$ satisfies the relations. Non-negativity of $\delta$  follows from $a_2\geq 0$, while the second   relation follows from the fact that $\delta$ satisfies \begin{equation}\label{eq:sbuf78b38bc}(a_1+\delta)\delta - a_2 = 0.\end{equation} Let us now argue that  $0<q<1$. Nonnegativity of $q$ follows from nonnegativity of $a_2$.  Clearly, as long as $a_2>0$, $q$ is positive. If $a_2=0$, then $a_1>0$ by assumption, which implies that $q$ is positive. The inequality $q<1$ follows directly from the assumption $a_1+a_2<1$.
By unrolling the recurrence \eqref{eq:09h9sh9089ns}, we obtain
$F_{k+1} \leq F_{k+1} + \delta F_k \leq q^k (F_1+ \delta F_0) = q^{k}(1+\delta) F_{0}.$

Finally, let us establish \eqref{eq:09h9sh9089ns}. Noting that $a_1 = q-\delta$, and since  in view of \eqref{eq:sbuf78b38bc} we have $a_2=q\delta$, we conclude that $a_1+a_2 = q + \delta(q-1) \leq q$, where the inequality follows from $q <1$.
\end{proof}

The following identities were established in \cite{ASDA}. For completeness, we include different (and somewhat simpler) proofs here.

\begin{lem}[\cite{ASDA}]
For all $x \in \R^n$ we have
\begin{equation}
\label{normbound}
f_{\bS}(x) = \frac{1}{2}\|\nabla f_{\bS}(x)\|^2_{\bB}.
\end{equation}
Moreover, if $x_*\in \cL $ (i.e., if $x_*$ satisfies $\mA x_* =b$), then for all $x\in \R^n$ we have
\begin{equation}
\label{functionequivalence}
f_{\bS}(x) = \frac{1}{2}\langle \nabla f_{\bS}(x),x-x_* \rangle_{\bB}, 
\end{equation}
and
\begin{equation}
\label{asnda}
f(x) = \frac{1}{2}\langle \nabla f(x),x-x_* \rangle_{\bB}.
\end{equation}
\end{lem}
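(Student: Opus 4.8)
The plan is to prove all three identities by direct computation, with everything reducing to one algebraic property of $\mH$. Set $\mM \eqdef \mS^\top \mA \mB^{-1}\mA^\top \mS$, so that $\mH = \mS \mM^\dagger \mS^\top$; note $\mM$ is symmetric, hence so are $\mM^\dagger$ and $\mH$. The property I would isolate first is
\[\mH \mA \mB^{-1}\mA^\top \mH = \mS\mM^\dagger\left(\mS^\top \mA \mB^{-1}\mA^\top \mS\right)\mM^\dagger \mS^\top = \mS \mM^\dagger \mM \mM^\dagger \mS^\top = \mS\mM^\dagger \mS^\top = \mH,\]
where the third equality is the Moore--Penrose identity $\mM^\dagger \mM \mM^\dagger = \mM^\dagger$.

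For \eqref{normbound}, I would substitute $\nabla f_\mS(x) = \mB^{-1}\mA^\top \mH(\mA x - b)$ from \eqref{eq:grad_f_S} into $\|\nabla f_\mS(x)\|_\mB^2 = \nabla f_\mS(x)^\top \mB \nabla f_\mS(x)$. Using symmetry of $\mH$ and cancelling $\mB^{-1}\mB\mB^{-1} = \mB^{-1}$ in the middle, this becomes $(\mA x - b)^\top \mH \mA \mB^{-1}\mA^\top \mH(\mA x - b)$, which collapses to $(\mA x - b)^\top \mH(\mA x - b) = 2 f_\mS(x)$ by the displayed property. For \eqref{functionequivalence}, I use that $x_*\in\cL$ gives $\mA x - b = \mA(x - x_*)$; then $\langle \nabla f_\mS(x), x - x_*\rangle_\mB = (\mA x - b)^\top \mH \mA(x - x_*)$, and replacing $\mA(x - x_*)$ by $\mA x - b$ once more yields $(\mA x - b)^\top \mH(\mA x - b) = 2 f_\mS(x)$. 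Finally, \eqref{asnda} follows by taking expectation over $\mS \sim \cD$ in \eqref{functionequivalence}, using linearity of expectation and of $\langle \cdot, x - x_*\rangle_\mB$ together with the fact that $\nabla f(x) = \E{\nabla f_\mS(x)}$ (differentiation commutes with expectation here since each $f_\mS$ is quadratic in $x$).

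I do not expect a genuine obstacle: the content is the pseudoinverse manipulation establishing $\mH\mA\mB^{-1}\mA^\top \mH = \mH$, and the only thing to be careful about is keeping track of the $\mB$ and $\mB^{-1}$ factors that arise from working with the $\mB$-inner product and the $\mB$-gradient.
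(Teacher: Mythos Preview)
Your proof is correct and follows essentially the same route as the paper's: both reduce everything to a single pseudoinverse identity and then compute directly. The only cosmetic difference is that the paper phrases the key identity as $\mZ\mB^{-1}\mZ = \mZ$ with $\mZ = \mA^\top \mH \mA$ (cited from \cite{ASDA}), whereas you prove the equivalent ``upstream'' version $\mH\mA\mB^{-1}\mA^\top\mH = \mH$ yourself via $\mM^\dagger\mM\mM^\dagger = \mM^\dagger$; your version is slightly more self-contained, and your handling of \eqref{normbound} avoids introducing an auxiliary $x_*\in\cL$, but otherwise the arguments coincide.
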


\begin{proof} In view of \eqref{eq:grad_f_S}, and  since $\mZ \mB^{-1} \mZ = \mZ$ (see \cite{ASDA}), we have

\begin{eqnarray*}\|\nabla f_{\bS}(x)\|^2_{\bB} & \overset{\eqref{eq:grad_f_S}}{=}& \|\bB^{-1} \mZ (x-x_*)\|^2_{\bB} \quad = \quad (x-x_*)^\top \mZ \bB^{-1} \bZ (x-x_*) \quad = \quad (x-x_*)^\top \mZ (x-x_*) \\
&\overset{\eqref{eq:Z}}{=} &(x-x_*)^\top \mA^\top \mH \mA (x-x_*) \quad = \quad (\mA x-b)^\top \mH (\mA  x- b ) \quad \overset{\eqref{eq:f_s}}{=}\quad 2f_{\bS}(x).\end{eqnarray*}

Moreover,
\begin{eqnarray*}
\langle \nabla f_{\bS}(x),x-x_* \rangle_{\bB} & \overset{\eqref{eq:grad_f_S}}{=}& \langle \bB^{-1} \bZ (x-x_*),x-x_* \rangle_{\bB}\\
& =& (x-x_*)^\top \bZ \bB^{-1} \bB (x-x_*) \quad = \quad 2f_{\bS}(x).
\end{eqnarray*}
By taking expectations in the last identity with respect to the random matrix $\bS$, we get
$
\langle \nabla f(x),x-x_* \rangle_{\bB}=2f(x).
$
\end{proof}

\begin{lem}[\cite{ASDA}]
\label{bounds}
For all $x \in \R^n$ and $x_* \in \cL$
\begin{equation}
\label{b1}
\lambda_{\min}^+ f(x) \leq \frac{1}{2} \|\nabla f(x) \|^2_{\bB} \leq \lambda_{\max} f(x)
\end{equation}
and 
\begin{equation}
\label{b2}
f(x) \leq  \frac{\lambda_{\max}}{2} \|x-x_*\|^2_{\bB}.
\end{equation}
Moreover, if exactness is satisfied, and we let $x_* =\Pi^{\bB}_{\cL}(x)$, we have 
\begin{equation}
\label{b3}
\frac{\lambda_{\min}^+}{2} \|x-x_*\|^2_{\bB} \leq  f(x)  .
\end{equation}
\end{lem}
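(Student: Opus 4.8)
The plan is to reduce all three bounds to elementary spectral facts about $\mW$. Fix $x\in\R^n$, let $x_*$ be as in the statement, and set $d\eqdef x-x_*$ and $t\eqdef\mB^{1/2}d$. First I would record three identities. Since $\mA x_*=b$, equation \eqref{eq:f_s} gives $f_\mS(x)=\tfrac12 d^\top\mZ d$; taking expectations yields $2f(x)=d^\top\E{\mZ}d=t^\top\mW t$. By \eqref{eq:grad_f_S}, $\nabla f(x)=\mB^{-1}\E{\mZ}d$, so $\|\nabla f(x)\|_\mB^2=d^\top\E{\mZ}\mB^{-1}\E{\mZ}d=t^\top\mW^2 t$. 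And trivially $\|x-x_*\|_\mB^2=d^\top\mB d=t^\top t$. Hence \eqref{b1}, \eqref{b2}, \eqref{b3} are equivalent, respectively, to
\[
\lambda_{\min}^+\, t^\top\mW t\;\le\; t^\top\mW^2 t\;\le\;\lambda_{\max}\, t^\top\mW t,
\qquad
t^\top\mW t\le\lambda_{\max}\, t^\top t,
\qquad
\lambda_{\min}^+\, t^\top t\le t^\top\mW t.
\]

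Next I would expand $t=\sum_{i=1}^n c_i u_i$ in the orthonormal eigenbasis $u_1,\dots,u_n$ of $\mW$ (from the eigendecomposition $\mW=\mU\Lambda\mU^\top$ recalled above), so that $t^\top\mW t=\sum_i\lambda_i c_i^2$, $t^\top\mW^2 t=\sum_i\lambda_i^2 c_i^2$, and $t^\top t=\sum_i c_i^2$. The two upper bounds $t^\top\mW^2 t\le\lambda_{\max}t^\top\mW t$ and $t^\top\mW t\le\lambda_{\max}t^\top t$ then follow term-by-term from $0\le\lambda_i\le\lambda_{\max}$, and the lower bound $\lambda_{\min}^+ t^\top\mW t\le t^\top\mW^2 t$ follows term-by-term as well: for indices with $\lambda_i=0$ both sides vanish, while for $\lambda_i>0$ we have $\lambda_i\ge\lambda_{\min}^+$, hence $\lambda_i^2 c_i^2\ge\lambda_{\min}^+\lambda_i c_i^2$. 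This establishes \eqref{b1} and \eqref{b2} with no extra hypotheses.

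The remaining bound \eqref{b3} is the crux, and is the only place the exactness assumption and the specific choice $x_*=\Pi^\mB_\cL(x)$ are used: the same term-by-term comparison yields $\lambda_{\min}^+ t^\top t\le t^\top\mW t$ only when $c_i=0$ for every $i$ with $\lambda_i=0$, i.e. only when $t\in{\rm Range}(\mW)$. I would verify precisely this. On one hand, the explicit projection formula for $\Pi^\mB_\cL$ shows $d=x-\Pi^\mB_\cL(x)\in{\rm Range}(\mB^{-1}\mA^\top)$, so $t=\mB^{1/2}d\in{\rm Range}(\mB^{-1/2}\mA^\top)={\rm Null}(\mA\mB^{-1/2})^\perp$. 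On the other hand, $\mW=\mB^{-1/2}\E{\mZ}\mB^{-1/2}$ is symmetric, so ${\rm Range}(\mW)={\rm Null}(\mW)^\perp$; and since exactness means precisely that the minimizers of the convex function $f$ coincide with $\cL$, i.e. ${\rm Null}(\E{\mZ})={\rm Null}(\mA)$, we get $\mW v=0\iff\E{\mZ}\mB^{-1/2}v=0\iff\mA\mB^{-1/2}v=0$, that is, ${\rm Null}(\mW)={\rm Null}(\mA\mB^{-1/2})$. Combining, $t\in{\rm Null}(\mA\mB^{-1/2})^\perp={\rm Range}(\mW)$, so $c_i=0$ whenever $\lambda_i=0$, and the term-by-term bound gives $t^\top\mW t=\sum_{i:\lambda_i>0}\lambda_i c_i^2\ge\lambda_{\min}^+\sum_{i:\lambda_i>0}c_i^2=\lambda_{\min}^+ t^\top t$. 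I expect this last step—identifying ${\rm Range}(\mW)$ with the set of admissible directions $x-\Pi^\mB_\cL(x)$ under exactness—to be the only nonroutine point; everything else is bookkeeping. (These inequalities are due to \cite{ASDA}; the argument above merely makes the spectral reduction explicit.)
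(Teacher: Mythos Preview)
Your proof is correct. The paper itself does not supply a proof of this lemma; it merely cites \cite{ASDA}, so there is nothing to compare against line by line. Your argument is a clean, self-contained spectral reduction: the substitutions $2f(x)=t^\top\mW t$, $\|\nabla f(x)\|_\mB^2=t^\top\mW^2 t$, and $\|x-x_*\|_\mB^2=t^\top t$ are exactly right, and the term-by-term eigenvalue comparisons dispatch \eqref{b1} and \eqref{b2} immediately. Your handling of \eqref{b3}---showing that exactness forces ${\rm Null}(\mW)={\rm Null}(\mA\mB^{-1/2})$ and that the projection formula places $t=\mB^{1/2}(x-\Pi^\mB_\cL(x))$ in the orthogonal complement of this null space---is the correct and essentially only way to make the $\lambda_{\min}^+$ lower bound work, and it matches how the companion result Lemma~\ref{Forweakconvergence} (also cited from \cite{ASDA}) is used elsewhere in the paper. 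Nothing is missing.
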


\subsection{The Proof}
First, we decompose
\begin{eqnarray}
\|x_{k+1}-x_*\|^2_{\bB} & =& \|x_k-\omega \nabla f_{\bS_k}(x_k)+\beta(x_k-x_{k-1})-x_*\|^2_{\bB} \notag\\
& =& \underbrace{\|x_k-\omega \nabla f_{\bS_k}(x_k)-x_*\|^2_{\bB}}_{\encircle{1}} \notag\\
&& \quad +\underbrace{2\langle x_k-\omega \nabla f_{\bS_k}(x_k)-x_*, \beta(x_k-x_{k-1})  \rangle_{\bB}}_{\encircle{2}} \notag\\
&& \quad +\underbrace{\beta^2\|x_k-x_{k-1}\|^2_{\bB}}_{\encircle{3}}. \label{n0}
\end{eqnarray}
We will now analyze the three expressions \encircle{1}, \encircle{2}, \encircle{3} separately. The first expression can be written as
\begin{eqnarray}
\encircle{1} &=& \|x_k-x_*\|^2_{\bB}-2 \omega \langle x_k-x_*,\nabla f_{\bS_k}(x_k) \rangle_{\bB} +\omega^2 \|\nabla f_{\bS_k}(x_k)\|^2_{\bB} \notag\\
& \overset{\eqref{normbound},\eqref{functionequivalence}}{=} & \|x_k-x_*\|^2_{\bB}-4\omega f_{\bS_k}(x_k)+2\omega^2 f_{\bS_k}(x_k) \notag\\
&=& \|x_k-x_*\|^2_{\bB}-2\omega(2-\omega)f_{\bS_k}(x_k).\label{n1}
\end{eqnarray}
We will now bound the second expression. First, we have
\begin{equation}\label{eq:ibnodh90h}
\begin{aligned}
\encircle{2}
&= 2\beta \langle x_k-x_*, x_k-x_{k-1} \rangle_{\bB} +2\omega \beta \langle \nabla f_{\bS_k}(x_k), x_{k-1} - x_k  \rangle_{\bB}\\ 
&= 2\beta \langle x_k-x_*, x_k-x_{*} \rangle_{\bB} + 2\beta \langle x_k-x_*, x_*-x_{k-1} \rangle_{\bB} +2\omega \beta \langle \nabla f_{\bS_k}(x_k),x_{k-1}- x_k  \rangle_{\bB}\\ 
&= 2\beta \|x_k-x_*\|^2_{\bB}  + 2\beta \langle x_k-x_*, x_*-x_{k-1} \rangle_{\bB} +2\omega \beta \langle \nabla f_{\bS_k}(x_k),x_{k-1}- x_k  \rangle_{\bB}. 
\end{aligned}
\end{equation}
Using the fact that for arbitrary vectors $a,b,c \in \R^n$ we have the identity
$2 \langle a-c,c-b \rangle_{\bB}=\|a-b\|^2_{\bB}-\|c-b\|^2_{\bB}-\|a-c\|^2_{\bB},$
we obtain
$$2 \langle x_k-x_*, x_*-x_{k-1} \rangle_{\bB}=  \|x_k-x_{k-1}\|^2_{\bB}- \|x_{k-1}-x_*\|^2_{\bB}-\|x_k-x_*\|^2_{\bB}.$$
Substituting this into \eqref{eq:ibnodh90h} gives
\begin{equation}
\label{n2}
\begin{aligned}
\encircle{2}& = \beta \|x_k-x_*\|^2_{\bB}+\beta \|x_k-x_{k-1}\|^2_{\bB}-\beta \|x_{k-1}-x_*\|^2_{\bB} + 2\omega \beta \langle \nabla f_{\bS_k}(x_k),x_{k-1}- x_k  \rangle_{\bB}. 
\end{aligned}
\end{equation}

The third expression can be bound as
\begin{equation}
\label{n3}
\encircle{3} =\beta^2\|(x_{k}-x_*)+(x_*-x_{k-1})\|^2_{\bB}  \leq  2\beta^2\|x_{k}-x_*\|^2_{\bB}+2
\beta^2\|x_{k-1}-x_*\|^2_{\bB}.
\end{equation}

By substituting the  bounds \eqref{n1}, \eqref{n2}, \eqref{n3} into  \eqref{n0} we obtain
\begin{eqnarray*}
\|x_{k+1}-x_*\|^2_{\bB} 
& \leq & \|x_k-x_*\|^2_{\bB}-2\omega(2-\omega)f_{\bS_k}(x_k)\\
&& \quad + \beta \|x_k-x_*\|^2_{\bB}+\beta \|x_{k}-x_{k-1}\|^2_{\bB}-\beta \|x_{k-1}-x_*\|^2_{\bB} \\
&& \quad + 2\omega \beta \langle \nabla f_{\bS_k}(x_k),x_{k-1}- x_k  \rangle_{\bB}  +  2\beta^2\|x_{k}-x_*\|^2_{\bB}+2\beta^2\|x_{k-1}-x_*\|^2_{\bB}\\
& \leq & (1+3\beta + 2\beta^2)\|x_k-x_*\|^2_{\bB}+ (\beta + 2\beta^2 )\|x_{k-1}-x_*\|^2_{\bB}-2\omega(2-\omega)f_{\bS_k}(x_k)\\
&& \quad + 2\omega \beta \langle \nabla f_{\bS_k}(x_k),x_{k-1}- x_k  \rangle_{\bB}.
\end{eqnarray*}
Now by first taking expectation with respect to $\mS_k$, we obtain:
\begin{eqnarray*}
\Exp_{\mS_k}[\|x_{k+1}-x_*\|^2_{\bB}] & \leq & (1+3\beta+2\beta^2)\|x_k-x_*\|^2_{\bB}+ (\beta +2\beta^2)\|x_{k-1}-x_*\|^2_{\bB} \\
&& \quad -2\omega(2-\omega)f(x_k) + 2\omega \beta \langle \nabla f(x_k),x_{k-1}- x_k  \rangle_{\bB}\\
 & \leq & (1+3\beta+2\beta^2)\|x_k-x_*\|^2_{\bB}+ (\beta +2\beta^2)\|x_{k-1}-x_*\|^2_{\bB} \\
&& \quad -2\omega(2-\omega)f(x_k) + 2\omega \beta(f(x_{k-1})-f(x_k))\\
 & = & (1+3\beta+2\beta^2)\|x_k-x_*\|^2_{\bB}+ (\beta +2\beta^2)\|x_{k-1}-x_*\|^2_{\bB} \\
&& \quad - (2\omega(2-\omega) +2\omega\beta)f(x_k) + 2\omega \beta f(x_{k-1}).
\end{eqnarray*}
where in the second step we used the inequality
 $\langle \nabla f(x_k),x_{k-1}- x_k \rangle  \leq f(x_{k-1})-f(x_k)$ and the fact that $\omega \beta \geq 0$, which follows from the assumptions. We now apply  inequalities  \eqref{b2} and \eqref{b3}, obtaining

\begin{eqnarray*}
\Exp_{\mS_k}[\|x_{k+1}-x_*\|^2_{\bB}] & \leq &
 \underbrace{(1+3\beta+2\beta^2 - (\omega(2-\omega) +\omega\beta)\lambda_{\min}^+)}_{a_1}\|x_k-x_*\|^2_{\bB} \\
 && \quad + \underbrace{(\beta +2\beta^2 + \omega \beta \lambda_{\max})}_{a_2}\|x_{k-1}-x_*\|^2_{\bB}.
\end{eqnarray*}

By taking expectation again, and letting $F_k\eqdef \Exp[\|x_{k}-x_*\|^2_{\bB}]$, we get  the relation
\begin{equation}
\label{recur}
F_{k+1}  \leq a_1 F_k  + a_2 F_{k-1} .
\end{equation}

It suffices to apply  Lemma~\ref{LemmaGlobal} to the relation  \eqref{recur}. The conditions of the lemma are satisfied. Indeed, $a_2\geq 0$, and if $a_2=0$, then $\beta=0$ and hence $a_1=1-\omega(2-\omega)\lambda_{\min}^+>0$. The condition $a_1+a_2<1$ holds by assumption.

The convergence result in function values, $\Exp[f(x_k)]$, follows as a corollary by applying inequality \eqref{b2} to  \eqref{eq:nfiug582}.

\section{Proof of Theorem~\ref{cesaro}}
\label{app:acc212}

Let $p_t=\frac{\beta}{1-\beta}(x_t-x_{t-1})$ 
and $d_t = \|x_t + p_t -x_*\|_{\mB}^2$. In view of \eqref{eq:SHB-intro},  we can write 
$$x_{t+1}+p_{t+1}= x_t+p_t-\frac{\omega}{1-\beta} \nabla f_{\bS_t}(x_t),$$
and therefore
\begin{eqnarray*}
d_{t+1} & =&  \left\|x_t+p_t-\frac{\omega}{1-\beta} \nabla f_{\bS_t}(x_t) -x_* \right\|^2_{\bB} \  \\
& =& d_t -2 \frac{\omega}{1-\beta} \langle x_t+p_t-x_*,  \nabla f_{\bS_t}(x_t) \rangle_{\bB} + \frac{\omega^2}{(1-\beta)^2} \|\nabla f_{\bS_t}(x_t)\|^2_{\bB}\\
& =& d_t -\frac{2\omega}{1-\beta} \langle x_t-x_*,  \nabla f_{\bS_t}(x_t) \rangle_{\bB} - \frac{2 \omega \beta}{(1-\beta)^2}   \langle x_t-x_{t-1},  \nabla f_{\bS_t}(x_t) \rangle_{\bB}\\ 
& & \quad + \frac{\omega^2}{(1-\beta)^2} \|\nabla f_{\bS_t}(x_t)\|^2_{\bB}.
\end{eqnarray*}

Taking expectation with respect to the random matrix $\bS_t$ we obtain:
\begin{eqnarray*}
\Exp_{\mS_t}[d_{t+1}] & =& \Exp_{\mS_t}[d_t] -\frac{2\omega}{1-\beta} \langle x_t-x_*,  \nabla f(x_t) \rangle_{\bB} - \frac{2 \omega \beta}{(1-\beta)^2}   \langle x_t-x_{t-1},  \nabla f(x_t) \rangle_{\bB} \\
&& \quad + \frac{\omega^2}{(1-\beta)^2} 2 f(x_t) \notag\\
& \overset{\eqref{asnda}}{=} & \Exp_{\mS_t}[d_t] -\frac{4\omega}{1-\beta}  f(x_t) - \frac{2 \omega \beta}{(1-\beta)^2}    \langle x_t-x_{t-1},  \nabla f(x_t) \rangle_{\bB} + \frac{\omega^2}{(1-\beta)^2} 2 f(x_t)\\
& \leq & \Exp_{\mS_t}[d_t] -\frac{4\omega}{1-\beta}  f(x_t) - \frac{2 \omega \beta}{(1-\beta)^2}   [f(x_t)-f(x_{t-1})] + \frac{\omega^2}{(1-\beta)^2} 2 f(x_t)\\
& = & \Exp_{\mS_t}[d_t] + \left[ -\frac{4\omega}{1-\beta} - \frac{2 \omega \beta}{(1-\beta)^2} +\frac{2 \omega^2}{(1-\beta)^2}\right] f(x_t)  +  \frac{2 \omega \beta}{(1-\beta)^2} f(x_{t-1}),
\end{eqnarray*}
where the inequality follows from convexity of $f$.  After rearranging the terms we get
\[
\Exp_{\mS_t}[d_{t+1}] +   \frac{2 \omega \beta}{(1-\beta)^2} f(x_t) + \alpha f(x_t)  \leq \Exp_{\mS_t}[d_t] + \frac{2 \omega \beta}{(1-\beta)^2} f(x_{t-1}),
\]
where $\alpha =   \frac{4\omega}{1-\beta} -\frac{2 \omega^2}{(1-\beta)^2} > 0$. Taking expectations again and using the tower property, we get
\begin{equation}\label{eq:oih89hd8}
\theta_{t+1} + \alpha \Exp[f(x_t)]  \leq \theta_t, \qquad t=1,2,\dots,
\end{equation}
where $\theta_t = \Exp[d_t] + \frac{2 \omega \beta}{(1-\beta)^2}\Exp[ f(x_{t-1})]$. By summing up \eqref{eq:oih89hd8} for $t=1,\dots, k$ we get
\begin{equation}\label{eq:s098h89hffdss}\sum_{t=1}^k \Exp[f(x_t)] \leq \frac{\theta_1-\theta_{k-1}}{\alpha} \leq \frac{\theta_1}{\alpha}.\end{equation}
Finally, using Jensen's inequality, we get
\[\Exp[f(\hat{x}_k)] = \Exp \left[f\left(\frac{1}{k}\sum_{t=1}^k x_t\right)\right] \leq \Exp \left[\frac{1}{k}\sum_{t=1}^k f(x_t)\right] =  \frac{1}{k}\sum_{t=1}^k \Exp[f(x_t)] \overset{\eqref{eq:s098h89hffdss}}{\leq} \frac{\theta_1}{\alpha k}.\]

It remains to note that $\theta_1 = \|x_0-x_*\|_{\mB}^2 + \frac{2\omega \beta}{(1-\beta)^2 }f(x_0).$

%%%%%%%%%%%%%%%%%%%%%%
\section{Proof of Theorem~\ref{theoremheavyball}} \label{app:acc}

In the proof of Theorem~\ref{theoremheavyball} the following two lemmas are used.

\begin{lem}[\cite{ASDA}]
\label{Forweakconvergence}
Assume exactness. Let $x\in \R^n$ and $x_* = \Pi_\mathcal{L}^\bB(x)$. If $\lambda_i=0$, then $u_i^\top \bB^{1/2} (x-x_*)=0$.

\end{lem}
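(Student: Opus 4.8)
The plan is to convert the spectral hypothesis $\lambda_i=0$ into the membership $\bB^{-1/2}u_i\in{\rm Null}(\mA)$, and then to read off the conclusion from the orthogonality property that characterizes the $\bB$-projection $x_*=\Pi_{\cL}^{\bB}(x)$.

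First I would unpack $\lambda_i=0$. Since $\mW=\bB^{-1/2}\E{\mZ}\bB^{-1/2}$ and $u_i$ is an eigenvector of $\mW$ with eigenvalue $0$, we have $\bB^{-1/2}\E{\mZ}\bB^{-1/2}u_i=0$; multiplying on the left by the invertible matrix $\bB^{1/2}$ yields $\E{\mZ}\bB^{-1/2}u_i=0$, i.e. $\bB^{-1/2}u_i\in{\rm Null}(\E{\mZ})$. Next I would invoke exactness. Because $\mZ=\mA^\top\mH\mA$, one always has ${\rm Null}(\mA)\subseteq{\rm Null}(\E{\mZ})$; conversely, using $\nabla f(x)=\bB^{-1}\E{\mZ}(x-x_*)$ and $f(x_*)=0$ one gets $f(x)=\tfrac12(x-x_*)^\top\E{\mZ}(x-x_*)$, so the minimizers of $f$ form the affine set $x_*+{\rm Null}(\E{\mZ})$, while $\cL=x_*+{\rm Null}(\mA)$. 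Exactness is precisely the statement that these two sets coincide, hence ${\rm Null}(\E{\mZ})={\rm Null}(\mA)$ (alternatively, cite the characterization of exactness in \cite{ASDA}). Therefore $\bB^{-1/2}u_i\in{\rm Null}(\mA)$.

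Finally, I would use that $x_*=\Pi_{\cL}^{\bB}(x)$ together with the fact that $\cL$ is an affine translate of ${\rm Null}(\mA)$: the residual $x-x_*$ is $\bB$-orthogonal to ${\rm Null}(\mA)$, i.e. $\langle x-x_*,v\rangle_{\bB}=0$ for every $v\in{\rm Null}(\mA)$. Applying this with $v=\bB^{-1/2}u_i$ gives $0=\langle x-x_*,\bB^{-1/2}u_i\rangle_{\bB}=(x-x_*)^\top\bB\bB^{-1/2}u_i=u_i^\top\bB^{1/2}(x-x_*)$, which is exactly the claimed identity.

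The only genuinely nontrivial ingredient is the equivalence between exactness and the equality ${\rm Null}(\E{\mZ})={\rm Null}(\mA)$; once that is in hand the rest is routine linear algebra (inverting $\bB^{1/2}$ and using the variational characterization of the projection). I expect this step to be the main thing to get right, and it can be dispatched either by the short quadratic-form argument sketched above or by a direct reference to the exactness discussion preceding the lemma.
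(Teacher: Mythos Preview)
Your argument is correct. The paper does not actually supply its own proof of this lemma; it merely cites \cite{ASDA}. Your reconstruction---translating $\lambda_i=0$ into $\bB^{-1/2}u_i\in{\rm Null}(\E{\mZ})$, identifying ${\rm Null}(\E{\mZ})={\rm Null}(\mA)$ via exactness, and then reading off the conclusion from the $\bB$-orthogonality of $x-x_*$ to ${\rm Null}(\mA)$---is sound and is essentially the natural proof one would expect in \cite{ASDA}. The only remark is that your justification of ${\rm Null}(\E{\mZ})={\rm Null}(\mA)$ via the quadratic-form expression for $f$ implicitly uses that $\E{\mZ}$ is positive semidefinite (so that $(x-x_*)^\top\E{\mZ}(x-x_*)=0$ forces $\E{\mZ}(x-x_*)=0$); this is immediate since each $\mZ=\mA^\top\mH\mA$ is PSD, but worth stating.
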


\begin{lem}[\cite{elaydi2005introduction, fillmore1968linear}]
\label{recurence}
Consider the second degree linear homogeneous recurrence relation:
\begin{equation}
\label{asdasdasd}
 r_{k+1}= a_1r_k+a_2 r_{k-1}
\end{equation}
with initial conditions $r_0,r_1 \in \R$.
Assume that the constant coefficients $a_1$ and $a_2$ satisfy the inequality $a_1^2 +4a_2<0$ (the roots of the characteristic equation $t^2-a_1t-a_2=0$ are imaginary). Then there are complex constants $C_0$ and $ C_1$ (depending on the initial conditions $r_0$ and $r_1$) such that:
$$r_k=2 M^k (C_0 \cos( \theta k) + C_1 \sin(\theta k))$$
where $M= \bigg(\sqrt{\frac{a_1^2}{4}+\frac{(-a_1^2-4a_2)}{4}} \bigg)=\sqrt{-a_2}$ and $\theta$ is such that $a_1=2 M \cos(\theta)$ and $\sqrt{-a_1^2-4a_2}=2 M \sin(\theta)$.
\end{lem}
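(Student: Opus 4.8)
The plan is to solve \eqref{asdasdasd} by the classical characteristic-root method and then rewrite the resulting complex solution in the stated trigonometric form. First I would form the characteristic equation $t^2 - a_1 t - a_2 = 0$ associated with the recurrence, whose roots are $t_\pm = \tfrac{1}{2}(a_1 \pm \sqrt{a_1^2 + 4a_2})$. Under the hypothesis $a_1^2 + 4a_2 < 0$ the discriminant is strictly negative, so these roots form a genuine (nonreal) complex-conjugate pair $t_\pm = \tfrac{a_1}{2} \pm \tfrac{i}{2}\sqrt{-a_1^2 - 4a_2}$, and in particular $t_+ \ne t_-$.

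Next I would pass to polar coordinates. A direct computation gives $|t_\pm|^2 = \tfrac{a_1^2}{4} + \tfrac{-a_1^2 - 4a_2}{4} = -a_2$, so that $|t_\pm| = \sqrt{-a_2} = M$; note that $a_2 < 0$ is forced, since $4a_2 < -a_1^2 \le 0$, whence $M > 0$ is a well-defined real number. Writing $t_\pm = M(\cos\theta \pm i\sin\theta)$ and matching real and imaginary parts yields exactly $a_1 = 2M\cos\theta$ and $\sqrt{-a_1^2 - 4a_2} = 2M\sin\theta$, which are the two defining relations for $\theta$ in the statement; one checks $\cos^2\theta + \sin^2\theta = 1$ to confirm consistency.

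Because the two characteristic roots are distinct, the general solution of the linear homogeneous recurrence is $r_k = A t_+^k + B t_-^k$ for complex constants $A,B$ fixed by the initial data $r_0,r_1$. This is the standard structure theorem (the sequences $\{t_+^k\}$ and $\{t_-^k\}$ span the two-dimensional solution space and are linearly independent precisely because $t_+ \ne t_-$), which I would invoke from the cited references \cite{elaydi2005introduction, fillmore1968linear} or verify directly by substitution together with a Casoratian (determinant) argument. Applying De Moivre's formula, $t_\pm^k = M^k(\cos(k\theta) \pm i\sin(k\theta))$, and collecting terms gives
$$r_k = M^k\big[(A+B)\cos(k\theta) + i(A-B)\sin(k\theta)\big].$$
Setting $C_0 = \tfrac{A+B}{2}$ and $C_1 = \tfrac{i(A-B)}{2}$ puts this in the claimed form $r_k = 2M^k(C_0\cos(\theta k) + C_1\sin(\theta k))$, which completes the proof.

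Since this is essentially a textbook fact, there is no real analytical obstacle; the only points requiring care are confirming that the roots are genuinely distinct (so that the repeated-root case does not arise) and that $a_2 < 0$ so that $M = \sqrt{-a_2}$ is real and positive, both of which follow immediately from the strict inequality $a_1^2 + 4a_2 < 0$. I would also emphasize that $C_0,C_1$ are permitted to be \emph{complex}, which is exactly what allows the single real-valued formula to accommodate arbitrary real initial conditions $r_0,r_1$.
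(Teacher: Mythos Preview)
Your proof is correct and follows the standard characteristic-root method for linear recurrences with complex roots. Note, however, that the paper does not actually supply its own proof of this lemma: it is stated as a known result and attributed to the cited references \cite{elaydi2005introduction, fillmore1968linear}. Your argument is precisely the textbook derivation one would find in those sources, so there is nothing to compare against beyond confirming that your write-up is sound, which it is.
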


We can now turn to the proof of Theorem~\ref{theoremheavyball}.
Plugging in the expression for the stochastic gradient, mSGD can be written in the form
\begin{eqnarray}
x_{k+1} & =& x_k -\omega \nabla f_{\bS_k}(x_k) + \beta(x_k - x_{k-1}) \notag \\
&\overset{\eqref{eq:grad_f_S}}{=}& x_k- \omega {\bB}^{-1} \bZ_k(x_k-x_*) + \beta(x_k - x_{k-1}).\label{difernt expad}
\end{eqnarray}

Subtracting  $x_*$ from both sides of \eqref{difernt expad}, we get
\begin{eqnarray*}
x_{k+1}-x_* 
& = & (\bI - \omega {\bB}^{-1} \bZ_k)(x_k-x_*) + \beta(x_k -x_* +x_* - x_{k-1})\\
& =& \left((1+\beta)\bI - \omega {\bB}^{-1} \bZ_k\right)(x_k-x_*) - \beta(x_{k-1}-x_*).
\end{eqnarray*}

Multiplying the last identity from the left by $\bB^{1/2}$, we get
\begin{eqnarray*}
\bB^{1/2} (x_{k+1}-x_*)
&=& \left((1+\beta)\bI - \omega \bB^{-1/2} \bZ_k \bB^{-1/2}\right) \bB^{1/2}(x_{k} -x_*) - \beta \bB^{1/2}(x_{k-1}-x_*).
\end{eqnarray*}

Taking expectations, conditioned on $x_k$ (that is, the expectation is with respect to $\bS_k$):
\begin{eqnarray}
\bB^{1/2} \Exp[x_{k+1} -x_* \;|\; x_k] & = & \left((1+\beta)\bI - \omega \bB^{-1/2}  \Exp[\bZ]  \bB^{-1/2}\right) \bB^{1/2}(x_{k} -x_*) - \beta \bB^{1/2}(x_{k-1}-x_*) . \label{eq:98g8gfsssd}
\end{eqnarray}

Taking expectations again, and using the tower property, we get
\begin{eqnarray*}
\bB^{1/2} \Exp[x_{k+1} -x_*] & = & \bB^{1/2}\Exp\left[\Exp[x_{k+1} -x_* \;|\; x_k]\right]\\
&\overset{\eqref{eq:98g8gfsssd}}{=}& \left((1+\beta)\bI - \omega \bB^{-1/2}  \Exp[\bZ] \bB^{-1/2} \right) \bB^{1/2} \Exp[x_{k} -x_*] - \beta \bB^{1/2} \Exp[x_{k-1}-x_*].
\end{eqnarray*}

Plugging the eigenvalue decomposition ${\bU}\bm{\Lambda} {{\bU}}^\top$ of the matrix $\bW=\bB^{-1/2}  \Exp[\bZ] \bB^{-1/2}$ into the above, and multiplying both sides from the left by ${{\bU}}^\top$, we obtain
\begin{equation}
\label{asdnaskdn}
{{\bU}}^\top \bB^{1/2} \Exp[x_{k+1} -x_*]  = {{\bU}}^\top \left((1+\beta)\bI - \omega {\bU}\bm{\Lambda} {{\bU}}^\top \right)\bB^{1/2} \Exp[x_{k} -x_*] - \beta {{\bU}}^\top \bB^{1/2} \Exp[x_{k-1}-x_*].
\end{equation}

Let us define $s_k\eqdef {{\bU}}^\top \bB^{1/2} \Exp[x_{k} -x_*]  \in \R^n$. Then relation \eqref{asdnaskdn} takes the form of the recursion
$$ s_{k+1}= [(1+\beta)\mI - \omega \bm{\Lambda} ] s_k - \beta s_{k-1},$$
which can be written in a coordinate-by-coordinate form as follows:
\begin{equation}
\label{coordinate}
 s_{k+1}^i= [(1+\beta) - \omega \lambda_i ] s_k^i - \beta s_{k-1}^i   \quad \text{for all} \quad i= 1,2,3,...,n,
\end{equation}
where $s_k^i$ indicates the $i$th coordinate of $s_k$.

We will now fix $i$ and analyze recursion \eqref{coordinate} using Lemma~\ref{recurence}. Note that \eqref{coordinate} is a second degree linear homogeneous recurrence relation of the form \eqref{asdasdasd} with $a_1=1+\beta - \omega \lambda_i $ and $a_2=- \beta$. Recall  that $0\leq\lambda_i \leq1$ for all $i$. Since we assume that $0< \omega \leq 1/\lambda_{\max}$, we know that $0\leq \omega \lambda_i \leq 1$ for all $i$. We now consider two  cases:

\begin{enumerate}
\item $ \lambda_i =0$.   

In this case, \eqref{coordinate} takes the form: \begin{equation} \label{eq:9g898fg9sy}s_{k+1}^i=(1+\beta)s_k^i-\beta s_{k-1}^i .\end{equation} Applying Theorem~\ref{prop:projections}, we know that $x_*=\Pi_\mathcal{L}^\bB(x_0)=\Pi_\mathcal{L}^\bB(x_1)$. Using Lemma \ref{Forweakconvergence} twice, once for $x=x_0$ and then for $x=x_1$, we observe that $s_0^i=u_i^\top \bB^{1/2} (x_0-x_*)=0$ and $s_1^i=u_i^\top \bB^{1/2} (x_1-x_*)=0$. Finally, in view of \eqref{eq:9g898fg9sy} we conclude that \begin{equation}\label{eq:8g98gdu9hhOOh} s_k^i=0 \quad \text{for all} \quad k\geq 0 .\end{equation}

\item $ \lambda_i >0$. 

Since $0<\omega \lambda_i \leq 1$ and $\beta\geq 0$, we have $1+\beta - \omega \lambda_i \geq 0$ and hence
\[a_1^2+4a_2=(1+\beta -\omega \lambda_i)^2-4\beta \leq  (1+\beta -\omega \lambda_{\min}^+)^2-4\beta  < 0,\]
where the last inequality can be shown to hold\footnote{The lower bound on $\beta$ is tight. However,  the upper bound is not. However, we do not care much about the regime of large $\beta$ as $\beta$ is the convergence rate, and hence is only interesting if smaller than 1.} for  $(1-\sqrt{\omega \lambda_{\min}^+})^2 < \beta < 1 $. Applying Lemma~\ref{recurence} the following bound can be deduced
\begin{eqnarray}
s_k^i  &=& 2(-a_2)^{k/2} (C_0  \cos(\theta k) +C_1 \sin(\theta k)) \; \leq \; 2 \beta^{k/2} P_i, \label{eq:uibd880s-pO}
\end{eqnarray}
where $P_i$ is a constant depending  on the initial conditions (we can simply choose $P_i = |C_0| + |C_1|$).

\end{enumerate}

Now putting the two cases together, for all $k\geq0$ we have
\begin{eqnarray*}
\|\Exp[x_{k} -x_*]\|_{\bB}^2&=& \Exp[x_{k} -x_*]^\top \bB \Exp[x_{k} -x_*] \; = \; \Exp[x_{k} -x_*] \bB^{1/2} \bU {\bU}^\top \bB^{1/2} \Exp[x_{k} -x_*]  \\
&=& \|{\bU}^\top \bB^{1/2} \Exp[x_{k} -x_*] \|_2^2 \; = \; \|s_k\|^2 \;= \; \sum_{i=1}^{n} (s_k^i)^2 \\
&= &  \sum_{i: \lambda_i=0} (s_k^i)^2  +  \sum_{i:  \lambda_i >0} (s_k^i)^2 \; \overset{\eqref{eq:8g98gdu9hhOOh}}{=}\;    \sum_{i:  \lambda_i >0} (s_k^i)^2\\
& \overset{\eqref{eq:uibd880s-pO}}{\leq} & \sum_{i:  \lambda_i >0} 4 \beta ^k P_i^2 \\
&=& \beta^k C,
\end{eqnarray*}
where $C=4\sum_{i:  \lambda_i >0}  P_i^2$.

\section{Proof of Theorem~\ref{thm:DSHB-L2} } \label{app:7}

The proof follows a similar pattern to that of Theorem~\ref{L2}. However, stochasticity in the momentum term introduces an additional layer of complexity, which we shall tackle by utilizing   a more involved version of the tower property.

For simplicity, let $i=i_k$ and $r_{k}^i  \eqdef e_i^\top(x_k-x_{k-1})e_i$. First, we decompose
\begin{eqnarray}
\|x_{k+1}-x_*\|^2 
& =& \|x_k-\omega \nabla f_{\bS_k}(x_k)+\beta r_k^i -x_*\|^2 \notag\\
& =& \|x_k-\omega \nabla f_{\bS_k}(x_k)-x_*\|^2+2\langle x_k-\omega \nabla f_{\bS_k}(x_k)-x_*, \beta r_k^i  \rangle + \beta^2\| r_k^i\|^2. \label{eq:iugh98d894}
\end{eqnarray}
 We shall use the tower property in the form
\begin{equation}\label{eq:tower3}\Exp[\Exp[\Exp[ X  \;|\; x_k,  \mS_k] \;|\; x_k]] = \Exp[X],\end{equation}
where $X$ is some random variable. We shall perform the three expectations in order, from the innermost to the outermost.  Applying the inner expectation to the identity \eqref{eq:iugh98d894}, we get
\begin{eqnarray}\Exp[\|x_{k+1}-x_*\|^2 \;|\; x_k, \mS_k ] 
&=&
 \underbrace{\Exp[\|x_k-\omega \nabla f_{\bS_k}(x_k)-x_*\|^2\;|\; x_k, \mS_k]}_{\encircle{1}} \notag\\
 && \quad +\underbrace{\Exp[2\langle x_k-\omega \nabla f_{\bS_k}(x_k)-x_*, \beta r_k^i  \rangle \;|\; x_k, \mS_k]}_{\encircle{2}} \notag\\
&& \quad +\underbrace{\Exp[\beta^2\| r_k^i\|^2\;|\; x_k, \mS_k]}_{\encircle{3}}. \label{eq:098j}
\end{eqnarray}

We will now analyze the three expressions \encircle{1}, \encircle{2}, \encircle{3} separately. The first expression is constant under the expectation, and hence we can write
\begin{eqnarray}
\encircle{1} &=& \|x_k-\omega \nabla f_{\bS_k}(x_k)-x_*\|^2\notag\\
&=& \|x_k-x_*\|^2-2 \omega \langle x_k-x_*,\nabla f_{\bS_k}(x_k) \rangle +\omega^2 \|\nabla f_{\bS_k}(x_k)\|^2\notag\\
& \overset{\eqref{normbound} + \eqref{functionequivalence}}{=} & \|x_k-x_*\|^2-4\omega f_{\bS_k}(x_k)+2\omega^2 f_{\bS_k}(x_k)\notag\\
&=& \|x_k-x_*\|^2-2\omega(2-\omega)f_{\bS_k}(x_k). \label{n1X}
\end{eqnarray}

We will now bound the second expression. Using the identity
\begin{equation}\label{eq:98gf8g8e09}\Exp[r_k^i  \;|\; x_k, \mS_k] = \Exp_i [r_k^i] = \sum_{i=1}^n \frac{1}{n}r_k^i = \frac{1}{n}(x_k-x_{k-1}),\end{equation}
we can write
\begin{eqnarray}
\encircle{2}
&=& \Exp[2\langle x_k-\omega \nabla f_{\bS_k}(x_k)-x_*, \beta r_k^i  \rangle \;|\; x_k, \mS_k]\notag\\
&=& 2\langle x_k-\omega \nabla f_{\bS_k}(x_k)-x_*, \beta \Exp[r_k^i  \;|\; x_k, \mS_k] \rangle\notag\\
&\overset{\eqref{eq:98gf8g8e09}}{=}& 2\langle x_k-\omega \nabla f_{\bS_k}(x_k)-x_*, \tfrac{\beta}{n} (x_k - x_{k-1})  \rangle\notag \\
& =&
2\tfrac{\beta}{n} \langle x_k-x_*, x_k-x_{k-1} \rangle +2\omega \tfrac{\beta}{n} \langle \nabla f_{\bS_k}(x_k), x_{k-1} - x_k  \rangle\notag\\ 
&=& 2\tfrac{\beta}{n} \langle x_k-x_*, x_k-x_{*} \rangle + 2\tfrac{\beta}{n} \langle x_k-x_*, x_*-x_{k-1} \rangle +2\omega \tfrac{\beta}{n} \langle \nabla f_{\bS_k}(x_k),x_{k-1}- x_k  \rangle \notag\\ 
&=& 2\tfrac{\beta}{n} \|x_k-x_*\|^2 + 2\tfrac{\beta}{n} \langle x_k-x_*, x_*-x_{k-1} \rangle +2\omega \tfrac{\beta}{n} \langle \nabla f_{\bS_k}(x_k),x_{k-1}- x_k  \rangle. \label{eq:ibnodh90hX}
\end{eqnarray}
Using the fact that for arbitrary vectors $a,b,c \in \R^n$ we have the identity
$2 \langle a-c,c-b \rangle =\|a-b\|^2-\|c-b\|^2-\|a-c\|^2,$
we obtain
$$2 \langle x_k-x_*, x_*-x_{k-1} \rangle=  \|x_k-x_{k-1}\|^2- \|x_{k-1}-x_*\|^2-\|x_k-x_*\|^2.$$
Substituting this into \eqref{eq:ibnodh90hX} gives
\begin{equation}
\label{n2X}
\begin{aligned}
\encircle{2}& = \tfrac{\beta}{n} \|x_k-x_*\|^2+\tfrac{\beta}{n}\|x_k-x_{k-1}\|^2-\tfrac{\beta}{n} \|x_{k-1}-x_*\|^2 + 2\omega \tfrac{\beta}{n} \langle \nabla f_{\bS_k}(x_k),x_{k-1}- x_k  \rangle. 
\end{aligned}
\end{equation}

The third expression can be bound as
\begin{eqnarray}
\encircle{3} &=& \Exp[\beta^2\| r_k^i\|^2\;|\; x_k, \mS_k] \notag\\
&=& \beta^2 \Exp_i[\| r_k^i\|^2] \notag \\
&=& \beta^2 \sum_{i=1}^n \tfrac{1}{n} (x_k^i -x_{k-1}^i)^2 \notag\\
&=& \tfrac{\beta^2}{n} \|x_k-x_{k-1}\|^2 \notag \\ 
&=&\tfrac{\beta^2}{n}\|(x_{k}-x_*)+(x_*-x_{k-1})\|^2 \notag \\
& \leq &  \tfrac{2\beta^2}{n}\|x_{k}-x_*\|^2+ \tfrac{2\beta^2}{n}\|x_{k-1}-x_*\|^2. \label{n3X}
\end{eqnarray}

By substituting the  bounds \eqref{n1X}, \eqref{n2X}, \eqref{n3X} into  \eqref{eq:098j} we obtain
\begin{eqnarray}
\Exp[\|x_{k+1}-x_*\|^2 \;|\; x_k, \mS_k ] 
& \leq & \|x_k-x_*\|^2-2\omega(2-\omega) f_{\bS_k}(x_k)\notag\\
&& \quad + \tfrac{\beta}{n}  \|x_k-x_*\|^2+ \tfrac{\beta}{n} \|x_{k}-x_{k-1}\|^2 -\tfrac{\beta}{n}  \|x_{k-1}-x_*\|^2 \notag\\
&& \quad + 2\omega \tfrac{\beta}{n}  \langle \nabla f_{\bS_k}(x_k), x_{k-1}- x_k  \rangle  +  2\tfrac{\beta^2}{n} \|x_{k}-x_*\|^2 \\
&& \quad + 2\tfrac{\beta^2}{n}\|x_{k-1}-x_*\|^2 \notag\\
&\overset{\eqref{n3}}{\leq}& \left(1+3\tfrac{\beta}{n} + 2\tfrac{\beta^2}{n}\right)\|x_k-x_*\|^2+ \left(\tfrac{\beta}{n} + 2\tfrac{\beta^2}{n} \right) \|x_{k-1}-x_*\|^2 \notag\\
&& \quad - 2\omega(2-\omega)f_{\bS_k}(x_k) + 2\omega \tfrac{\beta}{n} \langle \nabla f_{\bS_k}(x_k),x_{k-1}- x_k  \rangle.\label{eq:iohih638ygbdd}
\end{eqnarray}
We now take the middle expectation (see \eqref{eq:tower3}) and apply it to inequality \eqref{eq:iohih638ygbdd}:
\begin{eqnarray*}
\Exp[\Exp[\|x_{k+1}-x_*\|^2 \;|\; x_k, \mS_k ] \;|\; x_k]  & \leq & \left(1+3\tfrac{\beta}{n} + 2\tfrac{\beta^2}{n}\right)\|x_k-x_*\|^2+ \left(\tfrac{\beta}{n} + 2\tfrac{\beta^2}{n} \right) \|x_{k-1}-x_*\|^2 \\
&& \quad -2\omega(2-\omega)f(x_k) + 2\omega \tfrac{\beta}{n} \langle \nabla f(x_k),x_{k-1}- x_k  \rangle\\
 & \leq & \left(1+3\tfrac{\beta}{n} + 2\tfrac{\beta^2}{n}\right)\|x_k-x_*\|^2+ \left(\tfrac{\beta}{n} + 2\tfrac{\beta^2}{n} \right) \|x_{k-1}-x_*\|^2 \\
&& \quad -2\omega(2-\omega)f(x_k) + 2\omega \tfrac{\beta}{n}(f(x_{k-1})-f(x_k))\\
 & = &  \left(1+3\tfrac{\beta}{n} + 2\tfrac{\beta^2}{n}\right)\|x_k-x_*\|^2+ \left(\tfrac{\beta}{n} + 2\tfrac{\beta^2}{n} \right) \|x_{k-1}-x_*\|^2 \\
&& \quad - \left(2\omega(2-\omega) +2\omega\tfrac{\beta}{n}\right)f(x_k) + 2\omega \tfrac{\beta}{n} f(x_{k-1}).
\end{eqnarray*}
where in the second step we used the inequality
 $\langle \nabla f(x_k),x_{k-1}- x_k \rangle  \leq f(x_{k-1})-f(x_k)$ and the fact that $\omega \beta \geq 0$, which follows from the assumptions. We now apply  inequalities  \eqref{b2} and \eqref{b3}, obtaining
\begin{eqnarray*}
\Exp[\Exp[\|x_{k+1}-x_*\|^2 \;|\; x_k, \mS_k ] \;|\; x_k]& \leq &
 \underbrace{\left(1+3\tfrac{\beta}{n}+2\tfrac{\beta^2}{n} - \left(\omega(2-\omega) +\omega\tfrac{\beta}{n}\right)\lambda_{\min}^+ \right)}_{a_1}\|x_k-x_*\|^2 \\
 && \quad + \underbrace{\tfrac{1}{n}\left(\beta +2\beta^2 + \omega \beta \lambda_{\max}\right)}_{a_2}\|x_{k-1}-x_*\|^2.
\end{eqnarray*}

By taking expectation again (outermost expectation in the tower rule \eqref{eq:tower3}), and letting $F_k\eqdef \Exp[\|x_{k}-x_*\|^2_{\bB}]$, we get  the relation
\begin{equation}
\label{recurX}
F_{k+1}  \leq a_1 F_k  + a_2 F_{k-1} .
\end{equation}

It suffices to apply  Lemma~\ref{LemmaGlobal} to the relation  \eqref{recur}. The conditions of the lemma are satisfied. Indeed, $a_2\geq 0$, and if $a_2=0$, then $\beta=0$ and hence $a_1=1-\omega(1-\omega)\lambda_{\min}^+>0$. The condition $a_1+a_2<1$ holds by assumption.

The convergence result in function values follows as a corollary by applying inequality \eqref{b2} to  \eqref{eq:nfiug582X}.

\bibliographystyle{plain}
\bibliography{SHB}

\clearpage
\end{document}